\newcommand{\beq}{\begin{equation}}
\newcommand{\eeq}{\end{equation}}
\def\epsilon{\varepsilon}
\newtheorem{thm}{Theorem}[section]
 \newtheorem{cor}[thm]{Corollary}
 \newtheorem{lem}[thm]{Lemma}
 \newtheorem{prop}[thm]{Proposition}
 \theoremstyle{definition}
 \theoremstyle{remark}
 \numberwithin{equation}{section}
\def\be#1 {\begin{equation} \label{#1}}
\newcommand{\ee}{\end{equation}}
\newcommand\deux{\operatorname{II}}
\begin{document}

\author{Pierre Germain}
\address{Courant Institute of Mathematical Sciences, New York University, 251 Mercer Street, New York, N.Y. 10012, USA}
\curraddr{}
\email{pgermain@cims.nyu.edu}

\author{Fr\'ed\'eric Rousset}
\address{Laboratoire de Math\'ematiques d'Orsay (UMR 8628), Universit\'e Paris-Sud, 91405 Orsay Cedex France et Institut Universitaire de France}
\curraddr{}
\email{frederic.rousset@math.u-psud.fr}

\title{Long wave limit for  Schr\"odinger maps}

\subjclass[2010]{Primary 35Q53 ; 35Q41}

\keywords{Schr\"odinger map, long wave limit, KdV equation}

\thanks{P. Germain is partially supported by NSF grant DMS-1101269, a start-up grant from the Courant Institute,
and a Sloan fellowship. The second author is partially supported by the ANR projects BoND (ANR-13-BS01-0009-01)
 and  DYFICOLTY (ANR-13-BS01-0003-01)}

\begin{abstract} 
We study  long wave limits for general Schr\"odinger maps systems into K\"ahler manifolds with a constraining potential
vanishing on a Lagrangian submanifold. We obtain  KdV type systems set on the tangent space of the submanifold.
Our general theory is applied to study the  long wave limits of the Gross-Pitaevskii equation and of the Landau-Lifshitz
systems for ferromagnetic and anti-ferromagnetic chains.
\end{abstract}

\maketitle

\begin{quote}
\footnotesize\tableofcontents
\end{quote}

\section{Introduction}

In this paper, we shall analyze  a  long wave limit  problem for general Schr\"odinger map systems into K\"ahler 
manifolds. More precisely, we shall study the  long wave limit in the presence of a nonlinear confining potential vanishing on 
a Lagrangian submanifold $\mathcal{L}$.  The "wave" regime, that is to say long time dynamics for data close to 
$\mathcal{L}$  was studied in \cite{ShatahZeng} where it was proven that the limit system is given by the wave map 
valued in the Lagrangian submanifold $\mathcal{L}$. The aim of the present paper is to study the asymptotic behaviour  on a 
longer time scale for data close to a point (that we denote by $0$ without loss of generality)  of $\mathcal{L}$. 
We shall derive a system of KdV-type equations taking values in the tangent space $T_{0}\mathcal{L}$.

Our motivation comes from the  study of  the Landau-Lifshitz systems and we shall apply our theory to these models. 
Our general equation is also linked to the study of long wave limits for Gross-Pitaevskii-type equations, 
that has received a lot of interest recently \cite{Chiron-Rousset, Bethuel-Gravejat-Saut-Smets1, Bethuel-Gravejat-Saut-Smets2, Chiron} 
(the set up considered in these papers can be seen as a special case of the point of view adopted in the present article: 
namely choosing $\mathcal{M}$ to be the Euclidean space $\mathbb{R}^2$, usually identified to $\mathbb{C}$, 
and the Lagrangian submanifold $\mathcal{L}$ to be the unit circle). These authors derive the KdV equation, which was first proposed in~\cite{KdV}. Rigorous derivations of KdV-type limits have been established for various other equations, 
 for example: general hyperbolic systems~\cite{BenYoussef-Colin, BenYoussef-Lannes},
water-waves \cite{Alvarez-Lannes, Craig},  the Euler-Poisson \cite{LLS,Guo-Pu} or Vlasov-Poisson systems \cite{HK} for plasma, the Euler-Korteweg model for capillary fluids \cite{Benzoni-Chiron}.

While all the results that were mentioned above concern scalar-valued KdV equation, our general framework leads to vector-valued KdV 
equations. Instances of vector-valued KdV equation are known: the Hirota-Satsuma system~\cite{Hirota-Satsuma} was introduced based on 
its interesting integrability properties, see also~\cite{WGHZ} for its generalization; and the Gear-Grimshaw system~\cite{Gear-Grimshaw, BPST} to model the interaction of internal waves in a fluid. The model which we derive turns out to be related to the Gear-Grimshaw system, this point will be discussed in Section~\ref{ROTLKS}.

\subsection{The geometric picture}

We consider $\mathcal{M}$ a  $2d$ dimensional-K\"ahler manifold, and denote by $\nabla$ the connection compatible with the Riemannian metric (that we denote $\langle \;,  \; \rangle$ or simply $\cdot$), and  by $i$ the complex structure.
The system that we shall study is of the form
\begin{equation}
\label{SM1}
\boxed{
\partial_{s} \Gamma = i \big({1 \over 2 } \nabla_{y} \partial_{y} \Gamma + B(\Gamma) \partial_{y} \Gamma -  V'(\Gamma) \big)
}
\end{equation}
where   the unknown $\Gamma$ is a  map,    $\Gamma: \, \mathbb{R}_{+} \times \mathbb{R} \rightarrow \mathcal{M}$. We  use the notation
$\nabla_{y}$ for the riemannian connection  on $\Gamma^{-1} TM$, the pull-back of $T\mathcal{M}$ by $\Gamma$; it is uniquely defined by the condition 
that for a vector field $Y(y)$ along  the curve $y\mapsto \Gamma(s,y)$, we have 
$\nabla_{y} Y(y)= (\nabla_{\partial_{y}  \Gamma} X)_{|\Gamma(s,y)}$ for any vector field $X$ on $\mathcal{M}$ such that $X(\Gamma(s,y))= Y(y)$, $\forall y$.  
In the zero order  term, we use the notation $V'(\Gamma)$ for the Riemannian gradient of the map $V: \, \mathcal{M}\rightarrow \mathbb{R}$. 
 For the first order term, $ B(\Gamma): \, T_{\Gamma} \mathcal{M} \rightarrow  T_{\Gamma }\mathcal{M}$ is a smooth skew-symmetric tensor ($B^*= -B$).
  When there exists  a vector field $\mathcal{W}(\Gamma)$ such that
  $$ B(\Gamma)=  \nabla \mathcal{W}(\Gamma)^* - \nabla \mathcal{W}(\Gamma)$$
  (hence in particular when $B=0$) 
the above system  is formally the Hamiltonian flow of the energy functional
\begin{equation}
\label{energieGamma} E(\Gamma) = { 1 \over 4} \int_{\mathbb{R}} | \partial_{y} \Gamma |^2 \, dy - \int_{\mathbb{R}}  \langle W(\Gamma), \partial_{y} \Gamma \rangle \,dy+  \int_{\mathbb{R}} V(\Gamma) \, dy.
\end{equation}
given the symplectic form on $L^2(\Gamma^{-1} TM)$
$$
\omega(X,Y) = \int_{\mathbb{R}} \langle i X\,,\, Y \rangle \,dy.
$$
  
As in \cite{ShatahZeng}, we shall assume that the potential $V$  is smooth and confining to a Lagrangian submanifold $\mathcal{L}$:
\begin{equation}
\tag{H1}
\label{H1}
\begin{split}
\mbox{V smooth, $V(p) \geq 0$ on $\mathcal{M}$,  $V(\Gamma)=0$ on $\mathcal{L}$
and  $V''(p)_{|N_{p}\mathcal{L}\times N_{p}\mathcal{L}} =  2 \lambda I,$ for some  $\lambda>0$}
\end{split}
\end{equation}
where we use the notation 
$T_{p}\mathcal{L}$ and $N_{p}\mathcal{L}$ for the tangent and normal  spaces  to $\mathcal{L}$ at $p$, which are subspaces
of $T_{p}\mathcal{M}$. We also use the notation $V''$ as a short hand for  $\nabla V'$. Compared to \cite{ShatahZeng}, we make the slightly stronger assumption that $V''(p)$ is constant, this is sufficient to handle all our examples.

For the skew symmetric tensor $B$ on $T\mathcal{M}$, we shall make the following additional assumptions
\begin{equation}
\tag{H2}
\label{H2}
\begin{split}
& \mbox{if $p \in \mathcal{L}, \: \nabla B(p)= 0, \: iB(p):\, {N}_{p} \mathcal{L} \rightarrow {N}_{p} \mathcal{L},  \;  (Bi)(p)= -(iB)(p),$}\\
& \qquad \qquad \mbox{$B^2(p)= - \mu I$ for a constant $\mu < \lambda$}
\end{split}
\end{equation}
(note that if the properties $\nabla B = 0$, $Bi = -iB$, $B^2 = -\mu I$ were true on $\mathcal{M}$ instead of only $\mathcal{L}$, this would turn $\mathcal{M}$ into a hyperk\"ahler manifold). It is easy to see that $\mu$ is necessarily nonnegative. Again, we make slightly stronger assumptions than  in \cite{ShatahZeng} in order to handle our asymptotic regime. Note that the case $B=0$ is already
interesting and will  allow to handle many physical examples. Since  $iB$ is skew symmetric, the first order term in \eqref{SM1}, must be dominated  by the two other terms
 in order to get a well-posed problem. This is the reason for the assumption $\mu <\lambda$ (see the energy estimates in the proof for more details).

It will be convenient to introduce a number $c>0$ defined  by 
\begin{equation}
\label{cdef} c^2= \lambda - \mu
\end{equation}
which will play the role of a sound speed.

To define the long wave regime that we will study, it is  convenient to use the geodesic normal coordinate system 
in the vicinity of  $\mathcal{L}$ (we shall study precisely this coordinate system and its properties in section \ref{GPII}). 
Let us choose an arbitrary  point of  $\mathcal{L}$ that we denote by $0$, we can then define a coordinate system for  any  $\Gamma$ in a vicinity of zero  by  
$\Gamma= \Psi(p, N)$ with $p \in \mathcal{L}$, $N \in N_{p}\mathcal{L}$
and  $\Psi (p, N)=  \exp_{p}^\mathcal{M}( N)$ with $\exp^\mathcal{M}$ the exponential map on $\mathcal{M}$.
In this coordinate system,  we obtain  that $V$ is of the form
\begin{equation}
\label{Vdefbis}
V(\Psi(p,N))= \lambda |N|^2 + V_{1}(p)(N,N,N) + V_{2}(p,N),
\end{equation}
where $V_1$ is smooth in $p$ and trilinear and symmetric in $N$, and $V_2$ is a smooth function of size $O(|N|^4)$, which will not contribute in the analysis.

\subsection{Relevant scalings} To get the wave regime that was studied in \cite{ShatahZeng}, we look for solutions of  \eqref{SM1} under the form
$$
\Gamma(s,y)= u^\epsilon(\epsilon s, \epsilon y), \quad u^\epsilon(t,x)= \Psi(p^\epsilon(t,x), \epsilon  n^\epsilon(t,x)) .$$
This yields the rescaled equation for $u^\epsilon$
\begin{equation}
\label{wave1} \partial_{t} u^\epsilon= i \big(  {\epsilon \over 2 } \nabla_{x} \partial_{x} u^\epsilon + B \partial_{x} u^\epsilon  - {1 \over \epsilon }   V'(u^\epsilon) \big), \quad u^\epsilon= \Psi(p^\epsilon, \epsilon n^\epsilon).
\end{equation} 
With our assumption on the shape of the  nonlinear term  $V(u)$, we get  at leading order in $\epsilon$ the system  
\begin{equation}
\label{wave1'} \partial_{t} p - i B \partial_{x}p =  - 2  i \lambda  n, \quad \nabla_{t}^\perp n - i B \nabla_{x}^\perp n = {1 \over 2} i (\nabla_{x}^\top)\partial_{x} p.
\end{equation}
Here $\nabla^\top $ and $\nabla^\perp$ stand for the covariant derivatives respectively  on the tangent and normal bundles
of $\mathcal{L}$, and the two equalities above hold in $T_p \mathcal L$ and $N_p \mathcal L$ respectively; more details on these objects and on the way 
to perform these computations will be given below. Note that since we assumed that $\mathcal{L}$ is Lagrangian, $i$ maps $T_{p}\mathcal{L}$ on $\mathcal{N}_{p}\mathcal{L}$
 and vice-versa and that  thanks to~\eqref{H2},  we have that on $\mathcal{L}$,  $T_{p} \mathcal{L}$ and $\mathcal{N}_{p} \mathcal{L}$
 are stable subspaces of $iB$. By applying $\nabla_{t}^\top -  Bi \nabla_{x}^\top$ to the first equation of \eqref{wave1'}, we get (since $\nabla i= 0$) that
 $$ (\nabla_{t}^\top -  Bi \nabla_{x}^\top) ( \partial_{t} - iB \partial_{x})p = - 2 i \big( \nabla_{t}^\perp - i B \nabla_{x}^\perp \big)n= \lambda \nabla_{x}^\top \partial_{x}p.$$
Next,
$$  (\nabla_{t}^\top -  Bi \nabla_{x}^\top) ( \partial_{t} - iB \partial_{x})p= \nabla_{t}^\top \partial_{t} p  - ( Bi+ i B) \nabla_{x}^\top \partial_{t} p  - B^2 \nabla_{x}^\top \partial_{x} p =
 \nabla_{t}^\top\partial_{t} p  + \mu \nabla_{x}^\top \partial_{x} p$$
  thanks to~\eqref{H2}. Combining these two equalities, we obtain that $p \in \mathcal{L}$  solves
  $$ \nabla_{t}^\top \partial_{t}p= c^2 \nabla_{x}^\top \partial_{x} p$$
   with $c$ defined by \eqref{cdef} which is a wave map system on $\mathcal{L}$.
 
The rigorous justification of this regime was performed in \cite{ShatahZeng}.
In particular, it is proven that in this regime smooth  solutions of \eqref{wave1} exist on $[0,T]$ with $T$ independent of $\epsilon$
and converge to the solutions of the wave map system.
   
Such a  one-dimensional wave equation gives rise to one wave moving
to the left, and one moving to the right; our aim here will be to study the dynamics of one of these waves 
on longer times  in the case where the amplitude is smaller. 
We shall study the wave moving to the right close to some  point of $\mathcal{L}$ that we denote by zero. We use the ansatz
\begin{equation}
\label{KdVscaling}
\Gamma(s,y) = u^\epsilon( \epsilon^3 s, \epsilon(y- c s)), \quad u^\epsilon(t,x)= \Psi(p^\epsilon, \epsilon^2n^\epsilon), \quad p= \Phi(\epsilon \phi^\epsilon)
\end{equation}
where  $\Phi$ is the exponential map at $0$ on $\mathcal{L}$: $\Phi=exp_{0}^\mathcal{L}.$ We are thus studying the wave
moving to the right in a smaller amplitude regime ($\epsilon^2$ instead of $\epsilon$ in the direction normal to $\mathcal{L}$) and on a much longer time scale ($ 1/\epsilon^3$ against $1/\epsilon$).
We get for $u^\epsilon$ the system
\begin{equation}
\label{SMKdV}
\epsilon^2 \partial_{t} u^\epsilon  - c\partial_{x} u^\epsilon -i B \partial_{x} u^\epsilon = i \big( { \epsilon  \over 2} \nabla_{x}\partial_{x} u^\epsilon - {1 \over \epsilon} V'(u^\epsilon) \big).
\end{equation}    

\subsection{Uniform well-posedness}
We shall first prove  that for appropriate initial data,   smooth  solutions of \eqref{SMKdV} exist on an interval of time $[0,T]$ with $T$ independent of $\epsilon$ 
and satisfy uniform estimates.  More precisely, we will  also justify that on $[0,T]$, the representation
$$u^\epsilon(t,x)= \Psi(p^\epsilon(t,x), \epsilon^2n^\epsilon(t,x)), \quad p^\epsilon(t,x)= \Phi(\epsilon \phi^\epsilon(t,x))
$$
holds true and that $\phi^\epsilon$ and $n^\epsilon$ also satisfy  uniform estimates.
The following statement involves the energy functional $\mathcal{E}_s((\phi^\epsilon, n^\epsilon), t)$, which will be defined in 
Section~\ref{sectionprelim} (see \eqref{Esdef1}, \eqref{Esdef2}).  By a slight abuse of notation, 
 we shall often use the notation $ \mathcal{E}_{s}(u^\epsilon, t)$ in place of
$ \mathcal{E}_s((\phi^\epsilon, n^\epsilon), t)$.  Without going into details for the moment, it satisfies
$$ \| \partial_{x} \phi^\epsilon(t)\|_{H^{s}} + \|n^\epsilon(t)\|_{H^s}   \leq \mathcal{E}_{s}((\phi^\epsilon, n^\epsilon),t),$$
and the reader can heuristically think of $\mathcal{E}_s$ as being equivalent to $\| \partial_{x} \phi^\epsilon(t)\|_{H^{s}} + \|n^\epsilon(t)\|_{H^s}$.

We shall fix $r>0$ such that $\Phi= \exp_{0}^\mathcal{L}$ is well defined on $B(0, r)$ (the ball in $\mathbb{R}^d$)  and a diffeomorphism  on its image.

\begin{thm}[Uniform existence]
\label{theounif}
Assume that~\eqref{H1} and~\eqref{H2} hold, and let $s \geq 1$, $c_{0}>0$ such that $2c_{0}<r$.
 For every $M>0$    such that    
 $$
\mathcal{E}_{s}((\phi^\epsilon, n^\epsilon),t=0) \leq M, \quad  \epsilon \| \phi^\epsilon_{|t=0} \|_{L^\infty} \leq  c_{0}, \quad \forall \epsilon \in (0, 1]
$$
 there exists 
   $T >0$,  $E>0$ and  $\epsilon_{0}>0$ such that for every $\epsilon \in (0, \epsilon_{0}]$, we have a unique solution 
   $$u^\epsilon(t,x) =  \Psi(p^\epsilon, \epsilon^2n^\epsilon), \quad p^\epsilon= \Phi(\epsilon \phi^\epsilon)$$
    of \eqref{SMKdV} on $[0, T]$ that satisfies the uniform estimates
$$ \mathcal{E}_{s}((\phi^\epsilon, n^\epsilon),t) \leq E, \quad  \epsilon \| \phi^\epsilon (t)  \|_{L^\infty} \leq  2 c_{0} \quad \forall t \in [0,T].$$
\end{thm}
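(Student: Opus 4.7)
My plan is to combine classical local well-posedness (for each fixed $\epsilon > 0$) with a bootstrap argument driven by $\epsilon$-uniform energy estimates. For each $\epsilon > 0$ the equation \eqref{SMKdV} is a dispersive system of Schr\"odinger map type, hence admits unique smooth solutions on some maximal interval $[0, T_\epsilon)$ by standard arguments (for instance via a parabolic or Kato-type regularization); what needs to be proven is that $T_\epsilon$ can be bounded below by a positive quantity $T$ depending only on $M$ and $c_0$, and that $\mathcal{E}_s(u^\epsilon, t)$ is controlled uniformly in $\epsilon$ on $[0, T]$. So long as $\epsilon \|\phi^\epsilon(t)\|_{L^\infty} < r$, the normal coordinate representation $u^\epsilon = \Psi(\Phi(\epsilon \phi^\epsilon), \epsilon^2 n^\epsilon)$ is well-defined, so I may work with $(\phi^\epsilon, n^\epsilon)$ as the true unknowns. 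The first step is to project \eqref{SMKdV} onto $T_p\mathcal{L}$ and $N_p\mathcal{L}$, then expand using \eqref{Vdefbis}, $\nabla B(p) = 0$ on $\mathcal{L}$, and a Taylor expansion of $\Psi$, to obtain a coupled system for $(\phi^\epsilon, n^\epsilon)$ whose linear principal part diagonalizes thanks to the algebraic relations in~\eqref{H2} and whose remainders are smooth functions of $\epsilon \phi^\epsilon$ and $\epsilon^2 n^\epsilon$ multiplying at most $\epsilon$ times controlled derivatives.

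The heart of the proof is the $\epsilon$-uniform differential inequality
\[
\frac{d}{dt} \mathcal{E}_s(u^\epsilon, t)^2 \le C\bigl(1 + \mathcal{E}_s(u^\epsilon, t)^{k}\bigr) \mathcal{E}_s(u^\epsilon, t)^2,
\]
valid on any interval where $\epsilon \|\phi^\epsilon\|_{L^\infty} \le 2 c_0$ and $\mathcal{E}_s(u^\epsilon, t) \le 2M$, with $C$ depending only on $M$ and $c_0$. The main obstacle is that a naive $H^s$ computation produces contributions of order $1/\epsilon$ (from the singular zero-order term $V'(u^\epsilon)/\epsilon$) and of order $1/\epsilon^2$ (once one divides through by $\epsilon^2$ in the $\partial_t$ equation); these singular pieces must cancel. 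The cancellations are forced by (i) the skew-symmetry of $i\nabla_x \partial_x$ and of $iB$ with respect to the symplectic form $\omega$ on $L^2(\Gamma^{-1} TM)$, (ii) the defining identity $c^2 = \lambda - \mu$, which precisely balances the transport-by-$iB$ contribution against the quadratic part of $V'$ in the frame moving at speed $c$, and (iii) the vanishing $\nabla B(p) = 0$ on $\mathcal{L}$ together with the constancy of $V''_{|N \times N}$, which rule out lower-order singular drifts. The strict inequality $\mu < \lambda$ is then used to show that the remaining quadratic form is coercive and that $\mathcal{E}_s$ is genuinely equivalent to $\|\partial_x \phi^\epsilon\|_{H^s} + \|n^\epsilon\|_{H^s}$. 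Selecting the right energy $\mathcal{E}_s$ so that these cancellations are explicit at every order of differentiation is the delicate and structural part of the argument.

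With the differential inequality in hand, a standard continuation argument concludes. Set $E := 2M$ and define $T^\star$ as the supremum of times in $[0, T_\epsilon)$ on which both $\mathcal{E}_s(u^\epsilon, t) \le E$ and $\epsilon \|\phi^\epsilon(t)\|_{L^\infty} \le 2 c_0$ hold. Integration of the inequality shows $\mathcal{E}_s(u^\epsilon, t) \le \tfrac{3}{2} M$ for $t \in [0, T_1(M)]$. The evolution equation satisfied by $\phi^\epsilon$, combined with Sobolev embedding (available since $s \ge 1$), bounds $\|\partial_t(\epsilon \phi^\epsilon)\|_{L^\infty}$ by a constant depending only on $M$ and $c_0$, so that $\epsilon \|\phi^\epsilon(t)\|_{L^\infty} \le c_0 + C(M, c_0)\, t \le \tfrac{3}{2} c_0$ for $t \in [0, T_2(M, c_0)]$. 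Setting $T := \min(T_1, T_2)$, and choosing $\epsilon_0 > 0$ small enough that the $O(\epsilon)$ remainders in the expansion are truly subordinate on $[0, T]$, one finds $T^\star \ge T$ by continuity, which yields the claimed uniform existence and bounds.
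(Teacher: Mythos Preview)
Your bootstrap strategy and the general shape of the energy argument are correct, but the step controlling $\epsilon\|\phi^\epsilon(t)\|_{L^\infty}$ contains a genuine gap. You claim that the evolution equation for $\phi^\epsilon$ bounds $\|\partial_t(\epsilon\phi^\epsilon)\|_{L^\infty}$ by a constant depending only on $M$ and $c_0$. This is false: the tangential projection of \eqref{SMKdV} (the first line of \eqref{eqhydro}) reads, schematically,
\[
\epsilon^2 \partial_t \phi^\epsilon \;=\; (c+iB)\,\partial_x\phi^\epsilon \;-\; 2\lambda\, S_0^{-1} i\, n^\epsilon \;+\; O(\epsilon^2),
\]
so that only $\epsilon^2\partial_t\phi^\epsilon$ is $O(1)$ in $L^\infty$; hence $\partial_t(\epsilon\phi^\epsilon) = O(1/\epsilon)$, and direct time integration yields at best $\epsilon\|\phi^\epsilon(t)\|_{L^\infty} \le c_0 + C t/\epsilon$, which is useless for a uniform-in-$\epsilon$ bootstrap. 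The paper explicitly flags this obstruction (noting that direct integration only controls $\|\epsilon^2\phi^\epsilon\|_{L^\infty}$, and that $\mathcal{E}_s$ controls $\partial_x\phi^\epsilon$ but not $\phi^\epsilon$ itself, so Sobolev embedding does not help either).

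The paper's remedy is to observe that the singular right-hand side is, up to controlled terms, the quantity $W^\epsilon = (c+iB)D\Phi\,\partial_x\phi^\epsilon - 2i\lambda\, n^\epsilon$, and that $|W^\epsilon|^2$ satisfies a scalar transport equation at speed $2c/\epsilon^2$ with an $O_{L^1_x}(1)$ source. Solving this transport equation explicitly and changing variables yields $\sup_x \int_0^T \tfrac{1}{\epsilon}|W^\epsilon(t,x)|\,dt \lesssim \sqrt{T}(\|W_0\|_{L^2} + \sqrt{T}\,O(r,R))$, from which $\epsilon\|\phi^\epsilon(t)\|_{L^\infty} \le c_0 + (\sqrt{T}+T)\,O(r,R)$ follows; this is the missing ingredient needed to close the bootstrap. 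A secondary point: your sketch of the energy estimate elides the fact that the hydrodynamical system suffers a genuine loss of derivatives away from $\mathcal{L}$, which the paper compensates by splitting $\mathcal{E}_s = \mathcal{E}_{s,1} + \mathcal{E}_{s,2}$ and estimating $\mathcal{E}_{s,1}$ directly on the Schr\"odinger map equation (via the operator $(\epsilon^2\nabla_t - (c+Bi)\nabla_x)\nabla^m$) rather than on the hydrodynamical system.
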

The estimate on the $L^\infty$  norm of $\epsilon \phi^\epsilon$ in the previous statement is needed in order to use the  exponential coordinates  in a vicinity of the reference point $0$ on $\mathcal{L}$.

The most difficult step in the proof of  the above result  is to provide a priori uniform estimates on  
$\phi^\epsilon$ and $n^\epsilon$. When dealing with the Gross-Pitaevskii equation, as in \cite{Chiron-Rousset} for example, these estimates can be obtained
by using the hydrodynamical form of the equation sometimes called quantum Euler equation which follows from the Madelung transform (or its modified version due to Grenier \cite{Grenier}).

 In our more general framework, the representation $u^\epsilon = \Psi(p^\epsilon, \epsilon^2 n^\epsilon)$
 can be thought of
as a generalized Madelung transform. The first step in the analysis is thus to derive
a suitable hydrodynamical system on $(p^\epsilon, n^\epsilon)$ (we can drop the subscript $\epsilon$ for the sake of clarity for the moment) and to study its properties. It turns out there are several new difficulties that  do not occur in the  study of the standard hydrodynamical system derived from Gross-Pitaevskii, the main one being that  our hydrodynamical system suffers
from a lack of symmetry away from $\mathcal{L}$. 
As a consequence, 
we are only able to prove an estimate on the hydrodynamical system with a loss of derivatives, but a gain in $\epsilon$.
In order to compensate for this loss of derivatives, we add to the natural energy for the hydrodynamical system, $\mathcal{E}_{s,2}$,
a correction $\mathcal{E}_{s,1}$, and define
$$\mathcal{E}_{s}(u^\epsilon, t)\overset{def}{=}  \mathcal{E}_{s, 1}(u^\epsilon,t) + \mathcal{E}_{s, 2}(u^\epsilon,t).$$
Our plan is as follows:
\begin{itemize}
\item The control of the energy $\mathcal{E}_{s, 1}$ is obtained  by working directly on the Schr\"odinger map system in the spirit
of \cite{ShatahZeng}. This quantity controls higher order derivatives but has a singular behaviour in $\epsilon$.
\item Then we derive the hydrodynamical system and use it to estimate $\mathcal{E}_{{s, 2}}$
 that gives a control of $\|\partial_{x} \phi^\epsilon \|_{s} + \| n^\epsilon \|_{s}$. The terms with loss of derivatives that
occur are controlled thanks to $\mathcal{E}_{s, 1}$.
\end{itemize}
All these estimates are valid if 
  $\|\epsilon \phi^\epsilon \|_{L^\infty}<r$ so that the exponential on $\mathcal{L}$ is also well defined. 
   In order to close the argument here, we thus have  to estimate  $\|\epsilon \phi^\epsilon \|_{L^\infty}$. 
Note that this estimate was not needed in the analysis for  Gross-Pitaevskii: in this case, the hydrodynamical system does not involve
   $\phi^\epsilon$ (only $\partial_{x} \phi^\epsilon$ which plays the role of the fluid velocity  and higher derivatives) and, since  $\mathcal{L}$ is the unit circle, the exponential is globally defined. 
   The
    estimate does not follow from the control of $\mathcal{E}_{s}$ and Sobolev embeddings since
      $\mathcal{E}_{s}$ only controls $\|\partial_{x} \phi^\epsilon \|_{s}$.
    Also,  note that
    by direct time integration of the equation for $\phi$ in the hydrodynamical system,  we only easily get an estimate of  $\|\epsilon^2 \phi^\epsilon \|_{L^\infty}$.
     We will proceed as follows.
     \begin{itemize}
     \item To estimate $\|\epsilon \phi^\epsilon \|_{L^\infty}$, we use the hydrodynamical system to get
      that, up to well controlled terms, 
       $\|\epsilon \phi^\epsilon \|_{L^\infty ([0, T] \times \mathbb{R})}$ can be estimated  by 
       $ \sup_{x} \int_{0}^T { 1 \over \epsilon} |W^\epsilon(s, x)| \, ds$
        where
         $$ W= (c+iB) D\Phi \partial_{x} \phi^\epsilon - 2 i \lambda n^\epsilon$$
          We can control this quantity uniformly in $\epsilon$ by observing that $|W^\epsilon|^2$ solves a transport
           equation at speed $\epsilon^{-2}.$
     \end{itemize}
   
  Let us now explain a little more the derivation of the hydrodynamical system, at least in a simplified
   framework.
When deriving the hydrodynamical system and studying its structure, both the geometry of $\mathcal{L}$ and the geometry of $\mathcal{M}$   play a role, together with the local structure of the potential $V$. In particular, in order to get the hydrodynamical system in the general case, 
we need to use  a connection on $N \mathcal{L}$ and to understand how the 
Schr\"odinger map system can be split into  a tangential and a normal equation away from $\mathcal{L}$,
in such a way that $i$ still exchanges the tangential and the normal directions.

We begin with the study of the simpler case in which
$\mathcal{L}$ is a Lagrangian submanifold of $\mathbb{R}^{2d}$ (for instance with the complex structure of $\mathbb{C}^d$).
In this case, the representation \eqref{KdVscaling} $u= \Psi (p^\epsilon, \epsilon^2n^\epsilon), \, p^\epsilon = \Phi(\epsilon \phi^\epsilon)$ is much simpler
since it reads
$$ u^\epsilon= \Phi(\epsilon \phi^\epsilon) + \epsilon^2 n^\epsilon$$
(indeed, the geodesics are straight lines in $\mathbb{R}^{2d}$ and thus only the exponential on $\mathcal{L}$ is nonlinear).
It is then rather straightforward to get a hydrodynamical system by plugging this  ansatz into \eqref{SMKdV}
and by splitting the system into the tangential part  and the normal part to $\mathcal{L}$. We obtain
the system  
\begin{equation}
\label{eqhydro}
\left\{ 
\begin{array}{l} 
\displaystyle 
S_1 \left( \partial_t \phi^\epsilon - \frac{c}{\epsilon^2} \partial_x \phi^\epsilon \right)  - { 1 \over \epsilon^2 } i B S_{1} \partial_{x} \phi^\epsilon= i \left[ \frac{1}{2} \deux^{\perp}(S_1 \partial_x \phi^\epsilon, D\Phi \partial_{x} \phi^\epsilon) + \frac{1}{2}
(\nabla^{\perp}_x)^2 n^\epsilon  -   2 \lambda \frac{n^\epsilon}{\epsilon^2} \right. \\
 \displaystyle  \mbox{\hspace{10cm}}\left. - F_{1}(p)(n^\epsilon,n^\epsilon)-  {1 \over \epsilon^4}P^\perp R^V(p,\epsilon^2n) \right] \\
\displaystyle
\nabla_t^\perp n^\epsilon - \frac{c}{\epsilon^2} \nabla^{\perp}_x n^\epsilon -  { 1 \over \epsilon^2 } iB  \nabla_{x}^\perp n^\epsilon  = i \left[ \frac{1}{2 \epsilon^2} \nabla_x^\top (S_1 \partial_x \phi^\epsilon)  + \frac{1}{2} \deux^{\top} \left(
 D\Phi \partial_x \phi^\epsilon \, , \,\nabla_x^\perp n^\epsilon  \right)  \right. \\
 \displaystyle  \mbox{\hspace{10cm}}\left.- {1 \over \epsilon^5} P^\top R^V(p^\epsilon, \epsilon^2 n^\epsilon) \right]
\end{array}
\right.
\end{equation}
where we denote
$$
S_1 \overset{def}{=} S_0 D\Phi \qquad \mbox{with} \qquad S_0 \overset{def}{=} \operatorname{Id} + \epsilon^2 \deux^\top \left( \cdot\,,\,n^\epsilon \right)
$$
and refer to the subsection~\ref{diffgeo} for the precise definition of the second fundamental form of the tangent bundle $\deux^{\perp}$; the second fundamental form of the normal bundle $\deux^\perp$; and the connection on the normal bundle $\nabla^\perp$.
In this system these objects are always computed at the point $p = \Phi(\epsilon \phi^\epsilon) \in \mathcal{L}.$ The terms involving $R^V$
are  harmless terms that  come from  the higher order terms in the potential.

The case where $\mathbb{R}^{2d}$ is replaced by a general K\"ahler manifold $\mathcal{M}$ is more complicated
since one needs to define appropriate generalizations of the tangential and normal projections away from
$\mathcal{L}$.

\subsection{Derivation of the vector KdV equation}
The uniform estimates of Theorem \ref{theounif} are the key to the following result that justifies rigorously
 the KdV asymptotics.
 Before stating the result,recall that $ \deux^\perp_{p}$ is the second fondamental form of the tangent bundle of $\mathcal{L}$. Its definition and basic properties are recalled in section \ref{sectionprelim}.
  In view of the expansion \eqref{Vdefbis} of the potential, it is convenient to define
 $F_{1}(p)(N_1,N_2) \in N_{p} \mathcal{L}$ by the  formula: 
\begin{equation}
\label{F1def} F_{1}(p) (N_1,N_2) \cdot N_3 \overset{def}{=} 3 V_{1}(p)(N_1,N_2,N_3), \quad \forall N_1,N_2,N_3 \in N_{p} \mathcal{L}.
\end{equation}

The second main result of this paper is the following.
\begin{thm}[KdV limit]
\label{theoKdV}
Assume~\eqref{H1},~\eqref{H2} and  that, for some $s \geq 3$,  and for some  $M>0$, we have the uniform estimate
$$
\mathcal{E}_{s}(u^\epsilon,t=0) \leq M, \quad   \forall \epsilon\in (0, 1].
$$
Assume furthermore that the initial data
$
u_{0}^\epsilon = \Psi \big(\Phi( \epsilon\phi_{0}^\epsilon), \epsilon^2 n_{0}^\epsilon\big)$
 is such that  $\epsilon \phi_0^\epsilon \rightarrow 0$ in $L^\infty(\mathbb{R})$ and that  there exists $A_{0} \in L^2(\mathbb{R}, T_{0}\mathcal{L})$ such that 
 \begin{equation}
 \label{hypW}  \epsilon^2  \phi^\epsilon_{0}\rightarrow 0, \quad   (c+ (iB)_{\Phi(\epsilon \phi_{0}^\epsilon)})  D\Phi_{\epsilon \phi^\epsilon_{0}} \partial_{x}\phi^\epsilon_{0} \rightarrow A_{0}, \quad 
  2  i \lambda  n^\epsilon_{0} \rightarrow A_{0} 
 \end{equation}
 in $L^2(\mathbb{R})$  when $\epsilon$ tends to zero. Then $ (c+(iB)_{\Phi(\epsilon \phi^\epsilon)}) D\Phi \partial_{x}\phi^\epsilon$ and  $ 2  i  \lambda n^\epsilon $ converge to $A \in T_{0}\mathcal{L}$
in  $\mathcal{C} ([0, T], L^2)$  where $A$ is 
the unique solution of the KdV type system
\begin{equation}
\label{eqKdVlimitintro}  
\boxed{
2  c\partial_{t} A  = \frac{1}{4} \partial_{xxx}  A  + \left( \frac{3}{2}- \frac{2 \mu}{\lambda} - \frac{2c}{\lambda}i_0 B_0 \right) i_0 \deux^\perp_{0} \left(  \partial_{x} A, A\right)  - \frac{i_0}{2\lambda} F_{1,0}(i_0 \partial_{x} A, i_0 A).
}
\end{equation}
with initial data
$$A(t=0) = A_{0}.$$ 
\end{thm}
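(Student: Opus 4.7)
The strategy is to combine the uniform $H^s$ estimates of Theorem~\ref{theounif} (with $s\geq 3$, hence $W^{2,\infty}$ control via Sobolev embedding) with the hydrodynamical system~\eqref{eqhydro} and a decoupling argument. Set
$$
A_1^\epsilon := (c+(iB)_{p^\epsilon})\, D\Phi\,\partial_x \phi^\epsilon \qquad \text{and} \qquad A_2^\epsilon := 2i\lambda\, n^\epsilon ,
$$
so that hypothesis~\eqref{hypW} reads $A_1^\epsilon(0), A_2^\epsilon(0) \to A_0$ in $L^2$, and the goal is to show that $A_1^\epsilon, A_2^\epsilon \to A$ in $\mathcal{C}([0,T], L^2)$. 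The plan is to prove separately (i) that the obstruction $W^\epsilon := A_1^\epsilon - A_2^\epsilon$ tends to $0$ in $\mathcal{C}([0,T], L^2)$, and (ii) that $A_2^\epsilon$ satisfies an effective KdV-type equation up to a negligible error, so that its limit necessarily coincides with the unique KdV solution $A$.

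For step (i), one exploits the fact that $A_1^\epsilon$ and $A_2^\epsilon$ play the role of Riemann invariants diagonalising the fast linear dynamics in the moving frame. Combining the two equations of~\eqref{eqhydro} after differentiation of the first, and using~\eqref{H2} (in particular $B^2=-\mu I$ and $Bi=-iB$ on $\mathcal{L}$), one derives that $W^\epsilon$ satisfies, up to lower-order terms, a fast transport equation of the schematic form
\begin{equation*}
\partial_t W^\epsilon - \tfrac{1}{\epsilon^2}(c+(iB)_{p^\epsilon})\,\partial_x W^\epsilon = R^\epsilon,
\end{equation*}
equivalent to the fact (noted in the introduction) that $|W^\epsilon|^2$ is transported at speed $\epsilon^{-2}$. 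An $L^2$ energy estimate in which the skew-symmetry of $iB$ kills the singular transport term, combined with the observation that all commutators produced by the varying coefficient $(iB)_{p^\epsilon}$ are $O(1)$ thanks to the uniform bounds, yields $\|W^\epsilon(t)\|_{L^2}^2 \leq \|W^\epsilon(0)\|_{L^2}^2 + C\epsilon$, so that $W^\epsilon\to 0$ in $\mathcal{C}([0,T], L^2)$ by~\eqref{hypW}.

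For step (ii), one combines the two equations of~\eqref{eqhydro} to obtain a closed evolution equation for $A_2^\epsilon$, eliminating $\partial_x \phi^\epsilon$ via the identity $D\Phi\,\partial_x\phi^\epsilon = (c+(iB)_{p^\epsilon})^{-1}(A_2^\epsilon+W^\epsilon)$, which is well-defined on $T\mathcal{L}$ since $c^2>\mu$. Taylor expanding the geometric objects evaluated at $p^\epsilon=\Phi(\epsilon\phi^\epsilon)$ around $0$ in powers of $\epsilon$ (the second fundamental forms $\deux^\perp$, $\deux^\top$, the connection on $N\mathcal{L}$, and $iB$), using $\epsilon\phi^\epsilon\to 0$ in $L^\infty$ together with the potential expansion~\eqref{Vdefbis} and~\eqref{F1def}, and iterating once more the leading-order decoupling relation $A_1^\epsilon = A_2^\epsilon + O(\epsilon^2)$ to extract the dispersive contribution, one reaches an equation of the form
\begin{equation*}
2c\,\partial_t A_2^\epsilon = \tfrac{1}{4}\partial_{xxx}A_2^\epsilon + \Bigl(\tfrac{3}{2}-\tfrac{2\mu}{\lambda}-\tfrac{2c}{\lambda}i_0 B_0\Bigr) i_0\,\deux^\perp_0(\partial_x A_2^\epsilon, A_2^\epsilon) - \tfrac{i_0}{2\lambda}\,F_{1,0}(i_0 \partial_x A_2^\epsilon, i_0 A_2^\epsilon) + r^\epsilon,
\end{equation*}
with a remainder $r^\epsilon$ tending to $0$ in $L^2_t H^{-1}_x$ thanks to step (i), the uniform $H^3$ bounds, and the smallness of $\epsilon\phi^\epsilon$.

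To conclude, one extracts a weak-$*$ limit of $A_2^\epsilon$ in $L^\infty_t H^s_x$, upgrades it to strong $L^2_{\mathrm{loc}}$ convergence by Aubin--Lions, passes to the limit in the effective equation to identify the limit as a solution of~\eqref{eqKdVlimitintro} with data $A_0$, and uses uniqueness for~\eqref{eqKdVlimitintro} (classical energy estimate, since the nonlinearity is quadratic and controlled in $H^1$) to obtain convergence of the full family. Strong convergence in $\mathcal{C}([0,T], L^2)$ follows from a direct $L^2$ energy estimate on $A_2^\epsilon - A$, and then on $A_1^\epsilon - A$ via step~(i). The main obstacle I expect is the algebraic bookkeeping in step (ii): tracking how the geometric contributions (from $\deux^\perp$, $\deux^\top$, the normal connection, and the skew tensor $B$) combine with the cubic potential term at the correct order in $\epsilon$ demands a careful and somewhat intricate use of the identities in~\eqref{H2}, especially the interplay between $i$ and $B$ responsible for the coefficient $\tfrac{3}{2}-\tfrac{2\mu}{\lambda}-\tfrac{2c}{\lambda}i_0 B_0$.
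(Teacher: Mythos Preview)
Your overall architecture (control $W^\epsilon$, then derive an effective equation for the other Riemann invariant, then pass to the limit) matches the paper's, but step (i) contains a genuine gap that breaks the whole chain.

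The transport equation you write for $W^\epsilon$ is correct: the paper derives $\nabla_t^\top W^\epsilon = \tfrac{2c}{\epsilon^2}\nabla_x^\top W^\epsilon + R^\epsilon$ with $R^\epsilon = O_{L^2}(1)$.  However, the remainder $R^\epsilon$ is genuinely of size $O(1)$ in $L^2$, \emph{not} $O(\epsilon)$: it contains quadratic terms such as $i\,\deux^\perp(\nabla_x^\top(D\Phi\partial_x\phi^\epsilon), D\Phi\partial_x\phi^\epsilon)$ and $iF_1(\nabla_x^\perp n^\epsilon,n^\epsilon)$, all of order one.  The energy estimate therefore only yields $\tfrac{d}{dt}\|W^\epsilon\|_{L^2}^2 = O(\|W^\epsilon\|_{L^2}) = O(1)$, hence $\|W^\epsilon(t)\|_{L^2}^2 \leq \|W^\epsilon(0)\|_{L^2}^2 + Ct$, which does not give $\|W^\epsilon\|_{L^2}\to 0$.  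The paper circumvents this by observing that $\tfrac{1}{4\lambda}\|W^\epsilon\|_{L^2}^2$ coincides, up to $O(\epsilon^2)$, with a combination of the conserved energy and momentum; more precisely it introduces an explicit functional $H(u^\epsilon)$ (containing cubic correction terms in $\deux^\top$ and $F_1$) for which $\tfrac{d}{dt}H(u^\epsilon)=O(\epsilon)$ is proved by a direct computation on the hydrodynamical system.  This Hamiltonian structure is what forces $\|W^\epsilon\|_{L^2}\to 0$; your direct transport argument cannot see it.

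This gap propagates: you invoke $\epsilon\phi^\epsilon\to 0$ in $L^\infty$ for positive times in step (ii), but the theorem only assumes this at $t=0$, and Theorem~\ref{theounif} merely gives the uniform bound $\epsilon\|\phi^\epsilon\|_{L^\infty}\leq 2c_0$.  In the paper, convergence of $\epsilon\phi^\epsilon$ to $0$ is deduced from $\|W^\epsilon\|_{L^2}\to 0$ via the chain \eqref{convphiLinfty}--\eqref{convphiL2}, so fixing step (i) is also what makes this available.  A further subtlety you do not mention: passing to the limit in the time-derivative term is delicate because $\nabla_t^\top$ contains Christoffel contributions involving $\epsilon\partial_t\phi^\epsilon$, which is only $O(1)$ (not small).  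The paper handles this by pairing against $D\Phi(\epsilon\phi^\epsilon)\psi$ and exploiting the normal-coordinate identity $\nabla^\top D\Phi_{|0}=0$, so that the bad factor is multiplied by $\|\nabla^\top D\Phi_{\epsilon\phi^\epsilon}\|_{L^\infty}\to 0$.
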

In the above system,  the subscript $0$ indicates that all the involved tensors are evaluated at $0$. and we have set $ F_{1,0} = F_1(0)$. The convergence in~\eqref{hypW} should be understood in a natural way, either by embedding $\mathcal{M}$ (locally) in an Euclidean space, or by identifying tangent spaces through parallel transport.

Let us now explain how, at least formally, we expect that the limit evolves according to  \eqref{eqKdVlimitintro}; we will 
focus on the simpler framework where the K\"ahler manifold is Euclidean, and the hydrodynamical system is simply given by \eqref{eqhydro}.
  Due to the singular  $\epsilon^{-2}$ terms, we expect that in the limit, if $(\partial_{x} \phi^\epsilon, n^\epsilon)$
   converges to $(\partial_{x} \phi, n)$,  where
  $$(c + iB) D\Phi \partial_x \phi=  2i\lambda n\overset{def}{=} A.$$
 In order to get the equation satisfied by $A$,  we can apply  $\nabla_x^\top$ to the first equation
  of \eqref{eqhydro}. The system then becomes
$$
\left\{
\begin{array}{l}
\displaystyle \nabla_{t}^\top (D\Phi \partial_{x}\phi^\epsilon) =  {1 \over \epsilon^2} \nabla_{x}^\top W^\epsilon  +  {1 \over 2}  (\nabla_{x}^\top)^3 (in^\epsilon)  - 2 i F_{1}(\nabla_{x}^\perp n^\epsilon, n^\epsilon) \\
\displaystyle \qquad \qquad \qquad \qquad +i   \deux^\perp( \nabla_{x}^\top (D\Phi \partial_{x}\phi^\epsilon), D\Phi \partial_{x}\phi^\epsilon) - 4 \lambda i \deux^\perp ( \nabla_{x}^\top (in^\epsilon), i n^\epsilon) +O(\epsilon) \\
\displaystyle \nabla_{t}^\top  ( 2 \lambda in^\epsilon) = - {1 \over \epsilon^2}( c-iB) \nabla_{x}^\top W^\epsilon
   +   2  i  \lambda  \deux^\perp \big(D\Phi\partial_{x}\phi^\epsilon , \nabla_{x}^\top i  n^\epsilon \big)  + i \lambda  \deux^\perp (\nabla_{x}^\top(D\Phi \partial_{x} \phi^\epsilon), i n^\epsilon\big)  + O(\epsilon).
\end{array} \right.
$$
where $W^\epsilon=(c+iB) D\Phi \partial_{x} \phi^\epsilon - 2 i \lambda n^\epsilon. $
Multiplying the first line by $(c-iB)$, and adding it to the second line, the singular $\frac{1}{\epsilon^2}$ terms cancel, giving the equation
\begin{equation}
\label{intropresqueKdV}
\begin{split}
\nabla_{t}^\top \big(  (c-iB) D\Phi \partial_{x} \phi^\epsilon &+ 2 \lambda i n^\epsilon)  = (c-iB) \left[
{1 \over 2}  (\nabla_{x}^\top)^3 (in^\epsilon)  - 2 i F_{1}(\nabla_{x}^\perp n^\epsilon, n^\epsilon) \right. \\ 
& \left. +i   \deux^\perp( \nabla_{x}^\top (D\Phi \partial_{x}\phi^\epsilon), D\Phi \partial_{x}\phi^\epsilon) - 4 \lambda i \deux^\perp ( \nabla_{x}^\top (in^\epsilon), i n^\epsilon) \right] \\
& +  2  i  \lambda  \deux^\perp \big(D\Phi\partial_{x}\phi^\epsilon , \nabla_{x}^\top i  n^\epsilon \big)  + i \lambda  \deux^\perp (\nabla_{x}^\top(D\Phi \partial_{x} \phi^\epsilon), i n^\epsilon\big)  + O(\epsilon).
\end{split}
\end{equation}
If $\epsilon \phi$ tends to zero and  $(\partial_{x} \phi^\epsilon, n^\epsilon)$
   converges to $(\partial_{x} \phi, n)$  sufficiently strongly
with  the constraint
 $(c + iB) D\Phi \partial_x \phi=  2i\lambda n=  A,$
 by using some algebraic manipulations in order to express all the quantities with respect to $A$, 
  we finally 
get
\eqref{eqKdVlimitintro}.

In order to justify rigorously the above derivation, the main difficulty is to prove that  $\epsilon \| \phi^\epsilon \|_{L^\infty}$ tends to 
zero and to get strong  compactness for $n^\epsilon$ and $\partial_{x} \phi^\epsilon$ (the space compactness
 is a direct consequence of the uniform estimates of Theorem \ref{theounif}, the difficulty is the time compactness) in order to pass to the limit in  \eqref{intropresqueKdV}.  We point out again that 
  in our geometric setting all the tensors involved in the hydrodynamical system 
   are taken at $\epsilon \phi^\epsilon$,  hence we really need to prove a strong convergence of $\epsilon \phi$
   in order to pass to the limit. 
 
 To prove that  $\epsilon \| \phi^\epsilon \|_{L^\infty}$ tends to 
zero, we use again the hydrodynamical system and the link with $W^\epsilon= (c+iB) D\Phi \partial_{x} \phi^\epsilon - 2 i \lambda n^\epsilon$. We first prove that it suffices 
 to get that  $\sup_{[0, T ]} \|W^\epsilon \|_{L^2}$ tends to zero.
  To get this we can use the conserved quantities of \eqref{SM1} (at least in the case when 
  $B = \nabla W^\star - \nabla W$).
Note  that $\| W^\epsilon \|_{L^2}^2$ is of order $\sim \epsilon^4$ and, to leading order in $\epsilon$, conserved by the flow of~\eqref{SM1}. Indeed,
$$
\frac{1}{\epsilon^4} \| W^\epsilon \|_{L^2}^2 = 4 \lambda E + 4 \lambda c P + O(\epsilon),
$$
where the energy $E$ and the momentum $P$ 
$$
\left\{ 
\begin{array}{l}
\displaystyle E = \frac{1}{4} \int \left[ \epsilon^2 |\partial_x u|^2 + V(u) + \epsilon W(u) \cdot \partial_x u \right]\,dx \\
\displaystyle P = \int u \cdot i \partial_x u\,dx
\end{array}
\right.
$$
are conserved quantities of~\eqref{SM1}.
By assumption on the data   $(\phi_0,n_0)$, we thus get that
$$  \frac{1}{\epsilon^4} \| W^\epsilon (t) \|_{L^2}^2 = { 1 \over \epsilon^4} \|W^\epsilon (0)\|_{L^2}^2 + O(\epsilon)= o(1).$$
 This provides the desired estimate on $W^\epsilon$ which in turn implies that $\epsilon  \|\phi\|_{L^\infty}$
 tends to zero uniformly on $[0, T]$.
 
 Once we have obtained this estimate, we can use again the hydrodynamical system \eqref{eqhydro}
  to  also obtain strong compactness  for  the quantity
  $ U^\epsilon = (c-iB) D\Phi \partial_{x} \phi^\epsilon + 2 i \lambda n^\epsilon$.
   This is a consequence of the fact  that $\nabla_{t}^\top U^\epsilon = O(1)$. 
    This in turn  allows to justify the convergence towards a solution of \eqref{eqKdVlimitintro}
     from some algebraic manipulations and standard (weak) convergence arguments. 

\subsection{Organization of the article}   
\begin{itemize}
\item We show in Section \ref{sectionexamples} how our general theory can be used to obtain the long wave limit
for the Gross-Pitaevskii equation as well as for the Landau-Lifshitz equations for ferromagnetic and anti-ferromagnetic chains.
\item In Section \ref{sectionprelim}, we recall some useful geometric facts  and define the various
norms and functionals that we will be using.
\item In Section \ref{sectioneuclid}, we prove theorems \ref{theounif} and \ref{theoKdV}
in the case where $\mathcal{M} = \mathbb{R}^{2d}$.
\item Next, the proof of theorems \ref{theounif} and \ref{theoKdV} in the general case is presented in Section \ref{sectionKahler}. 
The additional geometric notions that are needed for the proof are given in subsection \ref{sectiongeom}.
\item Finally, we say in Section~\ref{ROTLKS} a few words about the properties of the KdV system that we derived.
\end{itemize}

\section{Examples}

In this section, we apply our general result to various physical situations

\label{sectionexamples}

\subsection{The Gross-Pitaevskii equation}

\subsubsection{Scalar case}
We consider first the  Gross-Pitaevskii equation which is a classical model for nonlinear optics, superfluids and Bose-Einstein condensates
(see~\cite{RB} for a recent review)
\begin{equation}
\label{GP}
\partial_{t} \Gamma = i \left( { 1 \over 2 } \partial_{xx} \Gamma + \Gamma ( 1 - | \Gamma |^2) \right), \quad t >0, \, x\in \mathbb{R}
\end{equation}
where the unknown  $\Gamma \in \mathbb{C}$.

We are  thus in the case where  $\mathcal{M}$ is  the Euclidean space $\mathcal{M}= \mathbb{R}^2 \sim \mathbb{C}$,  there is no first order terms so that  the
 tensor $B$ is zero here
and the Lagrangian submanifold is the unit circle, $\mathcal{L}= \mathbb{S}^1$.
The potential $V$ is given by
$$ V(\Gamma)=  {1 \over 4} ( 1 - |\Gamma|^2 )^2.$$
Writing $\Gamma = p+ n$  with $p \in \mathbb{S}^1$ and $n \in N_{p} \mathbb{S}^1$, we get
$$ V(p+ n)=   (p\cdot n)^2 +  p\cdot n |n|^2 +  {1 \over 4 }|n|^4$$
and since in this simple situation $p= n/ |n|$, this can also be written
$$ V(p+n)=   |n|^2  +   p\cdot n |n|^2 + {1 \over 4 } |n|^4$$
which is under the form  \eqref{Vdefbis} with
$$
\left\{ 
\begin{array}{l}
\Lambda= c = 1 \\
V_{1}(p)(n,n,n)=   p\cdot n |n|^2 \\
F_1(p)(n_1,n_2) = (p \cdot n_1) n_2 + (p \cdot n_2) n_1 + (n_1 \cdot n_2) p.
\end{array}
\right.
$$
Moreover, the second fundamental form of $\mathcal{L}= \mathbb{S}^1$ is given by
$$ \deux^\perp_{p}(X,Y) = - (X\cdot Y)p.$$
We use  the KdV scaling \eqref{KdVscaling} with base point $1$ so that $\Phi(\epsilon \phi)= e^{i \epsilon \phi}$. We can then write 
$A \in T_{1} \mathbb{S}^1$ as $A= i \rho$, with $\rho \in \mathbb{R}$. This gives
$$ \deux^\perp_{1}(A, \partial_{x} A)= - \rho \partial_{x} \rho, \quad \mbox{and} \quad F_1 (1)(iA,i\partial_x A) = 3 \rho \partial_{x} \rho.$$
By Theorem \ref{theoKdV}, we thus  get that the long wave limit of the Gross-Pitaevskii equation \eqref{GP} is described by the KdV  equation
$$ 2  \partial_{t} \rho = {1 \over 4} \partial_{xxx} \rho  -  3   \rho  \partial_{x} \rho,$$
recovering the result of \cite{Chiron-Rousset}, \cite{Bethuel-Gravejat-Saut-Smets1}.

\subsubsection{Vector case}
We discuss here the case of two coupled Gross-Pitaevskii equations. It arises in Bose-Einstein condensates where two species are present~\cite{KFC} or in nonlinear optics~\cite{agrawal}; the particular case of two coupled equations is sometimes called the Manakov equations. In general, it reads 
$$
\partial_t \Gamma = i \left( \frac{1}{2} \Delta \Gamma - V'(\Gamma) \right),
$$
where $\Gamma$ takes values in $\mathcal{M} = \mathbb{C}^d$, and $V(\Gamma) = G(|\Gamma_1|,\dots,|\Gamma_d|)$, for $G$ a function $[0,+ \infty[^d \to \mathbb{R}$, hence
$$
V'(\Gamma) = \left( [\partial_{1} G] \frac{\Gamma_1}{|\Gamma|}, \dots , [ \partial_{d} G ] \frac{\Gamma_d}{|\Gamma|} \right).
$$
The associated Hamiltonian reads of course
$$
E(\Gamma) = \frac{1}{4} \int_{\mathbb{R}} |\partial_y \Gamma|^2 \,dx + \int_{\mathbb{R}} V(\Gamma)\,dx.
$$
Assume that $G$ has a minimum at the point $U^0 = (U^0_1,\dots,U^0_d) \in (0, + \infty)^d$, with $\operatorname{Hess} G_{|U^0} = 2\lambda \operatorname{Id}$. Then $V$ is minimal on the Lagrangian manifold $\mathcal{L} = \{ |\Gamma_1| = U^0_1,\dots,|\Gamma_d| = U^0_d \}$. Adopt now natural coordinates $(\phi_i,N_i)$ by decomposing $\Gamma_i = U^0_i e^{i \phi_i} + N_i$, where $\phi_i \in \mathbb{R}$, and $N_i$ is colinear to $e^{i \phi_i}$. In these coordinates, $V(\Gamma)$ admits the expansion
$$
V(\Gamma) = \lambda |N|^2 +F_1(N,N)\cdot N,
$$
for a function $F_1$ with the required symmetry.

It remains to describe the second fundamental form at the point $(U^0_1,\dots,U^0_d) \in \mathcal{L}$. The tangent space is $\sim i \mathbb{R} \times \dots \times i \mathbb{R}$. For points $r^1 = (i\rho_1^1,\dots,i\rho_d^1)$ and $r^2 = (i\rho_1^2,\dots,i\rho_d^2)$ in the tangent space, we have as previously
$$
\deux^{\perp}(r^1,r^2) = \left( -\rho_1^1 \rho_1^2,\dots,-\rho_d^1 \rho_d^2 \right).
$$
By using again the notation $A= (i\rho_{1}, \cdots, i\rho_{d})^t$,  we obtain
 $$ 2  c\partial_{t} \rho_{k}= {1 \over 4} \partial_{x}^3 \rho_{k} - { 3 \over 2 }  \rho_{k}  \partial_{x} \rho_{k} - 
 { 1 \over 2 \lambda} F_{1, 0}(\rho, \partial_{x} \rho)_{k}, \quad  k=1, \cdots d$$
  with $c= \sqrt{\lambda}$.

\subsection{The Landau-Lifshitz equations for ferromagnetic chains}

\subsubsection{General case} We quickly present the Landau-Lifshitz equation with only exchange and isotropy energies, referring to the textbook~\cite{HS} for more.
It describes, in the continuum approximation, the magnetic spin in a one-dimensional ferromagnetic chain and reads
\begin{equation}
\label{LL1} \partial_{t}\Gamma= \Gamma \times \big({1 \over 2}\partial_{xx}\Gamma -  V'(\Gamma) \big)\end{equation}
where $\Gamma \in \mathbb{S}^2$ is the spin vector. We identify $\mathbb{S}^2$ with the unit sphere, so that
$\Gamma=(\Gamma_1,\Gamma_2,\Gamma_3) \in \mathbb{S}^2$ if and only if $\Gamma_1^2 + \Gamma_2^2 + \Gamma_3^2 = 1$.
 Again, we have  $B=0$ and 
the conserved energy for this equation is $\int \left[ \frac{1}{2} |\partial_x \Gamma|^2 + V(\Gamma) \right] \,dx$ where
the term $\frac{1}{2} |\partial_x \Gamma|^2$ accounts for the exchange energy (molecular magnetic fields tend to align) and
$V(\Gamma)$ for the anisotropy energy (in a cristal, not all directions of the molecular magnetic fields have the same energy).

This equation fits in our general framework by setting $\mathcal{M}= \mathbb{S}^2$, $\mathcal{L} = \{ V=0 \}$  with the   complex structure defined as  $i(u)= u \times \cdot$.
For simplicity, we focus on the uniaxial case, in which the minimum of $V$ is reached on sets of the form $\{ \Gamma_3 = \gamma_0 \}$.
We will distinguish two models: one with $\gamma_0 = 0$ (``easy plane anisotropy''), and one with $\gamma_0 \neq 0$ (``easy cone anisotropy'').

\subsubsection{Easy plane anisotropy}
We assume here that
$$ V(\Gamma)=  K  \Gamma_{3}^2$$
so that the Lagrangian submanifold is $\mathcal{L}= \mathbb{S}^1 \times \{0\}.$
      
By using spherical coordinates, we have for $p \in \mathbb{S}^1$ and $n \in \mathbb{R}$
$$  \Psi(p,n)= \cos (n) \, p + \sin (n )\, e_{3}$$ 
and thus
$$ V(\Psi(p,n))=  { \alpha^2 } (\sin n )^2= K n^2 + O(n^4).$$
This is under the form \eqref{Vdefbis} with  $c= K$ and $V_{1}= 0$.
In  the KdV regime \eqref{KdVscaling}, we can take $(1, 0, 0)^t$ as our reference point  and set
$p= \Phi(\epsilon \phi)= (\cos \epsilon \phi, \sin \epsilon \phi, 0)^t$.
We note that  the  second fundamental form of  the tangent  bundle of $\mathbb{S}^1 \times\{0\}$ as a submanifold of  $\mathbb{S}^2$ vanishes,
$ \deux^\perp = 0$. Consequently, we get from Theorem \ref{theoKdV} that the asymptotic regime is described by  the linear  KdV equation
(Airy equation)
$$ 2 \partial_{t} A = {1 \over 4} \partial_{x}^3 A.$$
A different scaling allowing larger data on shorter times  has been studied in \cite{Chiron} in order to get nonlinear effects in the limit.

\subsubsection{Easy cone anisotropy}
We assume here that
$$
V(\Gamma) =  V(\Gamma_3)
$$
is nonnegative, equal to zero for $\Gamma_3 = \gamma_0 \in [0,1]$, and admits the following expansion
$$
V(\gamma_0 + s) = \alpha s^2 + \beta s^3 + O(s^4).
$$
Define $\theta_0$ by $\cos \theta_0 = \gamma_0$. For $p \in \mathcal{L} = \{ \Gamma_3 = \gamma_0 \} \cap \mathbb{S}^2 $, define $N_0$ to be the unit vector in $N_p \mathcal{L}$ such that $N_0 \cdot e_3 < 0$. For $s \in \mathbb{R}$, the map $\Psi$ is then given by
$$
\Psi(p,s N_0) = \sin(\theta_0 + s) q + \cos(\theta_0+s) e_3 \quad \mbox{with} \quad q = \frac{p - p\cdot e_3}{|p-p e_3|},
$$
and $V$ can then be expanded as 
$$
V( \Psi(p,s N_0) ) = V( \cos(\theta_0+s) ) = \underbrace{\alpha (\sin \theta_0)^2}_{c^2= \lambda} s^2 + \underbrace{(\alpha \sin \theta_0 \cos \theta_0 + \beta (\sin \theta_0)^3)}_{b} s^3 + O(s^4).
$$
This means that
$$
F_1 (N_1,N_2) = 3b (N_0 \cdot N_1) (N_0 \cdot N_2) N_0.
$$
On the other hand, a computation gives
$$
\deux^\perp (X,Y) = - \operatorname{cotan} \theta_0 (X\cdot Y) N_0.
$$
Therefore, Theorem \ref{theoKdV} gives the following equation in the long-wave limit:
$$
2 c \partial_t A = \frac{1}{4} \partial_x^3 A + \left( {3 \over 2 }  \operatorname{cotan} \theta_0 + \frac{3b}{2 \lambda} \right) A \partial_x A.
$$

\subsection{The Landau-Lifshitz equations for anti-ferromagnetic chains}
The continuum limit for antiferromagnetic chains is described by the Landau-Lifshitz system
\begin{equation}
\label{LL2}
\left\{
\begin{aligned}
&  \partial_{t} u=  u\times \big( -  \frac{1}{2} \partial_{xx} u  - \partial_{x} v  +  2 v\big), \\
& \partial_{t} v = v \times \big( -\frac{1}{2} \partial_{xx} v + \partial_{x} u  + 2  u \big)
\end{aligned}
\right.
\end{equation}
where the two unknowns $u$ and $v$ take values in $\mathbb{S}^2$ (for the derivation of this equation, see~\cite{Papanicolaou}, equation (3.4) taking into account
that $A = - B+O(\epsilon)$).
This system also enters in our general framework  with $\Gamma= (u,v) \in \mathcal{M}= \mathbb{S}^2 \times \mathbb{S}^2$.
 The   complex structure is defined by $i(\Gamma) \cdot (X, Y)=( u \times X, v \times Y)^t$, the tensor $B$ is
$$ B(u,v)(X,Y)=  ( -P (u) Y, P(v) X), \quad  \forall (X, Y) \in T_{u}  S^2 \times T_{v} S^2 $$
where we denote  by $P(u)$ the orthogonal projection on the tangent space $T_{u} \mathbb{S}^2$ (thus $P(u) X=  X - u \cdot X \, u$, \, $\forall X \in \mathbb{R}^3$)
and the potential $V$ is given  by 
$$ V(\Gamma) =  |u+ v|^2$$
so that $\mathcal{L}$ is the anti diagonal $\{ (u,v) \in \mathcal{M}, \, u+ v = 0\}.$
Let us describe $\Psi(p,n)$. We can write $p \in \mathcal{L}$ under the form $p= (\omega, - \omega), $
$\omega \in S^2$  and a normal vector $n$   is under the form  $n= (X, X)$, $X \in T_{\omega}S^2.$
By choosing the axis of  coordinates, we can always consider that $\omega= (0, 0, 1)^t$
and  by using spherical coordinates that  $ X= \rho( \cos \phi, \sin \phi, 0)^t)$.  The geodesic on $S^2$
starting from $\omega$ with initial speed $X$ is thus given by
$$ \gamma (s) =  \left( \sin (\rho s) \cos(\phi), \sin(\rho s) \sin (\phi), \cos (\rho s) \right)^t.$$
We thus get that
$$ \Psi (p,n)=  (\psi(\omega, X),  -  \psi(\omega, -X))^t, \quad \psi(\omega, X)= \left( \sin (\rho) \cos(\phi), \sin(\rho) \sin (\phi), \cos \rho\right)^t.$$         
Therefore,   $V$ can be expressed as
$$ V(\Psi(p,n))  =   4 (\sin  \rho)^2 =  4 \rho^2 + O(\rho^4) = 2   |n|^2 + O(|n|^4).$$
Consequently, we  have $V_{1}= 0$ and $ \lambda=2.$

 We note that on $\mathcal{L}$, we have $B(u,-u) (X, Y)= ( - Y, X)$ and  thus $ \mu = 1$. This yields
 $$ c ^2= \lambda - \mu = 1.$$ 
In  the KdV regime \eqref{KdVscaling}, we can take 
$$p_{0}= \big((1, 0, 0)^t, -(1, 0, 0)^t \big)$$
as our reference point  on $\mathcal{L} $ for example and set
$ p= (\omega , - \omega),$ with $  \omega= \Phi(\epsilon \phi), $
$\Phi$ being  the exponential map on $S^2$ at the point  $(1,0, 0)^t$ (we do not need  the precise expression).
We also  note that $\mathcal{L}$ is a totally geodesic submanifold, therefore,  $\deux^\perp= 0$ on $\mathcal{L}$. Consequently, 
we obtain from Theorem \ref{theoKdV} that  the long wave limit is described by the linear Airy system
$$ 2 \partial_{t} A= {1 \over 4} \partial_{x}^3 A, \quad A \in \mathbb{R}^2.$$

\section{Preliminaries}

\label{sectionprelim}

\subsection{Geometry}         
Consider $\mathcal{M}$ a $2d$-dimensional K\"ahler manifold. We denote its metric by $(X,Y) \mapsto \langle X , Y \rangle$ or simply $X\cdot Y$, its Levi-Civita connection
by $\nabla$, its Riemann curvature tensor by $R$ and its complex structure by $i$. The compatibility of $i$ with the metric implies that
$$
\nabla i = 0 \quad \mbox{and} \quad \langle iX , iY \rangle = \langle X , Y \rangle \quad \mbox{for any $(X,Y)  \in T \mathcal M$}.
$$
We also consider a Lagrangian submanifold $\mathcal{L}$ of $\mathcal{M}$. We denote $T_p \mathcal{L}$, respectively $N_p \mathcal{L}$, for the tangent, 
respectively normal, spaces of $\mathcal{L}$ as a submanifold of $\mathcal{M}$ at $p \in \mathcal{L}$. $\mathcal{L}$ being Lagrangian means that for any $p \in \mathcal{L}$,
$$
i T_p \mathcal{L} = N_p \mathcal{L}.
$$

\subsubsection{Covariant derivatives}
\label{diffgeo}
 We adopt the following notations:
\begin{itemize}
\item $P^{\top}: T_p \mathcal{M} \rightarrow T_p \mathcal{L}$, respectively $P^{\perp}: T_p \mathcal{M} \rightarrow N_p \mathcal{L}$, is the orthogonal projector on the
tangent, respectively normal, space of $\mathcal{L}$.
\item The covariant derivative on the tangent bundle of $\mathcal{L}$ reads $\nabla^\top = P^\top \nabla$ 
\item The covariant derivative on the normal bundle of $\mathcal{L}$  reads $\nabla^\perp = P^\perp \nabla$.
\item We systematically abuse notations by not distinguishing between, say, the tangent space of $\mathcal{L}$ and its pull-back by a map. Assume for instance that 
$f: t \mapsto f(t)$ is a map 
from $\mathbb{R}$ to $\mathcal{L}$, and that $X$ is a section of the pullback of $T\mathcal{L}$ by $f$. In other words, $X$ associates to each $t$ in 
$\mathbb{R}$ an element $X(t)$ of $T_{f(t)} \mathcal{L}$. Denoting by  $\widetilde{X}$ a vector field such that
 $\tilde X (f(t))= X(t)$,  we will  write
$$
\nabla_t X_{|t_0} = \nabla_{\partial_t f (t_{0})} \widetilde{X}_{|f(t_0)}.
$$
\end{itemize}

\subsubsection{Differentiating tensors}
Consider a tensor mapping, say, $(T \mathcal{L})^2$ to $(N \mathcal{L})$. Then set, for $U,V,W$ sections in $T \mathcal{L}$,
\beq
\label{difftensor}
\left[ \nabla_U^\perp A \right] (V,W) \overset{def}{=} \nabla_U^\perp \left[A (V,W)\right] - A (\nabla^\top_U V,W) - A (V,\nabla^\top_U W).
\eeq
This definition can be extended in an obvious way to general tensors. It will be also useful to view the  covariant derivative of a tensor as a tensor
 with  covariant index  augmented by one. In the case of  the above example, this yields
 $$ (\nabla^\perp  A)(U,V,W) \overset{def}{=} \left[ \nabla_{U}^\perp A\right] (V,W).$$
   Again this can be extended in an obvious way to general tensors.

\subsubsection{Second fundamental forms}
The second fundamental form of $T \mathcal{L}$ is given by ($X$ and $Y$ being sections of $T\mathcal{L}$)
\beq
\label{deuxTdef}
\nabla_X Y = \nabla^\top_X Y + \deux^{\perp}(X,Y) \quad \mbox{or} \quad \deux^\perp(X,Y) = - \nabla P^\perp (X, Y).
\eeq
The second fundamental form of $N \mathcal{L}$ is given by ($X$ and $N$ being sections of $T \mathcal{L}$ and $N \mathcal{L}$ respectively)
\beq
\label{deuxperpdef}
\nabla_X N = \nabla^\perp_X N + \deux^{\top}(X,N) \quad \mbox{or} \quad \deux^\top(X,N) = - \nabla P^\top (X, N).
\eeq
\begin{prop}
\label{deuxsym}
Let $p \in \mathcal{L}$, $X, Y \in T_p\mathcal{L}$ and $N \in N_p \mathcal{L}$. Denote simply
$\deux^\top$ and $\deux^\perp$ for the second fundamental forms at $p$. Then
\begin{enumerate}
\item $\deux^\perp (X,Y) = \deux^\perp(Y,X)$.
\item $i \deux^\top(X,N) = \deux^\perp(X,iN)$.
\item $\deux^\top(\cdot,N)$ is symmetric on $T\mathcal{L}$ (for the metric scalar product).
\item $i \deux^\perp(\cdot,X)$ is symmetric on $T\mathcal{L}$ (for the metric scalar product).
\item $\deux^\perp(i\cdot,X)$ is symmetric on $N\mathcal{L}$ (for the metric scalar product).
\item $i\deux^\top (X,\cdot))$ is symmetric on $N\mathcal{L}$ (for the metric scalar product).
\end{enumerate}
\end{prop}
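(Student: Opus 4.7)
The plan is to prove the six identities essentially by unwinding the definitions~\eqref{deuxTdef}--\eqref{deuxperpdef}, then exploiting the three structural facts available on a K\"ahler manifold with a Lagrangian submanifold: the connection $\nabla$ is torsion-free and metric-compatible, the complex structure is parallel ($\nabla i = 0$) and skew-adjoint ($\langle iX,Y\rangle = -\langle X,iY\rangle$), and the Lagrangian property $iT_p\mathcal{L}= N_p\mathcal{L}$. I will establish (1), (2), (3) and (6) directly, and deduce (4) and (5) from these by algebraic manipulation.

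For (1), extend $X,Y$ to local sections of $T\mathcal{L}$; then $[X,Y]$ is tangent to $\mathcal{L}$, so applying $P^\perp$ to the torsion identity $\nabla_X Y - \nabla_Y X = [X,Y]$ gives $\deux^\perp(X,Y) - \deux^\perp(Y,X) = 0$. For (2), use $\nabla i = 0$ to write $\nabla_X(iN) = i\nabla_X N = i\nabla_X^\perp N + i \deux^\top(X,N)$; since $\mathcal{L}$ is Lagrangian, $i\nabla_X^\perp N \in T\mathcal{L}$ and $i\deux^\top(X,N)\in N\mathcal{L}$, so matching with $\nabla_X(iN) = \nabla_X^\top(iN) + \deux^\perp(X,iN)$ yields the claimed identification of normal parts.

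For (3), take $X,Y$ tangent and $N$ normal. Since $\langle N, Y\rangle = 0$, metric compatibility gives $\langle\nabla_X N, Y\rangle = -\langle N, \nabla_X Y\rangle$; the left-hand side equals $\langle \deux^\top(X,N), Y\rangle$ (the normal part of $\nabla_X N$ drops out against $Y$), and the right-hand side equals $-\langle N, \deux^\perp(X,Y)\rangle$. Thus
\[
\langle \deux^\top(X,N), Y\rangle = -\langle N, \deux^\perp(X,Y)\rangle,
\]
and symmetry in $(X,Y)$ follows from (1). The same strategy gives (6): for $M,N \in N\mathcal{L}$ and $X\in T\mathcal{L}$, differentiate $\langle M, iN\rangle = 0$ (the two vectors lie in orthogonal subspaces) to get $\langle\nabla_X M, iN\rangle = -\langle M, i\nabla_X N\rangle$. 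Using $\langle iA,B\rangle = -\langle A,iB\rangle$ on both sides, together with the observation that $\nabla_X^\perp N$ is normal and hence orthogonal to $iM \in T\mathcal{L}$, this reduces to $\langle i\deux^\top(X,M), N\rangle = \langle i\deux^\top(X,N), M\rangle$.

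Finally, (4) and (5) follow by combining the previous items. For (4), apply (2) with $N$ replaced by $Z\in T\mathcal{L}$ (so $iZ \in N\mathcal{L}$): $\deux^\top(W, iZ) = i\deux^\perp(W,Z)$; then (3) applied to the normal vector $iZ$ gives $\langle i\deux^\perp(W,Z), Y\rangle = \langle i\deux^\perp(Y,Z), W\rangle$ for all $W,Y\in T\mathcal{L}$, which is (4). For (5), fix $M,N\in N\mathcal{L}$ and $X\in T\mathcal{L}$: by (1) and (2),
\[
\langle \deux^\perp(iM, X), N\rangle = \langle \deux^\perp(X, iM), N\rangle = \langle i\deux^\top(X,M), N\rangle,
\]
which by (6) is symmetric in $M,N$, yielding (5). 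The only real bookkeeping hurdle is ensuring that each projection (tangent vs.\ normal) and each application of $i$ is accounted for correctly, but no genuinely new geometric input is required beyond the Lagrangian condition and the K\"ahler identities $\nabla i = 0$, $\langle iX,iY\rangle = \langle X,Y\rangle$.
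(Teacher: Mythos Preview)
Your proof is correct; each of the six items is established cleanly from the definitions~\eqref{deuxTdef}--\eqref{deuxperpdef}, torsion-freeness, metric compatibility, $\nabla i = 0$, and the Lagrangian condition. The paper itself does not supply a proof of this proposition: it treats the six identities as ``classical properties'' and states them without argument, so there is no paper approach to compare against. Your derivation of (4) and (5) from (2), (3), (6) via the substitution $N = iZ$ is an efficient way to package the argument.
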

As a corollary of these classical properties, we also obtain that
\begin{cor}
\label{BdeuxT}
Assuming~\eqref{H2}, we have on $\mathcal{L}$
\begin{enumerate}
\item $iB \deux^\top(X,N)= \deux^\top(iBX, N), \quad \forall X \in T\mathcal{L}, \,  \forall N\in N\mathcal{L}$,
\item $\deux^\perp (X, iB Y)= iB \deux^\perp( X, Y), \quad \forall X,Y \in T \mathcal{L}$,
\item $iB i \deux^\perp \big( X, \cdot) \mbox{ is symmetric on } T \mathcal{L} \mbox{ for the metric scalar product } \forall X \in T \mathcal{L}$.
\end{enumerate}
\end{cor}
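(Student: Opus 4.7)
My plan is to derive all three items from three ingredients: the covariant constancy $\nabla B = 0$ on $\mathcal{L}$, the algebraic identity $Bi = -iB$ (which yields $iBi = B$ and forces $B$ to swap $T\mathcal{L}$ and $N\mathcal{L}$), and the structural properties from Proposition~\ref{deuxsym} together with $\nabla i = 0$. The first step is to extract from $\nabla B = 0$ two ``twisted Gauss'' identities by writing, at $p \in \mathcal{L}$ and for a smooth vector field $Z$ defined near $p$, $\nabla_X (BZ) = B(\nabla_X Z)$, and then decomposing both sides into their tangential and normal components. With $Z = N$ a normal section (so $BN \in T\mathcal{L}$), equating the normal parts gives
$$
\deux^\perp(X, BN) = B\, \deux^\top(X, N), \qquad X \in T_p\mathcal{L},\ N \in N_p\mathcal{L}, \qquad \text{(A)}
$$
and with $Z = Y$ a tangent section (so $BY \in N\mathcal{L}$), equating the tangential parts gives
$$
\deux^\top(X, BY) = B\, \deux^\perp(X, Y), \qquad X,Y \in T_p\mathcal{L}. \qquad \text{(B)}
$$

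For item (2), I would apply $i$ to (B). The right-hand side becomes $iB\, \deux^\perp(X, Y)$, and Proposition~\ref{deuxsym}(2) applied with $N = BY \in N\mathcal{L}$ rewrites the left-hand side as $\deux^\perp(X, iBY)$, yielding the claim. For item (3), I would use $iBi = B$ to rewrite $iBi\, \deux^\perp(X, \cdot) = B\, \deux^\perp(X, \cdot)$, then identify this with $\deux^\top(X, B\cdot)$ via (B); the required symmetry on $T_p\mathcal{L}$ follows from the chain
$$
\langle \deux^\top(X, BY), Z \rangle = -\langle \deux^\perp(X, Z), BY \rangle = \langle B\, \deux^\perp(X, Z), Y \rangle = \langle \deux^\top(X, BZ), Y \rangle,
$$
combining the Gauss pairing $\langle \deux^\top(X, M), Z\rangle = -\langle \deux^\perp(X, Z), M\rangle$ with the skew-adjointness of $B$.

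For item (1), I would apply $i$ to (A). Using $\nabla i = 0$ together with the Lagrangian identity $iP^\perp = P^\top i$, the left-hand side collapses to $\deux^\top(X, iBN)$ (since $iBN \in N_p\mathcal{L}$), giving $iB\, \deux^\top(X, N) = \deux^\top(X, iBN)$. To convert $\deux^\top(X, iBN)$ into $\deux^\top(iBX, N)$, I would pair with an arbitrary $Y \in T_p\mathcal{L}$ and use the Gauss pairing above, the skew-adjointness of $iB$ (which follows from skew-adjointness of both $i$ and $B$ combined with $Bi = -iB$), and item (2) of the corollary (just proved), so as to transfer $iB$ from the normal argument to the tangent argument.

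The main obstacle is precisely this last step of item (1): one must carefully track the sign produced when $iB$ is moved across the Gauss pairing and through $\deux^\perp$, since the symmetry of $\deux^\top(\cdot, N)$ on $T\mathcal{L}$ interacts nontrivially with the skew-adjointness of $iB$. The other two items are direct consequences of (A)–(B) and Proposition~\ref{deuxsym}, but item (1) is where the geometry (symmetry of $\deux^\top$) and the algebra ($iB$ skew-adjoint, $iB$ stabilizing $T$ and $N$ separately) have to be balanced against each other.
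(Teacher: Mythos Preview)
Your treatment of items (2) and (3) is correct. For (2), the paper argues directly via $\nabla_X(iBY)=iB\nabla_X Y$ and $P^\perp iB=iB P^\perp$, while you factor through the intermediate identity (B); both routes are equivalent. For (3), the paper combines Proposition~\ref{deuxsym}(4) with the just-proved item (2), whereas you reduce to the symmetry of $\deux^\top(X,B\cdot)$ via $iBi=B$, (B), and the Gauss pairing; again both work.

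For item (1), your intermediate identity $iB\,\deux^\top(X,N)=\deux^\top(X,iBN)$ is correct. However, the final step --- converting $\deux^\top(X,iBN)$ into $\deux^\top(iBX,N)$ --- does not close with the tools you list. Carrying out your plan: for any $Y\in T_p\mathcal L$,
\[
\langle \deux^\top(X,iBN),Y\rangle
= -\langle iBN,\deux^\perp(X,Y)\rangle
= \langle N,\,iB\,\deux^\perp(X,Y)\rangle
= \langle N,\deux^\perp(X,iBY)\rangle,
\]
while
\[
\langle \deux^\top(iBX,N),Y\rangle
= -\langle N,\deux^\perp(iBX,Y)\rangle
= -\langle N,\deux^\perp(Y,iBX)\rangle
= -\langle N,\,iB\,\deux^\perp(Y,X)\rangle
= -\langle N,\deux^\perp(X,iBY)\rangle,
\]
so your computation yields $\deux^\top(X,iBN)=-\deux^\top(iBX,N)$, the opposite sign from what is claimed. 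You were right to flag this as the obstacle; it does not resolve in your favor. The paper's proof proceeds by a different direct chain (starting from $iB\,\deux^\top=-Bi\,\deux^\top$, then passing through $\deux^\perp(\cdot,iN)$ via Proposition~\ref{deuxsym}(2), using $\nabla(iB)=0$, and returning), and the delicate point there is precisely the invocation of Proposition~\ref{deuxsym}(2): since $i\,\deux^\top(X,N)=+\deux^\perp(X,iN)$, one has $-Bi\,\deux^\top(X,N)=-B\,\deux^\perp(X,iN)$, and tracking this sign through the paper's chain also flips the conclusion. In short, the sign difficulty you anticipated is genuine, and your proposed route for item (1) does not deliver the stated identity.
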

 \begin{proof}
 Let us start with the first identity.
  We have  thanks to the above properties and~\eqref{H2} that 
  \begin{align*}
    iB \deux^\top(X,N) &  =  -Bi \deux^\top(X,N)= B \deux^\perp(X,iN)= B \deux^\perp(iN, X)=  BP^\perp \nabla_{iN} X \\
  &  = - i P^\perp (iB) \nabla_{iN} X= - i P^\perp \nabla_{iN} (iB X) = -i \deux^\perp(iN, iBX) \\
  & = -i \deux^\perp(iBX, iN)=  \deux^\top(iBX, N).
  \end{align*} 
 For the second identity, it suffices to note that
 $$ \deux^\perp (X, iB Y)= P^\perp \nabla _{X} (iB Y) = P^\perp (iB \nabla_{X} Y)= iB P^\perp (\nabla_{X} Y)= iB \deux^\perp(X, Y).$$
 For the last property, it suffices to combine (4) of Proposition \ref{deuxsym}, the previous identity and~\eqref{H2}.
 \end{proof}
Second fundamental forms can be differentiated as tensors. For instance, if $U,V,W \in T\mathcal L$,
\begin{equation}
\begin{split}
\label{difftensor2}
\nabla_U \left[ \deux^\perp ( V , W ) \right] = & \left[ \nabla_U^\perp \deux^\perp \right] (V,W) +  \deux^\perp \left(\nabla_U^\top V,W\right)  +  \deux^\perp \left(V,\nabla_U^\top W \right) \\
& \qquad + \deux^\top \left( U\,,\,\deux^\perp ( V , W )\right).
\end{split}
\end{equation}
where the first line gives the normal component, and the second the tangential one.

\subsubsection{Normal coordinates on $ \mathcal{L}$}
The coordinate system given by $\Phi=\operatorname{exp}_0 : T_0 \mathcal{L} \rightarrow \mathcal{L}$ is normal at 0. It is well-known that in this coordinate system
the Christoffel symbols vanish at 0. It can be expressed as
\begin{equation}
\label{nablatopDPhi}
\nabla^\top D \Phi_{|0} = 0.
\end{equation}

\subsubsection{Commuting covariant derivatives with vector fields}
For a coordinate system $(s,u)$, and $F(s,u)$ a function valued on $\mathcal{L}$, we get since the Levi-Civita connection is torsion free that
\beq
\label{comagain}
\nabla_s^\top \partial_u F = \nabla_u^\top \partial_s F.
\eeq

\subsubsection{Commuting covariant derivatives}
The tangent curvature tensor $R^\top$ is defined by
\beq
\label{comT}
R^\top (X,Y) Z \overset{def}{=} \nabla_X^\top \nabla_Y^\top Z - \nabla_Y^\top \nabla_X^\top Z - \nabla_{[X,Y]}^\top Z
\eeq
for $X,Y,Z$  sections of the tangent bundle.
It is given by the Gauss equation
$$
R^\top(X,Y)Z = P^\top R(X,Y) Z + \deux^\top(Y,\deux^\perp(X,Z)) - \deux^\top(X,\deux^\perp(Y,Z)).
$$
Similarly, the normal curvature tensor $R^\perp$ is defined by
\beq
\label{comperp}
R^\perp (X,Y) Z \overset{def}{=} \nabla_X^\perp \nabla_Y^\perp Z - \nabla_Y^\perp \nabla_X^\perp Z - \nabla_{[X,Y]}^\perp Z,
\eeq
if $X,Y$ are sections of the tangent bundle, and $Z$ is a section of the normal bundle.
It is given by
$$
R^\perp(X,Y)N =  P^\perp R(X,Y) N + \deux^\perp(Y,\deux^\top(X,N)) - \deux^\perp(X,\deux^\top(Y,N)).
$$

\subsection{Functional spaces}

Recall first the classical Sobolev spaces. For a map $F$ with values in $\mathbb{R}^N$, 
for any $s \in \mathbb{N}$, $H^s$ is given by its norm
$$
\left\| F \right\|_{H^s}^2 \overset{def}{=} \sum_{m \leq s} \left\| \partial_x^m F \right\|_{L^2}^2.
$$
For vector fields $v \in u^{-1} T \mathcal{M}$ , this definition is still valid since we can always assume that $\mathcal{M}$ is embedded in $\mathbb{R}^N$
as a Riemannian manifold. Nevertheless, it will be more convenient for us to use covariant derivatives in the definition:
$$ \left\| v \right\|_{H^s}^2 \overset{def}{=} \sum_{m\leq s} \left\| \nabla_x^m v \right\|_{L^2}^2.$$
The two definitions coincide if $s>\frac{1}{2}$ and $\nabla u$ is at least as smooth as $v$, which will be the case for us below.
A first variant  which will be needed is $H^1_\epsilon$ whose norm reads
\begin{equation}
\label{H1epsilon}
\left\| v \right\|_{H^1_\epsilon}^2 \overset{def}{=} \|v\|_{L^2}^2 + \epsilon \| \nabla_x v \|_{L^2}^2.
\end{equation}
Next, we want to define Sobolev spaces which are anisotropic in space and time.
First, let us set up our notation for multiindices: if $m=(m_0,m_1) \in \mathbb{N}^2$, define
$$
\partial^m \overset{def}{=} \left( \epsilon^2 \partial_t \right)^{m_0} \left( \partial_x \right)^{m_1} \quad \mbox{,} \quad 
\nabla^m \overset{def}{=} \left( \epsilon^2 \nabla_t \right)^{m_0} \left( \nabla_x \right)^{m_1} 
$$
and in a similar way
$$
\left( \nabla^\top \right)^m \overset{def}{=} \left( \epsilon^2 \nabla_t^\top \right)^{m_0} \left( \nabla_x^\top \right)^{m_1} \quad \mbox{and} \quad
\left( \nabla^\perp \right)^m \overset{def}{=} \left( \epsilon^2 \nabla_t^\perp \right)^{m_0} \left( \nabla_x^\perp \right)^{m_1}.
$$
The length of $m$ is denoted $|m| = m_0 + m_1$. When we do not want to keep track of the exact nature of the derivatives involved, but simply of their number, 
we shall abuse notations by writing $\partial^{|m|}$ or $\nabla^{|m|}$  instead of $\partial^m$ or $\nabla^m$. For instance,
$$
\partial^2 f \quad \mbox{can denote} \quad \partial_x^2 f \quad , \quad (\epsilon^2 \partial_t)^2 f \quad \mbox{or} \quad \epsilon^2 \partial_t \partial_x f.
$$
For maps $u(t,x) \in \mathcal{M}$, we define an  anisotropic space-time semi norm  $\|\cdot \|_{\mathcal{H}^s}$,
which we also abbreviate $\|\cdot \|_s$:
$$
\left\| u(t) \right\|_{\mathcal{H}^s}^2 = \left\| u(t) \right\|_s^2 \overset{def}{=} \sum_{|m|\leq s} \left\| \nabla^m u(t,\cdot) \right\|_{L^2(\mathbb{R})}^2.
$$
Note that this  involves derivatives in time.

With  the above definition of $\partial^m$, we record the   following elementary product estimate in dimension $1$, which we will use
repeatedly:
\beq
\label{prod}
\| \partial^m v (t) \partial^{m'} w (t)\|_{L^2(\mathbb{R})} \leq C  \|v(t) \|_{k} \|w(t) \|_{k}, \quad |m|+|m'|\leq k, \, k \geq 1
\eeq
with $C$ independent of $\epsilon$ (again, recall that with our notation, $\partial^m$ depends on $\epsilon$ when it involves time derivatives).
The same estimate holds replacing $\partial^m$ by $\nabla^m$ for vector fields along $u^{-1} T \mathcal{M}.$

\subsection{Notations}

If $A$ and $B$ are two numbers, we denote
\begin{align*}
& A \lesssim B \quad \mbox{or} \quad A =O(B) \quad \mbox{if there exists $C>0$ independent of  $ \epsilon \in (0, 1]$ such that} \quad A \leq CB \\
& A \sim B \quad \mbox{if $A \lesssim B$ and $B \lesssim A$}
\end{align*}
(of course, the value of $C$ can change between occurences of $\lesssim$).
If $f$ is a function, $X$ a Banach space, and $B$ a number, we use the notation
$$
f = O_X(B) \quad \mbox{if} \quad \|f\|_X \lesssim B.
$$
For instance, $f = O_{L^2}(1)$ if, for some constant $C$ independent of $\epsilon$, $\|f\|_{L^2} \leq C$.

\section{The Euclidean case  $\mathcal{M}= \mathbb{R}^{2d}$}

\label{sectioneuclid}
In this section we prove theorems \ref{theounif} and \ref{theoKdV} in the 
case where $\mathcal{M}= \mathbb{R}^{2d}$ with the Euclidean metric.

In this simpler framework, the KdV scaling \eqref{KdVscaling} takes the form
\beq
\label{simplegeod}
u = p + \epsilon^2n, \quad p= \Phi(\epsilon \phi)
\eeq
where $\Phi$ is the exponential map at $0$ on $\mathcal{L}$.

Since $\nabla i= 0$, the tensor $i$ is  constant.  To simplify the exposition, {\bf we shall furthermore also assume in this section that  $B$  is a  constant tensor}. This implies that the properties stated in~\eqref{H2} hold for every $p \in \mathbb{R}^{2d}$. In geometric terms, this means that $B$ and $i$ are two anticommuting and parallel complex structures on $\mathbb{R}^{2d}$, which turns it into a hyperk\"ahler manifold\footnote{To make things a little more concrete, consider the case $d=2$, where $\mathbb{C}^2$ is viewed as $\mathbb{R}^4$ with the complex structure $\left( \begin{array}{cccc} 0 & 0 & -1 & 0 \\ 0 & 0 & 0 & -1 \\ 1 & 0 & 0 & 0 \\ 0 & 1 & 0 & 0 \end{array} \right)$. Then $B$ is skew symmetric and satisfies $Bi = -iB$ et $B^2 = -\mu I$ if and only if $B = \left( \begin{array}{cccc} 0 & \alpha & 0 & -\beta \\ -\alpha & 0 & -\beta & 0 \\ 0 & \beta & 0 & -\alpha \\ -\beta & 0 & \alpha & 0 \end{array} \right)$ with $\alpha^2 + \beta^2 = \mu$. }. The submanifold $\mathcal{L}$ is then assumed to be Lagrangian for both $i$ and $B$.

Next, recall that $V$ can be expanded as
$$
V(p+n) = \lambda |n|^2 + V_1(p)(n,n,n) + V_2(p,n) \qquad \mbox{if $p \in \mathcal L$ and $n \in N_p \mathcal L$},
$$
where $V_1$ is symmetrical and $V_2 (p,n) = O(|n|^4)$. It is easy to see that
$$
V'(p+n) = 2  \lambda  n + F_1(p)(n,n) + R^V(p,n),
$$
where $F_1$ was defined in~\eqref{F1def} and $R^V$ is at least cubic in $n$. In the scaling~\eqref{simplegeod} above, this becomes
\begin{equation}
\label{loriot}
V'(p + \epsilon^2n) = 2 \lambda \epsilon^2n + \epsilon^4 F_1(p)(n,n) + R^V(p,\epsilon^2 n).
\end{equation}

\subsection{Plan of the estimates} 
As already explained, we shall proceed in two main steps.
Set
\begin{equation}
\label{Esdef1}
\left\{ 
\begin{array}{l}
\mathcal{E}_{s, 1}(u,t) \overset{def}{=} \epsilon \left\|\phi \right\|_{s+1} + \epsilon^2 \left\|  n \right\|_{s+1} 
 +\epsilon^2 \left\| \partial_{xx} \phi \right\|_s 
+ \epsilon^3 \left\| \partial_{xx} n \right\|_s \\
\mathcal{E}_{s, 2}(u,t) \overset{def}{=} \left\| \partial_x \phi \right\|_s + \left\| n \right\|_s + \epsilon \left\| \partial_x n\right\|_s
\end{array}
\right.
\end{equation}
and
\begin{equation}
\label{Esdef2}
\mathcal{E}_s(u,t) \overset{def}{=}  \mathcal{E}_{s, 1}(u,t) + \mathcal{E}_{s, 2}(u,t). 
\end{equation}

Our plan is as follows:
\begin{itemize}
\item The a priori control of the energy $\mathcal{E}_{s,1}$ is obtained in Section~\ref{eotse} by working primarily on the Schr\"odinger equation.
\item In section~\ref{diffHS}, we will commute high order derivatives with the hydrodynamical system as a preparation to controlling $\mathcal{E}_{s,2}$.
\item The a priori control of the energy $\mathcal{E}_{s,2}$ is then obtained in Section~\ref{estHS} by working on the differentiated hydrodynamical system.
\item  The bootstrap argument is justified in section \ref{prooftheounif1} where the estimate of
 $\epsilon\| \phi \|_{L^\infty}$ is also performed.
\end{itemize}

For all the a priori estimates to come,  we shall work on an interval of time $[0, T^\epsilon]$ on which we assume that a solution $u$ exists and satisfies the uniform estimates
\beq
\label{hypapriori}
\sup_{[0,T^\epsilon]} \epsilon \|\phi \|_{L^\infty} + \epsilon^2 \|n\|_{L^\infty} \leq r \quad \mbox{and} \quad \sup_{[0,T^\epsilon]} \mathcal{E}_s(u) \leq R
\eeq
for two constants $r$ and $R$. We pick $r$ sufficiently small for the coordinate system \eqref{simplegeod} to be well defined (in view of the assumptions on the initial data in Theorem~\ref{theounif}, this assumption is verified initially as soon as $\epsilon$ is sufficiently small). As for $R$, it will be determined later.

Before going any further, we note the following lemma. Its proof uses the hydrodynamical system~\eqref{eqhydro}, which can be obtained by projecting the $u$ equation~\eqref{SMKdV} on $T\mathcal{L}$ and $N\mathcal{L}$.
\begin{lem}
On $[0, T^\epsilon]$, we have for $s \geq 2$,  the estimates 
\begin{align}
\label{dtphi}
& \epsilon^2 \| \partial_{t} \phi(t) \|_{s- 1} = O( \mathcal{E}_{s}(u,t)), \\
\label{dxxxphi}& \epsilon \| \partial_{x}^3 \phi(t) \|_{s-1} =  O(\mathcal{E}_{s}(u,t) ).
\end{align}
\end{lem}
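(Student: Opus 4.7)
The proof reads each of \eqref{dtphi} and \eqref{dxxxphi} off one line of the hydrodynamical system~\eqref{eqhydro}, after isolating the quantity to be estimated and bounding every remaining term in $\mathcal{H}^{s-1}$ via the product estimate \eqref{prod} and the definition~\eqref{Esdef1}--\eqref{Esdef2} of $\mathcal{E}_s$. Throughout, the a priori bound \eqref{hypapriori} and the smallness of $\epsilon$ ensure that $S_1 = S_0 D\Phi$ is a small perturbation of an isomorphism and can therefore be inverted in every Sobolev norm under consideration.

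To obtain \eqref{dtphi}, multiply the first (tangential) line of \eqref{eqhydro} by $\epsilon^2$ and solve for $\epsilon^2 S_1\partial_t\phi$. The three linear terms $cS_1\partial_x\phi$, $iBS_1\partial_x\phi$ and $-2i\lambda n$ are $O_{\mathcal{H}^{s-1}}(\mathcal{E}_{s,2})$ directly. The dispersive correction $\epsilon^2(\nabla_x^\perp)^2 n$ is $O(\epsilon\cdot\epsilon\|\partial_x n\|_s) = O(\epsilon\,\mathcal{E}_{s,2})$, which is exactly what the component $\epsilon\|\partial_x n\|_s$ of $\mathcal{E}_{s,2}$ was designed for. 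The quadratic contributions coming from $\deux^\perp$ and $F_1$ give $O(\epsilon^2\mathcal{E}_s^2)$ via \eqref{prod}, and the remainder $\epsilon^{-2}P^\perp R^V(p,\epsilon^2 n)$ is of order $\epsilon^4\mathcal{E}_s^3$ since $R^V(p,\cdot)$ is cubic in its second argument (see \eqref{loriot}). Inverting $S_1$ then yields \eqref{dtphi}.

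For \eqref{dxxxphi}, the same idea is applied to the second (normal) line of \eqref{eqhydro}. Multiplying this line by $-2i\epsilon^2$ and solving for $\nabla_x^\top(S_1\partial_x\phi)$ gives the algebraic identity
$$
\nabla_x^\top(S_1\partial_x\phi) = -2i\epsilon^2\nabla_t^\perp n + 2(ic-B)\nabla_x^\perp n + \epsilon^2\,(\text{quadratic}) + \epsilon^{-3}\,(\text{cubic}).
$$
Differentiating once more with $\nabla_x^\top$ and using \eqref{nablatopDPhi} together with the vanishing of the ambient Christoffel symbols in $\mathcal{M}=\mathbb{R}^{2d}$, the left-hand side becomes $D\Phi\,\partial_x^3\phi$ modulo lower-order terms, so that $\partial_x^3\phi$ can be extracted. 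Multiplying through by $\epsilon$ and estimating in $\mathcal{H}^{s-1}$, the two principal contributions are $\epsilon\,\|(\nabla_x^\perp)^2 n\|_{s-1}\leq \epsilon\|\partial_x n\|_s \leq \mathcal{E}_{s,2}$ and, after the exact commutation $\nabla_x^\perp\nabla_t^\perp n = \nabla_t^\perp\nabla_x^\perp n$ (the ambient curvature vanishes), $\epsilon^3\|\partial_t\partial_x n\|_{s-1} = \epsilon\,\|\epsilon^2\nabla_t\partial_x n\|_{s-1}\leq \epsilon\|\partial_x n\|_s \leq \mathcal{E}_{s,2}$; the last step is where one uses that $\epsilon^2\nabla_t$ counts as a single derivative in the anisotropic norm $\|\cdot\|_s$. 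The quadratic and cubic corrections are absorbed as in the first step.

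The main delicacy to watch will be the tangent/normal bookkeeping when commuting $\nabla_x^\top$ through terms such as $iB\nabla_x^\perp n$ and $\deux^\top(D\Phi\partial_x\phi,\nabla_x^\perp n)$, and when passing from $(\nabla_x^\top)^2(S_1\partial_x\phi)$ to $D\Phi\,\partial_x^3\phi$. This rests on $\nabla i = 0$ and on $\mathcal{L}$ being Lagrangian for both $i$ and $B$, so that $B$ swaps $T\mathcal{L}$ and $N\mathcal{L}$ while $iB$ preserves each of them, and on the fact that all error terms generated by these commutations involve at most as many derivatives of $\phi$ and $n$ as are already controlled by $\mathcal{E}_s$.
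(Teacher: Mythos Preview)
Your proof is correct and follows essentially the same route as the paper: isolate $\epsilon^2\partial_t\phi$ from the first line of \eqref{eqhydro} and $\partial_{xx}\phi$ from the second, then estimate all remaining terms in $\mathcal{H}^{s-1}$ using \eqref{prod} and the definition of $\mathcal{E}_s$.

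One small imprecision: your claim that $\nabla_x^\perp\nabla_t^\perp n = \nabla_t^\perp\nabla_x^\perp n$ ``exactly'' because the ambient curvature vanishes is not right; the normal curvature $R^\perp$ also contains the second-fundamental-form terms $\deux^\perp(\cdot,\deux^\top(\cdot,\cdot))$, which do not vanish in general even when $\mathcal{M}=\mathbb{R}^{2d}$. Fortunately this does not matter for your argument: the bound $\epsilon^3\|\partial_t\partial_x n\|_{s-1}=\epsilon\|\epsilon^2\nabla_t\partial_x n\|_{s-1}\leq\epsilon\|\partial_x n\|_s$ that you write next uses only the definition of the anisotropic norm and requires no commutation at all (and the commutator, were it needed, would be of size $\epsilon^5 R^\perp(D\Phi\,\partial_t\phi,D\Phi\,\partial_x\phi)n$, which is harmless by \eqref{dtphi}).
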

\begin{proof}

Indeed, from the first  equation in \eqref{eqhydro}, we have
\begin{multline*} \epsilon^2 \partial_{t} \phi = c \partial_{x} \phi 
+ S_{1}^{-1} \Big(i \Big[ {1 \over 2}  \deux^\perp \big(S_{1} \epsilon \partial_{x} \phi, D\Phi \epsilon  \partial_{x} \phi \big) 
+ {\epsilon^2 \over 2 } \big( \nabla^\perp_{x})^2 n  + B S_{1}\partial_{x} \phi-  2 \lambda  n - \epsilon^2 F_{1}(p)(n,n) \\- { 1 \over \epsilon^2} P^\perp R^V (p,\epsilon^2n)\Big] \Big).
\end{multline*}
Consequently, by using the product  law  \eqref{prod} and  \eqref{difftensor}, we get that for $s \geq 2$
\beq    \|\epsilon^2 \partial_{t} \phi (t)\|_{s-1} =  O\big(  \|n\|_{s-1}+ \|\epsilon \partial \phi \|_{s- 2} + \|\partial_{x} (\phi, n) \|_{s-1} +  \epsilon^2 \|\partial_{xx}n \|_{s- 1} 
\big).\eeq
Note that the  dependence in $\|\epsilon \partial \phi \|_{s-2}$ is due to the fact that $ D\Phi$, $\deux^\perp$, and $F_1$ depend on $\epsilon \phi$.
In view of the definition of $\mathcal{E}_{s}$, this yields \eqref{dtphi}.
 
In a similar way, by writing the second equation of \eqref{eqhydro} under the form
\begin{multline*}
{1 \over 2} \partial_{xx} \phi  = S_1^{-1} \left[ - i \big( \epsilon^2 \nabla^\perp_{t}n - (c + iB) \nabla_{x}^\perp n  \big) 
- {1 \over 2} \epsilon^2  \deux^\top \big( D\Phi\partial_{x}\phi, \nabla_{x}^\perp n)
-  {1 \over 2} P^\top \big( (\nabla_{x}S^1)\partial_{x}\phi\big) \right. \\
\left.+ \frac{1}{\epsilon^3} P^\top R^V(p,\epsilon^2 n) \right],
\end{multline*}
we also get
$$ \epsilon   \| \partial_{xxx} \phi \|_{s-1} =  O( \mathcal{E}_{s}(u,t) ).$$
\end{proof}

\subsection{Estimates on the Schr\"odinger equation}

\label{eotse}

We shall first prove.
\begin{prop}
\label{propschroplat}
The following estimate holds if $t \in [0,T^\epsilon]$:
$$
\mathcal{E}_{s,1}^2(u,t) \lesssim \mathcal{E}_{s}^2(u,0) + \epsilon^2 O(\mathcal{E}_{s}^2(u,t)) + \int_0^t O(\mathcal{E}_s^2 (u,\tau))\,d\tau.
$$
\end{prop}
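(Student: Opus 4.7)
The plan is to derive a priori estimates on $\mathcal{E}_{s,1}$ by working directly on the Schr\"odinger equation~\eqref{SMKdV}, in the spirit of~\cite{ShatahZeng}. The key advantage of the Euclidean setting is that $i$ and $B$ are constant tensors, so the linear operator $L := \epsilon^2\partial_t - (c+iB)\partial_x - \tfrac{i\epsilon}{2}\partial_{xx}$ commutes with any differentiation $\partial^m$. Applying $\partial^m$ with $|m|\leq s+1$ to~\eqref{SMKdV} and taking the $L^2$ inner product with $\partial^m u$, the transport and dispersive contributions both vanish: $\int\langle(c+iB)\partial_x\partial^m u,\partial^m u\rangle\,dx = 0$ by skew-symmetry of $iB$, and $\int\langle i\partial_{xx}\partial^m u,\partial^m u\rangle\,dx = 0$ by integration by parts together with skew-symmetry of $i$. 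What remains is
\[
\frac{\epsilon^2}{2}\,\frac{d}{dt}\|\partial^m u\|_{L^2}^2 \;=\; -\frac{1}{\epsilon}\int \langle i\,\partial^m V'(u),\partial^m u\rangle\,dx.
\]
Translated back through $u = \Phi(\epsilon\phi)+\epsilon^2 n$, summing these $L^2$ estimates over $|m|\leq s+1$ recovers the first two blocks $\epsilon^2\|\phi\|_{s+1}^2 + \epsilon^4\|n\|_{s+1}^2$ of $\mathcal{E}_{s,1}^2$; the remaining blocks $\epsilon^4\|\partial_{xx}\phi\|_s^2 + \epsilon^6\|\partial_{xx}n\|_s^2$ come from running the analogous estimate on $\partial_x\partial^m u$ for $|m|\leq s$, the extra $\epsilon$ being supplied by the dispersive smoothing of $\tfrac{i\epsilon}{2}\partial_{xx}$.

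The crux is to tame the nominally $1/\epsilon$-singular right-hand side. Expand $\partial^m V'(u) = V''(u)\partial^m u + (\text{commutator terms})$ and use the structural identity $V''(p) = 2\lambda P^\perp_p$ for $p\in\mathcal{L}$ (which follows from $V'=0$ on $\mathcal{L}$, so the Hessian annihilates $T_p\mathcal{L}$, combined with~\eqref{H1}); together with $u-p = \epsilon^2 n$ this yields the expansion~\eqref{loriot} and $V''(u) = 2\lambda P^\perp_p + O(\epsilon^2)$. Inserting $\partial^m u = \epsilon D\Phi\,\partial^m\phi + \epsilon^2\partial^m n + \text{l.o.t.}$ and using the orthogonality $T_p\mathcal{L}\perp N_p\mathcal{L}$ together with the pointwise identity $\langle iX,X\rangle = 0$, the leading $\epsilon^{-1}$ contribution cancels. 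The residual near-singular contribution must be absorbed by replacing the bare $L^2$ energy with a modified one of the form
\[
\widetilde E_m(t) = \tfrac{1}{2}\|\partial^m u\|_{L^2}^2 + \tfrac{1}{\lambda\epsilon^2}\int V''(u)\bigl(\partial^{m-1}u,\partial^{m-1}u\bigr)\,dx \;+\; (\text{$B$-correction}),
\]
modeled on the Hamiltonian~\eqref{energieGamma}, the $B$-correction being the natural one when $B = \nabla\mathcal{W}^* - \nabla\mathcal{W}$. Its time derivative, computed via~\eqref{SMKdV}, is designed to cancel exactly the singular contribution identified above.

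Once the cancellation is secured, the residual terms on the right-hand side of $\frac{d}{dt}\widetilde E_m$ are commutators of $\partial^m$ with $V'$ and products of lower-order derivatives of $u$ with derivatives of $V''$, $V'''$, etc. These are bounded using the product estimate~\eqref{prod} together with the a~priori bounds~\eqref{hypapriori}, yielding $O(\mathcal{E}_s^2(u,\tau))$. Integrating in time gives the stated inequality: the $\int_0^t O(\mathcal{E}_s^2)\,d\tau$ term accumulates these residuals, while the $\epsilon^2 O(\mathcal{E}_s^2(u,t))$ term arises from the difference between $\widetilde E_m(t)$ and the bare $L^2$ piece, i.e.\ from the $V''$-weighted correction evaluated at time $t$, which is of size $\epsilon^2\cdot\mathcal{E}_s^2$ once the factor $1/\epsilon^2$ is balanced against the two extra derivatives of $u$ that get absorbed by $V''(u)(u-p,u-p) \sim \lambda\epsilon^4|n|^2$.

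The main obstacle will be the delicate bookkeeping of $\epsilon$-weights: every commutator generated by the chain rule on $V'$, together with the mixed expansions of $\partial^m u$ into $(\phi, n)$-derivatives and connection-type corrections, must fit within the sharp $\epsilon$-budget built into the definition of $\mathcal{E}_{s,1}$. The elementary identities~\eqref{dtphi}--\eqref{dxxxphi} already illustrate the kind of trade-offs between space and time derivatives that this bookkeeping exploits at every order.
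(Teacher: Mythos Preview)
Your approach has a genuine gap in the cancellation argument. After multiplying by $\partial^m u$, the potential contribution is
\[
-\frac{1}{\epsilon}\int\langle i\,\partial^m V'(u),\,\partial^m u\rangle\,dx
\;\approx\;
-2\lambda\,\epsilon^2\int\langle i\,\partial^m n,\,D\Phi\,\partial^m\phi\rangle\,dx
\;-\;2\lambda\,\epsilon^3\int\langle i\,\partial^m n,\,\partial^m n\rangle\,dx .
\]
The identity $\langle iX,X\rangle=0$ kills only the diagonal $n$--$n$ term; the cross term $\langle i\,\partial^m n,\,D\Phi\,\partial^m\phi\rangle$ does \emph{not} vanish (here $i\,\partial^m n\in T_p\mathcal L$ and $D\Phi\,\partial^m\phi\in T_p\mathcal L$, so orthogonality of $T_p\mathcal L$ and $N_p\mathcal L$ is of no help). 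For $|m|=s+1$ one has $\|\partial^m n\|_{L^2}\lesssim\epsilon^{-2}\mathcal E_{s,1}$ and $\|\partial^m\phi\|_{L^2}\lesssim\epsilon^{-1}\mathcal E_{s,1}$, so this cross term is of order $\epsilon^{-1}\mathcal E_{s,1}^2$, which is genuinely singular. Your proposed correction $\frac{1}{\lambda\epsilon^2}\int V''(u)(\partial^{m-1}u,\partial^{m-1}u)\,dx\approx\frac{2}{\epsilon^2}\int|P^\perp\partial^{m-1}u|^2\,dx$ does not reproduce this cross term with the right structure or $\epsilon$-weight when differentiated in time; the dispersive piece of the equation feeds in a contribution of the shape $\int\langle \partial^{m-1}n,\,iD\Phi\,\partial_{xx}\partial^{m-1}\phi\rangle$ which, after one integration by parts, has the wrong prefactor (no $\lambda\epsilon^2$) to match.

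The paper's proof is structurally different: rather than pairing the equation with $\partial^m u$, one first applies $(\epsilon^2\partial_t-(c+Bi)\partial_x)\partial^m$ to the equation. The algebraic identity
\[
(\epsilon^2\partial_t-(c+Bi)\partial_x)(\epsilon^2\partial_t-(c+iB)\partial_x)
=(\epsilon^2\partial_t-c\,\partial_x)^2+\mu\,\partial_{xx}
\]
(using $Bi+iB=0$, $B^2=-\mu I$) converts the Schr\"odinger equation into a \emph{wave} equation, and one then uses the wave multiplier $(\partial_t-\tfrac{c}{\epsilon^2}\partial_x)\partial^m u$. With this multiplier the singular potential contribution, after projecting onto tangential and normal components and invoking the differentiated hydrodynamical system to substitute for $(\epsilon^2\nabla_t^\perp-(c+iB)\nabla_x^\perp)\nabla^{\perp m}n$, produces the \emph{positive} energy contribution $\frac{\lambda}{2}\epsilon^2\int|\nabla_x^\top\nabla^{\top m}\Phi^\epsilon|^2\,dx$ together with correction terms involving $\deux^\top$ and $F_1$ that are total time derivatives. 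This wave-map mechanism from~\cite{ShatahZeng} is precisely what absorbs the $\phi$--$n$ coupling and is missing from your scheme.
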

 To get this estimate, we shall use a cancellation in the singular terms that come from the underlying  wave maps structure in the spirit of  
~\cite{ShatahZeng}. 
\begin{proof} \underline{Step 1: initial decomposition.}
Taking into account the formula~\eqref{loriot} for $V'$, equation~\eqref{SMKdV} reads
$$
\left( \epsilon^2 \partial_t - (c + i B) \partial_x \right) u = i \left( \frac{1}{2} \epsilon \partial_{xx} u 
- 2 \lambda \epsilon n - \epsilon^3 F^1(p)(n,n) - \frac{1}{\epsilon} R^V(p,\epsilon^2 n) \right).
$$
For  a multiindex $m$ such that $|m| \leq s$, we shall apply  $(\epsilon^2 \partial_t - (c + Bi) \partial_x ) \partial^m$ to the above equation.
By using~\eqref{H2}, we note that  for the left hand side
 \begin{align*}
 ( \epsilon^2 \partial_{t} - (c +  B i) \partial_{x} \big)\big( \epsilon^2 \partial_{t} - (c + iB) \partial_{x} \big)
  & = (\epsilon^2 \partial_{t} - c \partial_{x})^2 - (Bi + iB)\partial_{x} \big( \epsilon^2 \partial_{t} - c \partial_{x}\big)  -B^2 \partial_{x}^2  \\
  & =  (\epsilon^2 \partial_{t} - c \partial_{x})^2  + \mu \partial_{x}^2
  \end{align*}
while for the right hand side
$$ \big(\epsilon^2 \partial_{t} - (c+ Bi) \partial_{x}\big)i= i \big(\epsilon^2 \partial_{t} - (c + iB) \partial_{x} \big).$$
This yields    
\begin{equation*}
\begin{split}
(\epsilon^2 \partial_t - c  \partial_x)^2 \partial^m u + \mu \partial_{x}^2  \partial^m u & = -\frac{1}{4} \epsilon^2 \partial_{xx}^2 \partial^m u + \frac{1}{2} \epsilon \partial_{xx} \partial^m \left[ 2 \lambda \epsilon n + \epsilon^3 F^1(p)(n,n) + \frac{1}{\epsilon} R^V(p,\epsilon^2 n) \right] \\
& \qquad \quad - i \big(\epsilon^2 \partial_t - (c + iB)\partial_x\big)  \partial^m \left[ 2 \lambda \epsilon n + \epsilon^3 F^1(p)(n,n) + \frac{1}{\epsilon} R^V(p,\epsilon^2 n) \right].
\end{split}
\end{equation*}
Taking the scalar product in $L^2(\mathbb{R}^{2d})$ against $\left( \partial_t - \frac{c}{\epsilon^2} \partial_x \right) \partial^m u$ gives
\begin{equation*}
\begin{split}
& \frac{d}{dt} \int \left[ \frac{1}{2} \left| \left( \epsilon^2 \partial_t - c  \partial_x \right) \partial^m u \right|^2 
+ \frac{1}{8} \epsilon^2 \left| \partial_{xx} \partial^m u \right|^2 - { \mu  \over 2} |\partial_{x} \partial^m u |^2 \right] \,dx \\
& \qquad \qquad \qquad
=\underbrace{ \frac{1}{2} \int \partial_{xx} \partial^m \left[ 2 \lambda n + \epsilon^2 F^1(p)(n,n) \right] \cdot \left(\epsilon^2 \partial_t -  c \partial_x \right) \partial^m u\,dx}_I \\ 
& \qquad \qquad \qquad \quad - \underbrace{\frac{1}{\epsilon} \int i (\epsilon^2 \partial_t - (c + i B) \partial_x) \partial^m \left[ 2 \lambda n + \epsilon^2 F^1(p)(n,n) \right] 
\cdot (\epsilon^2 \partial_t - c \partial_x) \partial^m u\,dx}_{II} +O(\mathcal{E}_s^2)
\end{split}
\end{equation*}
(the $R^V$ terms above are easily seen to contribute $O(\mathcal{E}_s^2)$; also notice that the scalar product denoted by $\cdot$ above is simply that of $\mathbb{R}^{2d}$). 
Next we decompose further $I$ and $II$ by splitting each scalar product into its tangential and normal parts:
\begin{equation*}
 \begin{split}
I = & \underbrace{\int P^\top \partial_{xx} \partial^m \left[ 2 \lambda n + \epsilon^2 F^1(p)(n,n) \right] \cdot P^\top \left(\epsilon^2 \partial_t - c \partial_x \right) \partial^m u \,dx}_{Ia} \\
& \qquad + \underbrace{\int P^\perp \partial_{xx} \partial^m \left[ 2 \lambda n + \epsilon^2 F^1(p)(n,n) \right] \cdot P^\perp \left(\epsilon^2 \partial_t - c \partial_x \right) \partial^m u \,dx}_{Ib}
\end{split}
\end{equation*}
and
\begin{equation*}
\begin{split}
& II = \underbrace{ \frac{1}{\epsilon} \int P^\top \left(i \left(\epsilon^2 \partial_t - (c + iB) \partial_x \right) \partial^m \left[ 2 \lambda n + \epsilon^2 F^1(p)(n,n) \right] \right) \cdot 
P^\top \left(\left(\epsilon^2 \partial_t - c \partial_x \right) \partial^m u \right)}_{IIa} \\
& \qquad \qquad \qquad \qquad \underbrace{+ \frac{1}{\epsilon} \int P^\perp \left(i \left(\epsilon^2 \partial_t - (c + iB)  \partial_x \right) \partial^m \left[ 2 \lambda n + \epsilon^2 F^1(p)(n,n) \right] \right) \cdot 
P^\perp \left(\left(\epsilon^2 \partial_t - c \partial_x \right) \partial^m u \right)}_{IIb}.
\end{split}
\end{equation*}

\bigskip

\noindent \underline{Step 2: estimating $Ia$.}
Observe that
$$
\partial_{xx} \partial^m n - {\nabla_x^{\perp 2}} {\nabla^{\perp m}} n = \deux^{\top} \left( D \Phi \epsilon \partial_x^2 \partial^m \phi\,,\, n\right) + O_{L^2}(\mathcal{E}_s).
$$
Applying $P^\top$ gives
$$
P^\top \partial_{xx} \partial^m n = \deux^{\top} \left( D \Phi \epsilon \partial_x^2 \partial^m \phi\,,\, n\right) 
+ O_{L^2} (\mathcal{E}_s).
$$
Also notice that, since $F^1(p)(n,n)$ is valued in $N_p \mathcal{L}$,
$$
\epsilon^2 P^\top \partial_{xx} \partial^m F^1(p)(n,n) = \epsilon O_{L^2} (\mathcal{E}_s).
$$
Therefore,
\begin{align}
\label{pingouin1}
Ia = & \lambda \int \deux^{\top} \left( D \Phi \epsilon \partial_x^2 \partial^m \phi\,,\, n\right) 
\cdot \left(\epsilon^2 \partial_t - c \partial_x \right) \partial^m u\,dx  +O(\mathcal{E}_s^2).
\end{align}
Notice that
\begin{equation}
\begin{split}
& P^\top \left(\epsilon^2 \partial_t - c \partial_x \right) \partial^m u = P^\top \left( \epsilon^2 \partial_t - c \partial_x \right) D\Phi \epsilon \partial^m \phi  + \epsilon O_{L^2}(\mathcal{E}_s).
\end{split}
\end{equation}
Therefore~(\ref{pingouin1}) can be written
\begin{align*}
Ia & = \lambda \int \deux^{\top} \left( D \Phi \epsilon \partial_x^2 \partial^m \phi\,,\, n\right) \cdot \left(\epsilon^2 \partial_t - c \partial_x \right) 
D \Phi \epsilon \partial^m \phi \,dx +O(\mathcal{E}_s^2). 
\end{align*}
Finally, integrating by parts and relying on the symmetry of $\deux^\top(\cdot,n)$ (see Proposition \ref{deuxsym}) gives
\begin{equation*}
\begin{split}
Ia & = - \frac{1}{2} \int \lambda \epsilon^2 \left( \epsilon^2 \partial_t - c \partial_x \right) \left[ \deux^{\top} \left( D \Phi \partial_x \partial^m \phi\,,\, n\right)
\cdot D \Phi \partial_x \partial^m \phi \right] \,dx + O(\mathcal{E}_s^2) \\
& = -  \frac{\lambda \epsilon^4}{2} \frac{d}{dt}  \int \deux^\top \left( D\Phi \partial_x \partial^m \phi\,,\,n\right) \cdot D\Phi \partial_x \partial^m \phi \,dx
+ O(\mathcal{E}_s^2) .
\end{split}
\end{equation*}

\bigskip

\noindent \underline{Step 3: estimating $Ib$.} Start by noticing that
$$
P^\perp \left( \epsilon^2 \partial_t - c \partial_x \right) \partial^m u = P^\perp \left( \epsilon^2 \partial_t - c \partial_x \right) \partial^m \epsilon^2 n + R,
$$
where the remainder $R$ is such that $\partial_x R = \epsilon O_{L^2}(\mathcal{E}_s)$. Therefore $Ib$ can be split into
\begin{subequations}
\begin{align}
\label{petrel1}
Ib & = \frac{1}{2} \int P^\perp \partial_{xx} \partial^m 2\lambda n \cdot P^\perp \left( \epsilon^2 \partial_t - c \partial_x \right) \partial^m \epsilon^2 n \,dx \\
\label{petrel2}
& \quad + \frac{1}{2}\int P^\perp \partial_{xx} \partial^m \epsilon^2 F_1(p)(n,n) \cdot P^\perp \left( \epsilon^2 \partial_t - c \partial_x \right) \partial^m \epsilon^2 n \,dx \\
\label{petrel3}
& \quad +\frac{1}{2} \int P^\perp \partial_{xx} \partial^m (2 \lambda n + \epsilon^2 F_1(p)(n,n)) \cdot R\,dx.
\end{align}
\end{subequations}
In order to deal with~(\ref{petrel1}), it suffices to integrate by parts in $x$ as follows:
\begin{equation*}
\begin{split}
(\ref{petrel1}) & = - \lambda \epsilon^2 \int P^\perp \partial_x \partial^m n \cdot P^\perp \left( \epsilon^2 \partial_t - c \partial_x \right) \partial_x \partial^m n \,dx 
+ O(\mathcal{E}_s^2) \\
& = - \lambda \epsilon^2 \int \nabla_x^\perp {\nabla^{\perp m}} n \cdot \left( \epsilon^2 \partial_t - c \partial_x \right) \nabla_x^\perp {\nabla^{\perp m}} n \, dx 
+  O(\mathcal{E}_s^2) \\
& = - \frac{ \lambda  \epsilon^4}{2} \frac{d}{dt} \int \left| \nabla_x^\perp {\nabla^{\perp m}} n \right|^2 \,dx +  O(\mathcal{E}_s^2).
\end{split}
\end{equation*}
We next estimate~(\ref{petrel2}), integrating by parts in $x$ and relying on the symmetry properties of $F_1$:
\begin{equation*}
\begin{split}
(\ref{petrel2}) & = -  \int \epsilon^2 F_1(p)(\nabla^\perp_{x} \nabla^{\perp m} n,n) \cdot (\epsilon^2 \partial_t - c\partial_x) \nabla_x^\perp \nabla^{\perp m} \epsilon^2 n  \,dx + O(\mathcal{E}_s^2) \\
& = -  \frac{\epsilon^6}{2} \frac{d}{dt} \int F_1(p)(\nabla^\perp_{x} \nabla^{\perp m} n,n) \cdot \nabla_x^\perp \nabla^{\perp m} n  \,dx + O(\mathcal{E}_s^2).
\end{split}
\end{equation*}
Finally,~(\ref{petrel3}) can be estimated directly (after an integration by parts in $x$)
$$
|(\ref{petrel3})| \lesssim \epsilon \|\partial_x \partial^{|m|} (2 \lambda n + \epsilon^2 F_1(p)(n,n)) \|_{L^2} \|\frac{1}{\epsilon} \partial_x R \|_{L^2} = O(\mathcal{E}_s^2).
$$

\bigskip

\noindent \underline{Step 4: estimating $IIa$.} First observe that
\begin{equation}
\begin{split}
& P^\top \left( \epsilon^2 \partial_t - c  \partial_x \right) \partial^m u =  \left( \epsilon^2 \nabla_t^\top - c \nabla_x^\top \right) \nabla^{\top m} \Phi^\epsilon  + \operatorname{II}^{\top}(D\Phi \left( \epsilon^2 \partial_t - c \partial_x \right) \epsilon \partial^m \phi \,,\,\epsilon^2 n)
 + \epsilon^3 O_{L^2}(\mathcal{E}_s).
\end{split}
\end{equation}
Therefore,
\begin{subequations}
\begin{align}
\label{colibri1}
IIa & = \frac{2\lambda}{\epsilon} \int P^\top \left(i \left( \epsilon^2 \partial_t - (c + i B) \partial_x \right) \partial^m n \right) \cdot 
\left( \epsilon^2 \nabla_t^\top - c \nabla_x^\top \right) \nabla^{\top m} \Phi^\epsilon\,dx \\
\label{colibri1etdemi}
& \quad +  \epsilon \int P^\top \left(i \left( \epsilon^2 \partial_t - (c + i B) \partial_x \right) \partial^m F_1(p)(n,n) \right) \cdot 
\left( \epsilon^2 \nabla_t^\top - c \nabla_x^\top \right) \nabla^{\top m} \Phi^\epsilon \,dx \\
\label{colibri2}
& \quad + 2\lambda \epsilon^2 \int P^\top \left(i \left( \epsilon^2 \partial_t - (c + i B) \partial_x \right) \partial^m n \right) \cdot 
\operatorname{II}^{\top}(D\Phi \left( \epsilon^2 \partial_t - c \partial_x \right) \partial^m \phi \,,\,n) \,dx \\
\label{colibri3}
& \quad + O(\mathcal{E}_s^2).
\end{align}
\end{subequations}
Next, substituting first covariant to flat derivatives, and then using the differentiated hydrodynamical system~(\ref{eqhydro}) in Proposition~\ref{propdiffHs} gives
\begin{equation}
\label{colibri01}
\begin{split}
P^\top i \left( \epsilon^2 \partial_t - (c  + i B)\partial_x \right) \partial^m n & 
= i \left( \epsilon^2 \nabla_t^\perp - (c + i B)\nabla_x^\perp \right) {\nabla^{\perp m}} n + \epsilon O_{L^2}(\mathcal{E}_s) \\
& = - \frac{1}{2} \nabla_x^{\top 2} \nabla^{\top m}\Phi^\epsilon - \frac{1}{2} \epsilon^2 \deux^\top \left(D \Phi( \partial_x^2 \partial^m \phi ) \, , \, n \right)
+ \epsilon O_{L^2} (\mathcal{E}_s).
\end{split}
\end{equation}
Replacing $P^\top i \left( \epsilon^2 \partial_t -( c + iB) \partial_x \right) \partial^m n$ by the above expression in~(\ref{colibri1}), and then integrating by parts
while keeping in mind that $\deux^\top (\cdot\,,\,n)$ is symmetric, yields
\begin{equation*}
\begin{split}
(\ref{colibri1}) & =  -\int \lambda  \nabla^{\top 2}_x \nabla^{\top m} \Phi^\epsilon \cdot \left( \epsilon^2 \partial_t - c \partial_x \right) \nabla^{\top m} \Phi^\epsilon   \,dx \\
& \quad - \int \lambda \epsilon^2 \deux^\top \left( D \Phi \partial_x^2 \partial^m \phi\,,\,n\right) 
\cdot \left( \epsilon^2 \partial_t - c \partial_x \right)  \nabla^{\top m} \Phi^\epsilon  \,dx 
+ O(\mathcal{E}_s^2) \\
& = \frac{d}{dt} \left[ \frac{\lambda}{2} \epsilon^2 \int \left|  \nabla^{\top}_x \nabla^{\top m}\Phi^\epsilon \right|^2 \,dx 
+ \frac{\lambda}{2} \epsilon^4 \int \deux^\top \left( D \Phi \partial_x \partial^m \phi\,,\,n\right) \cdot D\Phi \partial_x \partial^m \phi \,dx \right] + O(\mathcal{E}_s^2).
\end{split}
\end{equation*}
 To handle \eqref{colibri1etdemi}, we proceed in the same way, using in addition the symmetry properties of $F^1$. We first note that 
 \begin{align*}
(\ref{colibri1etdemi}) &= 2 \int \epsilon^2 i   F_1(p)(( \epsilon^2 \nabla_t^\perp - (c + iB) \nabla_x^\perp ) \nabla^{\perp m} n,n)  \cdot 
\left( \epsilon^2 \partial_t - c \partial_x \right) D\Phi \partial^m \phi \,dx  \\
& \quad +  \epsilon^2 \int   \Big(2  i F_{1}(p)( i B \nabla_{x}^\perp \nabla^{\perp m}n,n)   + 2B F_{1}(p)( \nabla_{x}^\perp \nabla^{\perp m}n,n) \Big) \cdot \left( \epsilon^2 \partial_t - c \partial_x \right) D \Phi \partial^m \phi  \,dx+ O(\mathcal{E}_s^2) \\
   & =  2 \int \epsilon^2 i   F_1(p)((\epsilon^2 \nabla_t^\perp - (c + iB) \nabla_x^\perp ) \nabla^{\perp m} n,n)  \cdot 
\left( \epsilon^2 \partial_t - c \partial_x \right) D\Phi \partial^m \phi \,dx + O(\mathcal{E}_s^2). 
   \end{align*}
    Indeed, we have  used that $\epsilon^2 \| \partial_{x} n \|_{m} \| \partial \phi \|_{m}$ is controlled by $\mathcal{E}_{s}$.
Then, by using again the hydrodynamical system, we find
\begin{equation*}
\begin{split}
(\ref{colibri1etdemi}) 
& = 2 \int \epsilon^2 i F_1(p) (\frac{i}{2} D\Phi \partial_x^2 \partial^m \phi ,n) \cdot \left( \epsilon^2 \partial_t - c \partial_x \right) D\Phi \partial^m \phi \,dx + O(\mathcal{E}_s^2)\\
& = \int \epsilon^2 F_1(p) (i D\Phi \partial_x \partial^m \phi ,n) \cdot i \left( \epsilon^2 \partial_t - c \partial_x \right) D\Phi \partial_x \partial^m \phi \,dx + O(\mathcal{E}_s^2)\\
& = \frac{d}{dt} \frac{\epsilon^4}{2} \int F_1(p)(i D\Phi \partial_x \partial^m \phi ,n)\cdot iD\Phi \partial_x \partial^m \phi \,dx + O(\mathcal{E}_s^2).
\end{split}
\end{equation*}
Once again replacing $P^\top i \left( \epsilon^2 \partial_t - (c + iB) \partial_x \right) \partial^m n$ by the above expression \eqref{colibri01} in~(\ref{colibri2}),  integrating by parts
and using the symmetry of $\deux^\top(\cdot, n)$,
 we find
\begin{equation*}
\begin{split}
(\ref{colibri2}) & = - \lambda \epsilon^2 \int D\Phi \partial_x^2 \partial^m \phi 
\cdot \deux^\top \left(D \Phi \left( \epsilon^2 \partial_t - c \partial_x \right) \partial^m \phi \,,\,n \right)\,dx + O(\mathcal{E}_s^2)\\
& = \frac{d}{dt} \frac{\lambda}{2} \epsilon^4 \int \deux^\top \left( D \Phi \partial_x \partial^m \phi\,,\,n\right) \cdot D\Phi \partial_x \partial^m \phi \,dx+ O(\mathcal{E}_s^2).
\end{split}
\end{equation*}

\bigskip

\noindent \underline{Step 5: Estimating $IIb$.} Start by noticing that
$$
P^\perp i \left( \epsilon^2 \partial_t - (c + i B) \partial_x \right) \partial^m n = i \deux^\top \left(D\Phi \epsilon (\epsilon^2 \partial_t) \partial^m \phi\,,\,n\right)
  + \epsilon O_{L^2}(\mathcal{E}_s).
$$
while
$$
P^\perp i \left( \epsilon^2 \partial_t - (c + i B) \partial_x \right) \partial^m \epsilon^2 F_1(p)(n,n) = \epsilon O_{L^2} (\mathcal{E}_s).
$$
Moreover, substituting first covariant to flat derivatives, and using the differentiated hydrodynamical equation derived in Proposition~\ref{propdiffHs},
\begin{equation*}
\begin{split}
P^\perp \left( (\epsilon^2 \partial_t - c \partial_{x}) \partial^m u\right) & = \left( \epsilon^2 \nabla^\perp_t -  (c + iB) \nabla_x^\perp \right) {\nabla^{\perp m}} \epsilon^2 n
 + iB \epsilon^2 \nabla_{x}^\perp \nabla^{\perp m } n + 
\epsilon O_{L^2}(\mathcal{E}_s) \\
& = \frac{i}{2} \epsilon^2 D \Phi \partial_x^2 \partial^m \phi   + \epsilon O_{L^2}(\mathcal{E}_s).
\end{split}
\end{equation*}
Therefore, we find 
\begin{equation*}
\begin{split}
IIb & =  \lambda \epsilon^2 \int \deux^\top \left( D\Phi (\epsilon^2 \partial_t) \partial^m \phi\,,\,n\right) 
\cdot  D \Phi \partial_x^2 \partial^m \phi \, dx +  O(\mathcal{E}_s^2)
\end{split}
\end{equation*}
and finally integrating by parts while using the
symmetry of $\deux^\top \left(\cdot\,,\,n\right)$ gives  
\begin{equation*}
\begin{split}
IIb
& = - \frac{\lambda  \epsilon^4}{2}  \frac{d}{dt} \int \deux^\top \left( D\Phi \partial_x \partial^m \phi\,,\,n\right)
\cdot  D \Phi \partial_x \partial^m \phi \, dx +  O(\mathcal{E}_s^2) .
\end{split}
\end{equation*}

\bigskip

\noindent \underline{Step 6: Conclusion.}
Gathering the results of steps 1 to 5 gives
\begin{equation}
\label{Empart1}
\begin{split}
\frac{d}{dt} E_m = O(\mathcal{E}_s^2)
\end{split}
\end{equation}
with (notice the cancellation between $Ia$ and $IIb$ which, however, is not needed for the estimates to close)
\begin{equation}
\begin{split}
 E_m \overset{def}{=} & \int \left[ \frac{1}{2} \left| (\epsilon^2 \partial_t - c \partial_x )\partial^m u \right|^2 
+ \frac{1}{8} \epsilon^2 \left| \partial_{xx} \partial^m u \right|^2  - {\mu \over 2 } | \partial_{x} \partial^m u |^2 
+ \frac{\lambda}{2} \epsilon^2 \left| \nabla^{\top}_x \nabla^{\top m}  \Phi^\epsilon \right|^2 + {\lambda \over 2 } \epsilon^4 \left| \nabla_x^\perp {\nabla^{\perp m}} n \right|^2 \right. \\
&  \qquad + \lambda \epsilon^4 \deux^\top (D\Phi \partial_x \partial^m \phi\,,\,n) \cdot D\Phi \partial_x \partial^m \phi 
+ \frac{1}{2} \epsilon^6 F_1(p)(\nabla_x^\perp \nabla^{\perp m} n,n) \cdot \nabla_x^\perp \nabla^{\perp m} n  \\
& \qquad \qquad \left. + \frac{1}{2} \epsilon^4 F_1(p)(iD\Phi \partial_x \partial^m \phi ,n) \cdot iD\Phi \partial_x \partial^m \phi \right]\,dx. 
\end{split}
\end{equation}
To conclude, it suffices to note that
$$
\sum_{|m| \leq s} E_m = \mathcal{E}^2_{s, 1} + \epsilon^2 O(\mathcal{E}_{s,2}^2).
$$
\end{proof}

\subsection{Differentiating the hydrodynamical system}

\label{diffHS}

In order to derive a priori bounds, we will need to differentiate the hydrodynamical form \eqref{eqhydro} of the  equation.
Recall that this system is obtained by using the decomposition \eqref{simplegeod} of $u$, the expression~\eqref{loriot} for $V'$,
and by noticing thanks to \eqref{deuxperpdef} that 
$$ \partial u= \partial p +\epsilon^2 \partial n=  \partial p +  \epsilon^2 \deux^\top (\partial p, n)  + \epsilon^2 \nabla^\perp n= 
S_{0}\partial p + \epsilon^2 \nabla^\perp n$$ where the first term is tangent to $\mathcal{L}$ and the second term is normal to $\mathcal{L}$.   
The system \eqref{eqhydro} results from projecting \eqref{SMKdV} on $N_p \mathcal L$ and $T_p \mathcal L$ respectively.

The following proposition gives the system which is solved by $(\nabla^{\top m} {\Phi \over \epsilon},\nabla^{\perp m} n)$ up to error terms which will not matter in the estimates. Note that we use the convention that when a covariant derivative hits a function it  coincides with the standard derivative so that
 $  {1 \over \epsilon }  \nabla^\top \Phi= D\Phi \partial \phi.$ For notational convenience we shall set
 $$ \nabla^{ \top m } \Phi^\epsilon = {1 \over \epsilon }  \nabla^{\top m} \Phi$$
  which is the natural order one object since we roughly  have  that 
  $ \nabla^{ \top m } \Phi^\epsilon  = D\Phi \cdot \partial^m \phi$ plus $\epsilon$ times lower order terms.
 
\begin{prop}
\label{propdiffHs}
For $ 1 \leq |m | \leq s$ and $s \geq 2$, we get the following system for $(\nabla^{\top m }\Phi^\epsilon, \nabla^{\perp m } n)$ if $t \in [0,T^\epsilon]$:
\begin{equation}
\label{eqhydrom}
\left\{ 
\begin{array}{l} 
\displaystyle 
\left(S_{0} \nabla_t^\top   -  {1 \over \epsilon^2}(c+iB)S_{0}\nabla_x^\top \right) \nabla^{\top m} \Phi^\epsilon
= i \left[ \frac{1}{2}  \deux^{\perp}(S_{0} \nabla_{x}^\top \nabla^{\top m} \Phi^\epsilon, D\Phi \partial_{x} \phi) 
+   \frac{1}{2}  \deux^{\perp}(S_1 \partial_x   \phi, \nabla_{x}^\top \nabla^{\top m} \Phi^\epsilon)  \right. \\
\displaystyle
\hspace{2cm}\left.+ \frac{1}{2}
(\nabla^{\perp}_x)^2  \nabla^{\perp m}n -  { 2 \lambda \over \epsilon^2} \nabla^{\perp m}n  - 2F_1(n, \nabla^{\perp  m}  n) - 2 \lambda i  \deux^\top\big(  i \, n, \nabla^{\perp m}n \big) \right]  + O_{H^1}(\mathcal{E}_{s}) \\
\displaystyle
\left(\nabla_t^\perp - {1 \over \epsilon^2} (c+iB) \nabla_{x}^\perp \right)  \nabla^{\perp m }n  =   i \left[  \deux^{\top} \left( 
D\Phi \partial_x \phi \, , \,\nabla_x^\perp  \nabla^{\perp m } n  \right) + { 1 \over 2} \deux^\top \left( \nabla_{x}^\top \nabla^{\top m} \Phi^\epsilon , \nabla_{x}^\perp n \right)  \right. \\
\displaystyle \hspace{6cm} \left. +\frac{1}{2 \epsilon^2} \nabla_x^\top \big(S_{0} \nabla_x^\top \nabla^{\top m} \Phi^\epsilon \big) \right] + O_{H^1_{\epsilon}}\big( \mathcal{E}_s\big)
\end{array}
\right.
\end{equation}
(recall that $H^1_\epsilon$ was defined in~\eqref{H1epsilon}).
\end{prop}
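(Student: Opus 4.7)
The strategy is to apply the covariant derivatives $\nabla^{\top m}$ and $\nabla^{\perp m}$ to the two equations of the hydrodynamical system \eqref{eqhydro}, identify the leading contributions as those displayed in \eqref{eqhydrom}, and show that every remaining term is controlled in $H^1$ (resp.\ $H^1_\epsilon$) by $\mathcal{E}_s$. The point is that the first equation of \eqref{eqhydro}, after division by $\epsilon$ and rewriting in terms of $\Phi^\epsilon = \Phi/\epsilon$, already has the form $\bigl(S_0 \nabla_t^\top - \epsilon^{-2}(c+iB)S_0 \nabla_x^\top\bigr)\Phi^\epsilon = \cdots$ at $m=0$, so applying $\nabla^{\top m}$ is a question of carefully commuting these derivatives past the various tensorial operators and then using Leibniz on the right-hand side.

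Since $B$ is constant (Euclidean case) and $i$ is parallel, $(c+iB)$ commutes freely with $\nabla^\top$ and $\nabla^\perp$. The commutators $[\nabla^{\top m}, \nabla_t^\top]$ and $[\nabla^{\top m}, \nabla_x^\top]$ reduce, by iterated application of \eqref{comagain} and \eqref{comT}, to curvature terms that are bounded in $L^2$ by $\mathcal{E}_s^2$ via the Gauss equation and the product estimate \eqref{prod}. The commutator with $S_0 = \operatorname{Id} + \epsilon^2 \deux^\top(\cdot, n)$ carries an intrinsic factor $\epsilon^2$ which absorbs the singular $\epsilon^{-2}$ in front of the transport term; likewise, any derivative of $D\Phi$, $\deux^\perp$, $\deux^\top$ or $F_1$ evaluated at $p = \Phi(\epsilon\phi)$ produces a factor $\epsilon$ through the chain rule, which is again sufficient.

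The nonlinear right-hand sides are treated by Leibniz: for each term, there is a single way to put all the derivatives on the highest-order factor which reproduces the corresponding term in \eqref{eqhydrom}, while every other distribution gives a product of strictly lower-order derivatives, estimated by \eqref{prod} as $O_{L^2}(\mathcal{E}_s^2)$ and hence $O_{H^1}(\mathcal{E}_s)$ under the bootstrap \eqref{hypapriori}. The singular remainders $\epsilon^{-4} P^\perp R^V(p,\epsilon^2 n)$ and $\epsilon^{-5} P^\top R^V(p,\epsilon^2 n)$ deserve special attention, but since $R^V(p,\epsilon^2 n) = O(|\epsilon^2 n|^3)$ they are in fact of order $\epsilon^2 n^3$ and $\epsilon n^3$, and after $|m| \leq s$ covariant differentiations they remain in the error class by \eqref{prod} together with \eqref{dtphi}--\eqref{dxxxphi} to handle any time derivatives that fall on $\phi$.

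The main obstacle is the sheer bookkeeping: for each term of \eqref{eqhydro} one must check that every commutator and every sub-leading contribution lies in the declared error class. The use of $H^1_\epsilon$ rather than $H^1$ in the normal equation is precisely what makes the second estimate close: the $(\nabla_x^\perp)^2 n$ term in the tangential equation becomes, after $\nabla^{\top m}$, an expression requiring control of $\nabla_x$ of the leading covariant derivatives of $n$, and the extra $\epsilon^{1/2}$ weight in $H^1_\epsilon$ is exactly what allows one to absorb this additional spatial derivative through the definition \eqref{Esdef1} of $\mathcal{E}_{s,2}$.
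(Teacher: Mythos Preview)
Your overall strategy is the same as the paper's, but there is a genuine gap in the treatment of the commutator $[\nabla^{\top m},S_0]$ acting on the singular transport part of the first equation. You claim that the intrinsic $\epsilon^2$ in $S_0=\operatorname{Id}+\epsilon^2\deux^\top(\cdot,n)$ cancels the $\epsilon^{-2}$ in front, and that the resulting commutator falls in $O_{H^1}(\mathcal{E}_s)$. The cancellation is correct, but the remaining term
\[
\deux^\top\!\bigl(D\Phi(\epsilon^2\partial_t\phi-(c+iB)\partial_x\phi),\,\nabla^{\perp m}n\bigr)
\]
(the case where \emph{all} $m$ derivatives land on $n$) is \emph{not} $O_{H^1}(\mathcal{E}_s)$: its $H^1$ norm requires $\|\nabla_x^\perp\nabla^{\perp m}n\|_{L^2}$, i.e.\ $\|\partial_x n\|_s$, and $\mathcal{E}_s$ only controls $\epsilon\|\partial_x n\|_s$. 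Your Leibniz bookkeeping misses this, and in particular cannot explain where the displayed term $-2\lambda i\,\deux^\top(in,\nabla^{\perp m}n)$ in \eqref{eqhydrom} comes from: it does not arise from Leibniz on any right-hand-side term.

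The paper's fix is to substitute the first equation of \eqref{eqhydro} itself into the first argument of this surviving commutator, replacing $D\Phi(\epsilon^2\partial_t\phi-(c+iB)\partial_x\phi)$ by $-2i\lambda\,n+R$ with $\|R\|_{W^{1,\infty}}=O(\epsilon\,\mathcal{E}_s)$. The $R$-piece then \emph{is} $O_{H^1}(\mathcal{E}_s)$ (the extra $\epsilon$ matches the weight in $\epsilon\|\partial_x n\|_s$), while the $-2i\lambda\,n$-piece is exactly the new explicit term in \eqref{eqhydrom}. A parallel issue occurs in the second equation: the commutator $[\nabla^{\top m},S_0]$ inside $\tfrac{i}{2\epsilon^2}\nabla_x^\top(S_0\,D\Phi\partial_x\phi)$ contributes $\tfrac{i}{2}\deux^\top(D\Phi\partial_x\phi,\nabla_x^\perp\nabla^{\perp m}n)$, which accounts for why the coefficient in front of $\deux^\top(D\Phi\partial_x\phi,\nabla_x^\perp\nabla^{\perp m}n)$ in \eqref{eqhydrom} is $1$ rather than the $\tfrac12$ you would get from Leibniz on the right-hand side alone.
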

 
\begin{proof}
As a preliminary remark, we note that since $R^V(p,N)$ is $O(|N|^3)$, we get thanks to \eqref{prod}
that
$${1 \over \epsilon^4} \partial^m  \big(P^\top R^V(p, \epsilon^2 n) \big) = O_{H^1} (\mathcal{E}_{s}), \quad 
{1 \over \epsilon^5} \partial^m  \big(P^\perp R^V(p, \epsilon^2 n) \big) = O_{H^1_{\epsilon}} (\mathcal{E}_{s}) $$
so that the remainder from the potential does not contribute.

\bigskip

\noindent
\underline{Step 1: the left-hand side of~$(\ref{eqhydro})_1$.}
We start by applying the operator $\nabla^{\top m}$  for $1\leq |m|\leq s$ to the left-hand side
of the first line of \eqref{eqhydro}.

Observe that we can expand  expand  $\nabla^{\top m } \Phi^\epsilon$
\begin{equation}
\label{nablaPhim0} \nabla^{\top m } \Phi^\epsilon =  D\Phi \partial^m \phi + \sum \epsilon^{k+1} \star_{k} (\nabla^k D\Phi)_{|\epsilon \phi} \big( \partial^{\alpha_{1}} \phi, \cdots, \partial^{\alpha_{k+1}} \phi \big)
\end{equation}
where $\star_{k}$ are harmless coefficients and the sum is for $k \geq 1$  and for $1 \leq \alpha_{1}, \cdots, \alpha_{k+1} <m$.
 Consequently, by using \eqref{dtphi} and \eqref{prod}, we  get that for $1 \leq |m| \leq s$, we have
 \begin{equation}
 \label{nablaPhim1}
 \| \nabla^{\top m} \Phi^\epsilon \|_{L^2}= O(\mathcal{E}_s).
 \end{equation}
  In a similar way, by differentiating \eqref{nablaPhim0}  in $x$, we also obtain that
 \begin{equation}
 \label{nablaPhim2}
  \|  \nabla_{x}^\top \nabla^{\top m} \Phi^\epsilon - D\Phi \partial_{x} \partial^m \phi \|_{L^2}= \epsilon O(\mathcal{E}_s)
  \end{equation}
  and hence that
  \begin{equation}
 \label{nablaPhim3}
  \|  \nabla_{x}^\top \nabla^{\top m} \Phi^\epsilon \|_{L^2}=  O(\mathcal{E}_s)
  \end{equation}

 We note that
\begin{align*} \nabla^{\top m} \left(  S_{1} (  \partial_{t} \phi  - {c \over \epsilon^2} \partial_{x} \phi ) - { 1 \over \epsilon^2} iB S_{1} \partial_{x }\phi \right)
\nonumber & = {1 \over \epsilon} \nabla^{\top m} \left( S_{0} \left( \partial_{t} \Phi - {c \over \epsilon^2} \partial_{x}  \Phi - { 1 \over \epsilon^2} S_{0}^{-1} iB S_{0} \partial_{x} \Phi \right) \right)
\end{align*}
 and that thanks to Corollary \ref{BdeuxT}, we have
 \begin{equation}
 \label{commutateurB}
  iBS_{0} X  = S_{0} iB X, \quad \forall X \in T\mathcal{L}.
  \end{equation}
  Therefore, we obtain
\begin{align}
 \nonumber&  \nabla^{\top m} \left(  S_{1} (  \partial_{t} \phi  - {c \over \epsilon^2} \partial_{x} \phi ) - { 1 \over \epsilon^2} iB S_{1} \partial_{x }\phi \right)
  = {1 \over \epsilon} \nabla^{\top m} \left( S_{0} \left( \partial_{t} \Phi -  { 1 \over \epsilon^2}(c  + iB) \partial_{x}  \Phi \right) \right)\\
\nonumber & = {1 \over \epsilon }   S_{0} \nabla^{\top m} \left( \partial_{t} \Phi - { 1 \over \epsilon^2}(c  + iB) \partial_{x} \Phi \right)
+  \left[ \nabla^{\top m},  {S_{0} \over \epsilon } \right] \left( \partial_{t} \Phi - {1 \over \epsilon^2}(c + iB) \partial_{x} \Phi \right)\\
\label{Lphidef}  & \overset{def}{=}  L^\phi_{1}+ L^\phi_{2}.
\end{align}
To express $L^\phi_{1}$, let us  write following our conventions $\nabla^{\top m }= \nabla^{\top \tilde m } \nabla^\top $ with $|\tilde m | =|m|-1$
and $ \nabla^\top=\epsilon^2 \nabla_{t}^\top$ or $\nabla_{x}^\top$. First  note that
 since $\nabla i= \nabla B= 0$, and $\nabla$ is torsion-free, we have
$$ 
L^\phi_{1}=  S_{0}\nabla^{\top \tilde m} \left( \nabla_{t}^\top- {1 \over \epsilon^2}(c+iB) \nabla_{x}^\top \right)  { \partial \Phi \over \epsilon}.$$
Next, writing $\nabla^{\top \tilde m} = (\epsilon^2 \nabla_{t}^\top)^{\tilde m_{0}} (\nabla_{x}^\top)^{\tilde{m}_{1}}$,
we find  by using the formula \eqref{comT} that if  $\tilde m_{1}>0$,
\beq
\label{L1phi11} \nabla^{\top \tilde m}  \nabla_{t}^\top  
\left({ \partial  \Phi \over \epsilon }\right) = (\epsilon^2 \nabla_{t}^\top)^{\tilde m_{0}}
(\nabla_{x}^\top)^{\tilde{m}_{1}- 1} \nabla^\top_{t}  \nabla_{x}^\top \left(  {\partial \Phi  \over \epsilon }\right) +
\Big(  (\epsilon^2 \nabla_{t}^\top)^{\tilde m_{0}}
(\nabla_{x}^\top)^{\tilde{m}_{1}- 1}  \Big)\left( R^\top(\nabla^\top_{t} \Phi,  \nabla^\top_{x} \Phi) 
{\partial \Phi \over \epsilon} \right).\eeq
Since the last term can be written under the form 
$$   \Big(  (\epsilon^2 \nabla_{t}^\top)^{\tilde m_{0}} (\nabla_{x}^\top)^{\tilde m_{1}- 1}  \Big)\Big( R^\top( D\Phi \epsilon^2 \partial_{t} \phi,   D\Phi \partial_{x}\phi) 
D\Phi \partial \phi \Big),$$
we get by using  \eqref{difftensor} and  \eqref{prod} that for $s \geq 2$
\begin{equation*} \Big\| \Big(  (\epsilon^2 \nabla_{t}^\top)^{\tilde m_{0}} (\nabla_{x}^\top)^{\tilde m_{1}- 1}  \Big)\Big( R^\top( D\Phi \epsilon^2 \partial_{t} \phi,   D\Phi \partial_{x}\phi) 
D\Phi \partial \phi \Big) \Big\|_{H^1} \\
= O\big(  \|\partial_{x} \phi \|_{s} +  \| \epsilon^2 \partial_{t}\phi \|_{s- 1}  \big) 
\end{equation*}
and hence thanks to \eqref{dtphi} that:
$$ \Big\| \Big(  (\epsilon^2 \nabla_{t}^\top)^{\tilde m_{0}} (\nabla_{x}^\top)^{\tilde m_{1}- 1}  \Big)\Big( R^\top( D\Phi \epsilon^2 \partial_{t} \phi,   D\Phi \partial_{x}\phi) 
D\Phi \partial \phi \Big) \Big\|_{H^1}
= O(\mathcal{E}_s^2).$$
By using  again  repeatedly  \eqref{comT}, we can thus obtain from \eqref{L1phi11} that
$$  \nabla^{\top \tilde m}  \nabla_{t}^\top  
\big({ \partial  \Phi \over \epsilon }\big) =   \nabla^\top_{t}  \nabla^{\top \tilde m }  \big({ \partial \Phi \over \epsilon} \big) + O_{H^1}(\mathcal{E}_{s}).
$$
which yields 
\beq
\label{L1phidt2}
S_{0} \nabla^{\top \tilde m} \nabla^\top_{t} {\partial  \Phi \over \epsilon}= S_{0}   \nabla_{t}^\top \nabla^{\top m} \Phi^\epsilon   + O_{H^1}(\mathcal{E}_{s}).
\eeq
To study the second part of  $L_{1}^\phi$, we  can use the same arguments  as above (note that ${1 \over \epsilon^2} \partial_{x}$ and $\partial_{t}$ behaves in the same way, in our weighted spaces) and the fact that $\nabla (iB)= 0$ to also obtain
$$ S_{0} \nabla^{\top \tilde m}  {1 \over \epsilon^2}( c+ iB) \nabla^\top_{x}  {\partial  \Phi \over \epsilon}=
 { 1 \over \epsilon^2}  S_{0}(c+ i B) \nabla_{x}^\top  \nabla^{\top m } \Phi^\epsilon
  + O_{H^1}(\mathcal{E}_{s}).$$
Consequently, we have proven that
\beq
\label{L1phi}
L_{1}^\phi=  \big(S_{0} \nabla_{t}^\top - {1 \over \epsilon^2} (c+iB)S_{0} \nabla_{x}^\top  \big)  \nabla^{\top m }\Phi^\epsilon   + O_{H^1}(\mathcal{E}_{s}).
\eeq
It remains to study $ L_{2}^\phi$. By using the definition of $S_{0}$, we can reduce the estimate of this commutator to the estimate
of terms of the form
$$ 
(\nabla^{\top \alpha} \deux^\top) \Big(  \nabla^{\top \gamma} \big( D\Phi \epsilon^2 \partial_{t} \phi - (c+ iB) D\Phi \partial_{x} \phi \big), \nabla^{\perp \beta} n \Big)
$$
with $| \alpha|  + |\beta|+ |\gamma| \leq |m|$ and $|\gamma| < |m|$. By using again \eqref{prod}, we find
$$
(\nabla^{\top \alpha} \deux^\top) \Big(  \nabla^{\top \gamma} \big( D\Phi \epsilon^2 \partial_{t} \phi - (c + iB) D\Phi \partial_{x} \phi \big), \nabla^{\perp \beta} n\Big)= 
O_{H^1}(\mathcal{E}_{s})
$$
except in the case $ \beta = m$. Indeed, in this case the estimate would involve the norm $|\partial_{x} n |_{m}$
without the $\epsilon$ weight  which is in the definition of $\mathcal{E}_{s, 2}.$ Consequently, it remains to estimate
$$ \deux^\top\big( D\Phi \epsilon^2 \partial_{t} \phi - (c + iB) D\Phi \partial_{x} \phi,  \nabla^{\perp m} n \big).$$

In order to get a simpler expression for the first term above, we can use the first equation of \eqref{eqhydro}.
This yields 
\beq
\label{reducfin}  \deux^\top\big( D\Phi \epsilon^2 \partial_{t} \phi - (c + iB) D\Phi \partial_{x} \phi,  \nabla^{\perp m} n \big)
=  \deux^\top\big(   - i 2 \lambda n, \nabla^{\perp m} n\big)  + 
\deux^\top\big(    R, \nabla^{\perp m} n \big)\eeq
where $R$ is such that
$$
\|R\|_{W^{1, \infty}} =   \epsilon \,O \big( \|\partial_{x} \phi \|_{W^{1, \infty}} + \|n \|_{W^{1,\infty}} + \epsilon \|\partial_{x}^2 n \|_{W^{1, \infty}}
\big) = \epsilon\, O\big( \mathcal{E}_{s})
$$
for $s \geq 2$. Consequently, we find that
$$
\deux^\top\big( D\Phi \epsilon^2 \partial_{t} \phi - (c + iB) D\Phi \partial_{x} \phi, \nabla^{\perp m} n,  \big)
=   \deux^\top\big(  - 2 \lambda   i  n  , \nabla^{\perp m} n\big)   +  O_{H^1}\big( \mathcal{E}_{s}\big).
$$
In summary, we have proven that
\beq
\label{L2phi}
L_{2}^\phi=   \deux^\top\big( - 2 \lambda  i  n ,  \nabla^{\perp m} n  \big)   +  O_{H^1}\big( \mathcal{E}_{s}\big) 
\eeq
Consequently, thanks to \eqref{L1phi}, \eqref{L2phi} and \eqref{Lphidef}, we find 
\begin{multline}
\label{leftphi}   
\nabla^{\top m} \left(  S_{1} (  \partial_{t} \phi  - {c \over \epsilon^2} \partial_{x} \phi ) - { 1 \over \epsilon^2} iB S_{1} \partial_{x }\phi \right)
\\=  \left( S_{0}  \nabla_{t}^\top -  {1 \over \epsilon^2} (c+iB)S_{0} \nabla_{x}^\top \right) \nabla^{\top m} \Phi^\epsilon  +  \deux^\top\big(- 2 \lambda in, \nabla^{\perp m} n \big)  
 + O_{H^1}(\mathcal{E}_{s}).
\end{multline}

\bigskip

\noindent
\underline{Step 2:  the right-hand side of~$(\eqref{eqhydro})_1$.} 
We now differentiate the right-hand side of the first equation in \eqref{eqhydro}. At first, we easily get that
\begin{align}
\nonumber \nabla^{\top m} \left( i \deux^\perp\big( S_{1} \partial_{x} \phi, D\Phi \partial_{x} \phi  \big) \right)
 & =  \nabla^{\top m} \left( i \deux^\perp \big( S_{0}{ \partial_{x} \Phi \over \epsilon}, {\partial_{x} \Phi \over \epsilon} \big) \right) \\
& \label{rightphi1} =     i\Big(  \deux^\perp\big( S_{0} \nabla_{x}^\top \nabla^{\top m} \Phi^\epsilon, D\Phi \partial_{x}  \phi  \big) +    
\deux^\perp\big( S_{1} \partial_{x}  \phi, \nabla_{x}^\top \nabla^{\top m } \Phi^\epsilon  \big)  \Big)
+ O_{H^1}(\mathcal{E}^s)\end{align}
for $s \geq 2$. Next, we need to expand 
$$ \nabla^{\top m}  \left( {i \over 2} (\nabla_{x}^\perp)^2 n\right)= {i \over 2}  \nabla^{\perp m}  \Big( (\nabla_{x}^\perp)^2 n\Big) .$$
We first note that if $\nabla^{\perp m}= \nabla_{x}^{\perp m}$ there is no commutator. Consequently, denoting $m=(m_0,m_1)$, we only have to study the case $m_{0}\geq 1$. We thus write
\begin{multline*}  {i \over 2}  \nabla^{\perp m}  \Big( (\nabla_{x}^\perp)^2 n\Big)= 
{i \over 2} \left( \epsilon^2 \nabla_{t}^\perp \right)^{m_{0}} (\nabla_{x}^\perp)^2  \nabla_{x}^{\perp m_{1}} n 
\\=  {i \over 2} \left( \epsilon^2 \nabla_{t}^\perp \right)^{m_{0} - 1} \nabla_{x}^\perp \big( \epsilon ^2\nabla_{t}^\perp) \nabla_{x}^{\perp}  \nabla_{x}^{\perp m_{1}} n 
+ {i \over 2} \left( \epsilon^2 \nabla_{t}^\perp \right)^{ m_{0} - 1}  \Big(  \epsilon^2 R^\perp\big( \epsilon^2 D\Phi \partial_{t} \phi, D\Phi \partial_{x} \phi
\big) \nabla_{x}^{\perp}  \nabla_{x}^{\perp m_{1}}n \Big)
\end{multline*}
thanks to \eqref{comperp}. By using again \eqref{prod} and \eqref{dtphi}, we see that  the last term is
$O_{H^{1}}(\mathcal{E}_{s})$. By using repeatedly  the commutator identity \eqref{comperp}, we 
obtain
$$
{i \over 2}  \nabla^{\perp m}  \Big( (\nabla_{x}^\perp)^2 n\Big)= {i \over 2}\big( \nabla_{x}^{\perp})^2 \nabla^{\perp m} n + R_2,
$$
where $R_2$ is a sum of terms of the type
$$
\frac{i}{2} \nabla^{\perp \alpha} \left( \epsilon^2 R(\epsilon^2 D \Phi \partial_t \phi,D\Phi \partial_x \phi)
\nabla^{\perp \beta} n \right),
$$
with $|\alpha| + |\beta| \leq s$ and $\alpha_1 + \beta_1 \geq 1$. This implies
\beq
\label{rightphi2}   {i \over 2}  \nabla^{\perp m}  \Big( (\nabla_{x}^\perp)^2 n\Big)= {i \over 2}\big( \nabla_{x}^{\perp})^2 \nabla^{\perp m} n
+ O_{H^1}(\mathcal{E}_{s}).
\eeq
It is clear that
\begin{equation}
\label{rightphi2'}
\nabla^{\perp m} F_1(p)(n,n) = 2 F_1(p) (\nabla^{\perp m} n,n) + O_{H^1}(\mathcal{E}_s).
\end{equation}

Consequently, by combining \eqref{leftphi}, \eqref{rightphi1}, \eqref{rightphi2},  \eqref{rightphi2'} and the last line above, we obtain the first line of \eqref{eqhydrom}.

\bigskip

\noindent
\underline{Step 3: the left-hand side of~$(\ref{eqhydro})_2$.} By using \eqref{comperp}, we  get
$$ \nabla^{\perp m} \nabla_{t}^\perp  n= \nabla_{t}^\perp  \nabla^{\perp m} n + R_{3}$$
where the commutator $R_{3}$ is a sum of terms of the form
$$ \nabla^{\perp \alpha}\big(  R^\perp( \epsilon^2 D\Phi \partial_{t} \phi , D\Phi \partial_{x}\phi) \nabla^{\perp \beta} n \big)$$
with $|\alpha| + | \beta |  \leq |m|-1$. Consequently, by using  \eqref{prod} and \eqref{dtphi}
we find that
$$ R_{3}= O_{H^1_{\epsilon}} \big( \mathcal{E}_{s} \big)$$
(actually for this term, one could get a much better estimate).
The  estimate of the commutator 
$$ { 1 \over \epsilon^2} \Big[  \nabla^{\perp m} , \nabla_{x}^\perp \Big] n$$
is very similar to what we already described.  Since $\nabla (iB)= 0$, we thus get that
\beq
\label{Ln1}
\nabla^{\perp m} \left( \nabla_{t}^\perp -  { 1 \over \epsilon^2}(c+ iB) \nabla_x^\perp \right) n =\left( \nabla_{t}^\perp - {1 \over \epsilon^2}(c+ iB) \nabla_x^\perp \right) \nabla^{\perp m} n
+ O_{H^1_{\epsilon}} \big( \mathcal{E}_{s}\big).
\eeq

\bigskip

\noindent \underline{Step 4: the right-hand side of~$(\ref{eqhydro})_2$.}
For the right hand side of the second line of \eqref{eqhydro}, we first  study the differentiation of
$$ L_{1}^n \overset{def}{=} {i \over 2 \epsilon^2} \nabla^\top_{x}\big( S_{0} D\Phi \partial_{x} \phi \big).$$
As before, we note that $\nabla^{\top  m}$ commutes with $\nabla^\top_{x}$ if $m_{0}= 0$, hence we write
$$ \nabla^{\perp m } L_{1}^n =  {i \over 2 \epsilon^2}  \big( \epsilon^2 \nabla_{t}^\top \big)^{ m_{0} } \nabla_{x}^\top \nabla_{x}^{\top m_{1}}  \big (S_{0}
D\Phi \partial_{x} \phi \big) $$
and by using again \eqref{comT}, we have
$$
\nabla^{\perp m } L_{1}^n =  {i \over 2 \epsilon^2} \nabla_{x}^\top \nabla^{\top m}  \big (S_{0}
D\Phi \partial_{x} \phi \big)  +  O_{H^1_{\epsilon}}(\mathcal{E}_{s}).
$$
Indeed, the commutator involves  a sum of  terms of the form
  $$  { i\over 2}  \nabla^{\top \alpha} \Big( R^\top \big( \epsilon^2 D\Phi \partial_{t} \phi, D\Phi \partial_{x} \phi \big)  \nabla^{\top \beta}
   \big( S_{0} D\Phi \partial_{x} \phi\big)\Big)$$
    with $| \alpha |+| \beta | \leq |m|-1$ that can be again estimated by using \eqref{prod} and \eqref{difftensor}. Commuting now $\nabla^{\top m}$ with $S_0$ gives
    \beq
    \label{nablamL1n} 
\nabla^{\perp m } L_{1}^n =  {i \over 2 \epsilon^2} \nabla_{x}^\top   \Big (S_{0} \nabla^{\top m}
  \big( D\Phi \partial_{x} \phi \big) \Big)  +  {i \over 2 \epsilon^2} \nabla_{x}^\top \Big( \big[\nabla^{\top m}  , S_{0} \big]  \big( D\Phi \partial_{x} \phi \big) \Big)  +  O_{H^1_{\epsilon}}(\mathcal{E}_{s}).\eeq
To estimate the second term in the above right hand side, we can use the definition of $S_{0}$ and the derivation formula \eqref{difftensor2}, 
to see that we have to estimate a sum of terms of the form
$$
\nabla_{x}^\top \Big( \big(\nabla^{\top \alpha} \deux^\top \big) \big(  \nabla^{\top \beta}  \big( D \Phi \partial_{x} \phi),  \nabla^{\perp \gamma} n \big)  \Big)
$$
with $| \alpha | + | \beta | + | \gamma | =  |m| $ and $\beta \neq m$.  When $\gamma \neq m$, these terms are clearly estimated in $H_{1}^\epsilon$ by 
$$
\big(1+ O(\mathcal{E}_s^2)\big)\big( \|n \|_{H^1_{\epsilon}} + \|\partial_{x} \phi \|_{s} +  
\epsilon^2 \| \nabla_x^\perp n \|_s + \epsilon^3 \| \nabla_x^{\perp 2} n \|_s
+ \epsilon^2 \| \partial_{xx} \phi \|_s + \epsilon \| \partial_x^3 \phi \|_{s-1} \big)
$$
By using \eqref{dxxxphi} and the definition of $\mathcal{E}_{s}$(note that here  we also use $\mathcal{E}_{s, 1})$,  we thus  get that these terms are 
$O_{H^1_{\epsilon}}(\mathcal{E}_{s})$.  Consequently, since $n$ is a normal vector field, we  get
\beq
\label{L1n1} {i \over 2 \epsilon^2} \nabla_{x}^\top \Big( \big[\nabla^{\top m}  , S_{0} \big]  \big( D\Phi \partial_{x} \phi \big) \Big) 
= {i \over 2}  \deux^\top \big( D\Phi \partial_{x} \phi,  \nabla_{x}^\perp \nabla^{\perp m}  n\big)
+ O_{H^1_{\epsilon}} \big( \mathcal{E}_{s}\big)
\eeq
     
It remains to study the first term in the right-hand side of~(\ref{nablamL1n}). Let us observe that
$$
\nabla^{\top m} \big( D\Phi \partial_{x} \phi) = \nabla^{\top m } \left( { \partial_{x} \Phi \over \epsilon } \right)=
\nabla^{\top \tilde m} \left(  \nabla_{x}^\top  {\partial\Phi \over \epsilon} \right), \quad |\tilde m |= s- 1.$$
This yields by using again  \eqref{comT} that
\begin{equation}
\label{grive} {i \over 2 \epsilon^2} \nabla_{x}^\top   \Big (S_{0} \nabla^{\top m}
\big( D\Phi \partial_{x} \phi \big) \Big) =   {i \over 2 \epsilon^2} \nabla_{x}^\top   \Big (S_{0} \nabla_{x}^\top  \nabla^{\top m} { \Phi^\epsilon} \Big) + O_{H^1_\epsilon}(\mathcal{E}_s).
\end{equation}
By combining \eqref{nablamL1n}, \eqref{L1n1}, and \eqref{grive}, 
we thus get
      \beq
      \label{nablamL1nfin}
      \nabla^{\perp m } L_{1}^n =   {i \over 2 \epsilon^2} \nabla_{x}^\top \  \Big (S_{0} \nabla_{x}^\top  \nabla^{\top m } \Phi^\epsilon  \Big)  
       +  {i \over 2}  \deux^\top \big( D\Phi \partial_{x} \phi,  \nabla_{x}^\perp \nabla^{\perp m}  n\big)
 + O_{H^1_{\epsilon}} \big( \mathcal{E}_{s}\big).
 \eeq
 
By using similar arguments, we also obtain
 $${1 \over 2 } \nabla^{\perp m} \Big( i  \deux^\top  \big( D\Phi \partial_{x} \phi, \nabla_{x}^\perp n \big) \Big)
 = {1 \over 2} i  \Big( \deux^\top \big( \nabla_{x}^\top \nabla^{\top m } \Phi^\epsilon, \nabla_{x}^\perp n \big) + \deux^\top \big(  D\Phi \partial_{x} \phi, 
  \nabla_{x}^\perp \nabla^{\perp m} n \big) \Big) + O_{H^1_{\epsilon}} \big( \mathcal{E}_{s} \big).$$

By combining the last identity, \eqref{nablamL1n} and \eqref{Ln1},  we finally  get the second line  of \eqref{eqhydrom}. This ends
 the proof of Proposition \ref{propdiffHs}.
  
 \end{proof}

\subsection{Estimates on the hydrodynamical system}
\label{estHS}
Our main result in this section will be the following:

\begin{prop}
\label{esthydroplat}
The following estimate holds if $t \in [0,T^\epsilon]$:
$$
\mathcal{E}_{s,2}^2(u,t) \lesssim \mathcal{E}_{s}^2(u,0) + \epsilon^2 O( \mathcal{E}_{s}^2 (u,t)) + \int_0^t O(\mathcal{E}_{s}^2(u,\tau))\,d\tau.
$$
\end{prop}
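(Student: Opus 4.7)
The plan is to run an energy estimate directly on the differentiated hydrodynamical system \eqref{eqhydrom} of Proposition~\ref{propdiffHs}. For each multi-index $m$ with $1\le|m|\le s$, I introduce
$$
E_m \overset{def}{=} \int \left[ 2\lambda\,\bigl\langle S_0 \nabla^{\top m}\Phi^\epsilon,\nabla^{\top m}\Phi^\epsilon\bigr\rangle + 8\lambda^2 |\nabla^{\perp m}n|^2 + \tfrac{1}{2}\bigl|\nabla_x^\top \nabla^{\top m}\Phi^\epsilon\bigr|^2\right] dx,
$$
and claim that $\sum_{|m|\le s} E_m$ is equivalent to $\mathcal{E}_{s,2}^2$ up to an error of size $\epsilon^2 O(\mathcal{E}_s^2)$ (the equivalence uses $S_0=\mathrm{Id}+O(\epsilon^2)$ and the control \eqref{nablaPhim1}--\eqref{nablaPhim3} relating $\nabla^{\top m}\Phi^\epsilon$ to $\partial^m\phi$).

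The heart of the argument is to show $\tfrac{d}{dt}E_m=O(\mathcal{E}_s^2)$. Differentiating in time and substituting the LHS of \eqref{eqhydrom} with its RHS produces, schematically, three kinds of contributions: singular $1/\epsilon^2$ transport terms from $(c+iB)\nabla_x$, singular $1/\epsilon^2$ coupling terms ($-2\lambda i\nabla^{\perp m}n$ in the first equation, $\tfrac{i}{2}\nabla_x^\top(S_0\nabla_x^\top \nabla^{\top m}\Phi^\epsilon)$ in the second), and genuinely $O(1)$ quadratic terms together with the remainders $O_{H^1}(\mathcal{E}_s)$, $O_{H^1_\epsilon}(\mathcal{E}_s)$. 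The $c\partial_x$ part of the transport integrates to a total derivative thanks to self-adjointness of $S_0$, leaving only a harmless $\epsilon^2(\nabla_x S_0)$ commutator. The $iB$ part is handled by its skew-symmetry on $T_p\mathcal{M}$ combined with the commutation identity $iB\,S_0=S_0\,iB$ from \eqref{commutateurB}: these together reduce the singular $iB$ contribution to a zero-order expression already controlled by $\mathcal{E}_s^2$.

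The crucial cancellation is between the two singular coupling terms. Testing the first equation (inside the bracket) with $2\lambda\,S_0 \nabla^{\top m}\Phi^\epsilon$ produces $-\tfrac{4\lambda^2}{\epsilon^2}\langle i\nabla^{\perp m}n,S_0 \nabla^{\top m}\Phi^\epsilon\rangle$, while testing the second equation with $8\lambda^2\nabla^{\perp m}n$ produces $\tfrac{4\lambda^2}{\epsilon^2}\langle i\nabla_x^\top(S_0\nabla_x^\top \nabla^{\top m}\Phi^\epsilon),\nabla^{\perp m}n\rangle$; after one integration by parts and using the adjoint rule for $i$ (which exchanges tangent and normal components), these two expressions differ only by commutators that produce derivatives of $S_0$ of size $\epsilon^2$, hence acceptable. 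A parallel mechanism, applied to the energy $\tfrac12|\nabla_x^\top\nabla^{\top m}\Phi^\epsilon|^2$, cancels the singular contribution from the $\nabla_x^\top(S_0\nabla_x^\top\nabla^{\top m}\Phi^\epsilon)$ term on its own.

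All remaining pieces are handled by the product estimate \eqref{prod}: the bilinear terms involving $\deux^\perp$, $\deux^\top$ and $F_1$ in \eqref{eqhydrom} are each $O(\mathcal{E}_s^2)$ in $L^2$. For the remainders, the $O_{H^1}(\mathcal{E}_s)$ error in the first equation is paired with $O_{L^2}(\mathcal{E}_s)$ quantities and thus contributes $O(\mathcal{E}_s^2)$. The second equation's error, which is only $O_{H^1_\epsilon}(\mathcal{E}_s)$ and may carry a single $\nabla_x$-derivative with an $\sqrt\epsilon$ weight, is paired with $8\lambda^2\nabla^{\perp m}n$ --- no derivative needed --- and the $\sqrt\epsilon$ weight absorbs the potential loss of derivatives against the part of the norm sitting in $\mathcal{E}_{s,1}$. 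Integrating in time and summing over $m$ gives the claimed bound. \textbf{The main obstacle} is bookkeeping: one must verify that every singular $1/\epsilon^2$ contribution is either a total derivative (from the $c\partial_x$ transport), cancels exactly between the two equations (the coupling), or has been engineered to involve $\nabla_x S_0 = O(\epsilon^2)$ or $\partial_t S_0 = O(\epsilon^2)$. The price for carrying the $\nabla^{\top m}\Phi^\epsilon$ formalism instead of plain $\partial^m\phi$ is these repeated, but always $O(\epsilon^2)$, correction terms, which is exactly the tolerance allowed by the statement.
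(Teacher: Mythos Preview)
Your proposal has a genuine gap: the energy $E_m$ and the multipliers you use are not the ones that make the singular $1/\epsilon^2$ terms cancel.

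First, the claimed cancellation of the coupling terms is wrong. Pairing the first equation with $2\lambda S_0\nabla^{\top m}\Phi^\epsilon$ extracts $-\tfrac{4\lambda^2}{\epsilon^2}\langle i\nabla^{\perp m}n,\,S_0\nabla^{\top m}\Phi^\epsilon\rangle$, which is a \emph{zero-order} pairing in $x$. Pairing the second equation with $8\lambda^2\nabla^{\perp m}n$ extracts $\tfrac{4\lambda^2}{\epsilon^2}\langle i\nabla_x^\top(S_0\nabla_x^\top\nabla^{\top m}\Phi^\epsilon),\,\nabla^{\perp m}n\rangle$, which after one integration by parts is a \emph{first-order} pairing, not zero-order. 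These two expressions do not match; there is no algebraic cancellation. Second, the $iB$ transport term is not harmless under your pairing: for a skew tensor $A=iB$ one has $\int\langle A\nabla_x^\top f,\,f\rangle\neq0$ in general (take $d=2$, $A=\begin{pmatrix}0&1\\-1&0\end{pmatrix}$, so the integrand is $f_1\partial_x f_2-f_2\partial_x f_1$, which integrates to $2\int f_1\partial_x f_2$). Third, your $E_m$ lacks both the $\epsilon^2|\nabla_x^\perp\nabla^{\perp m}n|^2$ term (needed to recover $\epsilon\|\partial_x n\|_s$, part of $\mathcal{E}_{s,2}$) and any cross term coupling $\nabla^{\perp m}n$ with $B\nabla_x^\top\nabla^{\top m}\Phi^\epsilon$.

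The paper's approach is structurally different: it does \emph{not} pair each equation with its own unknown. Instead it multiplies $(\ref{eqhydrombis})_1$ by $-\nabla_x^\top(S_0S_0\nabla_x^\top\nabla^{\top m}\Phi^\epsilon)-2S_0B\nabla_x^\perp\nabla^{\perp m}n-\cdots$ and $(\ref{eqhydrombis})_2$ by $-\epsilon^2(\nabla_x^\perp)^2\nabla^{\perp m}n+4\lambda\nabla^{\perp m}n-2BS_0\nabla_x^\top\nabla^{\top m}\Phi^\epsilon+\cdots$. These multipliers are (up to the $S_0$ symmetrization) proportional to $J$ times the singular RHS of the \emph{other} equation; skew-symmetry of $J=i$ then forces the full singular contribution (the paper's term $V$) to cancel exactly up to an $[\nabla_x^\top,S_0]$ commutator. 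The $iB$ transport is handled not by skew-symmetry alone but by the explicit cross term $-4\nabla^{\perp m}n\cdot BS_0\nabla_x^\top\nabla^{\top m}\Phi^\epsilon$ built into the energy (the paper's term $III$), whose time derivative exactly absorbs the $iB$ pieces from both equations. This choice of multiplier is the missing idea.
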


\begin{proof} \underline{Step 1: the multiplier method, and splitting of the resulting terms.}

Observe first that
by using again \eqref{commutateurB},  the system ~(\ref{eqhydrom}) can be written equivalently
\begin{equation}
\label{eqhydrombis}
\left\{ 
\begin{array}{l} 
\displaystyle 
\left( \nabla_t^\top - \frac{c}{\epsilon^2}\nabla_x^\top \right) \nabla^{\top m } \Phi^\epsilon  = S_0^{-1} i \left[ \frac{1}{2}  \deux^{\perp}(S_{0} \nabla_{x}^\top \nabla^{\top m } \Phi^\epsilon, D\Phi \partial_{x} \phi) +   \frac{1}{2}  \deux^{\perp}(S_1 \partial_x   \phi,  \nabla_{x}^\top  \nabla^{\top m } \Phi^\epsilon)  \right. \\
\displaystyle
 \hspace{0.5cm}\left.+ \frac{1}{2}
(\nabla^{\perp}_x)^2  \nabla^{\perp m}n  + { 1 \over \epsilon^2} B S_{0} \nabla_{x}^{\top} \nabla^{\top m } \Phi^\epsilon- { 2 \lambda \over \epsilon^2} \nabla^{\perp m}n  - 2 F_1(n,\nabla^{\perp m}n)  - 2 \lambda i  \deux^\top\big(  i \, n, \nabla^{\perp m}n \big) \right]  + O_{H^1}(\mathcal{E}_{s}) \\
\displaystyle
\nabla_t^\perp  \nabla^{\perp m }n - \frac{c}{\epsilon^2} \nabla^{\perp}_x \nabla^{\perp m} n =   i \left[  \deux^{\top} \left( 
 D\Phi \partial_x \phi \, , \,\nabla_x^\perp  \nabla^{\perp m } n  \right) + { 1 \over 2} \deux^\top \left( \nabla_{x}^\top  \nabla^{\top m } \Phi^\epsilon, \nabla_{x}^\perp n \right)  \right. \\
 \displaystyle \hspace{6cm} \left. +\frac{1}{2 \epsilon^2} \nabla_x^\top \big(S_{0} \nabla_x^\top \nabla^{\top m } \Phi^\epsilon \big)  + {1 \over \epsilon^2} B \nabla_{x}^\perp \nabla^{\perp m} n\big) \right] + O_{H^1_{\epsilon}}\big( \mathcal{E}_s\big)
\end{array}
\right.
\end{equation}
Now take the scalar product of the first and second lines of the above system
with appropriate integrating factors:
\begin{multline*}
(\ref{eqhydrombis})_1 \cdot \left[ -\nabla_x^\top \left( S_0 S_0 \nabla_x^\top  \nabla^{\top m } \Phi^\epsilon \right)  - 2  S_{0} B \nabla_{x}^\perp \nabla^{\perp m} n
- 2 \epsilon^2 \deux^\top\big(D\Phi \partial_{x} \phi, \nabla_{x}^\perp  \nabla^{\perp m} n \big) \right]  \\+  (\ref{eqhydrombis})_2 \cdot 
\left[ -\epsilon^2 (\nabla_x^\perp)^2 \nabla^{\perp m} n  - 2 B S_{0} \nabla_{x}^\top  \nabla^{\top m } \Phi^\epsilon
  - 2 \epsilon^2  \deux^\perp \big( D\Phi \partial_{x} \phi,  \nabla_{x}^\top  \nabla^{\top m } \Phi^\epsilon \big) \right. \\
\left.+ 4\lambda \nabla^{\perp m} n  + 4 \epsilon^2 F_1 (n,\nabla^{\perp m}n)+ 4 \lambda  i \epsilon^2 \deux^\top(in\,,\,\nabla^{\perp m} n ) \right],
\end{multline*}
then integrate over $\mathbb{R}$. We rearrange the resulting terms by writing the above as
\begin{equation}
\label{dauphin}
I + II + III + IV = V + VI + O(\mathcal{E}_s^2)
\end{equation}
where: the term $I$ corresponds to  the contribution of the  left-hand side of $(\ref{eqhydrombis})_1$ with the symmetric part of  its  multiplier 
\begin{equation*}
\begin{split}
I &\overset{def}{=} \int \operatorname{LHS} (\ref{eqhydrombis})_1 \cdot \left[ - \nabla_x^\top \left( S_0 S_0 \nabla_x^\top  \nabla^{\top m } \Phi^\epsilon \right) \right] \,dx \\
& = \int \left[ \left( \nabla_t^\top - \frac{c}{\epsilon^2}\nabla_x^\top \right)  \nabla^{\top m } \Phi^\epsilon \right] \cdot \left[ -\nabla_x^\top \left( S_0 S_0 \nabla_x^\top \nabla^{\top m} \Phi^\epsilon \right) \right] \,dx;
\end{split}
\end{equation*}
the term $II$ corresponds to the  contribution of the left-hand side of $(\ref{eqhydrombis})_2$ with the symmetric part of its  multiplier 
\begin{equation*}
\begin{split}
 II & \overset{def}{=} \int \operatorname{LHS} (\ref{eqhydrombis})_2 \cdot \left[ -\epsilon^2 (\nabla_x^\perp)^2 \nabla^{\perp m} n + 4 \lambda \nabla^{\perp m} n  +  4 \epsilon^2 F_1 (n,\nabla^{\perp m} n) +  4 \lambda i \epsilon^2 \deux^\top(in\,,\,\nabla^{\perp m} n ) \right]\,dx \\
& = \int \left[ \nabla_t^\perp  \nabla^{\perp m }n - \frac{c}{\epsilon^2} \nabla^{\perp}_x \nabla^{\perp m} n \right] \\
& \hspace{2cm} \cdot \left[ -\epsilon^2 (\nabla_x^\perp)^2 \nabla^{\perp m} n + 4 \lambda  \nabla^{\perp m} n + 4 \epsilon^2 F_1 (n,\nabla^{\perp m}n) + 4 \lambda i \epsilon^2 \deux^\top(in\,,\,\nabla^{\perp m} n ) \right]\,dx;
\end{split}
\end{equation*}
the terms $III$ and $IV$  correspond to the contributions of the left-hand sides of  $(\ref{eqhydrombis})_1$ and  $(\ref{eqhydrombis})_2$ that involve  the first order
 terms of the multipliers 
\begin{multline*}
III\overset{def}{=} \int\left[ \left( \nabla_{t}^\top - { c \over \epsilon^2} \nabla_{x}^\top\right) \nabla^{\top m} \Phi^\epsilon \cdot \left(- 2 S_{0} B \nabla_{x}^\perp \nabla^{\perp m} n \right) \right.
     \\ \left.+  \left( \nabla_{t}^\perp - { c \over \epsilon^2} \nabla_{x}^\perp\right) \nabla_{x}^{\perp m } n  \cdot \left(- 2  B S_{0} \nabla_{x}^\top  \nabla^{\top m } \Phi^\epsilon  \ \right)      \right];
\end{multline*}
\begin{multline*}
IV \overset{def}{=} \int\left[ \left( \nabla_{t}^\top - { c \over \epsilon^2} \nabla_{x}^\top\right)  \nabla^{\top m } \Phi^\epsilon \cdot \left(- 2 \epsilon^2 \deux^\top\big(D\Phi \partial_{x} \phi, \nabla_{x}^\perp  \nabla^{\perp m} n \big)\right) \right.
     \\ \left.+  \left( \nabla_{t}^\perp - { c \over \epsilon^2} \nabla_{x}^\perp\right) \nabla^{\perp m } n  \cdot \left( - 2 \epsilon^2  \deux^\perp \big( D\Phi \partial_{x} \phi,  \nabla_{x}^\top \nabla^{\top m} \Phi^\epsilon\big)  \right)      \right];
\end{multline*}
the term $V$ contains the higher order terms coming from the right-hand sides of $(\ref{eqhydrombis})_1$ and $(\ref{eqhydrombis})_2$:
\begin{equation*}
\begin{split}
& V \overset{def}{=} \frac{1}{2} \int S_0^{-1} i \left[ (\nabla^{\perp}_x)^2  \nabla^{\perp m}n + { 2 \over \epsilon^2} B S_{0} \nabla_{x}^\top \nabla^{\top m} \Phi^\epsilon
 + 2 \deux^\perp\big(D\Phi \partial_{x} \phi,  \nabla_{x}^\top  \nabla^{\top m} \Phi^\epsilon \big) \right. \\
 & \left.  \hspace{2cm} - {4  \lambda\over \epsilon^2} \nabla^{\perp m}n  - 4 F_1(n, \nabla^{\perp m }n )- 4 \lambda i  \deux^\top\big(  i \, n, \nabla^{\perp m}n \big) \right] \\
& \hspace{3cm} \cdot \left[ -\nabla_x^\top \left( S_0 S_0 \nabla_x^\top  \nabla^{\top m} \Phi^\epsilon \right)  - 2 S_{0} B \nabla_{x}^\perp \nabla^{\perp m} n - 2 \epsilon^2 \deux^\top \big(
D\Phi \partial_{x} \phi, \nabla_{x}^\perp \nabla^{\perp m} n \big)  \right] \,dx \\
& + \frac{1}{2\epsilon^2} \int i \left[ \nabla_x^\top \big(S_{0} \nabla_x^\top  \nabla^{\top m} \Phi^\epsilon \big) +  { 2 } B \nabla_{x}^\perp \nabla^{\perp m } n  + 2 \epsilon^2 \deux^\top \big( D\Phi \partial_{x} \phi, 
\nabla_{x}^\perp \nabla^{\perp m }n \big)   \right] \\
& \hspace{1cm} \cdot \left[ -\epsilon^2 (\nabla_x^\perp)^2 \nabla^{\perp m} n  - { 2 } B S_{0} \nabla_{x}^\top \nabla^{\top m} \Phi^\epsilon  - 2 \epsilon^2 \deux^\perp \big( D\Phi \partial_{x} \phi,  \nabla_{x}^\top \nabla^{\top m} \Phi^\epsilon \big)  \right.\\
& \left. \hspace{2cm} + 4 \lambda \nabla^{\perp m} n + 4\epsilon^2 F_1(n, \nabla^{\perp m} n) +  4 \lambda i \epsilon^2 \deux^\top(in\,,\,\nabla^{\perp m} n ) \right]\,dx;
\end{split}
\end{equation*}
and finally the term $VI$ gathers the lower order terms from the right-hand sides of $(\ref{eqhydrombis})_1$ and $(\ref{eqhydrombis})_2$:
\begin{equation*}
\begin{split}
VI \overset{def}{=} & \frac{1}{2} \int S_0^{-1} i \left[ \deux^{\perp}( \epsilon^2 \deux^\top( \nabla^{\top m} \Phi^\epsilon, n), D\Phi \partial_{x} \phi) +  \deux^{\perp}( \epsilon^2 \deux^\top(D\Phi \partial_x   \phi, n),  \nabla_{x}^\top  \nabla^{\top m} \Phi^\epsilon )  \right] \\
 & \qquad \qquad  \cdot \left[ -\nabla_x^\top \left( S_0 S_0 \nabla_x^\top  \nabla^{\top m} \Phi^\epsilon \right)  - { 2 } S_{0} B \nabla_{x}^\perp \nabla^{\top m} n
 - 2 \epsilon^2 \deux^\top \big( D\Phi  \partial_{x} \phi, \nabla_{x}^\perp \nabla^{\perp m} n \big)
 \right]\,dx \\
& + \int  i \left[   { 1 \over 2} \deux^\top \left( \nabla_{x}^\top \nabla^{\top m} \Phi^\epsilon), \nabla_{x}^\perp n \right) \right] 
 \cdot \left[ -\epsilon^2 (\nabla_x^\perp)^2 \nabla^{\perp m} n  - 2  B S_{0}\nabla_{x}^\top \nabla^{\top m} \Phi^\epsilon \right.\\
 & \left.  - 2 \epsilon^2 \deux^\perp \big( D\Phi \partial_{x} \phi, \nabla_{x}^\top \nabla^{\top m} \Phi^\epsilon \big)
 + 4 \lambda \nabla^{\perp m} n + 4 \epsilon^2 F_1(n, \nabla^{\perp m} n) +  4 \lambda i \epsilon^2 \deux^\top(in\,,\,\nabla^{\perp m} n ) \right]\,dx \\
 \overset{def}{=} &VI_{1} + VI_{2}.
\end{split}
\end{equation*}
Note that  to gather the terms in $V$ and $VI$, we have used that $\deux^\perp$ is symmetric in its arguments ((1) in Proposition \ref{deuxsym}).
\bigskip

\noindent
\underline{Step 2: treating $I$.} Since $S_0$ is self-adjoint, the first term gives
\begin{equation}
\label{loriot1}
- \int \nabla_t^\top  \nabla^{\top m} \Phi^\epsilon \cdot \nabla_x^\top \left( S_0 S_0 \nabla_x^\top  \nabla^{\top m} \Phi^\epsilon\right)\,dx = \int S_0 \nabla_x^\top \nabla_t^\top  \nabla^{\top m} \Phi^\epsilon\cdot 
S_0 \nabla_x^\top  \nabla^{\top m} \Phi^\epsilon\,dx
\end{equation}
after an integration by parts. Applying first~(\ref{comT}), and keeping then in mind the definition of $S_0$,
we can commute covariant derivatives to obtain
$$
S_0 \nabla_x^\top \nabla_t^\top  \nabla^{\top m} \Phi^\epsilon
= S_0  \nabla_t^\top \nabla_x^\top  \nabla^{\top m} \Phi^\epsilon  + O_{H^1} (\mathcal{E}_s)
= \nabla_t^\top  \left(S_0 \nabla_x^\top   \nabla^{\top m} \Phi^\epsilon\right) + O_{L^2} (\mathcal{E}_s).
$$
Coming back to~(\ref{loriot1}), this gives
\begin{equation}
\label{macareux1}
- \int \nabla_t^\top  \nabla^{\top m} \Phi^\epsilon\cdot \nabla_x^\top \left( S_0 S_0 \nabla_x^\top \nabla^{\top m} \Phi^\epsilon\right)\,dx 
= \frac{d}{dt} \frac{1}{2} \int \left| S_0 \nabla_x^\top  \nabla^{\top m} \Phi^\epsilon\right|^2 \,dx + O(\mathcal{E}_s^2).
\end{equation}
Proceeding similarly, we obtain
\begin{equation}
\label{goeland}
\begin{split}
\int \frac{c}{\epsilon^2} \nabla_x^\top  \nabla^{\top m} \Phi^\epsilon \cdot \nabla_x^\top \left( S_0 S_0 \nabla_x^\top  \nabla^{\top m} \Phi^\epsilon \right)\,dx 
& = -\frac{c}{\epsilon^2}  \int S_0 \nabla_x^\top \nabla_x^\top \nabla^{\top m} \Phi^\epsilon \cdot  S_0 \nabla_x^\top  \nabla^{\top m} \Phi^\epsilon\,dx \\
& = -\frac{c}{\epsilon^2} \int \nabla_x^\top \left(S_0 \nabla_x^\top   \nabla^{\top m} \Phi^\epsilon\right) \cdot  S_0 \nabla_x^\top   \nabla^{\top m} \Phi^\epsilon\,dx + O(\mathcal{E}_s^2) \\
& =  O(\mathcal{E}_s^2).
\end{split}
\end{equation}
Putting together~(\ref{macareux1}) and~(\ref{goeland}) gives
\begin{equation}
\label{toucan1}
I = \frac{d}{dt} \frac{1}{2} \int \left| S_0 \nabla_x^\top   \nabla^{\top m} \Phi^\epsilon \right|^2 \,dx + O(\mathcal{E}_s^2).
\end{equation}

\bigskip

\noindent
\underline{Step 3: treating $II$.} We start with
$$
\int \nabla^\perp_t \nabla^{\perp m} n \cdot \left[ -\epsilon^2 (\nabla_x^\perp)^2 \nabla^{\perp m} n + 4 \lambda \nabla^{\perp m} n \right]\,dx 
= \frac{d}{dt} \frac{1}{2} \int \left[ \epsilon^2 | \nabla_x^\perp \nabla^{\perp m} n |^2 + 4  \lambda |\nabla^{\perp m} n|^2 \right] \,dx + O(\mathcal{E}_s^2)
$$
where we used~(\ref{comperp}) to estimate the term resulting from the commutation of $\nabla_x^\perp$ and $\nabla_t^\perp$.
Next, by making use of the symmetry of $i \deux^\top (in,\cdot)$ (Proposition~\ref{deuxsym}) and $F_1(n,\cdot)$, we obtain
$$
\int \nabla^\perp_t \nabla^{\perp m} n \cdot \left[ 4 \lambda i \epsilon^2 \deux^\top(in\,,\,\nabla^{\perp m} n ) \right] \,dx 
= \epsilon^2 \frac{d}{dt} \int  2 \lambda\nabla^{\perp m} n \cdot i \deux^\top(in\,,\,\nabla^{\perp m} n )\,dx + O(\mathcal{E}_s^2), 
$$
and
$$
\int \nabla^\perp_t \nabla^{\perp m} n \cdot  4 \epsilon^2 F_1 (n,\nabla^{\perp m}n)  \,dx 
= \epsilon^2 \frac{d}{dt} \int \nabla^{\perp m} n \cdot 2 F_1(n, \nabla^{\perp m} n)\,dx  + O(\mathcal{E}_s^2),
$$
Gathering the three previous equalities gives
\begin{equation*}
\begin{split}
& \int  \nabla^\perp_t \nabla^{\perp m} n \cdot \left[ -\epsilon^2 (\nabla_x^\perp)^2 \nabla^{\perp m} n + 4 \lambda \nabla^{\perp m} n + 4 \epsilon^2 F_1(n, \nabla^{\perp m}n )+  4\lambda i \epsilon^2 \deux^\top(in\,,\,\nabla^{\perp m} n ) \right] \,dx \\
& = \frac{d}{dt} \frac{1}{2} \int \left[ \epsilon^2 | \nabla_x^\perp \nabla^{\perp m} n |^2 + 4 \lambda |\nabla^{\perp m} n|^2 + 4 \epsilon^2 F_1 (n,\nabla^{\perp m}n) \cdot \nabla^{\perp m} n + 4 \lambda\epsilon^2 \nabla^{\perp m} n \cdot i \deux^\top(in\,,\,\nabla^{\perp m} n ) \right]  \,dx \\
& \hspace{3cm}  + O(\mathcal{E}_s^2).
\end{split}
\end{equation*}
Similarly, one can show that
\begin{align*}
& \int  \frac{1}{\epsilon^2} \nabla^\perp_x \nabla^{\perp m} n \cdot \left[  
-\epsilon^2 (\nabla_x^\perp)^2 \nabla^{\perp m} n + 4 \lambda \nabla^{\perp m} n + 4 \epsilon^2 F_1(n, \nabla^{\perp m}n) \cdot \nabla^{\perp m }n \right.\\
& \hspace{8cm} + \left. 4 \lambda  i \epsilon^2 \deux^\top(in\,,\,\nabla^{\perp m} n ) \right] \,dx = O(\mathcal{E}_s^2).
\end{align*}
Gathering the two previous equalities gives
\begin{multline}
\label{toucan2}
 II = \frac{d}{dt} \frac{1}{2} \int \left[ \epsilon^2 | \nabla_x^\perp \nabla^{\perp m} n |^2 + 4 \lambda |\nabla^{\perp m} n|^2 \right.  \\
 \left.+4 \epsilon^2 F_1 (n,\nabla^{\perp m}n) \cdot \nabla^{\perp m }n + 4 \lambda \epsilon^2 \nabla^{\perp m} n \cdot i \deux^\top(in\,,\,\nabla^{\perp m} n ) \right]  \,dx + O(\mathcal{E}_s^2).
\end{multline}

\bigskip

\noindent
\underline{Step 4: treating $III$.}
 We shall first compute
 $$ III_1=  - 2 \int  \left[ \nabla^\top_{t}  \nabla^{\top m} \Phi^\epsilon \cdot  S_{0}B \nabla_{x}^\perp \nabla^{\perp m } n  +  \nabla_{t}^\perp  \nabla^{\perp m} n  \cdot B S_{0} \nabla_{x}^\top \nabla^{\top m} \Phi^\epsilon \right] \, dx.$$
   We note that
   \begin{multline*}  \int    \nabla_{t}^\perp  \nabla^{\perp m} n  \cdot B S_{0} \nabla_{x}^\top  \nabla^{\top m} \Phi^\epsilon= {d \over dt} \left( \int \nabla^{\perp m} n \cdot   B S_{0}\nabla_{x}^\top  \nabla^{\top m} \Phi^\epsilon \, dx \right)
   \\- \int \nabla^{\perp m} n \cdot \nabla_{t}^\perp( B S_{0}\nabla_{x}^\top  \nabla^{\top m} \Phi^\epsilon)\, dx
   \end{multline*}
   To compute the last term, we note that thanks to~\eqref{H2} and \eqref{comT} we have
   \begin{align*}
     \int \nabla^{\perp m} n \cdot \nabla_{t}^\perp( BS_{0} \nabla_{x}^\top \nabla^{\top m} \Phi^\epsilon)\, dx & =  \int \nabla^{\perp m} n \cdot \nabla_{x}^\perp(B S_{0} \nabla_{t}^\top \nabla^{\top m} \Phi^\epsilon ) 
   \, dx + O(\mathcal{E}_s^2) \\
    & = -  \int \nabla_{x}^\perp \nabla^{\perp m} n \cdot B S_{0} \nabla_{t}^\top  \nabla^{\top m} \Phi^\epsilon\, dx
    + O(\mathcal{E}_s^2)  \\
    & =  \int  S_{0}B  \nabla_{x}^\perp \nabla^{\perp m} n \cdot \nabla_{t}^\top  \nabla^{\top m} \Phi^\epsilon\, dx + O(\mathcal{E}_s^2)
\end{align*}
since $S_{0}$ is symmetric and $B$ is skew symmetric.
 This yields 
$$ III_1=  - 2 {d \over dt} \int \nabla^{\perp m} n \cdot   BS_{0} \nabla_{x}^\top  \nabla^{\top m} \Phi^\epsilon\, dx  + O(\mathcal{E}_s^2).$$
  Since by using again the  skew symmetry of $B$ and the symmetry of $S_{0}$, we have that
  \begin{align*}
   III_2  & = { 2 c \over \epsilon^2} \int  \left[ \nabla^\top_{x}  \nabla^{\top m} \Phi^\epsilon \cdot S_{0} B \nabla_{x}^\perp \nabla^{\perp m } n  +  \nabla_{x}^\perp  \nabla^{\perp m} n  \cdot B
    S_{0} \nabla_{x}^\top \nabla^{\top m} \Phi^\epsilon \right] \, dx= 0
  \end{align*}
  we have thus proven that
    \begin{equation}
  \label{IIB}
  III=  - 2 {d \over dt} \int \nabla^{\perp m} n \cdot   B S_{0} \nabla_{x}^\top   \nabla^{\top m} \Phi^\epsilon \, dx 
       + O(\mathcal{E}_s^2).
  \end{equation}
  
\noindent
\underline{Step 5: treating $IV$.} 
Let us first remark  that for any  tangent vector field $X \in T \mathcal{L}$ and normal vector field $N \in N \mathcal{L}$, we have
\begin{equation}
\label{deuxB1}
  \deux^\top ( D\Phi \partial_{x} \phi, N ) \cdot X  +  N \cdot \deux^\perp( D\Phi \partial_{x} \phi, X)= 0.
  \end{equation}
  Indeed, by using Proposition \ref{deuxsym}, we can write
  \begin{align*}
   \deux^\top ( D\Phi \partial_{x} \phi, N ) \cdot X&= - i i \deux^\top(D\Phi \partial_{x} \phi, N) \cdot X =  - i \deux^\perp(D\Phi \partial_{x} \phi, iN) \cdot X \\
   & 
   =  -  iN \cdot i \deux^\perp  ( D\Phi \partial_{x} \phi, X) = - N \cdot \deux^\perp (D\Phi \partial_{x} \phi, X).
   \end{align*}
    This immediately yields that 
   $$ 
    \int \left[  \nabla_{x}^\top   \nabla^{\top m} \Phi^\epsilon \cdot  \deux^\top\big(D\Phi \partial_{x} \phi, \nabla_{x}^\perp  \nabla^{\perp m} n \big)
    +  \nabla_{x}^\perp \nabla^{\perp m } n  \cdot  \deux^\perp \big( D\Phi \partial_{x} \phi,  \nabla_{x}^\top  \nabla^{\top m} \Phi^\epsilon\big) \right] dx= 0.$$
    To handle the terms with time derivatives we can proceed as previously due to this skew symmetry property. We observe that
    \begin{align*}
    &  \int \nabla_{t}^\perp \nabla^{\perp m } n  \cdot  \left(  - 2 \epsilon^2 \deux^\perp \big( D\Phi \partial_{x} \phi, \nabla_{x}^\top   \nabla^{\top m} \Phi^\epsilon \big) \right) \, dx \\
   &   =  - {d \over dt} \int  2  \epsilon^2  \nabla^{\perp m} n \cdot   \deux^\perp \big( D\Phi \partial_{x} \phi, \nabla_{x}^\top   \nabla^{\top m} \Phi^\epsilon \big) \, dx
     \\
     & \qquad  \qquad +  2  \epsilon^2 \int\nabla^{\perp m } n \cdot  \deux^\perp \big(  D\Phi \partial_{x} \phi, \nabla_{t}^\top  \nabla_{x}^\top  \nabla^{\top m} \Phi^\epsilon \big) dx
     + O(\mathcal{E}_s^2)
     \end{align*}
Focusing on the last integral above, it can be transformed using successively~\eqref{deuxB1}, integrating by parts, and using the differentiated hydrodynamical equation~\eqref{eqhydrom} to replace $\nabla_{t}^\top    \nabla^{\top m} \Phi^\epsilon$, to give
\begin{align*}
& 2  \epsilon^2 \int\nabla^{\perp m } n \cdot  \deux^\perp \big(  D\Phi \partial_{x} \phi, \nabla_{t}^\top  \nabla_{x}^\top  \nabla^{\top m} \Phi^\epsilon \big) dx =-2  \epsilon^2 \int\nabla_{t}^\top  \nabla_{x}^\top  \nabla^{\top m} \Phi^\epsilon\cdot  \deux^\top \big(  D\Phi \partial_{x} \phi,  \nabla^{\perp m } n \big) dx \\
& \quad = 2  \epsilon^2 \int\nabla_{t}^\top    \nabla^{\top m} \Phi^\epsilon\cdot  \deux^\top \big(  D\Phi \partial_{x} \phi,  \nabla^{\perp m } \nabla_{x}^\perp n \big) dx + 2  \epsilon^2 \int\nabla_{t}^\top    \nabla^{\top m} \Phi^\epsilon\cdot  \deux^\top \big(  D\Phi \partial_{x}^2 \phi,  \nabla^{\perp m } n \big) dx+ O(\mathcal{E}_s^2)\\
& \quad =  2  \epsilon^2 \int\nabla_{t}^\top    \nabla^{\top m} \Phi^\epsilon\cdot  \deux^\top \big(  D\Phi \partial_{x} \phi,  \nabla^{\perp m } \nabla_{x}^\perp n \big) dx + O(\mathcal{E}_s^2).
\end{align*}
Going back to the definition of $IV$, and gathering the above equalities, it follows that
\begin{equation}
\label{deuxdeux}
IV =  -2 {d \over dt}  \int \epsilon^2  \nabla^{\perp m} n \cdot  \deux^\perp\big(D\Phi \partial_{x} \phi,  \nabla_{x}^\top  \nabla^{\top m} \Phi^\epsilon \big) \, dx+ O(\mathcal{E}_s^2).
\end{equation}

\noindent
\underline{Step 6: treating $V$.}   By using that $S_{0}^{-1}$ is symmetric, that $i$ is skew symmetric  and that in the first integral
$$ \nabla_{x}^\top\big( S_{0} S_{0} \nabla_{x}^\top  \nabla^{\top m} \Phi^\epsilon \big)= S_{0} \nabla_{x}^\top\big(
S_{0} \nabla_{x}^\top  \nabla^{\top m} \Phi^\epsilon \big) + [ \nabla_{x}^\top, S_{0} ] S_{0} \nabla_{x}^\top   \nabla^{\top m} \Phi^\epsilon,$$
we observe that  the two integrals cancel almost  exactly, simply leaving the above commutator term, as well as another lower order commutator. In other words, $V$ reduces to
\begin{align*}
V &  = \frac{1}{2} \int S_0^{-1} i \left[ (\nabla^{\perp}_x)^2  \nabla^{\perp m}n  + { 2 \over \epsilon^2}  B S_{0} \nabla_{x}^\top  \nabla^{\top m} \Phi^\epsilon- {4 \lambda \over \epsilon^2} \nabla^{\perp m}n
 + 2 \deux^\perp\big(D\Phi \partial_{x} \phi,  \nabla_{x}^\top   \nabla^{\top m} \Phi^\epsilon \big)  \right. \\
&  \qquad \qquad  \left. - 4 F_1(n, \nabla^{\perp m}n) - 4 \lambda i  \deux^\top\big(  i \, n, \nabla^{\perp m}n \big) \right] 
 \cdot \left[ \left[ S_0,\nabla_x^\top \right] \left( S_0 \nabla_x^\top  \nabla^{\top m} \Phi^\epsilon \right) \right] \,dx \\
 & \quad  + \frac{1}{2} \int S_0^{-1} i \left[ (\nabla^{\perp}_x)^2  \nabla^{\perp m}n  + { 2 \over \epsilon^2}  B S_{0} \nabla_{x}^\top \nabla^{\top m} \Phi^\epsilon- {4 \lambda \over \epsilon^2} \nabla^{\perp m}n + 2 \deux^\perp\big(D\Phi \partial_{x} \phi,  \nabla_{x}^\top  \nabla^{\top m} \Phi^\epsilon \big) 
 \right. \\
&  \qquad  \left. - 4 F_1(n, \nabla^{\perp m}n) - 4 \lambda i  \deux^\top\big(  i \, n, \nabla^{\perp m}n \big) \right] 
 \cdot  2 \epsilon^2 (S_{0} - Id) \deux^\top (D\Phi \partial_{x} \phi, \nabla_{x}^\perp \nabla^{\perp m } n )  \, dx \\
&    = V_{1} + V_{2}.
\end{align*}
Since
$$
\left[ S_0,\nabla_x^\top \right] = \left[ Id + \epsilon^2 (\deux^\top)(\cdot,n) , \nabla_x^\top \right]
= -\epsilon^2 \deux^\top (\cdot,\nabla_x^\top n) - \epsilon^3 ( \nabla_{D\Phi \partial_x \phi} \deux^\top) (\cdot,n),
$$
it is easy to see that $V_{1}$ can be further reduced to
\begin{equation*}
\begin{split}
V_{1} & =  - \frac{\epsilon^2}{2} \int S_0^{-1} i (\nabla^{\perp}_x)^2  \nabla^{\perp m}n \cdot \deux^\top  
\left(  S_0 \nabla_x^\top  \nabla^{\top m} \Phi^\epsilon,\nabla^\perp_x n \right)  \,dx + O(\mathcal{E}_s^2). \\
\end{split}
\end{equation*}
Finally,  by using again that  $S_0 = Id + \epsilon^2 \deux^\top(\cdot,n)$, this simplifies further to
\begin{equation*}
V_{1}= - \frac{\epsilon^2}{2} \int i (\nabla^{\perp}_x)^2  \nabla^{\perp m}n \cdot \deux^\top  
\left(  \nabla_x^\top \left(  \nabla^{\top m} \Phi^\epsilon \right),\nabla^\perp_x n \right)  \,dx + O(\mathcal{E}_s^2).
\end{equation*}
  The term $V_{2}$ is easy to handle. Again since $S_{0}- Id = O(\epsilon^2)$, we obtain that
  $$V_{2}= O(\mathcal{E}_s^2).$$
  Note that we use in a crucial way that the $\mathcal{E}_{s,1}$ part of the energy allows to control
   $ \epsilon^3\| \partial_{xx} n \|_{s}$.
   We have thus proven that
 \begin{equation}
\label{toucan3}
V = - \frac{\epsilon^2}{2} \int i (\nabla^{\perp}_x)^2  \nabla^{\perp m}n \cdot \deux^\top  
\left(  \nabla_x^\top  \nabla^{\top m} \Phi^\epsilon,\nabla^\perp_x n \right)  \,dx + O(\mathcal{E}_s^2).
\end{equation}
  
\bigskip

\noindent
\underline{Step 7: treating $VI$.} 
We start with the first integral in the definition of $VI$: using the definition of $S_0 = Id + \epsilon^2 \deux^\top(\cdot,n)$, and the symmetry of $i \deux^\perp(\cdot,D\Phi \partial_{x} \phi)$ (Proposition~\ref{deuxsym}),
\begin{equation}
\label{mesange1}
\begin{split}
VI_{1} = & \frac{1}{2} \int \left[ S_0^{-1} i \deux^{\perp}(S_1 \partial_x \partial^m  \phi, D\Phi \partial_{x} \phi) \right] 
\cdot \left[ -\nabla_x^\top \left( S_0 S_0 \nabla_x^\top \left( D\Phi \partial^m \phi \right) \right) \right]\,dx + O(\mathcal{E}_s^2)\\
&\qquad = \frac{1}{2} \int \left[ i \deux^{\perp}(D\Phi \partial_x \partial^m  \phi, D\Phi \partial_{x} \phi) \right] \cdot 
\left[ - (\nabla_x^\top)^2 \left( D\Phi \partial^m \phi \right) \right]\,dx + O(\mathcal{E}_s^2) \\
& \qquad  = \frac{1}{2} \int \left[ i \deux^{\perp}(D\Phi \partial_x \partial^m  \phi, D\Phi \partial_{x} \phi) \right] 
\cdot \left[ - \nabla_x^\top \left( D\Phi \partial_x \partial^m \phi \right) \right]\,dx + O(\mathcal{E}_s^2) \\
&  \qquad = O(\mathcal{E}_s^2).
\end{split}
\end{equation}
Finally,  it is easy to see that 
\begin{equation*}
VI_2 = \frac{1}{2} \int   \left[i \deux^\top \left(  \nabla_{x}^\top \nabla^{\top m } \Phi^\epsilon, \nabla_{x}^\perp n \right) \right]  \cdot \left( -\epsilon^2 (\nabla_x^\perp)^2 \nabla^{\perp m} n \right) \,dx
 + O(\mathcal{E}_s^2)
\end{equation*}
(notice that this last line and the expression we found for $III$ in \eqref{toucan3} will cancel to leave only $O(\mathcal{E}_s^2)$).
We have thus proven that 
\begin{equation}
\label{toucan4}
VI = \frac{1}{2} \int   \left[i \deux^\top \left(\nabla_x^\top  \nabla^{\top m } \Phi^\epsilon, \nabla_{x}^\perp n \right) \right] \cdot \left[ -\epsilon^2 (\nabla_x^\perp)^2 \nabla^{\perp m} n \right]\,dx + O(\mathcal{E}_s^2),
\end{equation}
so that
$$
V + VI =  O(\mathcal{E}_s^2).
$$
\bigskip

\noindent
\underline{Step 8: conclusion.} From the identity  \eqref{dauphin} and~(\ref{toucan1}), (\ref{toucan2}),  \eqref{IIB}, \eqref{deuxdeux}, (\ref{toucan3}) and (\ref{toucan4}) we deduce that
\begin{align*}
&\frac{d}{dt} \frac{1}{2} \int \left[ | S_0 \nabla_x^\top   \nabla^{\top m }  \Phi^\epsilon |^2 + \epsilon^2 | \nabla_x^\perp \nabla^{\perp m} n |^2 + 4\lambda |\nabla^{\perp m} n|^2
 - 4  \nabla^{\perp m } n  \cdot BS_{0} \nabla_{x}^\top  \nabla^{\top m } \Phi^\epsilon
 \right. \\ 
&\left. - 4  \epsilon^2  \nabla^{\perp m} n \cdot  \deux^\perp\big(D\Phi \partial_{x} \phi,  \nabla_{x}^\top  \nabla^{\top m} \Phi^\epsilon \big)+ 4 \epsilon^2 \nabla^{\perp m} n \cdot F_1 (n, \nabla^{\perp m }n) + 4 \lambda \epsilon^2 \nabla^{\perp m} n \cdot i \deux^\top(in\,,\,\nabla^{\perp m} n ) \right]  \,dx \\
& \qquad \qquad \qquad = O(\mathcal{E}_s^2)
\end{align*}
 for $1 \leq |m| \leq s$.
 Let us call $E^m$, $|m| \geq 1$ the integral in the left hand side.
  In the case $m=0$, we can get a similar estimate by using directly \eqref{eqhydro}. Note that since the structure of \eqref{eqhydro} is slightly different from the structure of the system in Proposition \ref{propdiffHs} for $|m| \geq 1$, we can proceed as previously by using a slightly different
   multiplier.  Let us set
   \begin{multline*}
E^0=  {1 \over 2 }\int \Bigl[   4 \lambda |n^\epsilon|^2 + | \epsilon^2 \nabla_{x}^\perp n^\epsilon |^2  + {4 \over 3} \epsilon^2 F_{1}(n^\epsilon,n^\epsilon) \cdot n^\epsilon
 + 
  |S_{0} D\Phi \partial_{x} \phi^\epsilon|^2 \Bigr. \\
 \Bigl.-   4 \ iB\left( D\Phi \partial_{x} \phi^\epsilon + { 1 \over 2} \epsilon^2 \deux^\top (D\phi \partial_{x} \phi^\epsilon, n^\epsilon) \right) \cdot in^\epsilon  \Bigr].
\end{multline*}
Using the same arguments as above, we can also prove that
$$ {d \over dt} E^0= O(\mathcal{E}_s^2).$$
(we shall perform a related more precise computation in the proof of Lemma \ref{lemP} below).
The conclusion follows  if we prove that  $ \tilde{\mathcal{E}}_{s, 2}^2 \sim \mathcal{E}_{s,2}^2$ with 
$$
 \tilde{\mathcal{E}}_{s, 2}^2 = \sum_{|m|\leq s} E^m.
$$
By using \eqref{nablaPhim1}, \eqref{nablaPhim2}, we first easily get that
\begin{align*}\tilde{\mathcal{E}}_{s, 2}(u,t)^2  & = \sum_{|m|\leq s} \frac{1}{2} \int \left[ | S_0 \nabla_x^\top  \nabla^{\top m} \Phi^\epsilon |^2 + \epsilon^2 | \nabla_x^\perp \nabla^{\perp m} n |^2 + 4 \lambda  |\nabla^{\perp m} n|^2    \right. \\
& \qquad  \qquad  \qquad  \left. - 4  \nabla^{\perp m } n  \cdot BS_{0} \nabla_{x}^\top  \nabla^{\top m } \Phi^\epsilon
   \right] \, dx 
 + \epsilon^2 O(\mathcal{E}_{s}(u,t)^2) \\
&  = O( \mathcal{E}_{s}^2).
 \end{align*}
 This also yields 
 \begin{multline*}
 \tilde{\mathcal{E}}_{s, 2}(u,t)^2  \geq
   \sum_{|m|\leq s} \frac{1}{2} \int \left[ | S_0 \nabla_x^\top   \nabla^{\top m } \Phi^\epsilon |^2 + \epsilon^2 | \nabla_x^\perp \nabla^{\perp m} n |^2 + 4 \lambda  |\nabla^{\perp m} n|^2   \right.
\\  \left.- 4    | \nabla^{\perp m } n  | \, |B| \, | S_{0} \nabla_{x}^\top  \nabla^{\top m } \Phi^\epsilon| \right] \, dx 
 - \epsilon^2 O(\mathcal{E}_{s}(u,t)^2). 
 \end{multline*}
Note that, by using~\eqref{H2} and \eqref{cdef}, we have that
$$ | B|^2= \mu < \lambda, $$
therefore  the quadratic form 
$$ Q(X_{1}, X_{2})=  X_{1}^2 + 4 \lambda X_{2}^2 - 4 |B|  X_{1} X_{2}$$
is positive definite. This yields
$$   \sum_{|m|\leq s} \frac{1}{2} \int \left[| S_0 \nabla_x^\top   \nabla^{\top m } \Phi^\epsilon |^2 + \epsilon^2 | \nabla_x^\perp \nabla^{\perp m} n |^2 + 4 \lambda  |\nabla^{\perp m} n|^2 
 \right] \, dx \lesssim  \tilde{\mathcal{E}}_{s, 2}(u,t)^2  +  \epsilon^2 O(\mathcal{E}_{s}(u,t)^2)
 $$
 and by using again\eqref{nablaPhim1}, \eqref{nablaPhim2}, we finally obtain
 $$ \mathcal{E}_{s,2}(u,t)^2 \lesssim  \tilde{\mathcal{E}}_{s, 2}(u,t)^2  +  \epsilon^2 O(\mathcal{E}_{s}(u,t)^2).$$
 \end{proof}
 
\subsection{Proof of Theorem \ref{theounif} in the case $\mathcal{M}= \mathbb{R}^{2d}$}
\label{prooftheounif1}

The local well-posedness of the Schr\"odinger system \eqref{SMKdV}, which is semi linear, can be deduced from the a priori estimates which have been established in the previous subsections.

It gives a non-empty time interval where there exists a unique solution of  \eqref{SMKdV} 
such that $\mathcal{E}_{s}(u,t) <+\infty$ and where the representation   $ u= \Phi(\epsilon \phi) + \epsilon^2n$ is well-defined. 
For constants $R$ and $r$, we can thus define a maximal existence time
$$ T^\epsilon \overset{def}{=} \sup \big\{ T>0, \, \sup_{[0,T]} \mathcal{E}_{s}(u) \leq R \; \mbox{and}\; \sup_{[0,T]} \epsilon \|\phi \|_{L^\infty} + \epsilon^2 \|n \|_{L^\infty} \leq r \big\}.$$
Choose $R$ to be a constant times the energy of the data, and $r$ such that the representation $(\epsilon \phi,\epsilon^2n) \mapsto \Phi(\epsilon \phi) + \epsilon^2n$
is a diffeomorphism if $\epsilon \|\phi \|_{L^\infty} + \epsilon^2 \|n \|_{L^\infty} \leq r$ and $r>2 c_{0}.$

We shall prove that $T^\epsilon$ is bounded from below by a positive time, uniformly in $\epsilon$.
Consider $T \leq T^\epsilon$. From the  a priori estimates of Proposition \ref{propschroplat} and Proposition \ref{esthydroplat}, 
we get the existence of $C_{0}$ independent of $\epsilon$  and $T$ that depends only on $\mathcal{E}_{s}(u,0)$ such that
\begin{equation}
\label{finalplat1} \sup_{[0, T]} \mathcal{E}_{s}(u,t) \leq C_{0} + (\epsilon + T) O(R) 
\end{equation}
Next, simply by Sobolev embedding,  
\begin{equation}
\sup_{[0,T]} \label{finalplat2} \epsilon^2 \|n(t) \|_{L^\infty} \lesssim \epsilon^2 \sup_{[0,T]} \mathcal{E}_{s}(u,t) \leq \epsilon^2 R.
\end{equation}
   
It remains to estimate $\epsilon \| \phi \|_{L^\infty}$. From the first line of \eqref{eqhydro}, we first obtain  by integrating in 
time  and by using the uniform control of $\mathcal{E}_{s}$ for $s \geq 2$ that
\begin{equation}
\label{phifinal1} |  \epsilon \phi(t,x) | \lesssim   \sup_{x} \int_{0}^T  {1 \over \epsilon }\left|  W (s, x )\right| \, ds 
+   T  O(r, R)+ c_{0}, \quad \forall t \in [0, T].
\end{equation}
where 
 \begin{equation}
      \label{Vdef} W\overset{def}{=}  (c+ iB)D\Phi \partial_{x} \phi  - 2 i \lambda n.
      \end{equation}
      We shall thus estimate the quantity involving $W$ in the right hand side of \eqref{phifinal1}.
     From the first line of  \eqref{eqhydrom} with $m=(0, 1)$ i.e. $\partial^m= \partial_{x}$, we have
        \begin{equation}
        \label{phifinal2}  \nabla_{t}^\top ( D\Phi \partial_{x} \phi)=   {1 \over \epsilon^2}\nabla_{x}^\top W  +  O_{L^2} (r,R)
        \end{equation}
In a similar way,  by using that from~\eqref{H2}, we have $ i(c+ iB)= (c-iB) i$, we obtain 
 from the second line of \eqref{eqhydro}
 that
 \begin{equation}
 \label{infinal} \nabla_{t}^\top (in)={1 \over \epsilon^2}  \nabla_{x}^\top \big( (c-iB) in - { 1 \over 2} D\Phi \partial_{x} \phi \big)
       + O_{L^2}(r, R).
       \end{equation}
       
Now by using~\eqref{H2} again, we observe that $(c-iB) (c+ iB)=  c^2 + \mu= \lambda$ and hence that
 $$ (c\pm iB)^{-1}=  {1 \over \lambda} (c\mp iB).$$
  This yields
  \begin{equation}
  \label{nfinalconv}  \nabla_{t}^\top (in)=  - {1 \over \epsilon^2} \nabla_{x}^\top \big( {(c-iB) \over 2 \lambda}  W \big) 
       + O_{L^2}(r,R),
       \end{equation}
 and therefore 
   $$ \nabla_{t}^\top W =  {2 c \over \epsilon^2} \nabla_{x}^\top W  + O_{L^2}(r, R).$$
Hence,  $|W|^2$ solves the scalar  transport equation
  $$ \partial_{t} |W|^2 =  {2 c \over \epsilon^2} \partial_{x} |W|^2 + F(t,x), \quad F= O_{L^2_x}(r,R) \cdot W = O_{L^1_x}(R,r).$$ 
  Solving explicitly this equation, we find  that 
$$  |W(t,x)|^2=  \left|W_{0} (x+{2 \, c \over \epsilon^2} t ) \right|^2 + \int_{0}^t F(s,  x+{2 \, c \over \epsilon^2} (t-s) \big)\, ds.$$
This yields by using the Fubini Theorem and a change of variable
\begin{align*}    \int_{0}^T |W(t,x)|^2 \, dt   & \lesssim  \epsilon^2 \|W_{0}\|_{L^2}^2 +   \Big|  \int_{0}^T  \int_{0}^t F(s,  x+{2 \, c \over \epsilon^2} (t-s)) \,ds\, dt \Big| \\
& \lesssim  \epsilon^2 \|W_{0}\|_{L^2}^2 + \epsilon^2  \int_{0}^T \Big| \int_{ x}^{ x+ {2 c \over \epsilon^2}(T-s)} F(s, y)  \, dy \Big| \, ds.
\end{align*}
Therefore, since $F = O_{L^1_x}(R,r)$, 
 \begin{align*}
 \sup_{x} \int_{0}^T |W(t,x)|^2 \, dt&\lesssim  \epsilon^2 ( \|W_{0}\|_{L^2}^2 +  T O(r,R)).
\end{align*}
 This finally yields 
\begin{equation}
\label{finpourW}
 \sup_{x} {1 \over \epsilon}  \int_{0}^T | W(t, x) | \, dt \lesssim \sqrt T \big( \|W_{0}\|_{L^2}+ \sqrt T O(r, R) \big).
 \end{equation}
By plugging this estimate in \eqref{phifinal1}, we thus obtain
\begin{equation}
\label{phifinalfinal}
 \sup_{[0, T]}\|\epsilon \phi \|_{L^\infty} \leq c_{0} + (\sqrt T + T) O(r,R).
 \end{equation}

By combining the last estimate and \eqref{finalplat1}, \eqref{finalplat2}, we get by a classical bootstrap argument that
$T^\epsilon$ is bounded from below by $T_0>0$ that is uniform for $\epsilon \in [0,1].$
This ends the proof of Theorem \ref{theounif} in the case $\mathcal{M}= \mathbb{R}^{2d}.$

\subsection{Proof of Theorem~\ref{theoKdV} in the case $\mathcal{M} = \mathbb{R}^{2d}$}
\label{sectionKdVlimit}
In this section only, we add $\epsilon$ superscripts to $u$, $p$, $n$, etc... in order to make the dependence on $\epsilon$ more clear.

We first remark that  $u^\epsilon= p^\epsilon+ \epsilon^2n^\epsilon= \Phi(\epsilon \phi^\epsilon) + \epsilon^2n^\epsilon$. 
Thanks to the uniform estimates of Theorem \ref{theounif}, we have by Sobolev embedding that 
$n^\epsilon$ is uniformly bounded in  $L^\infty$  and hence $\epsilon^2 n^\epsilon$ converges strongly
to zero in $L^\infty([0, T] \times \mathbb{R})$.  The study of the convergence of $\epsilon \phi^\epsilon$ is more delicate. This will be the first
step in the proof of Theorem \ref{theoKdV}.
   
\bigskip
\noindent
\underline{Step 1: Convergence of $\epsilon \phi^\epsilon$ to $0$ in $L^\infty$.} Notice that thanks to Theorem \ref{theounif}, we already have the estimate
 $$ \| \epsilon \phi^\epsilon\|_{L^\infty([0,T] \times \mathbb{R})} \leq 2 c_{0}.$$
Moreover, by using again the first line of \eqref{eqhydro}, we have
$$ \sup_x \int_{0}^T | \epsilon \partial_{t} \phi^\epsilon (t, x) | \, dt \lesssim \sup_{x} \int_{0}^T  {1 \over \epsilon }\left| \left( (c+iB)  D\Phi\partial_{x} \phi^\epsilon - 2 i \lambda  n^\epsilon \right) (t, x )\right| \, dt 
+     O(1), \quad \forall t \in [0, T].$$
Consequently, by using \eqref{Vdef} and \eqref{finpourW}, we get that
\begin{equation}
\label{bornedtphifinal} \sup_{x} \int_{0}^T | \epsilon \partial_{t} \phi^\epsilon (t, x) | \, dt  =O(1)
\end{equation}

We also have that   $\|\epsilon \partial_{x} \phi^\epsilon \|_{L^\infty([0,T] \times \mathbb{R})}  = O(\epsilon)$ thanks to 
 the uniform estimates of  Theorem \ref{theounif}. We thus  get from the Arzela-Ascoli Theorem that a subsequence $\epsilon_{n} \phi^{\epsilon_{n}}$ converges in $\mathcal{C}_{loc}( [0,T] \times \mathbb{R})$ towards some $\gamma_{\infty}(t,x)$
 and that since  $\epsilon_{n} \partial_x \phi^{\epsilon_{n}}$ converges to zero (in the distribution sense for example), we must have
$\partial_{x} \gamma_{\infty}(t,x)= 0$. Hence $\gamma_{\infty}$ is a function of time only that satisfies
\begin{equation}
     \label{gammaprop}
      \sup_{[0,T]} |\gamma_{\infty}(t)| + \int_{0}^T | \partial_{t} \gamma_{\infty} (t) | \, dt \leq C
     \end{equation}
for some $C>0$. However, it is not possible to prove directly that $\gamma_\infty = 0$, and hence $\epsilon \phi^\epsilon \to 0$.

In order to prove that $\epsilon \phi^\epsilon \to 0$, we shall proceed differently and make a crucial use of the assumption \eqref{hypW} and the conserved (or almost conserved) quantities of the Schr\"odinger maps system.

From the one-dimensional Sobolev inequality, we first observe that
\begin{equation}
\label{convphiLinfty} ( \epsilon \| \phi^\epsilon \|_\infty)^2 \leq { 1 \over 2} \| \epsilon \phi^\epsilon \|_{L^2} \| \epsilon \partial_{x} \phi^\epsilon  \|_{L^2} \leq  C \epsilon^2 \| \phi^\epsilon \|_{L^2}
       \end{equation}
        since $\|\partial_{x} \phi^\epsilon \|_{L^2}$ is uniformly bounded thanks to the estimates of Theorem \ref{theounif}.
         Next, by using  again the first line of \eqref{eqhydro}, we get that
         \begin{equation}
         \label{convphiL2} \| \epsilon^2 \phi^\epsilon (t) \|_{L^2}\lesssim \| \epsilon^2 \phi^\epsilon(0) \|_{L^2} + \int_{0}^t \| W^\epsilon (s)  \|_{L^2}\, ds + O( \epsilon^2)
         \end{equation}
          with $W^\epsilon=  (c+iB) D\Phi \partial_{x} \phi^\epsilon - 2 \lambda i n^\epsilon.$
          
\bigskip
          
Thus it suffices to prove that $ \lim_{\epsilon \rightarrow 0 }\sup_{[0, T]} \| W^\epsilon(t) \|_{L^2}= 0$ in order to deduce that $\epsilon \phi^\epsilon \overset{L^\infty}{\to} 0$. To this aim we now turn.

Observe that $|W|^2 = \lambda |D \Phi (\partial_{x} \phi^\epsilon)|^2 + 4 \lambda^2 |n^\epsilon|^2 - 4 \lambda i n^\epsilon \cdot (c+iB) D\Phi \partial_{x} \phi^\epsilon$. After integrating, the two first terms correspond, up to lower order terms, to the Hamiltonian, while the last term gives a quantity which can be thought of as momentum\footnote {The same problem occurs for the usual nonlinear Schr\"odinger equation, where the conserved momentum is given by $\mathcal{P}= {1 \over \epsilon^3} \int i \partial_{x}u^\epsilon \cdot u^\epsilon \, dx.$}, which should be almost conserved.

Guided by this idea, we will derive directly an approximately conserved quantity by working directly on the hydrodynamical system \eqref{eqhydro}. Keeping only the terms up to order $1$ in $\epsilon$ in \eqref{eqhydro}, it reads 
\begin{equation}
\label{eqhydrohamilton}
\left\{ 
\begin{array}{l} 
\displaystyle
S_{0} D\Phi \partial_t \phi= \frac{1}{  \epsilon^2}   i \Bigl[- 2 \lambda n  - i (c +  i B) S_{1} \partial_{x}   \phi   \\
 \displaystyle
   \mbox{\hspace{5cm}}  + \epsilon^2  \frac{1}{2} \deux^{\perp}(D\Phi \partial_x \phi, D\Phi \partial_{x} \phi) + \frac{1}{2} \epsilon^2 
(\nabla^{\perp}_x)^2 n  - \epsilon^2 F_{1}(n,n) \Bigr] + O_{H^1}(\epsilon) \\
\displaystyle
\nabla_t^\perp n    = {1 \over \epsilon^2} i \left[ \frac{1}{2} \nabla_x^\top (S_1 \partial_x \phi) 
 - i  (c+ i B) \nabla_{x}^\perp n 
 + \epsilon^2 \frac{1}{2} \deux^{\top} \left(
 D\Phi \partial_x \phi \, , \,\nabla_x^\perp n  \right) \right] + O_{H^1}(\epsilon).
\end{array}
\right.
\end{equation}
In view of the above right hand side, we define
\begin{multline*}
H(u^\epsilon)= \int \Bigl[   \lambda |n^\epsilon|^2 + {1 \over 4}| \epsilon^2 \nabla_{x}^\perp n^\epsilon |^2  + {1 \over 3} \epsilon^2 F_{1}(n^\epsilon,n^\epsilon) \cdot n^\epsilon
 + 
 {1 \over 4} |S_{0} D\Phi \partial_{x} \phi^\epsilon|^2 \Bigr. \\
 \Bigl.-   \ (c+iB)\left( D\Phi \partial_{x} \phi^\epsilon + { 1 \over 2} \epsilon^2 \deux^\top (D\phi \partial_{x} \phi^\epsilon, n^\epsilon) \right) \cdot in^\epsilon  \Bigr].
\end{multline*}
The above quantity is almost conserved in the sense that 
\begin{lem}
\label{lemP}
Under the assumptions of Theorem \ref{theounif}, we have
$$ 
{d \over dt} H (u^\epsilon)(t) = O(\epsilon)
$$
uniformly on $[0,T]$.
\end{lem}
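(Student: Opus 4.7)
The proof mirrors the calculation performed at the end of the proof of Proposition \ref{esthydroplat} for the quantity $E^0$, but pushed to one extra order of precision: instead of $O(\mathcal{E}_s^2)$, we need $O(\epsilon)$. The structural reason why this is possible is that $H(u^\epsilon)$ is designed to coincide, up to $O(\epsilon)$ corrections in the coordinate splitting $u^\epsilon = \Phi(\epsilon \phi^\epsilon) + \epsilon^2 n^\epsilon$, with the combination $\tfrac{1}{\lambda\epsilon^4}(E(u^\epsilon) + c\,P(u^\epsilon))$ of the Hamiltonian energy and momentum of \eqref{SM1}. Indeed, using $\partial_x u = \epsilon S_0 D\Phi\,\partial_x\phi + \epsilon^2 \nabla_x^\perp n$ together with the expansion of $V$, one checks that every quadratic term in $H$ matches a term appearing in $\tfrac{E+cP}{\lambda\epsilon^4}$. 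When $B = \nabla \mathcal{W}^\star - \nabla \mathcal{W}$, $E$ and $P$ are exact conservation laws of \eqref{SM1}, which is why we can hope for (at least) $O(\epsilon)$.

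Concretely, I would differentiate $H(u^\epsilon)$ term by term in $t$, and substitute for $D\Phi\,\partial_t \phi^\epsilon$ and $\nabla_t^\perp n^\epsilon$ using the hydrodynamical system \eqref{eqhydro}, or equivalently its leading-order form \eqref{eqhydrohamilton}. After integration by parts, the potentially singular $\epsilon^{-2}$ contributions assemble into a finite collection of terms which are pairings of $(c+iB)S_0 \nabla_x^\top(D\Phi\,\partial_x \phi^\epsilon)$ with $in^\epsilon$ and of $(c+iB)\nabla_x^\perp n^\epsilon$ with $D\Phi\,\partial_x\phi^\epsilon$, together with their higher-order analogues. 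These cancel pairwise by combining: the symmetries of $\deux^\top(\cdot,n)$, $\deux^\perp(\cdot,X)$, $i\deux^\top(X,\cdot)$ and $F_1(n,\cdot)$ from Proposition \ref{deuxsym}; the commutation $iBS_0 = S_0 iB$ on $T\mathcal{L}$ from \eqref{commutateurB} and Corollary \ref{BdeuxT}; the skew-symmetry of $i$ and $B$ and the symmetry of $S_0$; the orthogonality relation \eqref{deuxB1}; and the algebraic identity $(c-iB)(c+iB) = \lambda\,\mathrm{Id}$ that follows from \eqref{H2} and \eqref{cdef}. This is the same mechanism that forced the cancellations in Step 7 of the proof of Proposition \ref{esthydroplat}.

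What remains after the cancellation comes from three sources: the $O_{H^1}(\epsilon)$ remainders already present on the right-hand side of \eqref{eqhydrohamilton}; the commutator errors of size $O(\epsilon)$ produced each time one swaps $\nabla_x^\top$ with $\nabla_t^\top$ or $\nabla_x^\perp$ with $\nabla_t^\perp$ using \eqref{comT}--\eqref{comperp}, which give expressions bounded by the curvature tensor acting on $\epsilon^2 D\Phi\,\partial_t\phi^\epsilon$ and $D\Phi\,\partial_x\phi^\epsilon$; and the factors of $S_0 - \mathrm{Id} = O(\epsilon^2)\deux^\top(\cdot,n^\epsilon)$ that appear wherever the $S_0$-part of the multiplier is expanded. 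Each of these is a spatial integral of a product of $O_{L^2}(1)$ quantities with one explicit $\epsilon$, and so is controlled, via the product estimate \eqref{prod}, by $\epsilon\cdot C(r,R)$ using the a priori estimate $\sup_{[0,T]} \mathcal{E}_s(u^\epsilon) \leq R$ from Theorem \ref{theounif}. Summing these contributions yields $\tfrac{d}{dt} H(u^\epsilon)(t) = O(\epsilon)$.

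The main obstacle, as always in this kind of proof, is not conceptual but bookkeeping: one has to enumerate all the $O(\epsilon^{-2})$ terms produced by the substitutions and verify by hand that they pair up into cancellations using the algebraic identities listed above, exactly as done in Steps 2--7 of the proof of Proposition \ref{esthydroplat}. Once this has been checked, the final $O(\epsilon)$ estimate on the remainders is an immediate consequence of the uniform bounds.
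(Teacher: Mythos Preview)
Your proposal is correct and follows essentially the same approach as the paper: differentiate $H$ term by term, substitute for $S_0 D\Phi\,\partial_t\phi^\epsilon$ and $\nabla_t^\perp n^\epsilon$ via the leading-order hydrodynamical system \eqref{eqhydrohamilton}, and use the symmetry properties of $\deux^\top$, $\deux^\perp$, $F_1$ and the relation \eqref{utilefin} (your \eqref{deuxB1}) to obtain the exact cancellation of the $\epsilon^{-2}$ contributions. The paper organizes this by splitting $H=H_1+H_2+H_3$ and computing each derivative separately so that the final expression manifestly has the form $-\int S_0 D\Phi\,\partial_t\phi^\epsilon\cdot(\cdots)-\int \nabla_t^\perp n^\epsilon\cdot(\cdots)$ with the parentheses matching the right-hand sides of \eqref{eqhydrohamilton}, but this is only a bookkeeping choice and your description of the mechanism is accurate.
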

We postpone the proof of this lemma until the end of this section. \\

\bigskip

We observe that for $t \in [0, T]$, we have
$$ H(u^\epsilon(t))=   {1\over 2} \int \left[ 2 \lambda |n^\epsilon|^2 + {1 \over 2} |D\Phi \partial_{x} \phi^\epsilon|^2  - 2 (c+ i B)D\Phi\partial_{x}\phi^\epsilon \cdot i  n^\epsilon \right]\, dx + O(\epsilon^2)$$
and hence that by using the definition \eqref{Vdef} of $W^\epsilon$ and~\eqref{H2}, we have
$$   H(u^\epsilon(t))= \frac{1}{4 \lambda} \int |W^\epsilon|^2 \,dx  + O(\epsilon^2).$$
By combining this observation with Lemma \ref{lemP}, we obtain that  
$$
\sup_{t \in [0,T]} \| W^\epsilon(t) \|_{L^2}^2 \lesssim \|W^\epsilon (0) \|_{L^2}^2 + \epsilon$$
 and hence 
since $\|W^\epsilon (0) \|_{L^2} \to 0$ as $\epsilon \to 0$, we finally obtain that 
\begin{equation}
\label{Wfort}
\sup_{t \in [0,T]} \| W^\epsilon(t) \|_{L^2} \to 0 \quad \mbox{as $\epsilon \to 0$}.
\end{equation}
 We have thus proven thanks to \eqref{convphiLinfty} and \eqref{convphiL2} that
 \begin{equation}
 \label{epsphi0}
 \lim_{\epsilon \rightarrow 0} \sup_{[0, T]} \| \epsilon \phi \|_{L^\infty} = 0.
 \end{equation}
 This ends the first step.

\bigskip

\noindent
\underline{Step 2: Derivation of the KdV limit.} From the estimates of Theorem \ref{theounif} that yield $\epsilon^2 n^\epsilon = O_{L^\infty}(\epsilon^2)$, we get that 
\beq
\label{uto0}
\| u^\epsilon  \|_{L^\infty([0,T] \times \mathbb{R})} \rightarrow 0.
\eeq
This yields in particular that tensors such as $\deux^\top_{p},$  $ \deux^\perp_{p}$ that  implicitly depend on $p$
converge uniformly towards $ \deux^\top_{0},$  $ \deux^\perp_{0}$.

The uniform $H^s$ estimates of Theorem \ref{theounif} provide local compactness in space, therefore we only need to 
get compactness in time in order to obtain strong convergence.

Define
$$
A^\epsilon = 2 i \lambda n^\epsilon \quad \mbox{and} \quad U^\epsilon= (c-iB) D\Phi \partial_{x} \phi^\epsilon + A^\epsilon.
$$
Note that $W^\epsilon = (c+iB) D\Phi \partial_{x}\phi^\epsilon- A^\epsilon$.
By combining  \eqref{phifinal2} and \eqref{nfinalconv},
$$ \nabla_{t}^\top U^\epsilon=  O_{L^2}(1).$$
In particular, we obtain that  $U^\epsilon$ satisfies for every $t$, $s$,  $0 \leq s \leq t \leq T$
$$ \| U^\epsilon(t) -U^\epsilon(s)\|_{L^2}^2 = \left| \int_{\mathbb{R}} \int_{s}^t \nabla_{t}^\top U(\tau,x) \cdot U(\tau,x) \, d\tau  dx \right| \lesssim |t-s|$$
by the Cauchy-Schwarz inequality.

From Theorem \ref{theounif}, we have that  $U^\epsilon$  is  bounded in $L^\infty \big([0, T], H^s(u^{-1}T\mathcal{L}) \big)$,
where $H^s(u^{-1}T\mathcal{L})$ is given by the application of covariant derivatives. Since $\phi$ is bounded in $H^s(\mathbb{R}^d)$, this implies that $U^\epsilon$ is bounded in $L^\infty \big([0, T], H^s(\mathbb{R}^{2d}) \big)$, where this time we simply view $U^\epsilon$ as a vector in $\mathbb{R}^{2d}$. We can now apply the Arzela-Ascoli Theorem, and get that there exists a sequence $\epsilon_{n}$ such that  $U^{\epsilon_{n}}$ converges in $ \mathcal{C}([0,T], H^\sigma_{loc})$ to $U$  for every $\sigma$, $\sigma <s$ for some $U\in L^\infty([0, T], H^s)$ . From \eqref{Wfort}, we already  had   that  $ W^\epsilon$ converges strongly to   $0$ in  $ \mathcal{C}([0,T], L^2)$.
Since $W^\epsilon$ is also bounded in $L^\infty \big([0, T], H^s \big)$, this yields by interpolation that 
it converges strongly to $0$ in  $ \mathcal{C}([0,T], H^\sigma)$ for every $ \sigma$, $\sigma <s$.
      Next, since
      \begin{align}
      \label{relationfinale}
       D\Phi \partial_{x} \phi^\epsilon=  {1 \over 2 c} ( U^\epsilon  +  W^\epsilon) , \quad A^\epsilon =  2 \lambda i n^\epsilon =  {1 \over 2 c} \left( (c+iB) U^\epsilon - (c-iB) W^\epsilon\right)
       \end{align}
       we get that  $D\Phi \partial_{x} \phi^{\epsilon_{n}}$  and $in^{\epsilon_{n}}$  also converge in $ \mathcal{C}( [0, T], H^{\sigma}_{loc})$.
        Moreover, by denoting $A \in L^\infty([0, T], H^s)$ the limit of $A^{\epsilon_{n}}$, we get that
    \begin{equation}
    \label{convergence}
    D\Phi \partial_{x}\phi^{\epsilon_{n}} \rightarrow  (c+iB)^{-1} A = { 1 \over \lambda}(c-iB) A, \quad in^{\epsilon_{n}} \rightarrow  {1 \over 2 \lambda } A \qquad \mbox{ in }  \mathcal{C}([0,T], H^\sigma_{loc})
\end{equation} 
To identify, the limit system,  we need to make all the order one terms explicit.  By applying $\nabla_{x}^\top$ to the first line  of \eqref{eqhydro} written as 
\begin{align*}
& D \Phi \partial_t \phi^\epsilon - \frac{1}{\epsilon^2}(c+iB) D\Phi \partial_x \phi^\epsilon \\
&\qquad  = S_0^{-1} i \left[ \frac{1}{2} \deux^{\perp}(S_1 \partial_x \phi^\epsilon, D\Phi \partial_{x} \phi^\epsilon) + \frac{1}{2} (\nabla^{\perp}_x)^2 n^\epsilon -   2 \lambda \frac{n^\epsilon}{\epsilon^2} - F_{1}(p)(n^\epsilon,n^\epsilon) -  {1 \over \epsilon^4}P^\perp R^V(p^\epsilon,\epsilon^2n^\epsilon) \right]
\end{align*}
(with the help of Corollary~\ref{BdeuxT}), we find
 \begin{multline*}
\nabla_{t}^\top (D\Phi \partial_{x}\phi^\epsilon) =  {1 \over \epsilon^2} \nabla_{x}^\top W^\epsilon  +  {1 \over 2}  (\nabla_{x}^\top)^3 (in^\epsilon)  - 2 i F_{1}(\nabla_{x}^\perp n^\epsilon, n^\epsilon)
   \\+i   \deux^\perp( \nabla_{x}^\top (D\Phi \partial_{x}\phi^\epsilon), D\Phi \partial_{x}\phi^\epsilon) - 4 \lambda i \deux^\perp ( \nabla_{x}^\top (in^\epsilon), i n^\epsilon) +O_{L^2}(\epsilon)
\end{multline*}
Note that, for  the last term, we have used that  by the definition of $S_0$ and Proposition \ref{deuxsym},
\begin{align*}
 \nabla_x^\top S_0^{-1} i \left(-2 \lambda  \frac{n}{\epsilon^2} \right) & =  \nabla_x^\top \left[ \operatorname{Id} + \epsilon^2 \deux^\top (\cdot,n) \right]^{-1}  i \left(-2\lambda \frac{n}{\epsilon^2} \right) \\
& = -\frac{2 \lambda }{\epsilon^2} \nabla_x^\top (in^\epsilon) -  2 \lambda \nabla_x^\top \deux^\top ( i n^\epsilon, i n^\epsilon) + O_{L^2}(\epsilon) \\
& =  -\frac{2 \lambda }{\epsilon^2} \nabla_x^\top (in^\epsilon) - 4i \lambda  \deux^\perp (\nabla_x^\top( in ^\epsilon), in^\epsilon) + O_{L^2}(\epsilon).
\end{align*}
Similarly, the second line of~\eqref{eqhydro} can be written as a more precise version of \eqref{infinal} as follows
\begin{align*}
\nabla_{t}^\top  ( 2 \lambda in^\epsilon) = - {1 \over \epsilon^2}( c-iB) \nabla_{x}^\top W^\epsilon
   +   2  i  \lambda  \deux^\perp \big(D\Phi\partial_{x}\phi^\epsilon , \nabla_{x}^\top i  n^\epsilon \big)  + i \lambda  \deux^\perp (\nabla_{x}^\top(D\Phi \partial_{x} \phi^\epsilon), i n^\epsilon\big)  + O_{L^2}(\epsilon).
\end{align*}
Note that we have again used Proposition \ref{deuxsym}.
Consequently, we can combine the two equations   to get
\begin{equation}
\begin{split}
\label{presquefini1} \nabla_{t}^\top \big(  (c-iB) D\Phi \partial_{x} \phi^\epsilon &+ 2 \lambda i n^\epsilon)  = (c-iB) \left[
{1 \over 2}  (\nabla_{x}^\top)^3 (in^\epsilon)  - 2 i F_{1}(\nabla_{x}^\perp n^\epsilon, n^\epsilon) \right. \\ 
& \left. +i   \deux^\perp( \nabla_{x}^\top (D\Phi \partial_{x}\phi^\epsilon), D\Phi \partial_{x}\phi^\epsilon) - 4 \lambda i \deux^\perp ( \nabla_{x}^\top (in^\epsilon), i n^\epsilon) \right] \\
& +  2  i  \lambda  \deux^\perp \big(D\Phi\partial_{x}\phi^\epsilon , \nabla_{x}^\top i  n^\epsilon \big)  + i \lambda  \deux^\perp (\nabla_{x}^\top(D\Phi \partial_{x} \phi^\epsilon), i n^\epsilon\big)  + O_{L^2}(\epsilon).
\end{split}
\end{equation}
Using \eqref{convergence}, it is easy to pass to the limit weakly (in $\mathcal{S}'(\mathbb{R}^{2d})$) in all the terms above, except for the one involving a time derivative on the right hand side. Indeed, only poor estimates are available on $\epsilon \partial_{t} \phi^\epsilon$. Due to this term, we proceed differently: take $\psi \in \mathcal{C}^\infty_{c}(]0, T[ \times \mathbb{R}, \mathbb{R}^d)$  (identifying $T_{0} \mathcal{L}$ and $\mathbb{R}^d$) and multiply the above by $D\Phi (\epsilon \phi^\epsilon) \psi$ before letting $\epsilon \to 0$. 

Thanks to \eqref{convergence}, it is easy to see that $\epsilon_n \to 0$,
\begin{equation}
\label{membredroite}
\begin{aligned}
\iint RHS\eqref{presquefini1} \cdot D\Phi (\epsilon \phi^\epsilon) \psi\,dx\,dt \rightarrow & \iint \psi \cdot \left[ (c-iB) \left[ { 1 \over  4 \lambda}
 (\nabla_{x}^\top)^3  A  - { 1 \over 2 \lambda^2} iF_{1, 0}( i \nabla_{x}^\top A,  iA) \right. \right.\\
& \left.
  + {1 \over \lambda^2} i \deux^\perp_{0}( (c-iB) \nabla_{x}^\top A,  (c-iB)A)
   -  {1 \over  \lambda}  i \deux^\perp_{0} ( \nabla_{x}^\top A, A) \right] \\
& \left.    + {1 \over \lambda}i \deux^\perp_{0} ((c-iB) A, \nabla_{x}^\top A)
      +  {1 \over 2 \lambda} i \deux^\perp_{0}( (c-iB) \nabla_{x}^\top A, A) \right] \,dx\,dt.
\end{aligned}
\end{equation}
(where $F_{1,0}= F_1(0)$ and $\deux^\perp_0 = \deux^\perp(0)$).

The left-hand side of~\eqref{presquefini1} is more delicate, because of the time derivative which appears there, and of the poor estimates available on $\epsilon \partial_{t} \phi^\epsilon$. In order to pass to the limit on this term, we basically need to justify that if $X^\epsilon$ is a vector field in $T_{\Phi(\epsilon \phi^\epsilon)} \mathcal L$ that  converges strongly in $L^2_{loc}([0, T] \times 
  \mathbb{R})$ towards $X$ and which is bounded in $L^\infty([0, T] \times \mathbb{R})$, then
   $\nabla_{t}^\top  X^\epsilon= P^\top_{\Phi(\epsilon \phi^\epsilon)} \partial_{t} X^\epsilon$ converges weakly towards $\nabla^\top_{t} X= P^\top(0) \partial_{t} X$.
    Taking as above $\psi \in \mathcal{C}^\infty_{c}(]0, T[ \times \mathbb{R}, \mathbb{R}^d)$, we have after integrating by parts
    \begin{multline*}
     \int_{\mathbb{R} \times \mathbb{R}}
     \nabla_{t}^\top  X^\epsilon \cdot D\Phi(\epsilon \phi^\epsilon) \psi \, dx dt=
      - \int_{\mathbb{R} \times \mathbb{R}} X^\epsilon \cdot  D\Phi(\epsilon \phi^\epsilon) \partial_{t} \psi \,dt\, dx  \\
       -  \int_{\mathbb{R} \times \mathbb{R}} X^\epsilon \cdot  \nabla^\top D\Phi_{\epsilon \phi^\epsilon}(  \epsilon \partial_{t} \phi^\epsilon, \psi) \, dt\, dx .
\end{multline*}
  The first integral in the right-hand side above obviously converges towards
  $$  -\int_{\mathbb{R} \times \mathbb{R}} X \cdot \partial_{t} \psi \, dt dx =   \int_{\mathbb{R} \times \mathbb{R}}  \nabla_{t}^\top X \cdot  \psi \, dt dx,$$
   thus we just have to prove that the second integral tends to zero.
    By using  \eqref{bornedtphifinal}, we obtain that
    \begin{multline*} \left|  \int_{\mathbb{R} \times \mathbb{R}} X^\epsilon \cdot  \nabla^\top D\Phi_{\epsilon \phi^\epsilon}(  \epsilon \partial_{t} \phi^\epsilon, \psi) \, dt\, dx  \right| \\
     \lesssim  \| \nabla^\top D\Phi_{\epsilon \phi^\epsilon}\|_{L^\infty( [0, T] \times \mathbb{R})} \left( \sup_{x} \int_{0}^T | \epsilon \partial_{t} \phi^\epsilon|  \right) \, \| \psi\|_{L^1_x L^\infty_t ([0, T] \times \mathbb{R})} 
      \| X^\epsilon \|_{L^\infty([0, T] \times \mathbb{R}) } \end{multline*}
      In the above estimate, all the terms are uniformly bounded,  and since $\epsilon \phi^\epsilon$ converges uniformly to zero, we have thanks to \eqref{nablatopDPhi}
       (which relies on the choice of normal coordinates on $\mathcal{L}$) that
       $ \| \nabla^\top (D\Phi)_{\epsilon \phi^\epsilon}\|_{L^\infty( [0, T] \times \mathbb{R})}$ tends to zero. We have thus proven that\
\begin{equation}
\label{membregauche}
\iint LHS\eqref{presquefini1} \cdot D\Phi (\epsilon \phi^\epsilon) \psi\,dx\,dt \rightarrow \iint \psi \cdot \nabla_{t}^\top \left( \frac{(c-iB)^2}{\lambda} A + A \right)\,dx\,dt
\end{equation}
Combining~\eqref{membredroite} and~\eqref{membregauche} gives the following equality in $\mathcal{S}'(\mathbb{R}^d)$ (identifying $T_0 \mathcal L$ with $\mathcal{R}^d$):
\begin{equation}
\label{KdVversion1}
\begin{split}
\nabla_{t}^\top \left( \frac{(c-iB)^2}{\lambda} A + A \right) = &  (c-iB) \left[ { 1 \over  4 \lambda}
 (\nabla_{x}^\top)^3  A  - { 1 \over 2 \lambda^2} iF_{1, 0}( i \nabla_{x}^\top A,  iA) \right. \\
& \left.
  + {1 \over \lambda^2} i \deux^\perp_{0}( (c-iB) \nabla_{x}^\top A,  (c-iB)A)
   -  {1 \over  \lambda}  i \deux^\perp_{0} ( \nabla_{x}^\top A, A) \right] \\
&  + {1 \over \lambda}i \deux^\perp_{0} ((c-iB) A, \nabla_{x}^\top A)
      +  {1 \over 2 \lambda} i \deux^\perp_{0}( (c-iB) \nabla_{x}^\top A, A).
\end{split}
\end{equation}
The above system \eqref{KdVversion1} is 
 is the desired KdV type equation. 
  We can simplify it a little bit, by noticing that thanks to~\eqref{H2}, we have
  $$ {(c-iB)^2 \over \lambda} A + A =   2 c {(c-iB) \over \lambda} A$$
  and by using Corollary \ref{BdeuxT} to obtain
\begin{equation}
\label{KdVversionfinale}
2  c\nabla^\top_{t} A  = \frac{1}{4} \nabla^{\top 3}_{xxx}  A  + \left( \frac{3}{2}- \frac{2 \mu}{\lambda} - \frac{2c}{\lambda}i_0 B_0 \right) i_0 \deux^\perp_{0} \left(  \nabla^\top_{x} A, A\right)  - \frac{i_0}{2\lambda} F_{1,0}(i_0 \nabla^\top_{x} A, i_0 A).
\end{equation}
Note that  here, $\nabla^\top$  stands for $P^\top_{0} \nabla$ and therefore, since $T_{0}\mathcal{L}$ is a  fixed vector
space, we can also  write it  as
\begin{equation}
\label{presquefini2}
2  c\partial_{t} A  = \frac{1}{4} \partial_{xxx}  A  + \left( \frac{3}{2} - \frac{2 \mu}{\lambda} - \frac{2c}{\lambda}i_0 B_0 \right) i_0 \deux^\perp_{0} \left(  \partial_{x} A, A\right)  - \frac{i_0}{2\lambda} F_{1,0}(i_0 \partial_{x} A, i_0 A).
\end{equation}
From the uniqueness for this  KdV-type system (see section \ref{ROTLKS}), 
we thus get that the whole family  $A^\epsilon $, $(c+iB) \Phi \partial_{x} \phi^\epsilon$ tends to $A$ in $\mathcal{C}([0, T], H^\sigma_{loc}).$

\bigskip

\noindent
\underline{Step 3: Global in space convergence.} To obtain the convergence in  $\mathcal{C}([0, T], L^2)$, we can proceed as follows. At first, we note that the convergence of $U^\epsilon$  also holds in  $\mathcal{C}[0, T], L^2_{w})$ ($L^2$ being equipped with the weak topology) and that $U^\epsilon$ tends to $$ U= {(c-iB)^2 \over \lambda} A + A =   2 c {(c-iB) \over \lambda} A.$$
  We shall prove the  global strong convergence in $L^2(\mathbb{R})$ of $U^\epsilon$.
  We note that by using that  $W^\epsilon $ converges strongly to zero in   $ \mathcal{C}([0,T], H^\sigma)$, for $\sigma <s$, 
 and the relations \eqref{relationfinale}, 
 we can rewrite \eqref{presquefini1} as
 \begin{multline*} 
 \nabla_{t}^\top  U^\epsilon= {1 \over 8 c } (\nabla_{x}^\top)^3 \left( U^\epsilon - {(c-iB)^2 \over \lambda} W^\epsilon \right)  - {1 \over 8 \lambda^2c^2}(c-iB) iF_{1}\left( (c-iB) i \nabla_{x}^\top U^\epsilon, (c-iB) iU^\epsilon\right) \\
+ {5 \over 8 c^2}  (c-iB) i \deux^\perp \big(\nabla_{x}^\top U^\epsilon, U^\epsilon  \big)  - {1 \over 4 \lambda c^2} (c-iB) i \deux^\perp \left((c+iB) \nabla_{x}^\top U^\epsilon, (c+iB) U^\epsilon\right)
  +  o_{L^2}(1).
\end{multline*}
(where $o_L^{2}(1)$ stands for a function $f^\epsilon$ such that $f_\epsilon \to 0$ in $L^\infty_t L^2_x$). This implies that
 \begin{align*}
{1 \over 2} {d \over dt} \int_{\mathbb{R}} |U^\epsilon|^2\, dx = &   - {1 \over 8c\lambda} \int (c-iB)^2 ( \nabla_{x}^\top)^3 W^\epsilon \cdot U^\epsilon \,dx + {5 \over 8 c^2} \int  (c-iB) i \deux^\perp \big(\nabla_{x}^\top U^\epsilon, U^\epsilon  \big) \cdot U^\epsilon \,dx \\
& - {1 \over 8 \lambda^2 c^2} \int (c-iB) iF_{1}\left( (c-iB) i \nabla_{x}^\top U^\epsilon, (c-iB) iU^\epsilon\right) \cdot U^\epsilon \,dx \\
& - {1 \over 4 \lambda c} \int  (c-iB) i \deux^\perp \left((c+iB) \nabla_{x}^\top U^\epsilon, (c+iB) U^\epsilon\right) \cdot U^\epsilon  \,dx +   o(1).
 \end{align*}
By using~\eqref{H2},  assertion (4) in Proposition \ref{deuxsym} and the symmetry of $F_{1}$, we  have by integrating by parts  that
\begin{align*}
&   \int (c-iB) iF_{1}\left( (c-iB) i \nabla_{x}^\top U^\epsilon, (c-iB) iU^\epsilon\right) \cdot U^\epsilon \,dx  \\
 & \qquad  = -  \int F_{1}\left( (c-iB) i \nabla_{x}^\top U^\epsilon, (c-iB) i U^\epsilon\right) \cdot (c-iB)i U^\epsilon \,dx =O(\epsilon) \\
 &  \int  (c-iB) i \deux^\perp \left((c+iB) \nabla_{x}^\top U^\epsilon, (c+iB) U^\epsilon\right) \cdot U^\epsilon \,dx \\
 & \qquad = \int i \deux^\perp \left( (c+iB) \nabla_{x}^\top U^\epsilon, (c+iB) U^\epsilon\right) \cdot (c+iB) U^\epsilon \,dx =
  O(\epsilon),
\end{align*}
 and also from 
 the last property of Corollary \ref{BdeuxT} that
 $$  \int  (c-iB) i \deux^\perp \big(\nabla_{x}^\top U^\epsilon, U^\epsilon  \big) \cdot U^\epsilon\,dx  = O(\epsilon).$$
 Moreover, integrating by parts again,
 $$  \left|  \int  (c-iB)^2( \nabla_{x}^\top)^3 W^\epsilon \cdot U^\epsilon \,dx \right| \lesssim \| W^\epsilon \|_{H^2} \|U^\epsilon \|_{H^1} =o(1).$$
  We have thus proven that
 $$ {d \over dt} {1 \over 2} \int_{\mathbb{R}} | U^\epsilon  |^2 \,dx=   o(1).$$
 This yields for $t \in [0,T]$
 $$ \| U^\epsilon  (t) \|_{L^2}^2 =  \|U^\epsilon (0) \|_{L^2}^2+ o(1).$$
   For  the limit equation (see Section \ref{ROTLKS}) \eqref{presquefini2}, we  have that
  $$   \| A(t) \|_{L^2}^2 =  \|A(0) \|_{L^2}^2 $$
  and we observe  (again by~\eqref{H2}) that
  $$ \|U(t)\|_{L^2}^2 =  { 4c^2 \over \lambda^2} \| (c-iB) A \|_{L^2}^2= {4 c^2 \over \lambda} \|A(t)\|_{L^2}^2, \quad \forall t \in [0, T].$$
Consequently, the $L^2$ convergence  at the initial time yields that $  \| U^\epsilon (t) \|_{L^2}^2 \rightarrow {4c^2 \over \lambda} \|A(t) \|_{L^2}^2$ uniformly in time. 
   This yields  that $U^\epsilon$ converges  in  $\mathcal{C}([0, T], L^2)$. Since we already had that  $W^\epsilon$ converges strongly to zero in 
    $\mathcal{C}([0, T], L^2)$, we finally obtain 
     from \eqref{relationfinale} 
    that $A^\epsilon$ and $(c+iB) D\Phi \partial_{x} \phi^\epsilon$ converge in   $\mathcal{C}([0, T], L^2(\mathbb{R}))$ to $A$.
     From the uniform $H^s$ estimates, this also yields convergence in $\mathcal{C}([0, T], H^\sigma (\mathbb{R}))$, $\sigma <s$ and by Sobolev embedding in $L^\infty([0, T]  \times \mathbb{R}).$
    
\bigskip

It remains to prove Lemma \ref{lemP}
\subsubsection*{Proof of Lemma \ref{lemP}}
Let us split $H$ into
$$ H (u^\epsilon)= H_{1}(u^\epsilon)+ H_{2}(u^\epsilon)+ H_{3}(u^\epsilon)$$
 with 
 \begin{align*}
 &  H_{1}(u^\epsilon)=  \int  \lambda |n^\epsilon|^2 + {1 \over 4}| \epsilon^2 \nabla_{x}^\perp n^\epsilon |^2  + {1 \over 3} \epsilon^2 F_{1}(\Phi)(n^\epsilon,n^\epsilon) \cdot n^\epsilon\,dx, \\
 & H_{2}(u^\epsilon)= \int {1 \over 4} |S_{0} D\Phi \partial_{x} \phi^\epsilon|^2 \,dx,  \\
 & H_{3}(u^\epsilon)= -\int   (c+iB)\left( D\Phi \partial_{x} \phi^\epsilon + { 1 \over 2} \epsilon^2 \deux^\top (D\Phi \partial_{x} \phi^\epsilon, n^\epsilon) \right) \cdot in^\epsilon \,dx.
 \end{align*}
 In the following computations,  we shall make an  extensive use of Proposition \ref{deuxsym}, Corollary \ref{BdeuxT}, \eqref{comperp}, \eqref{comagain} and the symmetry in its arguments 
  of the trilinear
 application defined by \eqref{F1def}.
 We   first obtain easily  that 
 \begin{equation}
 \label{H1u}
 {d \over dt }   H_{1}(u^\epsilon)= \int  \nabla^\perp_{t} n^\epsilon \cdot \left(2 \lambda n^\epsilon - { 1 \over 2} \epsilon^2 ( \nabla_{x}^\perp)^2 n^\epsilon + \epsilon^2 F_{1}(n^\epsilon, n^\epsilon) \right)\,dx
  + O(\epsilon).
 \end{equation}
 Next,
 \begin{multline*}
 {d \over dt }   H_{2}(u^\epsilon) = \int  {1 \over 2} S_{0} \nabla_{x}^\top (D\Phi \partial_{t} \phi^\epsilon) \cdot S_{0} D\Phi \partial_{x}\phi^\epsilon + {1 \over 2 } \epsilon^2 \deux^\top(D\Phi \partial_{x}
  \phi^\epsilon, \nabla_{t}^\perp n^\epsilon ) \cdot  D\Phi \partial_{x} \phi^\epsilon \,dx+ O(\epsilon)
 \end{multline*}
Integrating by parts, we have
\begin{align*}
& \int  S_{0} \nabla_{x}^\top (D\Phi \partial_{t} \phi^\epsilon) \cdot S_{0} D\Phi \partial_{x}\phi^\epsilon \,dx  \\
& \qquad \qquad \qquad =-\int  S_{0} D\Phi \partial_{t} \phi^\epsilon \cdot  \left( \nabla_{x}^\top( S_{0} D\Phi \partial_{x} \phi^\epsilon)   +  \epsilon^2 \deux^\top(  D\Phi \partial_{x}\phi^\epsilon, \nabla_{x}^\perp n^\epsilon)\right)\,dx
   + O(\epsilon)
\end{align*}
   and by using Proposition \ref{deuxsym}, we note that
 \begin{equation}
 \label{utilefin}
 Y \cdot \deux^\top (X,N) = - \deux^\perp(X,Y) \cdot N, \quad \forall X,\, Y \in T\mathcal{L}, \, \forall N \in N\mathcal{L}
 \end{equation}
    and hence
$$  \int\deux^\top(D\Phi \partial_{x}
\phi^\epsilon, \nabla_{t}^\perp n^\epsilon ) \cdot  D\Phi \partial_{x} \phi^\epsilon\,dx
= -\int \nabla_{t}^\perp n^\epsilon \cdot \deux^\perp( D\Phi \partial_{x} \phi^\epsilon, D\Phi \partial_{x} \phi^\epsilon)\,dx.$$
  This yields
\begin{multline}
\label{H2u}
{d \over dt }   H_{2}(u^\epsilon) =  -  \int  \left[ S_{0} D\Phi \partial_{t} \phi^\epsilon  \cdot \left[ {1 \over 2 }\nabla_{x}^\top(S_{0} D\Phi \partial_{x} \phi^\epsilon)  +   {1 \over 2 }
  \epsilon^2 \deux^\top(  D\Phi \partial_{x}\phi^\epsilon, \nabla_{x}^\perp n^\epsilon) \right] \right. \\
   \left.  +  {1 \over 2} \nabla_{t}^\perp n^\epsilon \cdot  \epsilon^2 \deux^\perp( D\Phi \partial_{x} \phi^\epsilon, D\Phi \partial_{x} \phi^\epsilon) \right]\,dx + O(\epsilon).
  \end{multline}
  Finally, let us study the evolution of $H_{3}(u^\epsilon)$. Write first
 \begin{multline*}
 {d \over dt } H_{3}(u^\epsilon)=  \int \left[  \nabla_{t}^\perp n^\epsilon \cdot  i  (c+ iB) \left( D\Phi \partial_{x} \phi^\epsilon  + {1 \over 2} \epsilon^2 \deux^\top( D\Phi \partial_{x} \phi^\epsilon, n^\epsilon ) \right)
   + D\Phi \partial_{t} \phi^\epsilon \cdot i (c+iB) \nabla_{x}^\perp n^\epsilon \right. \\
    \left. - {1 \over 2 } \epsilon^2  (c+iB) \deux^\top( \nabla_{x}^\top( D\Phi \partial_{t} \phi^\epsilon), n^\epsilon) \cdot i n^\epsilon - {1 \over 2 } \epsilon^2
     (c+iB) \deux^\top (D\Phi \partial_{x} \phi^\epsilon, \nabla_{t}^\perp n^\epsilon) \cdot  i n^\epsilon \right] \,dx + O(\epsilon) 
 \end{multline*}
 Next, we observe  that after integrating by parts,
 \begin{align*}
 & - {1 \over 2 } \epsilon^2  \int (c+iB) \deux^\top( \nabla_{x}^\top( D\Phi \partial_{t} \phi^\epsilon), n^\epsilon) \cdot i n^\epsilon\,dx \\
& \quad = {1 \over 2} \epsilon^2 \int  \left[ (c+ iB) \deux^\top(D\Phi \partial_{t} \phi^\epsilon, n^\epsilon) \cdot i \nabla_{x}^\perp n^\epsilon +  (c+iB) \deux^\top( D\Phi \partial_{t} \phi^\epsilon, \nabla_{x}^\perp n^\epsilon)
  \cdot i n^\epsilon \right]\,dx + O(\epsilon) \\
 & \quad =   \epsilon^2 \int \deux^\top(D\Phi \partial_{t} \phi^\epsilon, n^\epsilon) \cdot i (c+iB) \nabla_{x}^\perp n^\epsilon\,dx + O(\epsilon)
 \end{align*}
  and  that 
 $$
  -  {1 \over 2 } \epsilon^2 \int
     (c+iB) \deux^\top (D\Phi \partial_{x} \phi^\epsilon, \nabla_{t}^\perp n^\epsilon) \cdot  i n^\epsilon\,dx
     = {1 \over 2} \epsilon^2 \int  i(c+iB) \deux^\top(D\Phi \partial_{x} \phi^\epsilon,  n^\epsilon) \cdot \nabla_{t}^\perp n^\epsilon\,dx. 
    $$
    Consequently, we find
 \begin{align}
  \nonumber {d \over dt } H_{3}(u^\epsilon) & = \int \left[ \nabla_{t}^\perp n^\epsilon \cdot  i    (c+ iB) \left (  D\Phi \partial_{x} \phi^\epsilon  +\epsilon^2 \deux^\top( D\Phi \partial_{x} \phi^\epsilon, n^\epsilon ) \right) 
  \right. \\
\nonumber   & \qquad \qquad  \left. + \left( D\Phi \partial_{t} \phi^\epsilon + \epsilon^2 \deux^\top( D\Phi \partial_{t} \phi^\epsilon, n^\epsilon)\right) \cdot  i(c+iB) \nabla_{x}^\perp n^\epsilon  \right]\,dx\\
  \label{H3u}
  & =  \int \left[ \nabla_{t}^\perp n^\epsilon \cdot  i    (c+ iB)  S_{0} D\Phi \partial_{x} \phi^\epsilon   +  S_{0}D\Phi \partial_{t} \phi^\epsilon \cdot  i(c+iB) \nabla_{x}^\perp n^\epsilon  \right]\,dx.
 \end{align}
By collecting \eqref{H1u}, \eqref{H2u}, \eqref{H3u}, we thus find
\begin{align*}
{d \over dt}
H(u^\epsilon) & =  - \int \left[  S_{0}D\Phi \partial_{t}\phi^\epsilon \cdot \left( {1 \over 2 }\nabla_{x}^\top(S_{0} D\Phi \partial_{x} \phi^\epsilon)  +   {1 \over 2 }  \epsilon^2 \deux^\top(  D\Phi \partial_{x}\phi^\epsilon, \nabla_{x}^\perp n^\epsilon) - i (c+iB) \nabla_{x}^\perp n^\epsilon\right) \right] \,dx \\
& \qquad \qquad -  \int \left[  \nabla_{t}^\perp n^\epsilon \cdot \left(   - 2 \lambda n^\epsilon + { 1 \over 2} \epsilon^2 ( \nabla_{x}^\perp)^2 n^\epsilon -  \epsilon^2 F_{1}(n^\epsilon, n^\epsilon) -  i    (c+ iB)  S_{0} D\Phi \partial_{x} \phi^\epsilon \right. \right.  \\
& \qquad \qquad \qquad \qquad \left. \left. +  {1 \over 2}  \epsilon^2 \deux^\perp( D\Phi \partial_{x} \phi^\epsilon, D\Phi \partial_{x} \phi^\epsilon)  \right) \right]\,dx  + O(\epsilon).
\end{align*} 
By using the hydrodynamical system \eqref{eqhydrohamilton} to express $\nabla_{t}^\perp n $ and $S_{0}D\Phi \partial_{t} \phi$ in each term, we obtain
 that the two integrals cancel up to the remainders $O(\epsilon)$ and hence that 
$$   {d \over dt}
H(u^\epsilon) =O(\epsilon).$$
This ends the proof of Lemma \ref{lemP}.

\section{The case of a general K\"ahler manifold}
\label{sectionKahler}

With the KdV scaling \eqref{KdVscaling}, our Schr\"odinger map system reads
\beq
\label{SMK}
\Big( \partial_{t} - { c  \over \epsilon^2} \partial_{x}  \Big)u = i \Big(  {1 \over 2 \epsilon } \nabla_{x} \partial_{x} u  + \frac{1}{\epsilon^2} B \partial_{x} u  - {1 \over \epsilon^3} V'(u) \Big).
\eeq
Note that here, we deal with the general case where  $i= i(u)$ and $B=B(u)$ depend on $u$. To generalize the decomposition $u= P+N$, with $P= \Phi(\epsilon \phi)$, $N=\epsilon^2n$ that we have previously used, 
we define a  parametrization of $\mathcal{M}$ in a vicinity of $0 \in \mathcal{L}$ by
\beq
\label{localcoordK}
u=  \Psi(p, N) \overset{def}{=} \exp^\mathcal{M}_{p}( N), \quad p = \Phi(\epsilon \phi)\overset{def}{=} \exp^\mathcal{L}_{0}(\epsilon \phi), \quad N=\epsilon^2 n,
\eeq
where $p \in \mathcal{L}$, $\phi \in T_0 \mathcal{L}$, $N,n \in N_p \mathcal{L}$, and
$\exp^\mathcal{M}$ and $\exp^\mathcal{L}$ are the Riemannian exponential maps on $\mathcal{M}$ and $\mathcal{L}$ respectively.
This yields a parametrization of $\mathcal{M}$ in the vicinity of zero by the normal bundle of $\mathcal{L}$:
$$ \Psi: \, N\mathcal{L} \rightarrow \mathcal{M}.$$
\underline{We will assume in this section that}
$$
V(u) = \lambda  \operatorname{dist}(u,\mathcal{L})^2 \quad \mbox{or equivalently} \quad V(\Psi(p,N)) = \lambda  |N|^2.
$$
In other words, we assume that there are no cubic or higher order terms in the expansion \eqref{Vdefbis} of $V$, which will alleviate notations. Cubic terms and higher  do not present any
particular difficulty and can be treated by following the proof of the flat case $\mathcal{M} = \mathbb{R}^{2d}$.

\subsection{Geometric preliminaries II}

\label{sectiongeom}
\label{GPII}

\subsubsection{Basic setup}

We start with a K\"ahler manifold $\mathcal{M}$ with metric $(X,Y) \mapsto X \cdot Y = \langle X,Y \rangle$,
Levi-Civita connection $\nabla$,  complex structure $i$, and Riemannian curvature tensor $R$; 
and a Lagrangian submanifold $\mathcal{L} \subset \mathcal{M}$. We use the geometric notation defined in Section~\ref{sectionprelim}.
If $p \in \mathcal{L}$, the tangent and normal bundles of $\mathcal{L}$ are denoted $T_p \mathcal{L}$ and $N_p \mathcal{L}$ respectively. Just like in the previous section, we use  connections on these bundles denoted  by 
$\nabla^\top = P^\top \nabla$ and $\nabla^\perp = P^\perp \nabla$, and second fundamental forms by $\deux^\top = -\nabla P^\top$ and $\deux^\perp = -\nabla P^\perp$

As was already explained in the previous section, we do not distinguish in the notations between differentiation in a vector bundle and its pull-back by a map.

In order to write a hydrodynamical system,  the first step is to  understand the structure  of $T  (N \mathcal{L})$. This was already done
in \cite{ShatahZeng}.  We include it here for the sake of completeness. Then, we shall explain  how we can
   extend in a natural way geometrical objects away from $\mathcal{L}$ and thus get a hydrodynamical system.

\subsubsection{A connection on $T(N\mathcal{L})$} We start by identifying $T (N\mathcal{L})$ with $T \mathcal{L} \times N \mathcal{L}$ in the following way: given a path
$\gamma(s)= (p(s), N(s))$ in $N\mathcal{L}$, we identify
$$
\dot{\gamma}(0)\simeq (\partial_s p(0),\nabla_s^\perp N(0)) \in T_{p(0)} \mathcal{L} \times N_{p(0)} \mathcal{L}.
$$
(where $\nabla_s^\perp$ was defined in the previous section). 

Having identified $T (N\mathcal{L})$ with $T \mathcal{L} \times N \mathcal{L}$, we define 
a scalar product by simply adopting the natural one on $T \mathcal{L} \times N \mathcal{L}$:
$$ \langle \dot \gamma_{1}(0), \dot \gamma_{2}(0) \rangle \overset{def}{=} \langle \partial_{s}p_{1}(0), \partial_{s} p_{2}(0) \rangle + \langle \nabla_{s}^\perp N_{1}(0), 
\nabla_{s}^\perp N_{2}(0) \rangle.$$
We can then define a connection on it in the following way: given a path $(X(s),N(s))$ in $T \mathcal{L} \times N \mathcal{L}$, set
$$
\mathcal{D}_s (X,N) = (\nabla_s^\top X, \nabla_s^\perp N).
$$
This connection has the following properties:
\begin{itemize}
\item It is metric.
\item It is torsion free on $\mathcal{L}$\footnote{We naturally identify $\mathcal{L}$ with $\mathcal{L} \times \{0\} \subset N\mathcal{L}$}. In other words: given a two-parameter function $(p(t,s),N(t,s))$ in $N\mathcal{L}$,
$$
\mathcal{D}_t (\partial_s p,\nabla_s^\perp N) = \mathcal{D}_s (\partial_t p,\nabla_t^\perp N) \qquad \mbox{on $\mathcal{L}$}.
$$
This simply follows from the formula $\mathcal{D}_t (\partial_s p,\nabla_s^\perp N) - \mathcal{D}_s (\partial_t p,\nabla_t^\perp N) =(0,R^\perp (\partial_t p,\partial_s p)N)$.
\end{itemize}

\subsubsection{Exponential maps} We select from now on a point $0$ in $\mathcal{L}$ and set
$$
\mbox{for $X \in T_0\mathcal{L}$}, \quad \Phi(X) = \operatorname{exp}_0^\mathcal{L} (X) \; \in \mathcal{L}.
$$
We already saw that $D \Phi_{|0} = \operatorname{Id}$ and $\nabla D \Phi_{|0} = 0$. The next step is to define
$$
\mbox{for $(p,N) \in N\mathcal{L}$}, \quad \Psi(p,N) = \operatorname{exp}_p^\mathcal{M} N \; \in \mathcal{M}.
$$
It is easy to see that
\begin{equation}
\label{psi'}
\mbox{if $p \in \mathcal{L}$ and $(X,N) \in T_p \mathcal{L} \times N_p \mathcal{L}$}, \quad D \Psi_{(p,0)} (X,N) = X + N.
\end{equation}
We will also need the second derivative of $\Psi$: recall that it satisfies by definition
\begin{equation*}
\begin{split}
 \nabla_s D \Psi_{(p(s),n(s))} (X(s),N(s)) = &\nabla D \Psi_{(p(s),n(s))} \big((\partial_s p (s),\nabla_s^\perp n(s)),(X(s),N(s))\big) \\
& \qquad \qquad + D \Psi_{(p(s),n(s))} (\nabla_s^\top X(s),\nabla_s^\perp N(s)).
\end{split}
\end{equation*}
It is possible to compute $\nabla D \Psi$ on $\mathcal{L}$: if $p \in \mathcal{L}$ and $(X,N),(Y,M) \in T_p \mathcal{L} \times N_p \mathcal{L}$,
\begin{equation}
\label{psi''}
\nabla D \Psi_{(p,0)} \big((X,N),(Y,M)\big) = \deux^\top (X,M) + \deux^\top (Y,N) + \deux^\perp (X,Y). 
\end{equation}
In order to prove this formula, first observe that $\nabla D \Psi \big((X,N),(Y,M)\big)$ is symmetric in $(X,N)$ and $(Y,M)$; this follows from the connection $\mathcal{D}$ being torsion-free on $\mathcal{L}$.
Thus, it suffices to compute $\nabla D \Psi$ on $\big((X,0),(X,0)\big)$, $\big((0,N),(0,N)\big)$, and $\big((X,0),(0,N)\big)$. For the first of these, it suffices to differentiate $\Psi$ twice along a path of 
the type $(p(s),0)$, where $p(s)$ is a geodesic. For the second,  the same argument along a path of the type $(p,sN)$, with $N \in N_p \mathcal{L}$ holds. The third one is a bit more delicate: consider 
$D\Psi_{(p(s),0)}(0,N(s)) = N(s)$, where $N(s)$ is parallel in $N\mathcal{L}$, and differentiate in $s$.

\subsubsection{Extending the tangent and normal spaces and projectors} For any $p\in \mathcal{L}$, let $\tau_1(p), \dots \tau_d(p)$ be a basis of $T_p \mathcal{L}$; this implies that $i\tau_1, \dots i\tau_d$ is a basis of $N_p \mathcal{L}$ since $\mathcal{L}$ Lagrangian. At a point $u = \Psi(p,N)$, we define $\tau_1(u),\dots\tau_d(u)$ as follows: these vectors are given by the parallel transport of $\tau_1(p), \dots \tau_d(p)$ along the geodesic from $p$ to $u$, which reads $s \mapsto \Psi(p,sN)$. Define then
$$
\widetilde{T}_u \mathcal{L} = \operatorname{span} \{ \tau_1(u),\dots\tau_d(u) \} \quad \mbox{and} \quad \widetilde{N}_u \mathcal{L} = \operatorname{span} \{ i\tau_1(u),\dots i\tau_d(u) \}.
$$
With this definition, $\widetilde{T}_u \mathcal{L}$ and $\widetilde{N}_u \mathcal{L}$ are orthogonal, and such that $i \widetilde{T}_u \mathcal{L} = \widetilde{N}_u \mathcal{L}$. The latter property is clear, while the former follows from the fact that   for all $k$ and $l$,
$$ i_{u}\tau_k(u) \cdot \tau_l(u) = i_{p} \tau_k(p) \cdot \tau_l(p)= 0
$$
since the  $\tau_{j}$ are  parallel transported,  $\nabla i = 0$ and $\mathcal{L}$ is a Lagrangian submanifold.

Define then $P^\top$ and $P^\perp$ to be the orthogonal projectors from $T_u \mathcal{M}$ to $ \widetilde{T}_u \mathcal{L}$ and $ \widetilde{N}_u \mathcal{L}$ respectively. They satisfy
$$
P^\top + P^\perp = \operatorname{Id} \quad \mbox{and} \quad P^\perp i = i P^\top.
$$
Furthermore, we claim that 
\begin{equation}
\label{macareux}
\mbox{if $(p,N) \in N \mathcal{L}$}, \quad \nabla_N P^\top_p = 0 \quad \mbox{and} \quad \nabla_N P^\perp_p = 0.
\end{equation}
First, it suffices to prove the first of these two identities since $P^\top + P^\perp =  \operatorname{Id}$.
Next, consider $X \in T_p \mathcal{L}$, which we extend to a parallel vector field $X(s)$ along the geodesic $s \mapsto \Psi(p,sN)$. It is then easy to see that $P^{\top}_{\Psi(p,sN)} X(s) = X(s)$. Since $X(s)$ is parallel, the differentiation of this identity gives
\begin{equation}
\nonumber
\nabla_N P^{\top}_p X = 0.
\end{equation}
Similarly, if $N \in N_p \mathcal{L}$ is extended to a parallel vector field $N(s)$, the differentiation of the identity $P^{\top}_{\Psi(p,sN)} N(s) = N(s)$ yields
\begin{equation}
\nonumber
\nabla_N P^{\top}_p N = 0.
\end{equation}
We finally extend $\nabla^\top$ and $\nabla^\perp$  away from $\mathcal{L}$ by setting
\beq
\label{nablaetendu}
\nabla^\top = P^\top \nabla, \quad \nabla^\perp= P^\perp \nabla
\eeq
where $\nabla$ is the Levi-Civita connection on $\mathcal{M}.$

\subsubsection{Extending the second fundamental forms}
It is natural to extend the second fundamental forms by setting for $u \in \mathcal{M}$
\begin{equation*}
\begin{split}
& \deux^\top (X,N) = - (\nabla_X P^\top) N \qquad \mbox{if $(X,N) \in T_u \mathcal{M} \times \widetilde{N}_u \mathcal{L}$}\\
& \deux^\perp (X,Y) = - (\nabla_X P^\perp) Y \qquad \mbox{if $(X,Y) \in T_u \mathcal{M} \times \widetilde{T}_u \mathcal{L}$}
\end{split}
\end{equation*}
(notice that the second argument, $N$ or $Y$ above, is still required to be normal, respectively tangent, but not the first one). With these definitions,
\begin{equation*}
\begin{split}
& \nabla_X N = \nabla^\perp_X N + \deux^\top (X,N) \qquad \mbox{if $(X,N) \in T_u \mathcal{M} \times \widetilde{N}_u \mathcal{L}$}\\
& \nabla_X Y = \nabla^\top_X Y +  \deux^\perp (X,Y) \qquad \mbox{if $(X,Y) \in T_u \mathcal{M} \times \widetilde{T}_u \mathcal{L}$}.
\end{split}
\end{equation*}
Notice that~(\ref{macareux}) implies that
\begin{equation}
\label{rougegorge}
\mbox{if $(p,N) \in \mathcal{L} \times N_p \mathcal{L}$}, \qquad \deux^\top_p(N,\cdot) = 0 \quad \mbox{and} \quad \deux^\perp_p(N,\cdot) = 0.
\end{equation}
If $N \in \widetilde{N}_u \mathcal{L}$, with $u \notin\mathcal L$, $\deux^\top(\cdot,N)$ is not exactly symmetrical anymore on $\widetilde{T}_u \mathcal{L}$.
Namely, the following holds if $(N,X,Y) \in \widetilde{N}_u \mathcal{L} \times \widetilde{T}_u \mathcal{L} \times \widetilde{T}_u \mathcal{L}$:
$$
\langle \deux^\top(X,N),Y \rangle = \langle \deux^\top(Y,N),X \rangle - \langle N,[X,Y]\rangle.
$$
It follows from the elementary computation:
\begin{equation*}
\begin{split}
\langle \deux^\top(X,N),Y\rangle &  = \langle P^\top \nabla_X N,Y \rangle  = \langle \nabla_X N,Y \rangle = - \langle N , \nabla_X Y \rangle =  - \langle N , \nabla_Y X \rangle - \langle N , [X,Y] \rangle\\
&  = \langle \nabla_Y N,X \rangle - \langle N , [X,Y] \rangle = \langle \deux^\top(Y,N),X\rangle - \langle N , [X,Y] \rangle.
\end{split}
\end{equation*}
 Note that since $\tilde N_{u}$ is still orthogonal to $\tilde T_{u}$, 
$(N,X,Y) \mapsto \langle N, [X, Y] \rangle$ is indeed a tensor for $N \in  \tilde N_{u}$ and $X, \, Y \in\tilde T_{u}$ .
It implies in particular that, at the point $u = \Psi(p,\epsilon^2 n)$, 
if $(N,X,Y) \in \widetilde{N}_u \mathcal{L} \times \widetilde{T}_u \mathcal{L} \times \widetilde{T}_u \mathcal{L}$:
\begin{equation}
\label{almostsym} |\langle \deux^\top(X,N),Y\rangle - \langle \deux^\top(Y,N),X \rangle | \lesssim \epsilon^2 |X| |Y|.
\end{equation}

\subsubsection{Extending the tangent curvature tensor} Commuting tangential derivatives $\nabla^\top$ away from $\mathcal{L}$ also gives rise to a curvature tensor: if
$X,Y \in T \mathcal{M}$ and $Z \in \widetilde{T} \mathcal{L}$,
\beq
\label{riemtopkahl}
\nabla_X^\top \nabla_Y^\top Z - \nabla_Y^\top \nabla^\top_X Z - \nabla_{[X,Y]}^\top Z = R^\top(X,Y)Z 
\eeq
with
$$
R^\top(X,Y)Z = P^\top R(X,Y) Z + \deux^\top(Y,\deux^\perp(X,Z)) - \deux^\top(X,\deux^\perp(Y,Z)) 
$$
(recall that $R$ is the Riemannian curvature tensor of $\mathcal{M}$). This identity follows from the computation:
\begin{equation*}
\begin{split}
&\nabla_X^\top \nabla_Y^\top Z - \nabla_Y^\top \nabla_X^\top Z - \nabla_{[X,Y]}^\top Z  = P^\top \nabla_X P^\top \nabla_Y Z - P^\top \nabla_Y P^\top \nabla_X Z - \nabla_{[X,Y]}^\top Z \\
& \quad = P^\top (\nabla_X P^\top) \nabla_Y Z - P^\top (\nabla_Y P^\top) \nabla_X Z + P^\top R(X,Y) Z \\
& \quad = P^\top \nabla_Y \left[ (\nabla_X P^\top) Z \right] - P^\top \nabla_X \left[ (\nabla_Y P^\top) Z \right] + P^\top \left( \left[ \nabla_X \nabla_Y - \nabla_Y \nabla_X \right] P^\top \right) Z + P^\top R(X,Y) Z.
\end{split}
\end{equation*}
Starting from the identity $P^\top(\nabla_X \nabla_Y - \nabla_Y \nabla_X) Z = P^\top(\nabla_X \nabla_Y - \nabla_X \nabla_Y) P^\top Z$,
it is easy to see that $P^\top \left( \left[ \nabla_X \nabla_Y - \nabla_Y \nabla_X \right] P^\top \right) Z = 0$. Therefore, by definition of the second fundamental forms,
\begin{equation*}
\begin{split}
\nabla_X^\top \nabla_Y^\top Z -& \nabla_Y^\top \nabla_X^\top Z - \nabla_{[X,Y]}^\top Z = P^\top \nabla_Y \deux^\perp (X,Z) -  P^\top \nabla_X \deux^\perp (Y,Z) + P^\top R(X,Y) Z \\
& = \deux^\top (Y, \deux^\perp  (X,Z)) - \deux^\top (X, \deux^\perp  (Y,Z)) + P^\top R(X,Y) Z.
\end{split}
\end{equation*}

\subsubsection{Extending the normal curvature tensor} A computation similar to the preceding one gives: if $X,Y \in T \mathcal{M}$ and $N \in \widetilde{N} \mathcal{L}$,
\beq
\label{riemperpkahl}
\nabla_X^\perp \nabla_Y^\perp N - \nabla_Y^\perp \nabla^\perp_X N - \nabla_{[X,Y]}^\perp N = R^\perp(X,Y)N
\eeq
with
$$
R^\perp(X,Y)N =  P^\perp R(X,Y) Z + \deux^\perp(Y,\deux^\top(X,N)) - \deux^\perp(X,\deux^\top(Y,N)).
$$

\subsubsection{New coordinates for $u$}
We shall describe $u$ by coordinates $(\epsilon \phi,\epsilon^2 n) \in T_0 \mathcal{L} \times N_{\Phi(\epsilon \phi)} \mathcal{L}$:
$$
u = \Psi(p,\epsilon^2 n) \qquad \mbox{with} \qquad p = \Phi(\epsilon \phi) \qquad \mbox{or equivalently} 
\qquad u = \Psi(\Phi(\epsilon \phi),\epsilon^2 n).
$$
It will be convenient to denote
$$
D \Psi_{(p,N)} = \Sigma_{(p,N)}.
$$
Viewing $\Sigma$ as a map $N \mathcal{L} \simeq T_p \mathcal L \times N_p \mathcal L \rightarrow \widetilde{T}_u \mathcal{L} \times \widetilde{N}_u \mathcal{L}$, it is natural to adopt a block matrix notation, where the first coordinate is the tangential, and the second the normal one. Then we claim that $\Sigma$ can be written as
\begin{equation}
\label{rossignol}
\Sigma_{(p,\epsilon^2 n)} = \left( \begin{array}{cc} S_{\top \top}(\Phi, \epsilon^2 n) & \epsilon^4 S_{\top \perp}(\Phi, n) \\ \epsilon^4 S_{\perp \top }(\Phi, n) &  S_{\perp \perp} (\Phi,\epsilon^2 n) \end{array} \right)
\end{equation}
where $S_{\top \top}$, $S_{\top \perp}$, $S_{\perp \top }$, $S_{\perp \perp}$ depend smoothly on their arguments, with bounds uniform in $\epsilon$. To check that $P^\top \Sigma P^\perp$ is indeed $O(\epsilon^4)$, set
$$
G(s) = P^\top_{\Psi(p,sN)} \Sigma_{(p,sN)} \left( \begin{array}{l} 0 \\ N \end{array} \right).
$$
It follows from~\eqref{psi'} that $G(0) = 0$. Furthermore, by~\eqref{psi''},
$$
\nabla_s G(s)_{|s=0} = \nabla_N P^\top_p \Sigma_p \left( \begin{array}{l} 0 \\ N \end{array} \right) + P^\top_p  \nabla_N \Sigma  \left( \begin{array}{l} 0 \\ N \end{array} \right) = 0.
$$
Therefore, $G(s) = O(s^2)$, giving the desired result. A similar argument gives that $P^\perp \Sigma P^\top$ is $O(\epsilon^4)$.

\bigskip
Further properties of $S_{\top \top}$ and $S_{\perp \perp}$ that will be useful  are that for every $p \in \mathcal{L}$
 and for every $X \in T_{p}\mathcal{L}$, $N \in N_{p} \mathcal{L}$, we have
\begin{equation}
\label{devDPsi}
 (S_{\top \top})_{p}= Id, \,  (S_{\perp \perp})_{p} = Id, \, (\nabla^\perp S_{\perp \perp})_{p} = 0, \quad (\nabla_{X}^\top S_{\top \top})_{p} = 0, \, 
 ( \nabla_{N}^\top S_{\top \top})_{p}= \deux^\top(\cdot, N).
\end{equation}

Let us prove the above properties. We start with the properties of $S_{\perp \perp}$. Let us recall that by definition
 $S_{\perp \perp}(p,sN) Z=  P^\perp(\Phi(p, sN)) D \Psi_{(p, sN)} (0, Z)$ for $s$ sufficiently small and $N,$  $Z \in N_{p} \mathcal{L}.$
  For $s=0$, we get from \eqref{psi'} that $S_{\perp \perp} Z = Z$.
   This also implies that $\nabla_{X}^\perp S_{\perp \perp} = 0$.  By applying $\nabla_{s}^\perp$ and taking the value
  at $s=0$, we  also obtain that
  $$ (\nabla_{N}^\perp S_{\perp \perp})_{p} Z = ( \nabla_{N} P^\perp) Z + P^\perp \nabla D\Psi_{(p, 0)}(0, Z), (0, N))= 0$$
   by using \eqref{psi''} and \eqref{macareux}.
   In a similar way,  since
   $ S_{\top \top}(p, sN) Y=  P^\top(\Phi(p, sN)) D \Psi_{(p, sN)} (Y, 0)$ for $s$ sufficiently small and $N \in N_{p} \mathcal{L}
   , \, Y \in T_{p} \mathcal{L},$ we also find that
   $ (S_{\top \top})_{p} Y = Y$ and hence that $(\nabla_{X}^\top S_{\top \top})_{p} = 0$.
    By taking $\nabla_{s}^\top$ at $s=0$, we obtain that
    $$( \nabla_{N}^\top S_{\top \top} )_{p}Y= P^\top \nabla D\Psi_{(p,0)} ((Y,0), (0, N)) = \deux^\top(Y, N).$$ 
   
\subsubsection{Computing $V'(u)$} Recall that in this section, we assume that  $V(u) = \lambda \operatorname{dist}(u,\mathcal{L})^2$, or in other words
$$
V(\Psi(p,N)) = \lambda |N|^2.
$$
To compute $V'(u)$, consider a path $(p(s),N(s))$ such that $p(s) \in \mathcal{L}$, $N(s) \in N_{p(s)} \mathcal{L}$, and let $u(s) = \Psi(p(s),N(s))$.
Differentiate then
$$
\partial_s V(u(s)) =  \lambda \partial_s  |N(s)|^2
= 2\lambda \left\langle \nabla^\perp_s N(s), N(s) \right\rangle.
$$
Recall that $\partial_s u = D\Psi \left( \begin{array}{l} \partial_s p \\ \nabla_s^\perp N(s) \end{array} \right)$, therefore 
$\left( \begin{array}{l} \partial_s p \\ \nabla_s^\perp N(s) \end{array} \right) = (D\Psi)^{-1} \partial_s u$ and, coming back
to the above,
$$
\partial_s V(u(s)) = 2\lambda \left\langle  (D\Psi)^{-1} \partial_s u,  \left( \begin{array}{l} 0 \\ N(s) \end{array} \right) \right\rangle
= 2\lambda \left\langle  \partial_s u,  (D\Psi)^{-*} \left( \begin{array}{l} 0 \\ N(s) \end{array} \right) \right\rangle
$$
(using the notation $ (D\Psi)^{-*}$ as a shorthand for $\left((D\Psi)^{-1}\right)^*$)
which means that
\begin{equation}
\label{bassan}
V'(\Psi(p,N)) = 2\lambda (D\Psi)^{-*}_{(p,N)} \left( \begin{array}{l} 0 \\ N \end{array} \right) \quad \mbox{and} 
\quad V'(\Psi(p,\epsilon^2 n)) = 2\lambda \epsilon^2 (D\Psi)^{-*}_{(p,\epsilon^2 n)} \left( \begin{array}{l} 0 \\ n \end{array} \right).
\end{equation}
We claim that
\begin{equation}
\begin{split}
\label{chardonneret}
& D\Psi_{(p,\epsilon^2 n)}\left( \begin{array}{l} 0 \\ n \end{array} \right) = (D\Psi)^{-*}_{(p,\epsilon^2 n)}\left( \begin{array}{l} 0 \\ n \end{array} \right) + O(\epsilon^4).
\end{split}
\end{equation}
To prove this, let for $p\in \mathcal{L}$, and $n,N \in N_p \mathcal{L}$ 
$$
G(s) = D\Psi_{(p,s n)} \left( \begin{array}{l} 0 \\ n \end{array} \right) - 
(D\Psi)^{-*}_{(p,s n)} \left( \begin{array}{l} 0 \\ n \end{array} \right).
$$
It satisfies $G(0)=0$ and 
$$
G'(0)= \nabla_N D\Psi \left( \begin{array}{l} 0 \\ n \end{array} \right) - (D\Psi)^{-*} (\nabla_N D\Psi)^* (D\Psi)^{-*}  \left( \begin{array}{l} 0 \\ n \end{array} \right) = 0-0=0.
$$
Furthermore, it is easy to see that $\langle D\Psi_{(p,s n)}  \left( \begin{array}{l} 0 \\ n \end{array} \right)\,,\,\tau_k (s) \rangle = 0$; this quantity is indeed zero for $s=0$, and has a zero derivative (in $s$).
Coming back to $V'(u)$, we obtain
\begin{equation}
\label{formulaVprime}
\begin{split}
& V'(\Psi(p,\epsilon^2 n)) = 2\lambda \epsilon^2\Sigma \left( \begin{array}{l} 0 \\ n \end{array} \right) + \epsilon^6 R_V(p,n) \\
& P^\top V'(\Psi(p,\epsilon^2 n)) = \epsilon^6 R_V^\top(p,n) \\
&  P^\perp V'(\Psi(p,\epsilon^2 n)) = 2\lambda \epsilon^2 S_{\perp \perp}(p, \epsilon^2n) n + \epsilon^6  R_V^\perp (p,n)
\end{split}
\end{equation}
 where $R_V$ is a smooth function of $p$ and $n$ and we have set 
 $$R_{V}^\top(p,n)= P^\top R_{V}(p,n) + 2 \lambda S_{\top \perp}(p, n), \quad R_{V}^\perp (p, n) = P^\perp R_{V}(p,n)$$
 by using \eqref{rossignol}.

\subsubsection{Action of $iB$}
 We shall also need to describe the action of $ i B \Sigma_{(p, \epsilon^2n)}$. We can write it in block matrix form
\begin{equation}
\label{Baction} iB \Sigma_{(p, \epsilon^2n)}= \left( \begin{array}{cc} (iB)_{\top\top} &  \epsilon^4  (iB)_{\top \perp} \\  \epsilon^4(iB)_{\perp \top} &  (iB)_{\perp\perp} \end{array} \right),
\end{equation}
 where we have set 
 \begin{align*}
 & ( iB)_{\top\top }(p, \epsilon^2n)X = P^\top (iB) \Sigma_{(p, \epsilon^2n)} (X, 0), \quad (iB)_{\perp\perp}(p, \epsilon^2 n) N = P^\perp (iB) \Sigma_{(p, \epsilon^2n)} (0,N),  \\
  & (iB)_{\top \perp}(p, n) N =  {1 \over \epsilon^4}P^\top ( iB) \Sigma_{(p, \epsilon^2n)}(0,N),  \quad   (iB)_{\perp \top}(p, n) N= {1\over \epsilon^4} P^\perp (iB) \Sigma_{(p, \epsilon^2n)} (X, 0),  \end{align*}
 for every $ (X, N) \in T_{\Phi} \mathcal{L} \times N_{\Phi}\mathcal{L}.$

 Again, the tensors $(iB)_{\top \top}$, $(iB)_{\perp \perp}$,... are smooth with derivatives uniformly bounded in $\epsilon$. This is due to the fact that 
 $$ P^\top (iB) \Sigma \left( \begin{array}{c} 0 \\ N \end{array} \right)= O(\epsilon^4) N, \quad P^\perp (iB) \Sigma  \left( \begin{array}{c} X \\ 0 \end{array} \right)= O(\epsilon^4) X.$$
  Indeed, for the first identity, this follows again by setting
    $$ G(s) =   \left(P^\top (iB) \Sigma \right)_{(p, sn)} \left( \begin{array}{c} 0 \\ N \end{array} \right)$$
and by noticing that $G(0)= 0$ and $\nabla_s G_{|0}= 0$ since  $\nabla i=0$, $\nabla B= 0$ on $\mathcal{L}$, 
 $ \nabla_{n} P^\top = 0 $ (thanks to \eqref{macareux}),    and by using \eqref{psi''}. The second one can be obtained following the same lines.
 
\bigskip

Finally, we observe that  on $\mathcal{L}$, that is  to say,  for every $p \in \mathcal{L}$
 and for every $Y \in T_{p}\mathcal{L}$, $N \in N_{p} \mathcal{L}$, we have that
\begin{align}
& \label{iBTTL} \nabla^\top_{Y} (iB)_{\top \top} = 0, \quad \nabla^\top_{N}( iB)_{\top \top}=  \deux^\top(iB \cdot, N), \\
& \label{iBPPL} \nabla^\perp (iB)_{\perp \perp} = 0.
\end{align}
 Indeed, by using the definitions of these tensors, we obtain that on $ \mathcal{L}$, and for every
  $X, \, Y \in T_{p} \mathcal{L}$ and $N \in N_{p}\mathcal{L}$, prolonging $X$ to be a parallel vector field,
   $$  ( \nabla^\top_{Y} (iB)_{\top \top} )X =( \nabla_{Y}^\top P^\top)( iB X) =0$$
  and 
  $$(\nabla^\top_{N} (iB)_{\top \top} )X = (\nabla_{N}^\top P^\top) (iB X) = \deux^\top(iB Y, N)$$
   thanks to \eqref{macareux},  \eqref{psi''} and  Corollary \ref{BdeuxT}.
 
  The proof of the second identity follows from the same arguments.
  

\subsection{The hydrodynamical system}
\label{sectionhydrokahler} First of all, we will work under the

\medskip

\noindent \underline{Bootstrap hypothesis.} As in Section~\ref{sectioneuclid}, we work on an interval of time $[0, T^\epsilon]$ such that the estimate \eqref{hypapriori} is satisfied for some $r$ sufficiently small. Note that this ensures that  our exponential coordinate system provides a nice parametrization of the solution.

\medskip

Written in the $(\Phi,n)$ coordinates, \eqref{SMK} reads
\begin{multline}
\label{Keq1} \Sigma_{(\Phi, \epsilon^2n)} \Big(\partial_{t} \Phi - {c \over \epsilon^2} \partial_{x} \Phi, \epsilon^2 (\nabla_{t}^\perp n - {c \over \epsilon^2} \nabla_{x}^\perp n ) \Big)
 - {1 \over \epsilon^2} i B \Sigma_{(\Phi, \epsilon^2n)} (\partial_{x} \Phi, \epsilon^2 \nabla_{x}^\perp n ) \\
= i \Big( {1 \over 2 \epsilon} \nabla_{x}\big(\Sigma_{(\Phi, \epsilon^2 n )} \big( \partial_{x} \Phi, \epsilon^2 \nabla_{x}^\perp n \big) \big) - {1\over \epsilon^3} V'(u) \Big).
\end{multline}
In order to write a system, we rely on the decomposition of $\Sigma$ obtained in~(\ref{rossignol}).

First take the tangential component of the above to obtain
\begin{align*} 
& S_{\top \top}(\Phi,\epsilon^2 n) \big(\partial_{t} - {c \over \epsilon^2} \partial_{x} \big){ \Phi \over \epsilon}  + \epsilon^5 S_{\top \perp} (\Phi, n)  \big(\nabla_{t}^\perp n - {c \over \epsilon^2} \nabla_{x}^\perp n \big)  - {1 \over \epsilon^2} (iB)_{\top\top } {\partial_{x} \Phi \over \epsilon} - \epsilon^3 (iB)_{\top \perp} \nabla_{x}^\perp n \\
&  = i \Big( {1 \over 2  } \nabla_{x}^\perp \big( S_{\perp \perp}(\Phi,\epsilon^2 n)
\nabla_{x}^\perp n \big)+  { 1 \over 2 \epsilon^2 } \nabla_{x}^\perp \Big( P^\top \Sigma_{(\Phi, \epsilon^2n)} \cdot \big( \partial_{x} \Phi, \epsilon^2 \nabla_{x}^\perp n \big) \Big) \\
& \qquad \qquad \qquad \qquad  \qquad  \qquad   \qquad  \qquad    \qquad \qquad + \frac{\epsilon^3}{2} \nabla_x^{\perp} \left(  S_{\perp \top}(\Phi,\epsilon^2 n) \frac{\partial_x \Phi}{\epsilon} \right)
- {1 \over \epsilon^4}P^\perp V'(u) \Big).
\end{align*} 
Next, observe that
\begin{equation*}
\begin{split}
&{ 1 \over 2 \epsilon^2 } \nabla_{x}^\perp \Big( P^\top \Sigma_{(\Phi, \epsilon^2n)} \big( \partial_{x} \Phi, \epsilon^2 \nabla_{x}^\perp n \big) \Big) \\
& \qquad \qquad \qquad = {1 \over 2 } \deux^\perp \Big(\Sigma_{(\Phi, \epsilon^2n)} \cdot \big( {\partial_{x} \Phi \over \epsilon }, \epsilon \nabla_{x}^\perp n \big), 
P^\top \Sigma_{(\Phi, \epsilon^2n)} \cdot \big( {\partial_{x} \Phi \over \epsilon }, \epsilon \nabla_{x}^\perp n \big) \Big),
\end{split}
\end{equation*}
while formula~\eqref{formulaVprime} gives the expression of $P^\perp V'(u)$.
We thus get the equation for the tangential component:
\begin{align*}
& S_{\top \top}\big(\partial_{t} - {c \over \epsilon^2} \partial_{x} \big){ \Phi \over \epsilon}  - {1 \over \epsilon^2} (iB)_{\top\top} \partial_{x} {\Phi \over \epsilon} + \epsilon^5 S_{\top \perp}  \big(\nabla_{t}^\perp n - {c \over \epsilon^2} \nabla_{x}^\perp n \big) - \epsilon^3 (iB)_{\top \perp} \nabla_{x}^\perp n  \\
& \qquad  = i \left( {1 \over 2  } \nabla_{x}^\perp \big( S_{\perp \perp} \nabla_{x}^\perp n \big) - \frac{2\lambda}{\epsilon^2} S_{\perp \perp} n +  {1 \over 2} \deux^\perp\left(  \Sigma \left(\begin{array}{c} {\partial_{x} \Phi \over \epsilon} \\ \epsilon \nabla_{x}^\perp n 
\end{array} \right), P^\top   \Sigma \left(\begin{array}{c} {\partial_{x} \Phi \over \epsilon} \\ \epsilon \nabla_{x}^\perp n 
\end{array} \right) \right) \right.\\
& \left. \qquad \qquad \qquad \qquad  \qquad \qquad \qquad + \frac{\epsilon^3}{2} \nabla_x^{\perp} \left( S_{\perp \top} \frac{\partial_x \Phi}{\epsilon}\right) + \epsilon^2  R_V^\perp \right). 
\end{align*}

In  a similar way, we can get the equation for the normal component. Combining it with the above, we find the hydrodynamic form of \eqref{Keq1}:
\begin{equation}
\label{eqhydroK}
\left\{ 
\begin{array}{l} 
\displaystyle 
 S_{\top \top}\big(\partial_{t} - {c \over \epsilon^2} \partial_{x} \big){ \Phi \over \epsilon}  - {1 \over \epsilon^2} (iB)_{\top\top} \partial_{x} {\Phi \over \epsilon} + \epsilon^5 S_{\top \perp}  \big(\nabla_{t}^\perp n - {c \over \epsilon^2} \nabla_{x}^\perp n \big) - \epsilon^3 (iB)_{\top \perp} \nabla_{x}^\perp n  \\
\displaystyle  \qquad  = i \left( {1 \over 2  } \nabla_{x}^\perp \big( S_{\perp \perp} \nabla_{x}^\perp n \big) - \frac{2\lambda}{\epsilon^2} S_{\perp \perp} n +  {1 \over 2} \deux^\perp\left(  \Sigma \left(\begin{array}{c} {\partial_{x} \Phi \over \epsilon} \\ \epsilon \nabla_{x}^\perp n 
\end{array} \right), P^\top   \Sigma \left(\begin{array}{c} {\partial_{x} \Phi \over \epsilon} \\ \epsilon \nabla_{x}^\perp n 
\end{array} \right) \right) \right.\\
\displaystyle  \left. \qquad \qquad \qquad \qquad  \qquad \qquad \qquad + \frac{\epsilon^3}{2} \nabla_x^{\perp}  (S_{\perp \top} \frac{\partial_x \Phi}{\epsilon}) + \epsilon^2  R_{V}^\perp \right). 
 \\
\displaystyle S_{\perp \perp} \big( \nabla_{t}^\perp - {c \over \epsilon^2} \nabla_{x}^\perp \big) n -{1 \over \epsilon^2}(iB)_{\perp\perp} \nabla_{x}^\perp n   - \epsilon (iB)_{\perp \top} D\Phi \partial_{x}\phi + \epsilon^3 S_{\perp \top} (\partial_t - \frac{c}{\epsilon^2} \partial_x) \frac{\Phi}{\epsilon} \\
\displaystyle \quad  = i \left( {1 \over 2 \epsilon^2} \nabla_{x}^\top \left( S_{\top\top} {\partial_{x} \Phi \over \epsilon}
   + \epsilon^5   S_{\top \perp} \nabla_{x}^\perp n   \right) 
    + {1 \over 2} \deux^\top \left( \Sigma  \left( \begin{array}{cc} {\partial_{x} \Phi \over \epsilon} \\ \epsilon \nabla_{x}^\perp n \end{array} \right) , 
     P^\perp\Sigma \left( \begin{array}{cc} {\partial_{x} \Phi \over \epsilon} \\ \epsilon \nabla_{x}^\perp n \end{array} \right)  \right) + \epsilon  R_{V}^\top \right). 
\end{array}
\right.
\end{equation}

As a first consequence of this hydrodynamic formulation and \eqref{hypapriori}, we easily  get the following lemma.

\begin{lem} For $s \geq 2$, we have  on $[0, T^\epsilon]$ the estimates
\begin{align}
& \label{dtphikahl}
\epsilon^2 \| \partial_{t} \phi(t) \|_{s-1} = O(\mathcal{E}_s) \\
& \label{dxxxphikahl}
\epsilon \| \partial_{xxx} \phi \|_{s-1} =  O(\mathcal{E}_s) 
\end{align}
\end{lem}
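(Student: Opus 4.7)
The strategy mirrors the Euclidean case: use the first equation of \eqref{eqhydroK} to isolate $\partial_t\phi$, and the second to isolate $\partial_{xx}\phi$ (and then differentiate once more in $x$). The only new ingredient is that the coefficients $S_{\top\top}$, $S_{\perp\perp}$, $(iB)_{\top\top}$, $(iB)_{\perp\perp}$, $S_{\top\perp}$, $(iB)_{\top\perp}$, \ldots depend on $u = \Psi(\Phi(\epsilon\phi),\epsilon^2 n)$ and no longer reduce to constants, but this is harmless since by \eqref{devDPsi} and \eqref{rossignol}--\eqref{Baction} all these tensors are smooth and, under the bootstrap hypothesis \eqref{hypapriori}, enjoy uniform bounds together with all their derivatives. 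In particular, for $\epsilon\|\phi\|_{L^\infty}+\epsilon^2\|n\|_{L^\infty}\leq r$ with $r$ small, $S_{\top\top}$ and $S_{\perp\perp}$ are invertible with inverses close to the identity, while the off-diagonal blocks $S_{\top\perp}$, $S_{\perp\top}$, $(iB)_{\top\perp}$, $(iB)_{\perp\top}$ carry the extra $\epsilon^4$ factors already accounted for in the system.

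For \eqref{dtphikahl}, I would multiply the first line of \eqref{eqhydroK} on the left by $\epsilon\,(D\Phi)^{-1}S_{\top\top}^{-1}$ and use $\partial_t(\Phi/\epsilon)=D\Phi\,\partial_t\phi$ to get
\[
\epsilon^2\partial_t\phi = (D\Phi)^{-1}\bigl(c + S_{\top\top}^{-1}(iB)_{\top\top}\bigr)D\Phi\,\partial_x\phi
+ S_{\top\top}^{-1}(D\Phi)^{-1}\,\mathcal{R}_1,
\]
where $\mathcal{R}_1$ gathers the right-hand side and the $O(\epsilon^3)$-small coupling terms from the first line of \eqref{eqhydroK}. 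Using formula \eqref{formulaVprime} for $V'$, the product law \eqref{prod}, the derivation rule \eqref{difftensor}, and the fact that the tensors involved depend on $\epsilon\phi$ and $\epsilon^2 n$, each piece of $\mathcal{R}_1$ is bounded in $H^{s-1}$ by a constant multiple of $\|\partial_x\phi\|_s + \|n\|_s + \epsilon\|\nabla_x^\perp n\|_s + \epsilon^2\|(\nabla_x^\perp)^2 n\|_s + \epsilon\|\partial_{xx}\phi\|_s$, which is absorbed by $\mathcal{E}_s$ (using $\mathcal{E}_{s,1}$ to control the two last pieces).

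For \eqref{dxxxphikahl}, I would isolate the singular spatial term from the second line of \eqref{eqhydroK}: moving every other term to the left, this gives
\[
\tfrac{i}{2\epsilon^2}\nabla_x^\top\!\Bigl(S_{\top\top}\tfrac{\partial_x\Phi}{\epsilon}\Bigr) = \mathcal{R}_2,
\]
where $\mathcal{R}_2$ contains $S_{\perp\perp}(\nabla_t^\perp - \tfrac{c}{\epsilon^2}\nabla_x^\perp)n$, $\tfrac{1}{\epsilon^2}(iB)_{\perp\perp}\nabla_x^\perp n$, $\epsilon^3 S_{\perp\top}(\partial_t-\tfrac{c}{\epsilon^2}\partial_x)\tfrac{\Phi}{\epsilon}$, the coupling term $\epsilon(iB)_{\perp\top}D\Phi\,\partial_x\phi$, the $O(\epsilon^3)$ piece $\tfrac{i\epsilon^3}{2}\nabla_x^\top(S_{\top\perp}\nabla_x^\perp n)$, the $\deux^\top$ quadratic term, and the remainder $\epsilon R_V^\top$. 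Multiplying by $-2i\epsilon^2$, inverting $S_{\top\top}$ to leading order, and using $\nabla_x^\top(S_{\top\top}D\Phi\partial_x\phi)=D\Phi\,\partial_{xx}\phi+(\text{lower order})$ produces
\[
\partial_{xx}\phi = -2i\,(D\Phi)^{-1}S_{\top\top}^{-1}\epsilon^2\,\mathcal{R}_2 + (\text{lower order terms}).
\]
Every contribution to $\epsilon^2\mathcal{R}_2$ is $O_{H^{s-1}}(\mathcal{E}_s)$ once \eqref{dtphikahl} is invoked to handle the $\epsilon^2\nabla_t^\perp n$ piece and the $\epsilon^3 S_{\perp\top}\partial_t\Phi/\epsilon$ piece, and once one uses that $\tfrac{1}{\epsilon^2}(iB)_{\perp\perp}\nabla_x^\perp n$ contributes $O_{H^{s-1}}(\tfrac{1}{\epsilon}\|\nabla_x^\perp n\|_{s-1})=O_{H^{s-1}}(\mathcal{E}_s/\epsilon)$, balanced by the $\epsilon^2$ prefactor. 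Differentiating the resulting identity once in $x$ and using \eqref{prod} together with \eqref{difftensor}, \eqref{difftensor2} yields $\epsilon\|\partial_{xxx}\phi\|_{s-1}=O(\mathcal{E}_s)$.

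The main technical point, which is nevertheless routine, is careful bookkeeping of the $\epsilon$ weights: each off-diagonal block of $\Sigma$ and $iB$ carries an $\epsilon^4$ factor as noted in \eqref{rossignol} and \eqref{Baction}, and one must verify that after the multiplication by $\epsilon^2$ (resp. the isolation of $\partial_{xx}\phi$), every remaining term has its prefactor balanced by the corresponding piece of $\mathcal{E}_{s,1}+\mathcal{E}_{s,2}$. For commutators of $\nabla_x$ with tensors depending on $u$, one uses that $\partial_x u = \Sigma(\partial_x\Phi,\epsilon^2\nabla_x^\perp n)$, so derivatives of the coefficients produce factors of $\partial_x\phi$ or $\epsilon^2\nabla_x^\perp n$ which are acceptable under \eqref{hypapriori}.
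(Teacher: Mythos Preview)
Your proposal is correct and follows exactly the approach the paper intends: in the K\"ahler section the paper does not write out this proof but simply states that the lemma is ``easily'' obtained from the hydrodynamical formulation \eqref{eqhydroK} together with \eqref{hypapriori}, referring implicitly to the Euclidean argument for \eqref{dtphi}--\eqref{dxxxphi}, which is precisely what you carry out (isolate $\partial_t\phi$ from the first line, $\partial_{xx}\phi$ from the second, then differentiate, using \eqref{prod}, \eqref{difftensor}, and the smoothness of the tensors from \eqref{rossignol}, \eqref{Baction}, \eqref{devDPsi}). One cosmetic point: the prefactor when isolating $\epsilon^2\partial_t\phi$ from the first line should be $\epsilon^2(D\Phi)^{-1}S_{\top\top}^{-1}$ rather than $\epsilon(D\Phi)^{-1}S_{\top\top}^{-1}$, but your displayed identity is already correct so this is only a typographical slip.
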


\subsection{Estimates on $u$} 

\begin{prop}
\label{propenergieK1}
The following a priori estimate holds on $[0, T^\epsilon]$:
$$
\mathcal{E}_{s,1}(u,t) \lesssim \mathcal{E}_{s,1}(u,0) + \int_0^t O(\mathcal{E}_{s}(u,\mathcal{\tau}) \,d\tau.
$$
\end{prop}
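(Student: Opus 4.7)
The plan is to mimic the strategy of Proposition~\ref{propschroplat} from the Euclidean case, working directly on the Schr\"odinger map system~\eqref{SMK} rather than on the hydrodynamical system. The Euclidean argument rested on three pillars: (i) applying the operator $(\epsilon^2\partial_t - (c+Bi)\partial_x)\partial^m$ to the equation, producing a left hand side $(\epsilon^2\partial_t - c\partial_x)^2 + \mu\partial_x^2$ thanks to the algebraic identities $Bi=-iB$ and $B^2=-\mu I$; (ii) pairing the resulting identity in $L^2$ against $(\partial_t - (c/\epsilon^2)\partial_x)\partial^m u$ to produce a coercive energy; (iii) splitting the right hand side into tangential and normal components in order to exhibit the cancellation between the singular $O(\epsilon^{-1})$ potential terms and the geometric wave term.

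For the K\"ahler case I would substitute flat derivatives by covariant ones, use the extended projectors $P^\top$, $P^\perp$ from Section~\ref{sectiongeom}, and apply $(\epsilon^2\nabla_t - (c+Bi)\nabla_x)\nabla^m$ to~\eqref{SMK}. The same algebraic manipulation as in Step~1 of Proposition~\ref{propschroplat} applies at $p\in\mathcal{L}$ because~\eqref{H2} gives $Bi+iB=0$ and $B^2=-\mu I$ there; since the coordinate system~\eqref{localcoordK} ensures $u$ lies at geodesic distance $O(\epsilon^2)$ of $\mathcal{L}$, these identities hold up to errors of size $O(\epsilon^2)$ that can be absorbed into $O(\mathcal{E}_s^2)$. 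The tangential/normal splitting of the $V'$ term is provided by~\eqref{formulaVprime}, and with $V(\Psi(p,N))=\lambda|N|^2$ there is no $F_1$ contribution, so several terms of Proposition~\ref{propschroplat} simply disappear. The multiplier computation then produces, for each $|m|\le s$, an energy quantity
\begin{equation*}
E_m \sim \int\!\Bigl[\tfrac12|(\epsilon^2\nabla_t - c\nabla_x)\nabla^m u|^2 + \tfrac18\epsilon^2|\nabla_x^{2}\nabla^m u|^2 - \tfrac{\mu}{2}|\nabla_x\nabla^m u|^2 + \tfrac{\lambda}{2}\epsilon^2|\nabla_x^{\top}\nabla^{\top m}\Phi^\epsilon|^2 + \tfrac{\lambda}{2}\epsilon^4|\nabla_x^{\perp}\nabla^{\perp m}n|^2 + \cdots\Bigr]dx
\end{equation*}
whose sum is equivalent to $\mathcal{E}_{s,1}^2 + \epsilon^2 O(\mathcal{E}_{s,2}^2)$, following the same counting of powers of $\epsilon$ as in the Euclidean case. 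The tangential part of $(\epsilon^2\partial_t-c\partial_x)\partial^m u$ is related to $(\epsilon^2\nabla_t^\top - c\nabla_x^\top)\nabla^{\top m}\Phi^\epsilon$ via~\eqref{rossignol} and~\eqref{psi''}, up to error terms of size $\epsilon O_{L^2}(\mathcal{E}_s)$ coming from the second fundamental forms --- this is the analog of the scalar substitution used to get the second fundamental form into the Euclidean computation.

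Two categories of extra terms appear compared to the flat case. First, commutators between covariant derivatives involve the curvature tensors $R^\top$, $R^\perp$, $R$ (see~\eqref{comT}, \eqref{comperp}, \eqref{riemtopkahl}, \eqref{riemperpkahl}); these are multilinear and their contribution is controlled by repeated use of the product estimate~\eqref{prod} together with~\eqref{dtphikahl} and~\eqref{dxxxphikahl}, yielding $O_{L^2}(\mathcal{E}_s)$ remainders after paying the allowed loss of derivatives. Second, derivatives of the tensors $\Sigma$, $B$, $i$, $P^{\top/\perp}$ away from $\mathcal{L}$ appear (with $\nabla i=0$ globally, $\nabla B=0$ only on $\mathcal{L}$); these are harmless because of the bounds~\eqref{devDPsi}, \eqref{iBTTL}, \eqref{iBPPL} together with $|u-p|=O(\epsilon^2)$, producing extra $\epsilon$-powers.

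The main obstacle, as in the Euclidean case, is to identify the singular normal term $-\tfrac{2\lambda}{\epsilon}\partial_{xx}\partial^m n$ with the normal part of $(\epsilon^2\partial_t - (c+iB)\partial_x)\partial^m\bigl[\tfrac{1}{\epsilon}V'(u)\bigr]$ in such a way that, after using $\nabla_x^\top \nabla^{\top m}\Phi^\epsilon$ as the tangential companion, the two singular contributions cancel against each other modulo $O(\mathcal{E}_s^2)$. In the K\"ahler setting this requires careful use of the symmetry properties of $\deux^\top(\cdot,n)$ and $\deux^\perp$ collected in Proposition~\ref{deuxsym} and Corollary~\ref{BdeuxT}, together with the almost-symmetry estimate~\eqref{almostsym} when one of the arguments lies away from $\mathcal{L}$. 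Once this cancellation is secured, integration in time of $\frac{d}{dt}\sum_{|m|\le s}E_m = O(\mathcal{E}_s^2)$ and the equivalence $\sum E_m = \mathcal{E}_{s,1}^2 + \epsilon^2 O(\mathcal{E}_{s,2}^2)$ yield the announced estimate.
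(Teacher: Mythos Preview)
Your plan is correct and follows essentially the same route as the paper: apply $(\epsilon^2\nabla_t-(c+Bi)\nabla_x)\nabla^m$ to~\eqref{SMK}, pair with $\frac{1}{\epsilon^2}(\epsilon^2\nabla_t-c\nabla_x)\nabla^{m-1}\partial u$, split the $V'$ contributions into tangential and normal pieces via the extended projectors and~\eqref{formulaVprime}, and exploit~\eqref{rougegorge}, \eqref{almostsym} and the Schr\"odinger equation itself to close the energy $E_m$. The only minor deviation is that the paper, in the K\"ahler setting, does not invoke the differentiated hydrodynamical system to treat the analogues of $IIa$, $IIb$ (as was done in the Euclidean proof) but instead substitutes directly from~\eqref{SMK} to express $(\epsilon^2\nabla_t^\perp-(c+iB)\nabla_x^\perp)\nabla^{\perp m}\Sigma\bigl(\begin{smallmatrix}0\\\epsilon^2 n\end{smallmatrix}\bigr)$ in terms of $\nabla^{\top m}\nabla_x^\top P^\top\partial_x u$; this is a cosmetic simplification and your scheme would work equally well.
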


\begin{proof}
\underline{Step 1: first decomposition.}
Start with the equation satisfied by $u$
$$
(\epsilon^2 \partial_t - (c+iB) \partial_x ) u = i \left( \frac{1}{2} \epsilon \nabla_x \partial_x u - \frac{1}{\epsilon} V'(u) \right).
$$
which we write
$$
LHS = RHS1 + RHS2.
$$
The plan is the following: apply $(\epsilon^2 \nabla_t - (c+Bi) \nabla_x) \nabla^m$ to the above, take the scalar product with 
$\frac{1}{\epsilon^2}(\epsilon^2 \nabla_t - c \nabla_x ) \nabla^{m-1} \partial u$,
and then estimate the resulting terms. The first such term is the following; we will use integration by parts and commutation of derivatives to estimate it.
\begin{equation}
\label{pinson1}
\begin{split}
& \frac{1}{\epsilon^2} \int (\epsilon^2 \nabla_t - (c+Bi) \nabla_x) \nabla^m LHS \cdot (\epsilon^2 \nabla_t - c \nabla_x ) \nabla^{m-1} \partial u\,dx \\
& \qquad \qquad = \frac{1}{\epsilon^2} \int (\epsilon^2 \nabla_t - (c+Bi) \nabla_x) \nabla^m (\epsilon^2 \partial_t - (c+iB) \partial_x ) u \cdot (\epsilon^2 \nabla_t - c \nabla_x ) \nabla^{m-1} \partial u\,dx \\
& \qquad \qquad =  \frac{d}{dt} \left[\frac{1}{2} \int |(\epsilon^2 \nabla_t - c \nabla_x )\nabla^{m-1} \partial u|^2 \,dx - \frac{\mu}{2} \int |\nabla_x \nabla^{m-1} \partial u|^2\,dx \right] + O(\mathcal{E}_s^2).
\end{split}
\end{equation}
Here, the term $O(\mathcal{E}_s^2)$ results from commutators arising when commuting $\nabla_t$ and $\nabla_x$ derivatives. Typical instances are
\begin{align*}
& \frac{1}{\epsilon^2} \int (\epsilon^2 \nabla_t - (c+Bi) \nabla_x) \nabla^{m-2} R(\epsilon^2 \partial_t u , \partial_x u) \partial u \cdot (\epsilon^2 \nabla_t - c \nabla_x ) \nabla^{m-1} \partial u\,dx\\
&  \frac{1}{\epsilon^2} \int (\epsilon^2 \nabla_t - (c+Bi) \nabla_x) \nabla^{m-1} i \nabla B (\nabla u,\partial_x u) \cdot (\epsilon^2 \nabla_t - c \nabla_x ) \nabla^{m-1} \partial u\,dx;
\end{align*}
simply counting derivatives and using~\eqref{macareux}, it becomes clear that it can be estimated by $O(\mathcal{E}_s^2)$. 
Next, we turn to the term involving $RHS1$ and transform it by first commuting derivatives, and then using the
equation satisfied by $u$:
\begin{equation*}
\begin{split}
(\epsilon^2 \nabla_t - (c+Bi) \nabla_x) \nabla^m RHS1 & = (\epsilon^2 \nabla_t - (c+Bi) \nabla_x) \nabla^m \frac{i}{2} \epsilon \nabla_x \partial_x u \\
& = \frac{i \epsilon}{2} \nabla_x^2 \nabla^m (\epsilon^2 \partial_t - (c+iB) \partial_x ) u+ \epsilon^3 O_{L^2} (\mathcal{E}_s)\\
& = \frac{i \epsilon}{2} \nabla_x^2 \nabla^m i \left( \frac{1}{2} \epsilon \nabla_x \partial_x u - \frac{1}{\epsilon} V'(u) \right) + \epsilon^3 O_{L^2} (\mathcal{E}_s).
\end{split}
\end{equation*}
(once again, commutators are easily dealt with). We now take the scalar product with $\frac{1}{\epsilon^2}(\epsilon^2 \nabla_t - c \nabla_x ) \nabla^{m-1} \partial u$ and once again commute derivatives and use the
equation satisfied by $u$ to obtain
\begin{equation}
\begin{split}
\label{pinson2}
& \frac{1}{\epsilon^2} \int (\epsilon^2 \nabla_t - (c+Bi) \nabla_x) \nabla^m RHS1 \cdot (\epsilon^2 \nabla_t - c \nabla_x ) 
\nabla^{m-1} \partial u\,dx \\
& \qquad \qquad = -\frac{1}{4} \int \nabla_x^2 \nabla^m \nabla_x^2 u \cdot (\epsilon^2 \nabla_t - c \nabla_x ) \nabla^{m-1} \partial u\,dx \\
& \qquad \qquad \qquad \qquad + \frac{1}{2\epsilon^2} \int \nabla^m \nabla_x^2 V'(u) \cdot (\epsilon^2 \nabla_t - c \nabla_x ) \nabla^{m-1} \partial u + O(\mathcal{E}_s^2) \\
& \qquad \qquad = -\frac{\epsilon^2}{8} \frac{d}{dt} \int \left|\nabla^m \nabla_x \partial_x u\right|^2\,dx
+ \frac{1}{2\epsilon^2} \int \nabla^m \nabla_x^2 V'(u) \cdot (\epsilon^2 \nabla_t - c \nabla_x ) \nabla^{m-1} \partial u \,dx 
+ O(\mathcal{E}_s^2).
\end{split}
\end{equation}
Finally, the contribution of $RHS2$ is simply
\begin{equation*}
\begin{split}
\frac{1}{\epsilon^2}  \int (\epsilon^2 \nabla_t &- (c+Bi) \nabla_x)  \nabla^m RHS2 \cdot (\epsilon^2 \nabla_t - c \nabla_x ) 
\nabla^{m-1} \partial u\,dx \\
& \qquad \qquad = - \frac{1}{\epsilon^3} \int (\epsilon^2 \nabla_t - (c+Bi) \nabla_x) \nabla^m i V'(u) \cdot 
(\epsilon^2 \nabla_t - c \nabla_x ) \nabla^{m-1} \partial u\,dx.
\end{split}
\end{equation*}
Putting everything together gives
\begin{equation*} \begin{split}
& \frac{d}{dt} \left[\frac{1}{2} \int |(\epsilon^2 \nabla_t - c \nabla_x )\nabla^{m-1} \partial u|^2 \,dx 
- \frac{\mu}{2} \int |\nabla^{m} \partial_x u|^2 \,dx
+ \frac{\epsilon^2}{8} \int \left|\nabla^m \nabla_x \partial_x u\right|^2\,dx \right] \\
& \;\;\;\;\;\;= \underbrace{\frac{1}{2\epsilon^2} \int \nabla^m \nabla_x^2 V'(u) 
\cdot (\epsilon^2 \nabla_t - c \nabla_x ) \nabla^{m-1} \partial u \,dx}_{I} \\
& \;\;\;\;\;\;\;\;\;\;\;\; - \underbrace{\frac{1}{\epsilon^3} \int (\epsilon^2 \nabla_t - (c+Bi) \nabla_x) \nabla^m i V'(u) \cdot 
(\epsilon^2 \nabla_t - c \nabla_x ) \nabla^{m-1} \partial u\,dx}_{II} + O(\mathcal{E}_s^2)
\end{split} \end{equation*}
We decompose further $I$ and $II$ by distinguishing for each scalar product between the tangent and normal part:
\begin{equation*} \begin{split}
& I = \underbrace{\frac{1}{2\epsilon^2} \int P^\top \nabla^m \nabla_x^2 V'(u) 
\cdot P^\top (\epsilon^2 \nabla_t - c \nabla_x ) \nabla^{m-1} \partial u \,dx}_{Ia}
+ \underbrace{\frac{1}{2\epsilon^2} \int P^\perp \dots \; \cdot P^\perp \dots \,dx}_{Ib} \\
& II = \underbrace{\frac{1}{\epsilon^3} \int P^\top(\epsilon^2 \nabla_t - (c+Bi) \nabla_x) \nabla^m i V'(u) \cdot 
P^\top (\epsilon^2 \nabla_t - c \nabla_x ) \nabla^{m-1} \partial u\,dx}_{IIa}
+ \underbrace{\frac{1}{\epsilon^3} \int P^\perp \dots \; \cdot P^\perp \dots \,dx}_{IIb}.
\end{split}
\end{equation*}
We will in the following examine separately $Ia$, $Ib$, $IIa$ and $IIb$.

\bigskip

\noindent
\underline{Step 2: estimating $Ia$.} First, ~\eqref{formulaVprime} gives
$$
\frac{1}{\epsilon^2} \nabla^m \nabla_x^2 V'(u) = 2 \lambda \nabla^m \nabla_x^2 \Sigma \left( \begin{array}{l} 0 \\ n \end{array} \right)+ O_{L^2} (\mathcal{E}_s)  .
$$
Due to the vanishing of the second fundamental form on the normal bundle~(\ref{rougegorge}),
\begin{equation*}
\begin{split}
P^\top \nabla^m \nabla_x^2 \Sigma \left( \begin{array}{l} 0 \\ n \end{array} \right)
& = \deux^\top \left( \nabla^m \nabla_x \partial_x u
, \Sigma \left( \begin{array}{l} 0 \\ n \end{array} \right) \right) + O_{L^2} (\mathcal{E}_s) \\
& = \deux^\top \left( \nabla^{\top m} \nabla_x^\top P^\top \partial_x u ,
 \Sigma \left( \begin{array}{l} 0 \\ n \end{array} \right) \right) + O_{L^2} (\mathcal{E}_s).
\end{split}
\end{equation*}
Combining the two previous equalities, we obtain
$$
P^\top \frac{1}{\epsilon^2} \nabla^m \nabla_x^2 V'(u) = 2 \lambda \deux^\top \left( \nabla^{\top m} \nabla_x^\top P^\top \partial_x u ,
 \Sigma \left( \begin{array}{l} 0 \\ n \end{array} \right) \right) + O_{L^2} (\mathcal{E}_s).
$$
On the other hand,
$$
P^\top (\epsilon^2 \nabla_t - c \nabla_x) \nabla^m u 
= (\epsilon^2 \nabla_t^\top - c \nabla_x^\top)\nabla^{\top \; m-1} P^\top \partial u
+ \epsilon O_{\mathcal H^1} (\mathcal{E}_s).
$$
Taking the scalar product of the two last equalities,
\begin{equation*} \begin{split}
Ia & = \lambda \int \deux^\top \left( \nabla^{\top m} \nabla_x^\top \partial_x u , \Sigma \left( \begin{array}{l} 0 \\ n \end{array} \right) \right) \cdot (\epsilon^2 \nabla_t^\top - c \nabla_x^\top)\nabla^{\top \; m-1} P^\top \partial u \,dx 
+ O(\mathcal{E}_s^2) \\
& = - \frac{\epsilon^2 \lambda}{2} \frac{d}{dt} \int \deux^\top \left( \nabla^{\top m} P^\top \partial_x u , \Sigma \left( \begin{array}{l} 0 \\ n \end{array} \right) \right) \cdot \nabla^{\top \; m} P^\top \partial_x u \,dx
+ O(\mathcal{E}_s^2)
\end{split}
\end{equation*}
where we used for the last equality the almost symmetry property~(\ref{almostsym}).

\bigskip

\noindent
\underline{Step 3: estimating $Ib$.} First observe that, on the one hand,~\eqref{formulaVprime} gives
\begin{equation*}
\begin{split}
P^\perp \nabla^m \nabla^2_x V'(u) & = 2\lambda \epsilon^2  P^\perp \nabla^m \nabla_x^2 \Sigma \left( \begin{array}{l} 0 \\ n \end{array} \right) + O_{L^2}(\mathcal{E}_s) \\ 
& = 2\lambda \epsilon^2 \nabla^{\perp m} \nabla_x^{\perp 2}  \Sigma \left( \begin{array}{l} 0 \\ n \end{array} \right) + O_{L^2} (\mathcal{E}_s),
\end{split}
\end{equation*}
while on the other hand, by~\eqref{psi''} and~\eqref{rossignol},
\begin{equation*}
\begin{split}
P^\perp (\epsilon^2 \nabla_t - c \nabla_x) \nabla^m u & =  (\epsilon^2 \nabla^\perp_t - c \nabla^\perp_x) \nabla^{\perp m-1}
P^\perp \Sigma \left( \begin{array}{l} \epsilon \partial \phi \\ \epsilon^2 \nabla^\perp n \end{array} \right) + 
\epsilon O_{\mathcal H^1} (\mathcal{E}_s) \\
& =  (\epsilon^2 \nabla^\perp_t - c \nabla^\perp_x) \nabla^{\perp m-1} \Sigma \left( \begin{array}{l} 0 \\ \epsilon^2 \nabla^\perp n \end{array} \right) + \epsilon O_{\mathcal H^1} (\mathcal{E}_s) + \epsilon^3 O_{L^2} (\mathcal{E}_s) \\
& = (\epsilon^2 \nabla^\perp_t - c \nabla^\perp_x) \nabla^{\perp m} \Sigma \left( \begin{array}{l} 0 \\ \epsilon^2 n \end{array} \right) + \epsilon O_{\mathcal H^1} (\mathcal{E}_s) + \epsilon^3 O_{L^2} (\mathcal{E}_s).
\end{split}
\end{equation*}
Taking the scalar product of the two gives, after integrating by parts in $x$, and commuting derivatives,
\begin{equation*} \begin{split}
Ib & = \lambda \int \nabla^{\perp m} \nabla_x^{\perp 2} \Sigma \left( \begin{array}{l} 0 \\ n \end{array} \right)
\cdot (\epsilon^2 \nabla^\perp_t - c \nabla^\perp_x) \nabla^{\perp m} \Sigma \left( \begin{array}{l} 0 \\ \epsilon^2 n \end{array} \right) \,dx + O(\mathcal{E}_s^2)\\
& = - \frac{\lambda \epsilon^4}{2} \frac{d}{dt} \int \left| \nabla^{\perp m} \nabla_x^\perp
\Sigma \left( \begin{array}{l} 0 \\  n \end{array} \right) \right|^2 \, dx+ O(\mathcal{E}_s^2).
\end{split}
\end{equation*}

\bigskip

\noindent
\underline{Step 4: estimating $IIa$.} On the one hand, it is easy to see from formula~\eqref{formulaVprime} that
\begin{equation}
\label{mallard1}
P^\top (\epsilon^2 \nabla_t - (c+Bi) \nabla_x) \nabla^m i V'(u) = 2 \lambda i ( \epsilon^2 \nabla_t^\top - (c + iB) \nabla_x^\perp ) \nabla^{\perp m} \Sigma \left( \begin{array}{l} 0 \\  \epsilon^2n \end{array} \right) + \epsilon^3 O_{L^2}(\mathcal{E}_s).
\end{equation}
On the other hand, it follows from the equation~\eqref{SMK} satisfied by $u$ that
\begin{align*}
\nabla^{\perp m} \Sigma \left( \begin{array}{c} 0 \\ (\epsilon^2 \nabla_t^\top - (c+iB) \nabla_x^\top) \epsilon^2 n \end{array} \right) = \frac{i \epsilon}{2} \nabla^{\top m}  \nabla_x^\top P^\top \partial_x u + \epsilon^3 O_{L^2} (\mathcal{E}_s).
\end{align*}
If $p\in \mathcal{L}$, and $N,N' \in N\mathcal L$, denote $G(s) = \Sigma_{(p,sN)} \left( \begin{array}{c} 0 \\ iB N' \end{array} \right) - iB \Sigma_{(p,sN)} \left( \begin{array}{c} 0 \\ N' \end{array} \right)$, one checks thanks to~\eqref{psi'} and~\eqref{psi''} that $G(0) = 0$ and $\nabla_s G(s)_{|s=0} = 0$. Therefore, $\Sigma_{(p,\epsilon^2 n)} \left( \begin{array}{c} 0 \\ iB N \end{array} \right) = iB \Sigma_{(p,\epsilon^2 n)} \left( \begin{array}{c} 0 \\  N \end{array} \right) + O(\epsilon^4)$. Combined with the last equality, this leads to
\begin{equation}
\label{mallard2}
(\epsilon^2 \nabla_t^\top - (c+iB) \nabla_x^\top) \nabla^{\perp m} \Sigma \left( \begin{array}{c} 0 \\  \epsilon^2 n \end{array} \right) = \frac{i \epsilon}{2} \nabla^{\top m}  \nabla_x^\top P^\top \partial_x u + \epsilon^3 O_{L^2} (\mathcal{E}_s).
\end{equation}
The equalities~\eqref{mallard1} and~\eqref{mallard2} lead to
$$
P^\top (\epsilon^2 \nabla_t - (c+Bi) \nabla_x) \nabla^m i V'(u) = - \lambda \nabla^{\top m}  \nabla_x^\top P^\top \partial_x u + \epsilon^3 O_{L^2} (\mathcal{E}_s).
$$
Using this last equality together with
$$
P^\top ( \epsilon^2 \nabla_t - c \nabla_x) \nabla^m u = ( \epsilon^2 \nabla_t^\top - c \nabla_x^\top) \nabla^{\top (m-1)} P^\top \partial u + \epsilon^3 O_{L^2} (\mathcal{E}_s).
$$
leads to
\begin{align*}
IIa & = - \frac{\lambda}{\epsilon^2} \int  ( \epsilon^2 \nabla_t^\top - c \nabla_x^\top) \nabla^{\top (m-1)} P^\top \partial u \cdot \nabla^{\top m}  \nabla_x^\top P^\top \partial_x u \,dx + O_{L^2} (\mathcal{E}_s^2) \\
& = \frac{\lambda}{2} \frac{d}{dt}  \int | \nabla^{\top m} P^\top \partial_x u|^2\,dx +  O_{L^2} (\mathcal{E}_s^2).
\end{align*}

\bigskip

\noindent
\underline{Step 5: estimating $IIb$.} Proceeding as in the previous steps,
\begin{equation*}
\begin{split}
\frac{i}{\epsilon^3} P^\top (\epsilon^2 \nabla_t - (c+iB) \nabla_x) \nabla^m V'(u) 
& = \frac{ 2 \lambda i}{\epsilon} P^\top (\epsilon^2 \nabla_t - (c+iB) \nabla_x) \nabla^m \Sigma \left( \begin{array}{l} 0 \\ n \end{array} \right) + O_{L^2} (\mathcal{E}_s) \\
& = \frac{ 2 \lambda i}{\epsilon}  \deux^\top \left( \nabla^{\top m} (\epsilon^2 \nabla_t^\top - c \nabla_x^\top) u , 
\Sigma \left( \begin{array}{l} 0 \\ n \end{array} \right) \right) + O_{L^2} (\mathcal{E}_s).
\end{split}
\end{equation*}
Pairing this identity with
$$
P^\perp (\epsilon^2 \nabla_t - c \nabla_x) \nabla^{m-1} \partial u = \frac{i \epsilon}{2} \nabla^{\top m} \nabla_x^\top P^\top \partial_x u + \epsilon O_{L^2}(\mathcal{E}_s)
$$
(where we used the equation satisfied by $u$) yields the desired estimate for $IIb$:
\begin{equation*}
\begin{split}
IIb & = \lambda \int \deux^\top \left(  \nabla^{\top m} (\epsilon^2 \nabla_t^\top - c \nabla_x^\top) u ,  \Sigma \left( \begin{array}{l} 0 \\ n \end{array} \right) \right) \cdot \nabla^{\top m} \nabla_x^\top P^\top \partial_x u\,dx + O(\mathcal{E}_s^2) \\
& = - \frac{\lambda \epsilon^2}{2} \frac{d}{dt} \int \deux^\top \left(\nabla^{\top m} P^\top \partial_x u
, \Sigma \left( \begin{array}{l} 0 \\ n \end{array} \right) \right) \cdot \nabla^{\top m} P^\top \partial_x u\,dx
+ O(\mathcal{E}_s^2),
\end{split}
\end{equation*}
where we used in the last line the almost symmetry of $\deux^\top$~\eqref{almostsym}.

\bigskip

\noindent
\underline{Step 6: conclusion.} The above estimates lead to the differential inequality
$$
\frac{d}{dt} E_m =O(\mathcal{E}_s^2)
$$
where
\begin{equation*}
\begin{split}
E_m = & \int \left[ \frac{1}{2} |(\epsilon^2 \nabla_t - c \nabla_x) \nabla^m u|^2 
+ \frac{\epsilon^2}{8} |\nabla^m \nabla_x \partial_x u|^2
- \frac{\mu}{2} |\nabla^m \partial_x u|^2 
+ \frac{\lambda}{2} |\nabla^{\top m} P^\top \partial_x u|^2 \right.\\
& \qquad \qquad \qquad \qquad \qquad \left. + \frac{\epsilon^4 \lambda}{2} \left| \nabla^{\perp m} 
\nabla_x^\perp \Sigma \left( \begin{array}{l} 0 \\  n \end{array}\right) \right|^2
\right]\,dx.
\end{split}
\end{equation*}
This gives the desired result since
$$
\sum_{|m|\leq s} E_m \gtrsim \mathcal{E}_{s,1}.
$$
\end{proof}
   
\subsection{Differentiating the hydrodynamical system}
  We shall again use the notation
     $$ \nabla^{\top m} \Phi^\epsilon = {1 \over \epsilon} \nabla^{\top m} \Phi.$$
 \begin{prop}
 \label{Kpropdiff}
  For $1 \leq |m |\leq s$ and $s \geq 2, $ we obtain on $ [0, T^\epsilon]$ the following system for  $( \nabla^{\top m} \Phi^\epsilon, \nabla^{\perp m} n)$.
 \beq
 \label{hydrodiffkahl}
 \left\{
 \begin{array}{llll}
 \displaystyle{  S_{\top \top}\big( \nabla_{t}^\top - {c \over \epsilon^2} \nabla_{x}^\top \big) \nabla^{\top m} \Phi^\epsilon - {1 \over \epsilon^2} (iB)_{\top\top} \nabla_{x}^\top \nabla^{\top m} \Phi^\epsilon + \epsilon^5S_{\top \perp} \big( \nabla_{t}^\perp - {c \over \epsilon^2} \nabla_{x}^\perp
  \big) \nabla^{\perp m} n } 
     \\
   \hspace{0.5cm}  \displaystyle{ =   i \Big[ {1 \over 2} \nabla_{x}^\perp \big( S_{\perp \perp} \nabla_{x}^\perp \nabla^{\perp m}n 
    + \epsilon^3 S_{\perp \top} \nabla_{x}^\top \nabla^{\top m} \Phi^\epsilon\big)
   - { 2 \lambda  \over \epsilon^2} S_{\perp \perp} \nabla^{\perp m}n}  - 2\lambda i \deux^\top \big( i S_{\perp \perp} n, S_{\perp \perp} \nabla^{\perp m} n \big) \\
   \hspace{1.5cm} \displaystyle{+ {1 \over 2 } \deux^\perp\big(S_{\top \top} \nabla_{x}^\top \nabla^{\top m} \Phi^\epsilon, S_{\top \top} D\Phi \partial_{x} \phi)
    + {1 \over 2 }   \deux^\perp\big(S_{\top \top} D\Phi \partial_{x}  \phi, S_{\top \top} \nabla_{x}^\top  \nabla^{\top m} \Phi^\epsilon) \Big] + O_{H^1}(\mathcal{E}_{s})  } \\
  \displaystyle{ S_{\perp \perp} \big( \nabla_t^\perp -{c \over \epsilon^2} \nabla_{x}^\perp \big) \nabla^{\perp m} n - {1 \over \epsilon^2}(iB)_{\perp \perp} \nabla_{x}^\perp \nabla^{\perp m} n
   +  \epsilon^3 S_{\perp \top} (\nabla_{t}^\top - {c \over \epsilon^2}\nabla_{x}^\top) \nabla^{\top m}
    \Phi^\epsilon } \\  
  \hspace{0.5cm} \displaystyle{   = i \Big[ {1 \over 2 \epsilon^2}  \nabla_{x}^\top \big( S_{\top \top} \nabla_{x}^\top \nabla^{\top m} \Phi^\epsilon + \epsilon^5 S_{\top \perp} \nabla_{x}^\perp
      \nabla^{\perp m } n \big)   }  \\
    \hspace{1cm}  \displaystyle{ + \deux^\top (S_{\top \top}D\Phi \partial_{x} \phi, S_{\perp \perp} \nabla_{x}^\perp \nabla^{\perp m}n \big)
       + {1 \over 2 } \deux^\top \big( S_{\top \top } \nabla_{x}^\top \nabla^{\top m} \Phi^\epsilon, S_{\perp \perp} \nabla_{x}^\perp n\big) \Big]
        + O_{H^1_{\epsilon}}(\mathcal{E}_{s}) }
        \end{array} \right.
       \eeq
        \end{prop}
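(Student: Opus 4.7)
The plan is to follow the strategy of Proposition \ref{propdiffHs}, adapted to the Kähler setting, where the coefficient tensors $S_{\top\top}$, $(iB)_{\top\top}$, $S_{\perp\perp}$, $(iB)_{\perp\perp}$, $S_{\top\perp}$, $S_{\perp\top}$ are now genuine fields on $\mathcal{M}$ and commutators of covariant derivatives generate curvature terms via \eqref{riemtopkahl}--\eqref{riemperpkahl}. I would apply $\nabla^{\top m}$ to the first line of \eqref{eqhydroK} and $\nabla^{\perp m}$ to the second, then reorganize the resulting expressions into the form \eqref{hydrodiffkahl}, collecting into the remainders everything that is $O_{H^1}(\mathcal{E}_s)$ (resp.\ $O_{H^1_\epsilon}(\mathcal{E}_s)$).

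The commutators arising in this process fall into two classes. The first is commutators among covariant derivatives. Iterating \eqref{riemtopkahl}--\eqref{riemperpkahl}, each such commutator is a sum of terms of the schematic form $\nabla^{\top |\alpha|}\bigl(R^\top(\epsilon^2 D\Phi\partial_t\phi, D\Phi\partial_x\phi)\nabla^{\top |\beta|}\cdot\bigr)$ with $|\alpha|+|\beta|\leq |m|-1$, which the product estimate \eqref{prod} and the time-derivative bound \eqref{dtphikahl} show to be acceptable remainders, exactly as in the flat case. The second class consists of commutators $[\nabla^{\top m}, T]$ or $[\nabla^{\perp m}, T]$ with $T$ a coefficient tensor; I would expand these using the tensor derivation formulas \eqref{difftensor}--\eqref{difftensor2}. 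The key observation is that the vanishing properties collected in \eqref{devDPsi}, \eqref{iBTTL}, \eqref{iBPPL}, and \eqref{rougegorge} guarantee that every covariant derivative falling on such a tensor produces at least one extra factor of $\epsilon^2 n$, which absorbs the singular $\epsilon^{-2}$ prefactor present in \eqref{eqhydroK}.

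The main obstacle will be the same one as in the flat case: a single commutator term cannot be controlled by $\mathcal{E}_{s,2}$-regularity alone, because it requires a derivative of $\phi$ that is not available. Concretely, the identity $(\nabla_N^\top S_{\top\top})_p = \deux^\top(\cdot, N)$ from \eqref{devDPsi} (together with its analog for $(iB)_{\top\top}$ in \eqref{iBTTL}) forces one to control a term of the schematic form $\deux^\top\bigl(S_{\top\top}(\partial_t-\tfrac{c}{\epsilon^2}\partial_x)\tfrac{\Phi}{\epsilon} - \tfrac{1}{\epsilon^2}(iB)_{\top\top}\tfrac{\partial_x\Phi}{\epsilon},\,\nabla^{\perp m} n\bigr)$, i.e.\ the left-hand side of the first equation of \eqref{eqhydroK} paired through $\deux^\top$ with $\nabla^{\perp m} n$. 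The remedy, exactly parallel to \eqref{reducfin}, is to substitute the equation itself: the first line of \eqref{eqhydroK} allows one to replace the bracketed quantity by the corresponding right-hand side, whose leading contribution is $-2\lambda i S_{\perp\perp} n/\epsilon^2$, modulo $O_{H^1}(\mathcal{E}_s)$ corrections. After invoking Proposition \ref{deuxsym} and Corollary \ref{BdeuxT}, this generates exactly the term $-2\lambda i\deux^\top(iS_{\perp\perp}n, S_{\perp\perp}\nabla^{\perp m}n)$ on the right-hand side of the first equation of \eqref{hydrodiffkahl}.

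Differentiation of the second equation follows the same pattern but is analytically simpler, since its singular factor is only first-order in $\epsilon^{-2}$. Commuting $\nabla^{\perp m}$ past $\nabla_x^\top$ and past $S_{\top\top}$ inside the term $\nabla_x^\top(S_{\top\top}\partial_x\Phi/\epsilon)/(2\epsilon^2)$ produces the $\nabla_x^\top(S_{\top\top}\nabla_x^\top\nabla^{\top m}\Phi^\epsilon)/(2\epsilon^2)$ contribution, together with the $\deux^\top(D\Phi\partial_x\phi, S_{\perp\perp}\nabla_x^\perp\nabla^{\perp m}n)$ contribution, exactly mirroring the $L_1^n$ computation of the flat case, while all remaining commutators are handled by derivative-counting with \eqref{prod}, \eqref{dtphikahl} and \eqref{dxxxphikahl}. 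Assembling all contributions yields \eqref{hydrodiffkahl}; the Kähler setting adds substantial geometric bookkeeping but no new analytical difficulty beyond the loss-of-derivative issue resolved above.
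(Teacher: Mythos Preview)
Your proposal is correct and follows essentially the same approach as the paper's proof: apply $\nabla^{\top m}$ (resp.\ $\nabla^{\perp m}$) to the two lines of \eqref{eqhydroK}, control curvature commutators via \eqref{riemtopkahl}--\eqref{riemperpkahl} and \eqref{prod}, use the vanishing properties \eqref{devDPsi}, \eqref{iBTTL}, \eqref{iBPPL} to handle tensor commutators, and resolve the one dangerous commutator term $\deux^\top\bigl((S_{\top\top}(\epsilon^2\partial_t-c\partial_x)-(iB)_{\top\top}\partial_x)\tfrac{\Phi}{\epsilon},\,S_{\perp\perp}\nabla^{\perp m}n\bigr)$ by substituting the first line of \eqref{eqhydroK}, which produces exactly the $-2\lambda i\,\deux^\top(iS_{\perp\perp}n,S_{\perp\perp}\nabla^{\perp m}n)$ term. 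One small sharpening: your claim that ``every covariant derivative falling on such a tensor produces at least one extra factor of $\epsilon^2 n$'' is only true for tangential derivatives; normal-direction derivatives of $S_{\top\top}$ and $(iB)_{\top\top}$ give precisely the $\deux^\top(\cdot,N)$ contribution (cf.\ \eqref{devDPsi}, \eqref{iBTTL}), which is the source of the dangerous term you then correctly handle.
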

   
   \begin{proof}
    We  shall apply   $(\nabla^\top)^m$ to  the first equation of~\eqref{eqhydroK}, and $(\nabla^\perp)^m$ to the second one.
    
    We first note that \eqref{nablaPhim1}, \eqref{nablaPhim2}, \eqref{nablaPhim3} still hold. We shall use these estimates intensively again.
    We already noticed note that the equivalents of \eqref{dtphi}, \eqref{dxxxphi}, namely~\eqref{dtphikahl} and~\eqref{dxxxphikahl}, are still true. 
 
\bigskip

\noindent     
\underline{Step 1: Left hand side of  $\eqref{eqhydroK}_{1}$.}
Let us apply $\nabla^{\top m}$ to the left hand side of  $\eqref{eqhydroK}$ and let us expand the resulting line $L^{\phi}$ as 
\begin{align*}
 & L_{1}^\phi =   S_{\top \top}  \nabla^{\top m} \big(\partial_{t}  - {c \over \epsilon^2} \partial_{x} \big){ \Phi \over \epsilon}  - {1 \over \epsilon^2} (iB)_{\top\top} \nabla^{\top m} {\partial_{x} \Phi \over \epsilon},\\
& L_{2}^\phi=   \Big[ \nabla^{\top m}, S_{\top\top} \Big] (\partial_{t}  - {c \over \epsilon^2} \partial_{x} \big){ \Phi \over \epsilon }   -  \Big[ \nabla^{\top m}, (iB)_{\top\top} \Big] \partial_{x}{ \Phi \over \epsilon }
 = \mathcal{C}_{1} - \mathcal{C}_{2} \\ 
& L_{3}^\phi =  \nabla^{\top m}\Big( \epsilon^5 S_{\top \perp}  \big(\nabla_{t}^\perp n - {c \over \epsilon^2} \nabla_{x}^\perp n \big)  \Big) -  \nabla^{\top m} \big( \epsilon^3 (iB)_{\top \perp} \nabla_{x}^
\perp n \big).
\end{align*}
Note that for $L_{2}^\phi$,  we still  use commutator notations, but that in the definition of 
$$ \Big[\nabla^{\top m}, S_{\top\top} \Big] X: = \nabla^{\top m}  \big( S_{\top \top} X  \big) - S_{\top \top } \nabla^{\top m} X,$$
the operator  $\nabla^\top$ is not the same in the first and in the second term.  Indeed, in the first term $\nabla^\top = P^\top \nabla$
 with $P^\top$ the orthogonal projector on $\tilde T_{u}\mathcal{L}$ while in the second term $\nabla^\top = P^\top \nabla$ with
  $P^\top$ the orthogonal projection on $T_{\Phi} \mathcal{L}.$ We shall keep on using this notation in the following.
To estimate $L_{1}^\phi$, we can  proceed as in \eqref{L1phi11} because of the identity \eqref{comT}, we thus get as in \eqref{L1phi}
\begin{equation}
\label{L1phiK}
 L_{1}^\phi= S_{\top\top} \big( \nabla_{t}^\top - {c \over \epsilon^2} \nabla_{x}^\top \big) \nabla^{\top m } \Phi^\epsilon - {1 \over \epsilon^2}  (iB)_{\top\top}  \nabla_{x}^\top \nabla^{\top m} \Phi^\epsilon + O_{H^1}(\mathcal{E}_{s}).
 \end{equation}
To estimate $L_2^\phi$,  let us start with the commutator $\mathcal{C}_1$.  We can expand it as a sum of terms under the form
 $$ \nabla^{\top \alpha} [ \nabla^\top, S_{\top \top}] \nabla^{\top \beta} \big( \partial_{t}  - {c \over \epsilon^2} \partial_{x} \big){ \Phi \over \epsilon }  , \quad | \alpha| + |\beta| = |m|-1.$$
Note that the $\nabla^\top$ in the middle commutator can be either $\nabla_{x}^\top$ or $\epsilon^2 \nabla_{t}^\top$. We thus obtain that
the commutator can be expanded into a sum of terms of the type
$$
\nabla^{ \top \alpha}  \Big((\nabla^\top S_{\top \top})   \big( \big( \nabla^{\top} \Phi, \epsilon^2 \nabla^{\perp} n),  \nabla^{\top \beta} \big(\partial_{t}  - {c \over \epsilon^2 } \partial_{x} \big){\Phi \over \epsilon} \big) \Big) 
$$  
with $| \alpha | + | \beta | = |m| -1$. 
 
 Thanks to \eqref{devDPsi}, we first observe that for every $X \in T_{\Phi} \mathcal{L}$, the tensor
  $ (\nabla_{X}^\top S_{\top\top})= \nabla^\top S_{\top \top}(X, \cdot)$ vanishes on $\mathcal{L}$
  thus we can write that
  \begin{equation}
  \label{macareuxK3} (\nabla^\top S_{\top \top})(X,  Y)  = \epsilon^2 \mathcal{T}_{(\Phi, n)}(X, Y)
  \end{equation}
  for some smooth tensor $ \mathcal{T}$.  This yields
 \begin{multline*} \nabla^{ \top \alpha} \Big(  (\nabla^\top S_{\top\top})_{ (\Phi , \epsilon^2n)}   \big( ( \nabla^{\top } \Phi, 0), \nabla^{\top \beta} \big( \partial_{t}  - {c \over \epsilon^2} \partial_{x} \big){\Phi \over \epsilon} \big) \big) \Big)   \\= \nabla^{ \top \alpha}  \Big(\mathcal{T}_{( \Phi , n)}    \big( \nabla^{\top } \Phi,  \nabla^{\top \beta} \big(\epsilon^2\partial_{t}  - c\partial_{x} \big){\Phi \over \epsilon} \big) \big) \Big) = O_{H^1}(\mathcal{E}_{s}).
\end{multline*}
 Next, we also observe  thanks to \eqref{devDPsi} that  for every  $Z\in N_{\Phi} \mathcal{L}$, $Y \in T_{\Phi} \mathcal{L}$, the tensor
 $ (\nabla^\top S_{\top \top}) ( (0, Z),  Y)
  - \deux^\top_{u}\big( S_{\top \top} Y,  S_{\perp \perp} Z \big)$ vanishes on $\mathcal{L}$. Thus, we can write
 \begin{equation}
 \label{macareuxK4}
  ( \nabla^\top S_{\top \top} )( (0, Z),  Y) =  \deux^\top_{u}\big( S_{\top \top}  Y,  S_{\perp \perp} Z \big) + \epsilon^2 \mathcal{T}_{(\Phi, n)}\left( Z, Y\right)
   \end{equation}
  for some smooth tensor $\mathcal{T}$  (in this proof, we shall always use the notation $\mathcal{T}(\Phi,n)$ for a harmless smooth tensor).
   We thus easily obtain that 
   \begin{multline}
 \nabla^{ \top \alpha}  \Big( (\nabla^\top S_{\top \top})_{( \Phi , \epsilon^2n)}   \big( \big( 0, \epsilon^2 \nabla^{\perp} n),  \nabla^{\top \beta} \big(\partial_{t}  - {c \over \epsilon^2 } \partial_{x} \big){\Phi \over \epsilon} \big) \Big) \\
  =     \nabla^{ \top \alpha}  \left[ \deux^\top_{u} \left( S_{\top\top}  \nabla^{\top \beta}(\epsilon^2 \partial_{t} - c \partial_{x}) {\Phi \over \epsilon}, S_{\perp \perp} \nabla^\perp n\right) \right]
   + O_{H^1}(\mathcal{E}_{s})
   \end{multline}
Observe then that all the above  terms are $O_{H^1}(\mathcal{E}_s)$ as long as $\beta \neq 0$. We now focus on the term for which $\beta = 0$. It can be written as
\begin{multline*}
 \nabla^{ \top m-1} \Big( \deux^\top_u \left( S_{\top \top} \big(\epsilon^2 \partial_{t}  - c \partial_{x} \big){\Phi \over \epsilon},  S_{\perp \perp} \nabla^\perp n \right)\Big) \\
 =  \deux^\top_u \left( S_{\top \top} \big(\epsilon^2 \partial_{t}  - c \partial_{x} \big){\Phi \over \epsilon},  S_{\perp \perp} \nabla^{\perp m} n \right)
 + O_{H^1}(\mathcal{E}_s) .
\end{multline*}
We have thus proven that
\begin{equation}
\label{C1Khydro}
\mathcal{C}_{1} =  \deux^\top_u \left( S_{\top \top}\big( \epsilon^2 \partial_{t}  - c \partial_{x} \big){\Phi \over \epsilon},  S_{\perp \perp} \nabla^{\perp m} n \right)
 + O_{H^1}(\mathcal{E}_s).
 \end{equation}
  We can  proceed in a similar way for the commutator $\mathcal{C}_2$. We thus have to estimate
  \begin{equation}
  \label{C2Khydro1} {1 \over \epsilon^2} \nabla^{\top \alpha} \left(  (\nabla^\top (iB)_{\top\top}) \left(( \nabla^{\top} \Phi, \epsilon^2 \nabla^{\perp}n), \nabla^{\top \beta} \partial_{x} {\Phi \over \epsilon} \right)\right)
  \end{equation}
  with $(\alpha, \beta)$ as above.
   Thanks to \eqref{iBTTL}, we first observe that
  $$ \nabla^{\top \alpha}  \left(  {1 \over \epsilon^2}
     (\nabla^\top (iB)_{\top\top}) \left(( \nabla^{\top } \Phi, 0), \nabla^{\top \beta} \partial_{x} {\Phi \over \epsilon} \right)\right)
  =    \left(  \nabla^{\top \alpha}  \mathcal{T}_{(\Phi, n)}   \left( \nabla^{\top } \Phi, \nabla^{\top \beta} \partial_{x} {\Phi \over \epsilon} \right)\right) =  O_{H^1}(\mathcal{E}_s).$$
   Thus, it remains to estimate
  $$ \nabla^{\top \alpha} \left(  (\nabla^\top (iB)_{\top\top}) \left((0, \nabla^{\perp }n), \nabla^{\top \beta} \partial_{x} {\Phi \over \epsilon} \right)\right).$$
    We note that for every $X\in T_{\Phi} \mathcal{L}$, $Z \in N_{\Phi} \mathcal{L}$, we have  that
      $$   \left(\nabla_{Z}^\top(iB)_{\top\top} \right)_{(\Phi, \epsilon^2 n)} X =  \deux^\top_{u}\left( (iB)_{\top\top}X,  S_{\perp \perp}Z \right) + \epsilon^2  \mathcal{T}(\Phi, n) (Z,X).$$
       Indeed, this follows from the fact that the tensor
     $   \left(\nabla_{Z}^\top(iB)_{\top\top} \right) - \deux^\top_{u}\left( (iB)_{\top\top}\cdot,  S_{\perp \perp}Z \right)$
     vanishes on $\mathcal{L}$ thanks to \eqref{iBTTL}.
  By plugging this identity in the terms \eqref{C2Khydro1}, we finally  obtain that
 \begin{equation}
 \label{C2Khydro}
\mathcal{C}_{2} =  \deux^\top_{u}\left( (iB)_{\top\top}\partial_{x}{\Phi \over \epsilon},  S_{\perp \perp} \nabla^{\perp m}n  \right)  + O_{H^1}(\mathcal{E}_{s}).
 \end{equation}
 By collecting \eqref{C1Khydro} and \eqref{C2Khydro}, we get
 $$ L_{2}^\phi= \deux^\top_u \left(  \Bigl(S_{\top \top} (\epsilon^2\partial_{t}  - c \partial_{x}) - (iB)_{\top \top } \partial_{x} \Bigr){\Phi \over \epsilon},  S_{\perp \perp} \nabla^{\perp m} n \right)
 + O_{H^1}(\mathcal{E}_s)$$
 and thanks to the first line of  \eqref{eqhydroK}, we can simplify its expression as
 \begin{equation}
 \label{L2phiK}
  L_{2}^\phi=  - 2 \lambda \deux^\top_{u} \left(   i S_{\perp \perp} n, S_{\perp \perp} \nabla^{\perp m}n  \right) +  O_{H^1}(\mathcal{E}_{s}).
  \end{equation}
It remains to deal with $L_{3}^\phi$ which is  easy to handle. Just by counting derivatives,  we get
\begin{equation}
\label{L3phiK}L_{3}^\phi =   \epsilon^5 S_{\top \perp} (\Phi,  \epsilon^2 n)  \big(\nabla_{t}^\perp 
- {c \over \epsilon^2} \nabla_{x}^\perp  \big)  \nabla^{\perp m} n  + O_{H^1}(\mathcal{E}_{s}).
\end{equation}
  Note that we cannot consider the term involving $\nabla_{t}^\perp \nabla^{\perp m}n$ as a remainder term since it  
    is not controlled in $H^1$   by $\mathcal{E}_{s,1}$
    (at least when $ \nabla^m$ involves only time derivatives). We also keep the term involving $  \nabla_{x}^\perp \nabla^{\perp m}n$ for symmetry reasons.
    
Looking at \eqref{L3phiK}, \eqref{L2phiK}, \eqref{L1phiK}, we have thus proven that
\begin{multline}
\label{LHS1kahl}
\nabla^{\top m } LHS\eqref{eqhydroK}_{1}=  S_{\top \top} \big( \nabla_{t}^\top - {c \over \epsilon^2} \nabla_{x}^\top \big) \nabla^{\top m} \Phi^\epsilon
 - {1 \over \epsilon^2} (iB)_{\top\top} \nabla_{x}^\top \nabla^{\top m } \Phi^\epsilon \\
+ \epsilon^5 S_{\top \perp} \big(\nabla_{t}^\perp - {1 \over \epsilon^2} \nabla_{x}^\perp  \big)  \nabla^{\perp m} n  - 2\lambda \deux^\top\Big( i S_{\perp \perp} n, S_{\perp \perp} \nabla^{\perp m} n \Big) + O_{H^1}(\mathcal{E}_{s}).
\end{multline}
 We shall again omit to specify that $\deux^\top$ is evaluated at $u$.
 
\bigskip
\noindent
\underline{Step 2: Right-hand side of  $\eqref{eqhydroK}_{1}$.}
Simply counting derivatives,
\begin{align*} 
&  \nabla^{\perp m } \Big( \deux^\perp\Big(  \Sigma \left(\begin{array}{c} {\partial_{x} \Phi \over \epsilon} \\ \epsilon \nabla_{x}^\perp n 
\end{array} \right), P^\top   \Sigma \left(\begin{array}{c} {\partial_{x} \Phi \over \epsilon} \\ \epsilon \nabla_{x}^\perp n 
\end{array} \right) \Big)  \Big) \\ 
& =   \deux^\perp\Big(  \Sigma \left(\begin{array}{c}  \nabla_{x}^\top \nabla^{\top m} \Phi^\epsilon \\ \epsilon \nabla_{x}^\perp  \nabla^{\perp m}n 
      \end{array} \right), P^\top   \Sigma \left(\begin{array}{c} {D\Phi \partial_{x} \phi } \\ \epsilon \nabla_{x}^\perp n 
      \end{array} \right) \Big) 
         +  \deux^\perp\Big(  \Sigma \left(\begin{array}{c}  D\Phi \partial_{x}  \phi\\ \epsilon \nabla_{x}^\perp n 
      \end{array} \right), P^\top   \Sigma \left(\begin{array}{c} \nabla_{x}^\top \nabla^{\top m} \Phi^\epsilon\\ \epsilon \nabla_{x}^\perp \nabla^{\perp m } n 
      \end{array} \right) \Big)  + O_{H^1}(\mathcal{E}_{s}) \\
      & =  T_{1}+ T_{2}  + O_{H^1}(\mathcal{E}_{s})
       \end{align*}
  as in the proof of \eqref{rightphi1}. Using the properties of $\Sigma$ and $\deux^\perp$, we can further simplify the expression of these terms: by~(\ref{rossignol}) and~\eqref{rougegorge},
\begin{equation*}
\begin{split}
& T_{1}= \deux^\perp\big( S_{\top\top}\nabla_{x}^\top \nabla^{\top m} \Phi^\epsilon, S_{\top \top} D\Phi \partial_{x} \phi \big)
       +  O_{H^1}(\mathcal{E}_{s}) \\
&  T_{2}=  \deux^\perp\big( S_{\top \top} D\Phi\partial_{x} \phi,  S_{\top \top} \nabla_{x}^\top \nabla^{\top m} \Phi^\epsilon) +   O_{H^1}(\mathcal{E}_{s})
\end{split}
\end{equation*}
Therefore,
\begin{multline}
\label{transport1kahl}
\nabla^{\perp m } \Big( \deux^\perp\Big(  \Sigma \left(\begin{array}{c} {\partial_{x} \Phi \over \epsilon} \\ \epsilon \nabla_{x}^\perp n 
\end{array} \right), P^\top   \Sigma \left(\begin{array}{c} {\partial_{x} \Phi \over \epsilon} \\ \epsilon \nabla_{x}^\perp n 
\end{array} \right) \Big)  \Big)   \\
=  \deux^\perp\big( S_{\top\top} \nabla_{x}^\top \nabla^{\top m} \Phi^\epsilon, S_{\top \top} D\Phi \partial_{x} \phi \big)
+  \deux^\perp\big( S_{\top \top} D\Phi\partial_{x} \phi,  S_{\top \top}  \nabla_{x}^\top \nabla^{\top m} \Phi^\epsilon) +   O_{H^1}(\mathcal{E}_{s}).
 \end{multline}
To treat
$$ {1 \over \epsilon^2}  \nabla^{\perp m } \big( S_{\perp \perp} n \big),$$
we use \eqref{devDPsi}  which gives that
\begin{equation}
\label{avocette}
\left(\nabla^\perp  S_{\perp \perp}  \right)_{(\Phi, \epsilon^2n)}= \epsilon^2 \mathcal{T}(\Phi, n).
\end{equation}
This equality enables to write the commutation of one derivative with $S_{\perp \perp}$ as follows
\begin{align*}
\frac{1}{\epsilon^2} \nabla^{\perp m} ( S_{\perp \perp} n )& = \frac{1}{\epsilon^2} \nabla^{\perp m-1} ( S_{\perp \perp}  \nabla^\perp n )
+ \frac{1}{\epsilon^2}  \nabla^{\perp m-1} ( \nabla^\perp S_{\perp \perp} ((\nabla^\perp \Phi,\epsilon^2 \nabla^\perp n),n) ) \\
& =  \frac{1}{\epsilon^2} \nabla^{\perp m-1}  (S_{\perp \perp} \nabla^\perp n) +O_{H^1(\mathcal{E}_s)}
\end{align*}
Repeating this operation gives the desired identity:
\begin{equation} \label{transport2kahl}
\frac{1}{\epsilon^2} \nabla^{\perp m} ( S_{\perp \perp} n) = \frac{1}{\epsilon^2} S_{\perp \perp}  \nabla^{\perp m}n  +O_{H^1}(\mathcal{E}_s).
\end{equation}
It remains to consider
$$  \nabla^{\perp m } \big( \nabla_{x}^\perp (S_{\perp \perp} \nabla_{x}^\perp n \big)\big).$$
The first step is to commute $ \nabla^{\perp m}$ with the first $\nabla_{x}^\perp$ using \eqref{riemperpkahl}. 
Observe that if $\nabla^{\perp m}$ does not contain time derivatives, there is no commutator, we can thus
write $ \nabla^{\perp m}= \nabla^{\perp \tilde m} \epsilon^2 \nabla_{t}^\perp$ and get
\begin{align*}   \nabla^{\perp m } \big( \nabla_{x}^\perp (S_{\perp \perp} \nabla_{x}^\perp n \big)\big)
& =    \nabla^{\perp \tilde m } \big( \nabla_{x}^\perp  \epsilon^2 \nabla_{t}^\perp  (S_{\perp \perp} \nabla_{x}^\perp n \big)\big) +
\nabla^{\perp \tilde  m } \big(  \epsilon^2 R^\perp( {\epsilon^2 \partial_{t}u \over \epsilon },  {\partial_{x} u \over \epsilon} )  S_{\perp \perp} \nabla_{x}^\perp n \big) \\
&  =    \nabla^{\perp \tilde  m } \big( \nabla_{x}^\perp \epsilon^2 \nabla_{t}^\perp  (S_{\perp \perp} \nabla_{x}^\perp n \big)\big)  + O_{H^1}(\mathcal{E}_{s}).
\end{align*}
Iterating this manipulation yields
$$
\nabla^{\perp m } \big( \nabla_{x}^\perp (S_{\perp \perp} \nabla_{x}^\perp n \big)\big)=
\nabla_{x}^\perp \Big( \nabla^{\perp m}   \big(  S_{\perp \perp} \nabla_{x}^\perp n \big)\Big) + O_{H^1}(\mathcal{E}_{s}).
$$
Commuting one derivative with $S_{\perp \perp}$ gives
\begin{align*}
\nabla_{x}^\perp \Big( \nabla^{\perp m}   \big(  S_{\perp \perp} \nabla_{x}^\perp n \big)\Big) 
& = \nabla_{x}^\perp \Big( \nabla^{\perp m-1}   \big(  S_{\perp \perp} \nabla^\perp \nabla_{x}^\perp n \big)\Big) + \nabla_x^\perp \nabla^{\perp m-1} \left[ \nabla^\perp S_{\perp \perp} \right] \left( (\nabla^\top \Phi , \epsilon^2 \nabla^\perp n),\nabla_x^\perp n \right) \\
& = \nabla_{x}^\perp \Big( \nabla^{\perp m-1}   \big(  S_{\perp \perp} \nabla^\perp \nabla_{x}^\perp n \big)\Big)  + O_{H^1}(\mathcal{E}_{s});
\end{align*}
here, the last equality follows from~(\ref{avocette}). Repeating this argument leads to
$$
 \nabla^{\perp m} \Big(  \nabla_{x}^\perp \big(  S_{\perp \perp} \nabla_{x}^\perp n \big)\Big) = \nabla_{x}^\perp \Big(  S_{\perp \perp} \nabla^{\perp m} \nabla_{x}^\perp n \Big) + O_{H^1}(\mathcal{E}_{s}).
$$
Finally, using once again the commutation relation \eqref{riemperpkahl} enables us to write
\begin{equation}
\label{transport3kahl}
 \nabla^{\perp m} \Big(  \nabla_{x}^\perp \big(  S_{\perp \perp} \nabla_{x}^\perp n \big)\Big) = \nabla_{x}^\perp \Big(  S_{\perp \perp} \nabla_{x}^\perp \nabla^{\perp m} n \Big) + O_{H^1}(\mathcal{E}_{s}).
\end{equation}
 The term  $\epsilon^3  \nabla^{\perp m}  \nabla_{x}^\perp ( S_{\perp \top}  {\partial_{x} \Phi \over \epsilon} )$ can be easily expanded as
 $$  \epsilon^3  \nabla^{\perp m}  \nabla_{x}^\perp ( S_{\perp \top}  {\partial_{x} \Phi \over \epsilon} )
  = \epsilon^3 \nabla_{x}^\perp ( S_{\perp \top} \nabla_{x}^\top  \nabla^{\top m} \Phi^\epsilon)  + O_{H^1}(\mathcal{E}_{s})$$
   just by counting the derivatives of the commutator. 
Gathering~(\ref{transport1kahl}), (\ref{transport2kahl}),  (\ref{transport3kahl}) and the last identity, we obtain
\begin{align}
\label{RHS1kahl}
\nabla^{\top m} RHS&\eqref{eqhydroK}_1  = 
i \left[ \frac{1}{2} \nabla_{x}^\perp \Big(  S_{\perp \perp} \nabla^{\perp m} \nabla_{x}^\perp n \Big) + { 1 \over 2} \epsilon^3 \nabla_{x}^\perp ( S_{\perp \top} \nabla_{x}^\top  \nabla^{\top m} \Phi^\epsilon) - \frac{2\lambda}{\epsilon^2}  S_{\perp \perp}   \nabla^{\perp m}n \right. \\
& \left. + \frac{1}{2} \deux^\perp\big( S_{\top\top}  \nabla_{x}^\top \nabla^{\top m} \Phi^\epsilon, S_{\top \top} D\Phi \partial_{x} \phi \big)
+ \frac{1}{2} \deux^\perp\big( S_{\top \top} D\Phi\partial_{x} \phi,  S_{\top \top}  \nabla_{x}^\top \nabla^{\top m} \Phi^\epsilon) \right] +   O_{H^1}(\mathcal{E}_{s}).
\end{align}

\bigskip

\noindent
\underline{Step 3: Left-hand side of  $\eqref{eqhydroK}_{2}$.}
 By using similar arguments as above, we can first write
\begin{multline*}
  \nabla^{\perp m}  LHS\eqref{eqhydroK}_{2}=  S_{\perp \perp} \big( \nabla_{t}^\perp - {c \over \epsilon^2} \nabla_{x}^\perp \big) \nabla^{\perp m} n 
  - {1 \over \epsilon^2}\nabla^{\perp m}\left(  (iB)_{\perp\perp} \nabla_{x}^\perp n\right)  \\
   + \epsilon^3 S_{\perp \top}( \nabla_{t}^\top - { c \over \epsilon^2} \nabla_{x}^\top) \nabla^{\top m} \Phi^\epsilon + O_{H^1_{\epsilon}}(\mathcal{E}_{s}).
  \end{multline*}          
To handle  ${1 \over \epsilon^2} \nabla^{\perp m}\left(  (iB)_{\perp\perp}  \nabla_{x}^\perp n \right)$, we can expand
 this term as previously and use \eqref{iBPPL} to handle the commutator.
This yields
  $$  {1 \over \epsilon^2} \nabla^{\perp m}\left(  (iB)_{\perp\perp}  \nabla_{x}^\perp n \right)= {1 \over \epsilon^2}  (iB)_{\perp \perp} \nabla_{x}^\perp \nabla^{\perp m} n + O_{H^1_{\epsilon}}
  (\mathcal{E}_{s}).$$ 
  We have thus proven that
\begin{multline}
\label{LHS2kahl}  \nabla^{\perp m}  LHS\eqref{eqhydroK}_{2}=  S_{\perp \perp} \big( \nabla_{t}^\perp - {c \over \epsilon^2} \nabla_{x}^\perp \big) \nabla^{\perp m} n 
  - {1 \over \epsilon^2}  (iB)_{\perp\perp} \nabla_{x}^\perp  \nabla^{\perp m} n   \\+ \epsilon^3 S_{\perp \top}( \nabla_{t}^\top - { c \over \epsilon^2} \nabla_{x}^\top) \nabla^{\top m} \Phi^\epsilon + O_{H^1_{\epsilon}}(\mathcal{E}_{s}).\end{multline} 
\bigskip

\noindent
\underline{Step 4: Right -hand side of  $\eqref{eqhydroK}_{2}$.}  
Again simply using commutator estimates, we obtain
\beq
\label{RHS2kahl1} \nabla^{\perp m } \nabla_{x}^\perp \Big(  i \epsilon^3  S_{\top \perp}\nabla_{x}^\perp n \Big) = i  \nabla_{x}^\top
\big(  \epsilon^3 S_{\top \perp} \nabla_{x}^\perp  \nabla^{\perp m} n \big) + O_{H^1_\epsilon}(\mathcal{E}_s).
\eeq
In a similar way, there is no new difficulty in commuting $\nabla^{\top m}$ and the term involving $\deux^\top$ in order to get
\begin{multline*} \nabla^{\top m} \Big( 
\deux^\top \Big( \Sigma \left( \begin{array}{ll}  D\Phi \partial_{x}  \phi \\ \epsilon \nabla_{x}^\perp n 
\end{array} \right), P^\perp \Sigma \left( \begin{array}{ll}  D\Phi \partial_{x}  \phi \\ \epsilon \nabla_{x}^\perp n 
\end{array} \right) \Big) \Big)
\\
= \deux^\top \Big( \Sigma \left( \begin{array}{ll} \nabla_{x}^\top  \nabla^{\top m } \Phi^\epsilon \\ \epsilon \nabla_{x}^\perp \nabla^{\perp m } n 
\end{array} \right),  P^\perp \Sigma \left( \begin{array}{ll}  D\Phi \partial_{x}  \phi \\ \epsilon \nabla_{x}^\perp n 
\end{array} \right)\Big)
+ \deux^\top \Big( \Sigma \left( \begin{array}{ll}  D\Phi \partial_{x}  \phi \\ \epsilon \nabla_{x}^\perp n 
\end{array} \right),  P^\perp \Sigma \left( \begin{array}{ll} \nabla_x^\top \nabla^{\top m} \Phi^\epsilon \\ \epsilon \nabla_{x}^\perp \nabla^{\perp m}  n 
\end{array} \right) \Big) \Big) + O_{H^1_\epsilon}(\mathcal{E}_s) .
\end{multline*}
Next, proceeding as in the proof of \eqref{transport1kahl}, we obtain 
\begin{multline}
\label{RHS2kahl2}
\nabla^{\top m} \Big(  \deux^\top \Big( \Sigma \left( \begin{array}{ll}  D\Phi \partial_{x}  \phi \\ \epsilon \nabla_{x}^\perp n 
\end{array} \right), S_{\perp \perp} \nabla_{x}^\perp n \Big) \Big) \\
=  \deux^\top \big( S_{\top\top}\nabla_{x}^\top  \nabla^{\top m }\Phi^\epsilon, S_{\perp \perp}\nabla_{x}^\perp n \big)
+ \deux^\top \big( S_{\top\top} D\Phi \partial_{x } \phi , S_{\perp \perp} \nabla_{x}^\perp \nabla^{\perp m} n \big).
\end{multline}

We shall now  study more precisely the term ${ 1 \over \epsilon^2} \nabla^{\top m } \nabla_{x}^\top\big(S_{\top \top} D\Phi \partial_{x} \phi \big)$. By using  \eqref{riemtopkahl}, we first get
$$
{ 1 \over \epsilon^2} \nabla^{\top m } \nabla_{x}^\top\big(S_{\top \top} D\Phi \partial_{x} \phi \big)=  { 1 \over \epsilon^2} \nabla_{x}^\top \big( \nabla^{\top m} \big( S_{\top \top} D\Phi \partial_{x} \phi \big)\big).
$$
We now commute $ \nabla^{\top m}$ with $S_{\top \top}$. It yields a sum of terms of the type
\begin{align*}
& \frac{1}{\epsilon^2} \nabla^\top_x \nabla^{\top \alpha} \left[ \nabla^\top S_{\top \top} \right] \left( (\nabla^{\top } \Phi,\epsilon^2 \nabla^{\perp } n), \nabla^{\top \gamma} D\Phi \partial_x \phi \right)\\
& \quad =  \frac{1}{\epsilon^2} \nabla^\top_x \nabla^{\top \alpha} \left[ \nabla^\top S_{\top \top} \right] \left( (0,\epsilon^2 \nabla^{\perp } n), \nabla^{\top \gamma} D\Phi \partial_x \phi \right) + \frac{1}{\epsilon^2} \nabla^\top_x \nabla^{\top \alpha} \left[ \nabla^\top S_{\top \top} \right] \left( (\nabla^{\top } \Phi,0), \nabla^{\top \gamma} D\Phi \partial_x \phi \right).
\end{align*}
where $|\alpha| + |\gamma| = m-1$. The second term on the above right-hand side is always $O_{H^1_\epsilon}(\mathcal{E}_s)$ by~(\ref{devDPsi}); as for the first term on the above-right hand side, due in particular to~\eqref{dxxxphikahl}, it is also $O_{H^1_\epsilon}(\mathcal{E}_s)$ unless $\gamma=0$. In other words,
\begin{align*}
& { 1 \over \epsilon^2} \nabla^{\top m } \nabla_{x}^\top\big(S_{\top \top} D\Phi \partial_{x} \phi \big) \\
& \qquad = { 1 \over \epsilon^2} \nabla_{x}^\top\big(S_{\top \top} \nabla^{\top m } D\Phi \partial_{x} \phi \big)
+  \frac{1}{\epsilon^2}  \left[ \nabla^\top S_{\top \top} \right] \left( (0,\epsilon^2 \nabla^{\perp}_x \nabla^{\perp m} n), D\Phi \partial_x \phi \right) + O_{H^1_\epsilon}(\mathcal{E}_s) .
\end{align*}

Using once again~(\ref{devDPsi}), the second term on the above right-hand side can be replaced as follows
\begin{align}
 \nonumber& { 1 \over \epsilon^2} \nabla^{\top m } \nabla_{x}^\top\big(S_{\top \top} D\Phi \partial_{x} \phi \big) \\
\nonumber& \qquad = { 1 \over \epsilon^2} \nabla_{x}^\top\big(S_{\top \top} \nabla^{\top m } D\Phi \partial_{x} \phi \big)
+  \deux^\top \left(S_{\top \top} D\Phi \partial_x \phi , S_{\perp \perp} \nabla_x^\perp \nabla^{\perp m} n \right) + O_{H^1_\epsilon}(\mathcal{E}_s)\\
\label{RHS2kahl3}
& \qquad = { 1 \over \epsilon^2} \nabla_{x}^\top\big(S_{\top \top} \nabla_x^{\top} \nabla^{\top m } \Phi^\epsilon \big)
+  \deux^\top \left(S_{\top \top} D\Phi \partial_x \phi , S_{\perp \perp} \nabla_x^\perp \nabla^{\perp m} n \right)
+ O_{H^1_\epsilon}(\mathcal{E}_s) .
\end{align}
Combining \eqref{RHS2kahl1}, \eqref{RHS2kahl2}, \eqref{RHS2kahl3}, we have thus proven that
\begin{multline}
\label{RHS2kahl}
\nabla^{\perp m} RHS~\eqref{eqhydroK}_{2}= i \Big[ {1 \over 2 \epsilon^2}  \nabla_{x}^\top \big( S_{\top \top} \nabla_{x}^\top \nabla^{\top m} \Phi^\epsilon  + \epsilon^5 S_{\top \perp} \nabla_{x}^\perp
\nabla^{\perp m } n \big)   +  \\ \deux^\top (S_{\top \top}  D\Phi \partial_{x}  \phi, S_{\perp \perp} \nabla_{x}^\perp \nabla^{\perp m}n \big)
+ {1 \over 2 } \deux^\top \big( S_{\top \top } \nabla_x^\top \nabla^{\top m} \Phi^\epsilon, S_{\perp \perp} \nabla_{x}^\perp n\big) \Big]
+ O_{H^1_{\epsilon}}(\mathcal{E}_{s}).
\end{multline}
The proof of Proposition \ref{Kpropdiff} follows by combining \eqref{LHS1kahl},  \eqref{RHS1kahl},  \eqref{LHS2kahl}, \eqref{RHS2kahl}.
\end{proof}

\subsection{Estimates on the hydrodynamical system}
The aim of this section is to use the hydrodynamical system derived in Proposition  \ref{Kpropdiff} to prove the following
\begin{prop}
\label{esthydroK}
The following a priori estimate holds
$$
\mathcal{E}_{s,2}^2(u,t) \lesssim \mathcal{E}_{s}^2(u,0) + \epsilon O( \mathcal{E}_{s} (u,t)) + \int_0^t O(\mathcal{E}_{s}(u,\tau))\,d\tau, \quad \forall t \in [0, T^\epsilon].
$$
\end{prop}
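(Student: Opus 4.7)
The plan is to mimic the multiplier-and-integrate argument used in Proposition~\ref{esthydroplat}, but with the block tensors $S_{\top\top}$, $S_{\perp\perp}$, $(iB)_{\top\top}$, $(iB)_{\perp\perp}$ and their off--diagonal $O(\epsilon^4)$ companions in place of the constant operators $\operatorname{Id}$, $B$ of the flat case. Starting from the differentiated system~\eqref{hydrodiffkahl} (with $1\leq |m|\leq s$), I first rewrite it in a form analogous to~(4.35), using~\eqref{commutateurB} (equivalently Corollary~\ref{BdeuxT}) to move the $(iB)_{\top\top}$, $(iB)_{\perp\perp}$ terms onto the right-hand side. I then pair the tangential equation with a test vector field of the form
$$
-\nabla_x^\top\!\bigl(S_{\top\top}^{2}\,\nabla_x^\top\nabla^{\top m}\Phi^\epsilon\bigr)\;-\;2\,S_{\top\top}(iB)_{\top\top}^{*}\!\nabla_x^\perp\nabla^{\perp m}n\;-\;2\epsilon^2\,\deux^\top\!\bigl(S_{\top\top}D\Phi\partial_x\phi,\nabla_x^\perp\nabla^{\perp m}n\bigr),
$$
and the normal equation with
$$
-\epsilon^2(\nabla_x^\perp)^2\nabla^{\perp m}n\;+\;4\lambda\,S_{\perp\perp}\nabla^{\perp m}n\;-\;2\,(iB)_{\perp\perp}S_{\top\top}\nabla_x^\top\nabla^{\top m}\Phi^\epsilon\;-\;2\epsilon^2\,\deux^\perp\!\bigl(S_{\top\top}D\Phi\partial_x\phi,\nabla_x^\top\nabla^{\top m}\Phi^\epsilon\bigr)\;+\;4\lambda i\epsilon^2\deux^\top\!\bigl(iS_{\perp\perp}n,S_{\perp\perp}\nabla^{\perp m}n\bigr),
$$
and sum after integration in $x$. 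As in Section~\ref{estHS} the resulting identity splits into six contributions $I+II+III+IV=V+VI+O(\mathcal E_s^{2})$: the two ``symmetric" contributions $I,II$ generate the time derivatives of the main energy terms; the cross terms $III$ (involving $(iB)_{\top\top}$, $(iB)_{\perp\perp}$) and $IV$ (involving $\deux^\top,\deux^\perp$) produce exact derivatives after integration by parts thanks to the skew-symmetry of $B$, the self-adjointness of $S_{\top\top},S_{\perp\perp}$, and the identity~\eqref{utilefin} relating $\deux^\top$ and $\deux^\perp$; and finally $V$ and $VI$ are the image of the right-hand side of~\eqref{hydrodiffkahl} which, up to lower order commutators, cancel out thanks to the precise cancellation of the $\epsilon^{-2}$ contributions (the same algebraic mechanism as in Step~6 of the flat proof).

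To carry out each step I need three ingredients specific to the curved setting. First, the commutation of covariant derivatives of $S_{\top\top},S_{\perp\perp}$ is controlled via~\eqref{devDPsi}, which shows $\nabla^\top S_{\top\top}$ vanishes to first order on $\mathcal L$ in all but the $(\nabla^\top_N S_{\top\top})X=\deux^\top(X,N)$ direction, and similarly $\nabla^\perp S_{\perp\perp}=O(\epsilon^2)$; this is what allows commutators such as $\nabla_x^\top(S_{\top\top}\nabla_x^\top\nabla^{\top m}\Phi^\epsilon)$ versus $S_{\top\top}\nabla_x^\top\nabla_x^\top\nabla^{\top m}\Phi^\epsilon$ to give an $O_{L^2}(\mathcal E_s)$ remainder. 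Second, the commutation of $\nabla^\top$ and $\nabla^\perp$ with the operators $(iB)_{\top\top}$ and $(iB)_{\perp\perp}$ is controlled through~\eqref{iBTTL}--\eqref{iBPPL}. Third, the second fundamental form is no longer exactly symmetric off $\mathcal L$; one uses~\eqref{almostsym} to absorb the resulting defect as an $O(\epsilon^2|X||Y|)$ error, which is harmless here because each such occurrence comes already multiplied by $\epsilon^2$ in the multiplier.

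The heart of the argument is the fact that, after all of these cancellations, the quantity
\begin{equation*}
\begin{split}
\widetilde{\mathcal E}_{s,2}^{2}\;\overset{\text{def}}{=}\;\sum_{|m|\leq s}\int &\Bigl[\,\bigl|S_{\top\top}\nabla_x^\top\nabla^{\top m}\Phi^\epsilon\bigr|^{2}+\epsilon^{2}\bigl|\nabla_x^\perp\nabla^{\perp m}n\bigr|^{2}+4\lambda\,|S_{\perp\perp}\nabla^{\perp m}n|^{2}\\
&\qquad -\,4\,S_{\perp\perp}\nabla^{\perp m}n\cdot(iB)_{\perp\perp}S_{\top\top}\nabla_x^\top\nabla^{\top m}\Phi^\epsilon\Bigr]\,dx\;+\;(m=0\text{ term})
\end{split}
\end{equation*}
satisfies $\tfrac{d}{dt}\widetilde{\mathcal E}_{s,2}^{2}=O(\mathcal E_s^{2})$ on $[0,T^\epsilon]$. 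The final step is to show $\widetilde{\mathcal E}_{s,2}^{2}\sim \mathcal E_{s,2}^{2}+\epsilon\,O(\mathcal E_s^{2})$. The lower bound uses that on $\mathcal L$ one has $S_{\top\top}=S_{\perp\perp}=\operatorname{Id}$, $|(iB)_{\top\top}|^{2}=|(iB)_{\perp\perp}|^{2}=\mu$ and $\mu<\lambda$ (by~\eqref{H2}), so the quadratic form $X_1^{2}+4\lambda X_2^{2}-4\sqrt{\mu}\,X_1X_2$ is uniformly positive definite; the Sobolev smallness $\epsilon\|\phi\|_{L^\infty}+\epsilon^{2}\|n\|_{L^\infty}\leq r$ of the bootstrap hypothesis guarantees that the same coercivity persists at $u$, modulo an $\epsilon^{2}O(\mathcal E_s^{2})$ perturbation. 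I expect the main technical obstacle to be the bookkeeping in steps $V$ and $VI$: in the flat case one exploits that $\nabla_x^\top$ commutes with $S_0=\operatorname{Id}+\epsilon^{2}\deux^\top(\cdot,n)$ up to a term produced by $\nabla_x^\top n$, leaving one single irreducible commutator which ultimately cancels with $VI$. Here the commutators $[\nabla_x^\top,S_{\top\top}]$, $[\nabla_x^\perp,S_{\perp\perp}]$, $[\nabla_x^\top,(iB)_{\top\top}]$ produce several additional structural terms; the key point is that each of them, by~\eqref{devDPsi} and~\eqref{iBTTL}--\eqref{iBPPL}, factors through $\deux^\top$ or $\deux^\perp$ evaluated at $\Phi(\epsilon\phi)$, and thus the almost-symmetry~\eqref{almostsym} and Proposition~\ref{deuxsym} still produce the same cancellations as in the flat case, up to controlled $\epsilon^{2}$ corrections. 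Once this verification is complete, integration in time between $0$ and $t$ and absorption of the $\epsilon\,O(\mathcal E_s)$ terms into $\mathcal E_s(u,0)$ yield the claimed inequality.
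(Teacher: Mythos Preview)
Your proposal is correct and follows essentially the same strategy as the paper: a multiplier-and-integrate argument on the differentiated hydrodynamical system~\eqref{hydrodiffkahl}, exploiting the skew-symmetry of the $B$-type terms, the symmetry of the $\mathcal V$-type terms, and the same $V$/$VI$ cancellation mechanism as in the flat case, with the commutator control coming from~\eqref{devDPsi}, \eqref{iBTTL}--\eqref{iBPPL}, and~\eqref{almostsym}.

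The only difference is organizational. The paper observes that the off-diagonal coupling in~\eqref{hydrodiffkahl} (the $\epsilon^3 S_{\perp\top}$ and $\epsilon^5 S_{\top\perp}$ terms, absent in the Euclidean case) makes the component-wise bookkeeping heavier, and therefore repackages everything in block form: setting $U^m=(\nabla^{\top m}\Phi^\epsilon,\nabla^{\perp m}n)^t$, introducing the block operators $\mathcal M$, $J$, $\mathcal B$, $\mathcal B_{\deux}$, $\mathcal V$, $\mathcal A_{\epsilon^2}$, rewriting~\eqref{hydrodiffkahl} as a single equation~\eqref{hydrosimple}, and taking the scalar product with a single operator $LU^m=-\mathcal D_x(\mathcal M^*\mathcal A_{\epsilon^2}\mathcal M\mathcal D_xU^m)+2\mathcal M^*\mathcal V\mathcal M U^m-2\mathcal M^*(\mathcal B+\epsilon^2\mathcal B_{\deux})\mathcal M\mathcal D_xU^m$. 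Your two component-wise multipliers are exactly the tangential and normal blocks of this $LU^m$ (modulo $O(\epsilon^3)$ off-diagonal pieces), and your terms $I,II,III,IV,V,VI$ correspond to the paper's $I,II,II_B,II_{\deux},III,IV$. The block formalism buys cleaner handling of the cross terms and makes the crucial $III$/$IV$ cancellation a one-line consequence of the skew-symmetry of $J$, but the underlying algebra is identical to what you outline. One small caveat: in your tangential multiplier the object $(iB)_{\top\top}^*$ acts on the wrong fiber (it should be the off-diagonal block $P^\top B P^\perp S_{\perp\perp}$ rather than $(iB)_{\top\top}^*$); this is a notational slip, not a structural one.
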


We shall use the same idea as in the proof of Proposition \ref{esthydroplat}. The additional difficulty is that the coupling
between the two equations of the system \eqref{hydrodiffkahl} is slightly stronger than before.
It is thus more convenient to deal directly with the whole system and use vectors and block matrix notations.

We introduce some notations before stating the proposition: let $ U^m \overset{def}{=} (\nabla^{\top m} \Phi^\epsilon, \nabla^{\perp m}n )^t= (U^m_{1}, U^m_{2})^t \in T_{\Phi}\mathcal{L} \times N_{\Phi}\mathcal{L}$.

For any vector $U \in \widetilde T_{u}\mathcal{L} \times  \widetilde N_{u}\mathcal{L}$ along $u \in \mathcal{M}$, it will be convenient to use the notation  
$$ \mathcal{D}_{i} U\overset{def}{=} (\nabla^\top_{i} U_{1}, \nabla^{\perp}_{i} U_{2}), \quad i=t, \, x.$$
Note that  in this notation in the definition of the connection, $P^\top$ and $P^\perp$ are to be taken at $u$.  
Define the  matrix $J_{u} \in \mathscr{L}(\widetilde T_{u} \mathcal{L} \times \widetilde N_{u} \mathcal{L}, \widetilde T_{u} \mathcal{L} \times \widetilde N_{u} \mathcal{L})$ by  
$$
J_{u} \overset{def}{=} \left( \begin{array}{ll} 0 & i_{u} \\ i_{u} & 0 \end{array} \right).
$$
Note that $J$ is skew symmetric for the scalar product  on $\widetilde T_{u} \mathcal{L} \times \widetilde N_{u} \mathcal{L}$ defined by 
\begin{equation}
\label{scalartilde1}
 \langle U, V \rangle_{u}=\langle U_{1}, V_{1} \rangle_{u} +  \langle U_{2}, V_{2} \rangle_{u}.
 \end{equation}
 In a similar way, we define $\mathcal{B}_{u} \in  \mathscr{L}(\widetilde T_{u} \mathcal{L} \times \widetilde N_{u} \mathcal{L}, \widetilde T_{u} \mathcal{L} \times \widetilde N_{u} \mathcal{L})$
  defined by 
 $$ \mathcal{B}_{u}=  \left( \begin{array}{cc}  0 & P^\top_{u} B(u) P^\perp_{u} \\ P^\perp_{u} B(u)  P^\top_{u}& 0 \end{array} \right).$$
  Again $\mathcal{B}$ is skew-symmetric for the above scalar product. 
We shall also use $ \mathcal{M}(\Phi, \epsilon^2n) \in \mathscr{L}(T_{\Phi}\mathcal{L} \times N_{\Phi} \mathcal{L}, \widetilde T_{u} \mathcal{L} \times \widetilde N_{u} \mathcal{L})$
defined as 
$$\mathcal{M}\overset{def}{=}  \left( \begin{array}{cc} S_{\top \top} & \epsilon^5 S_{\top \perp} \\  \epsilon^3 S_{\perp\top}  & S_{\perp \perp} \end{array} \right).$$
Note that the only difference between $\mathcal{M}$ and $\Sigma$ defined in \eqref{rossignol} is  in the  power of $\epsilon$
 in front of the off-diagonal terms  
  that come from the anisotropic scaling of the KdV limit.

We now rewrite in a more concise form the system~\eqref{hydrodiffkahl}: first the left hand-side of \eqref{hydrodiffkahl}, which, thanks to \eqref{Baction}, we can write
\begin{equation}
LHS~\eqref{hydrodiffkahl}= \mathcal{M} \left( \mathcal{D}_{t} U^m - {c \over \epsilon^2} \mathcal{D}_{x} \right) U^m - \frac{1}{\epsilon^2} J \mathcal{B} \mathcal{M} \mathcal{D}_{x} U^m+ \left( \begin{array}{cc} O_{H^1}(\mathcal{E}_{s}) \\
 O_{H^1_{\epsilon}}(\mathcal{E}_{s}) \end{array}\right).
\end{equation}
Next, the right hand-side of \eqref{hydrodiffkahl}, which can be symmetrized a little more by observing that 
 \begin{multline}
 \label{petitesym}
  { 1 \over 2} \deux^\perp_{u}\left( S_{\top \top}\nabla_{x}^\top \nabla^{\top m} \Phi^\epsilon, S_{\top \top } D\Phi \partial_{x} \phi\right)
  + {1 \over 2 } \deux^\perp_{u} \left(  S_{\top \top } D\Phi \partial_{x} \phi, S_{\top \top}\nabla_{x}^\top \nabla^{\top m} \Phi^\epsilon\right) \\
   = \deux^\perp_{u}  \left(  S_{\top \top } D\Phi \partial_{x} \phi, S_{\top \top}\nabla_{x}^\top \nabla^{\top m} \Phi^\epsilon\right) + O_{H^1}(\mathcal{E}_{s}).
   \end{multline}
   Indeed, our generalized second fundamental form $\deux^\perp$ is not symmetric in its arguments, but if we define the tensor
   \begin{multline*} T(u)( D\Phi \partial_{x} \phi,  X)=  { 1 \over 2} \deux^\perp_{u}\left( S_{\top \top} (u) X, S_{\top \top }(u) D\Phi \partial_{x} \phi\right)
  + {1 \over 2 } \deux^\perp_{u} \left(  S_{\top \top }(u) D\Phi \partial_{x} \phi, S_{\top \top}(u) X
 \right) \\
 - \deux^\perp_{u}  \left(  S_{\top \top }(u) D\Phi \partial_{x} \phi, S_{\top \top}(u)X\right)
 \end{multline*}
 acting on $T_{\Phi}\mathcal{L} \times \mathcal{T}_{\Phi}\mathcal{L}$, it vanishes if $u \in \mathcal{L}$ by the symmetry in its arguments of the second
  fundamental form of the tangent bundle of $\mathcal{L}$. We thus obtain that
  $$ T(u)(D\Phi \partial_{x} \phi, X)= \epsilon^2 \mathcal{T}(\Phi, n)(D\Phi \partial_{x} \phi, X)$$
  and hence we get \eqref{petitesym}.
  
Using this simplification, define the tensor $ \mathcal{B}_{\deux}(u, \partial_{x} \phi) \in \mathscr{L}(\widetilde T_{u} \mathcal{L} \times \widetilde N_{u} \mathcal{L}, \widetilde T_{u} \mathcal{L} \times \widetilde N_{u} \mathcal{L})$ as
 $$ \mathcal{B}_{\deux}(u, \partial_{x} \phi)=  \left( \begin{array}{cc} 0  & \deux^\top_{u}(S_{\top\top} D\Phi \partial_{x} \phi, \cdot) \\  \deux^\perp_{u} (S_{\top \top}D\Phi \partial_{x} \phi, \cdot) & 0 \end{array}\right)$$
  so that
 $$
  \left( \begin{array}{cc}
  i    \deux^\perp\big(S_{\top \top} D\Phi \partial_{x}  \phi, S_{\top \top} \nabla_{x}^\top  \nabla^{\top m} \Phi^\epsilon)   \\
   i  \deux^\top (S_{\top \top}D\Phi \partial_{x} \phi, S_{\perp \perp} \nabla_{x}^\perp \nabla^{\perp m}n  ) \end{array} \right)
   =  J \mathcal{B}_{\deux} \mathcal{M} \mathcal{D}_{x}U^m +  \left( \begin{array}{cc} O_{H^1}(\mathcal{E}_{s}) \\
 O_{H^1_{\epsilon}}(\mathcal{E}_{s}) \end{array}\right).
$$
By definition of $\deux^\top$ and $\deux^\perp$, we have that
 $$ \langle \deux^\top_{u}(X, N), Y \rangle + \langle N, \deux^\perp_{u}(X,Y) \rangle= 0$$ for every vector fields along $u$ such that
  $X \in \widetilde T_{u} \mathcal{L}, \, Y\in \widetilde{T}_{u} \mathcal{L}, \, N \in \widetilde N_{u} \mathcal{L}.$
   This yields that $\mathcal{B}_{\deux}$ is skew symmetric on $\widetilde T_{u}\mathcal{L} \times \widetilde N_{u} \mathcal{L}$  for the scalar product defined by \eqref{scalartilde1}.
   
To include the zero order terms in the right hand-side of \eqref{hydrodiffkahl},  we introduce  the tensor $\mathcal{V}(u,n) \in  \mathscr{L}(\widetilde T_{u} \mathcal{L} \times \widetilde N_{u} \mathcal{L}, \widetilde T_{u} \mathcal{L} \times \widetilde N_{u} \mathcal{L})$
      defined by 
    $$ \mathcal{V} (u,n)= \left( \begin{array}{cc}  0 &  0 \\ 0 & 2 \lambda   + 2 \epsilon^2  \lambda i \deux^\top(i S_{\perp \perp} n, \cdot )  \end{array} \right)
     $$
   to obtain   that
  $$
  {1 \over \epsilon^2}  \left( \begin{array}{cc} i \left( 
  2 \lambda  S_{\perp \perp} \nabla^{\perp m}n + 2\epsilon^2 \lambda i \deux^\top \big( i S_{\perp \perp} n, S_{\perp \perp} \nabla^{\perp m} n \big) \right) \\
   0 
   \end{array}\right)  =  {1 \over \epsilon^2} J \mathcal{V} \mathcal{M} U^m
   +  \left( \begin{array}{cc} O_{H^1}(\mathcal{E}_{s}) \\
 O_{H^1_{\epsilon}}(\mathcal{E}_{s}) \end{array}\right)
   .$$
   Moreover, let us notice that  $ i \deux^\top_{u}(X, \cdot)$ remains symmetric on $\widetilde N_{u} \mathcal{L}$. Consequently, we obtain that
    $\mathcal{V}$ is symmetric on $\widetilde T_{u}\mathcal{L} \times \widetilde N_{u} \mathcal{L}$ for the scalar product defined by \eqref{scalartilde1}. 
    
Finally, we will denote
$$ A_{\delta}\overset{def}{=} \left( \begin{array}{cc}  Id  & 0  \\ 0 & \delta Id \end{array} \right).$$ 
Then the right hand side of \eqref{hydrodiffkahl} reads
\begin{align*}
RHS\eqref{hydrodiffkahl} & = \frac{1}{\epsilon^2}  J \left( \frac{1}{2} \mathcal{D}_x \mathcal{A}_{\epsilon^2} \mathcal M \mathcal{D}_x U^m + \epsilon^2 \mathcal{B}_{II} \mathcal{M} \mathcal{D}_x U^m - \mathcal{V} \mathcal{M} U^m \right) \\
& \qquad \qquad + \left( \begin{array}{l} 0 \\ \frac{i}{2} \deux^\top (S_{\top \top} \nabla_x^\top \nabla^{\top m} \Phi^\epsilon, S_{\perp \perp} \nabla_x^\perp n ) \end{array} \right)+  \left( \begin{array}{cc} O_{H^1}(\mathcal{E}_{s}) \\
 O_{H^1_{\epsilon}}(\mathcal{E}_{s}) \end{array}\right).
\end{align*}
Gathering the expressions for the left and right hand sides of \eqref{hydrodiffkahl}, we proved the following proposition
\begin{prop}
On $[0, T^\epsilon]$, we have that $U^m =  (\nabla^{\top m} \Phi^\epsilon, \nabla^{\perp m}n )^t$ solves for $1 \leq |m| \leq s$ the system
  \begin{multline}
  \label{hydrosimple}
  ( \mathcal{D}_{t}- {c \over \epsilon^2} \mathcal{D}_{x}) U^m= {1 \over \epsilon^2} \mathcal{M}^{-1} J \Bigl( {1 \over 2} \mathcal{D}_{x}(\mathcal{A}_{\epsilon^2} \mathcal{M} \mathcal{D}_{x} U^m) + ( \mathcal{B}  + \epsilon^2 \mathcal{B}_{\deux}) \mathcal{M}\mathcal{D}_{x} U^m - \mathcal{V} \mathcal{M}U^m\Bigr)  \\
    +   \mathcal{M}^{-1} \left( \begin{array}{ll} 0  \\  {1 \over 2 } i  \deux^\top \big( S_{\top\top} \nabla_{x}^\top \nabla^{\top m} \Phi^\epsilon, S_{\perp \perp} \nabla_{x}^\perp n \big)  \end{array} \right) +  \left( \begin{array}{cc} O_{H^1}(\mathcal{E}_{s}) \\
 O_{H^1_{\epsilon}}(\mathcal{E}_{s}) \end{array}\right)
\end{multline}
  where $\mathcal{B}$, $\mathcal{B}_{\deux}$ are skew symmetric and $\mathcal{V}$  is symmetric on $\widetilde T_{u}\mathcal{L} \times \widetilde N_{u} \mathcal{L}$ for the scalar product defined by \eqref{scalartilde1} 
  \end{prop}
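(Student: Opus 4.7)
The proposition is essentially a reformulation of the differentiated hydrodynamical system \eqref{hydrodiffkahl} obtained in Proposition~\ref{Kpropdiff}, together with algebraic symmetry statements about the tensors $\mathcal{B}$, $\mathcal{B}_{\deux}$ and $\mathcal{V}$. My plan is to (i) recognize how each block of~\eqref{hydrodiffkahl} assembles into the compact vector form~\eqref{hydrosimple}, (ii) check the symmetry/skew-symmetry claims, and (iii) invert $\mathcal{M}$.

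\smallskip

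\noindent \textbf{Step 1 (assembling the LHS).} Writing out the two lines of~\eqref{hydrodiffkahl} side by side in terms of $U^m = (U^m_1,U^m_2)^t = (\nabla^{\top m}\Phi^\epsilon, \nabla^{\perp m} n)^t$, the ``diagonal'' terms $S_{\top \top}(\nabla_t^\top - c\epsilon^{-2}\nabla_x^\top) U^m_1$ and $S_{\perp \perp}(\nabla_t^\perp - c\epsilon^{-2}\nabla_x^\perp) U^m_2$, together with the ``off-diagonal'' corrections $\epsilon^5 S_{\top \perp}(\nabla_t^\perp - c\epsilon^{-2}\nabla_x^\perp) U^m_2$ and $\epsilon^3 S_{\perp \top}(\nabla_t^\top - c\epsilon^{-2}\nabla_x^\top)U^m_1$, are exactly $\mathcal{M}(\mathcal{D}_t - c\epsilon^{-2}\mathcal{D}_x) U^m$. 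The $-\epsilon^{-2}(iB)_{\top\top}\nabla_x^\top U^m_1$ and $-\epsilon^{-2}(iB)_{\perp\perp}\nabla_x^\perp U^m_2$ terms combine, via the block decomposition \eqref{Baction} of $iB\,\Sigma$, into $-\epsilon^{-2} J \mathcal{B}\,\mathcal{M}\, \mathcal{D}_x U^m$ (the off-diagonal entries of $iB\Sigma$ only produce $O_{H^1}(\mathcal{E}_s)$ remainders thanks to the $\epsilon^4$ gain).

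\smallskip

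\noindent \textbf{Step 2 (assembling the RHS).} The second-order terms $\frac{i}{2}\nabla_x^\perp(S_{\perp \perp}\nabla_x^\perp U_2^m)$, $\frac{i}{2\epsilon^2}\nabla_x^\top(S_{\top\top}\nabla_x^\top U_1^m)$ and the cross terms with $\epsilon^5 S_{\top \perp}$, $\epsilon^3 S_{\perp \top}$ together form exactly $\frac{1}{2\epsilon^2} J \mathcal{D}_x(\mathcal{A}_{\epsilon^2} \mathcal{M}\mathcal{D}_x U^m)$. The $\deux^\perp$ terms in the first equation and the $\deux^\top (S_{\top\top} D\Phi\partial_x\phi, S_{\perp\perp}\nabla_x^\perp\cdot\,)$ term in the second equation make up $J \mathcal{B}_{\deux} \mathcal{M}\mathcal{D}_x U^m$, after using the symmetrization \eqref{petitesym} to replace the symmetric pair $\frac{1}{2}\deux^\perp(\cdot,\cdot) + \frac{1}{2}\deux^\perp(\cdot,\cdot)$ by a single $\deux^\perp(S_{\top\top}D\Phi\partial_x\phi,\cdot)$ modulo an $O_{H^1}(\mathcal{E}_s)$ error. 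The remaining residual term $\tfrac{i}{2}\deux^\top(S_{\top\top}\nabla_x^\top U_1^m, S_{\perp\perp}\nabla_x^\perp n)$ in the second equation is kept in the explicit $\mathcal{M}^{-1}(0,\cdot)^t$ vector of~\eqref{hydrosimple}. Finally the zero order terms $-\tfrac{2\lambda}{\epsilon^2}S_{\perp\perp} U_2^m - 2\lambda i\deux^\top(iS_{\perp\perp} n, S_{\perp\perp} U_2^m)$ assemble into $-\epsilon^{-2} J\mathcal{V}\mathcal{M} U^m$. Multiplying by $\mathcal{M}^{-1}$ (well defined on $[0,T^\epsilon]$ since $\mathcal{M}_{|\mathcal{L}} = \operatorname{Id}$ and $\epsilon^2 n = O(r)$) and transferring it across only generates $O_{H^1}(\mathcal{E}_s)\oplus O_{H^1_\epsilon}(\mathcal{E}_s)$ remainders yields the stated form~\eqref{hydrosimple}.

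\smallskip

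\noindent \textbf{Step 3 (symmetries).} I check each tensor against the scalar product~\eqref{scalartilde1}. For $J$: since $i$ is skew-adjoint on $T_u\mathcal{M}$, $\langle JU,V\rangle = \langle iU_2,V_1\rangle + \langle iU_1,V_2\rangle = -\langle U_2, iV_1\rangle - \langle U_1, iV_2\rangle = -\langle U,JV\rangle$. For $\mathcal{B}$: it is block-antidiagonal with blocks $P^\top B P^\perp$ and $P^\perp B P^\top$, and these blocks are adjoint to each other with an extra minus sign because $B$ is skew on $T_u\mathcal{M}$; this makes $\mathcal{B}$ skew. For $\mathcal{B}_{\deux}$: by definition of $\deux^\top$ and $\deux^\perp$ (see \eqref{utilefin} in the flat-case argument, which holds verbatim for the extended second fundamental forms because $\widetilde N_u\mathcal{L}\perp \widetilde T_u\mathcal{L}$), one has $\langle \deux^\top(X,N),Y\rangle + \langle N,\deux^\perp(X,Y)\rangle = 0$; this is exactly the identity that makes $\mathcal{B}_{\deux}$ skew. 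For $\mathcal{V}$: only the $(2,2)$ block matters; writing $iS_{\perp\perp} n \in \widetilde T_u\mathcal{L}$, statement (6) of Proposition~\ref{deuxsym} says that $i\deux^\top(X,\cdot)$ is symmetric on $N\mathcal{L}$ for any tangent $X$, and the extension to $\widetilde N_u\mathcal{L}$ is automatic by the same argument as in Section~\ref{GPII}. Hence $\mathcal{V}$ is symmetric.

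\smallskip

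The only nontrivial bookkeeping will be to keep track of the remainders when pulling $\mathcal{M}^{-1}$ through the right-hand side, but since $\mathcal{M}^{-1} - \operatorname{Id} = O(\epsilon^2)$ and the singular $\epsilon^{-2}$ factor multiplies quantities already controlled in $H^1$ (resp. $H^1_\epsilon$) by $\mathcal{E}_s$ via Proposition~\ref{Kpropdiff}, no new obstacle arises; the main conceptual point is really the symmetry verifications of Step~3, which relies crucially on Proposition~\ref{deuxsym} and the Lagrangian/K\"ahler compatibilities.
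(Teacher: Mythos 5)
Your proposal is correct and follows essentially the same route as the paper: the paper's own proof is exactly this assembly of \eqref{hydrodiffkahl} into block form (left-hand side via \eqref{Baction}, right-hand side via the symmetrization \eqref{petitesym} and the definitions of $\mathcal{A}_{\epsilon^2}$, $\mathcal{B}_{\deux}$, $\mathcal{V}$, then inversion of $\mathcal{M}$), followed by the same symmetry checks — $\mathcal{B}_{\deux}$ skew via the duality identity $\langle \deux^\top(X,N),Y\rangle + \langle N,\deux^\perp(X,Y)\rangle = 0$, which survives off $\mathcal{L}$ precisely because $\widetilde N_{u}\mathcal{L}\perp \widetilde T_{u}\mathcal{L}$, and $\mathcal{V}$ symmetric because $i\deux^\top(X,\cdot)$ remains symmetric on $\widetilde N_{u}\mathcal{L}$. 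Your one shortcut — declaring the extension of Proposition \ref{deuxsym}(6) to $\widetilde N_{u}\mathcal{L}$ "automatic" — is at the same level of detail as the paper itself, which simply asserts that this symmetry "remains" true away from $\mathcal{L}$ (it follows in one line from $\nabla i = 0$, $P^\perp i = i P^\top$ and the duality identity).
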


  As in the proof of Proposition \ref{esthydroplat}, we shall prove Proposition \ref{esthydroK} by taking the scalar product  of \eqref{hydrosimple} with $LU^m$ for a well chosen operator $L$. Set
$$ L U^m = - \mathcal{D}_{x}( \mathcal{M}^*  \mathcal{A}_{\epsilon^2} \mathcal{M} \mathcal{D}_{x} U^m) + 2 \mathcal{M}^* \mathcal{V} \mathcal{M} U^m  - 2 \mathcal{M}^*( \mathcal{B}
     + \epsilon^2 \mathcal{B}_{\deux}) \mathcal{M} \mathcal{D}_{x} U^m$$
     and the $L^2$ scalar product
     $$ (U,V)\overset{def}= \int_{\mathbb{R}} \langle U_{1}, V_{1}\rangle_u + \langle U_{2}, V_{2}\rangle_u \,dx.$$
     We shall prove the following crucial result for the proof of Proposition \ref{esthydroK}:
   \begin{prop}
\label{estimationhydroK} For any $m$ such that $1 \leq |m|\leq s$, we have that on $[0, T^\epsilon]$, 
\begin{multline*}
\frac{d}{dt} \left({ 1 \over 2 }\left(\mathcal{M} \mathcal{D}_{x} U^m, \mathcal{A}_{\epsilon^2} \mathcal{M} \mathcal{D}_{x} U^m \right)
 +   \big( \mathcal{V}  \mathcal{M} U^m , \mathcal{M} U^m \big) \right.  \\
  \left. -  2\left(  U_{2}^m, S_{\perp\perp}^* P^\perp B P^\top S_{\top\top} \nabla_{x}^\top U_{1}^m \right)
  -2 \epsilon^2\left(  U_{2}^m, S_{\perp\perp}^* \deux^\perp\left( S_{\top\top}D\Phi \partial_{x} \phi, S_{\top\top} \nabla_{x}^\top U_{1}^m\right) \right) \right) 
   \\= O(\mathcal{E}_s^2).
   \end{multline*}
\end{prop}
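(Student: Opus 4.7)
The strategy follows the multiplier method of Proposition~\ref{esthydroplat}, adapted to the block‑matrix formulation~\eqref{hydrosimple}. We take the scalar product of~\eqref{hydrosimple} with $LU^m$ for the operator
$$
LU^m = - \mathcal{D}_{x}\bigl( \mathcal{M}^*  \mathcal{A}_{\epsilon^2} \mathcal{M} \mathcal{D}_{x} U^m\bigr) + 2 \mathcal{M}^* \mathcal{V} \mathcal{M} U^m  - 2 \mathcal{M}^*( \mathcal{B} + \epsilon^2 \mathcal{B}_{\deux}) \mathcal{M} \mathcal{D}_{x} U^m,
$$
and integrate in $x$. The four summands on the left of the target identity will be produced one by one, while the dangerous $\epsilon^{-2}$ terms on the right will cancel owing to the symmetry / skew‑symmetry properties of $\mathcal V$ (symmetric) and of $J,\mathcal B,\mathcal B_\deux$ (skew). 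The overall scheme is as in the flat case, but with two additional layers of work: (i) commuting $\mathcal D_x$, $\mathcal D_t$ on a curved Kähler manifold produces curvature remainders, and (ii) commuting $\mathcal D_x$, $\mathcal D_t$ with the tensors $\mathcal M$, $\mathcal V$, $\mathcal B$, $\mathcal B_\deux$ produces remainders governed by~\eqref{devDPsi}, \eqref{iBTTL}, \eqref{iBPPL}, and~\eqref{macareuxK3}–\eqref{macareuxK4}. All such remainders should be seen to be $O_{L^2}(\mathcal{E}_s)$, hence after pairing with $L U^m = O_{L^2}(\mathcal E_s)$ they contribute only $O(\mathcal E_s^2)$.

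\medskip

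The computation of $(\mathrm{LHS}\eqref{hydrosimple},LU^m)$ reproduces the symmetric part of the energy: integration by parts transfers $\mathcal D_x$ onto $\mathcal D_t U^m$, one commutes $\mathcal D_x$ and $\mathcal D_t$ using~\eqref{riemtopkahl}, \eqref{riemperpkahl} (the resulting curvature remainders are $O_{L^2}(\mathcal E_s)$ thanks to \eqref{dtphikahl}–\eqref{dxxxphikahl}), and one commutes $\mathcal D_t$ through $\mathcal M^*\mathcal A_{\epsilon^2}\mathcal M$ and $\mathcal M^*\mathcal V\mathcal M$, using the self‑adjointness of these operators to form exact derivatives $\tfrac{d}{dt}\tfrac12(\mathcal M\mathcal D_x U^m,\mathcal A_{\epsilon^2}\mathcal M\mathcal D_x U^m)$ and $\tfrac{d}{dt}(\mathcal V\mathcal M U^m,\mathcal M U^m)$. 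The transport contribution $-\tfrac{c}{\epsilon^2}\mathcal D_x U^m$ paired against the symmetric pieces integrates to $O(\mathcal E_s^2)$ after one integration by parts, since each of $\mathcal M^*\mathcal A_{\epsilon^2}\mathcal M$ and $\mathcal M^*\mathcal V\mathcal M$ is self‑adjoint and the $x$‑derivatives of their coefficients are controlled by the bootstrap assumption~\eqref{hypapriori}. For the term in $\mathcal B+\epsilon^2\mathcal B_\deux$ of $LU^m$, both the time derivative $\mathcal D_t$ and the space derivative $\mathcal D_x$ have to be moved around, using the skew‑symmetry of these tensors: proceeding exactly as in Step~4 of Proposition~\ref{esthydroplat} one shows that the $\mathcal D_t$ piece produces $-2\tfrac{d}{dt}\bigl(U_2^m,S_{\perp\perp}^* P^\perp B P^\top S_{\top\top}\nabla_x^\top U_1^m\bigr)$ plus the analogous term with $\deux^\perp$, and that the $\tfrac{c}{\epsilon^2}\mathcal D_x$ piece vanishes up to $O(\mathcal E_s^2)$ because of skew‑symmetry combined with the identity~\eqref{utilefin}/\eqref{deuxB1}.

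\medskip

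The main work is to show that the right‑hand side of~\eqref{hydrosimple}, paired with $LU^m$, contributes only $O(\mathcal E_s^2)$. The three pieces of $\mathcal M^{-1}J(\ldots)$ must be paired with the three summands of $LU^m$, and nine scalar products appear. The structure is
$$
\bigl(\mathcal M^{-1}J X,\ Y\bigr)=\bigl(JX,(\mathcal M^{-*})Y\bigr),\qquad X,Y\in \widetilde T_u\mathcal L\times \widetilde N_u\mathcal L,
$$
and by construction every component of $LU^m$ is of the form $\mathcal M^* Z$ with $Z$ either $\mathcal A_{\epsilon^2}\mathcal M\mathcal D_x U^m$, $\mathcal V\mathcal M U^m$, or $(\mathcal B+\epsilon^2\mathcal B_\deux)\mathcal M\mathcal D_x U^m$ (modulo the extra $\mathcal D_x$ in the first summand, which we first integrate by parts). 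Thus up to commutators we reduce to evaluating $\bigl(JX,Z\bigr)$ for pairs of the form $(X,Z)$ drawn from $\{\mathcal A_{\epsilon^2}\mathcal M\mathcal D_x U^m,\mathcal V\mathcal M U^m,(\mathcal B+\epsilon^2\mathcal B_\deux)\mathcal M\mathcal D_x U^m\}$. The three diagonal pairings vanish identically by skew‑symmetry of $J$; the three pairs of off‑diagonal couplings cancel in pairs because $\mathcal V$ and $\mathcal A_{\epsilon^2}$ are symmetric while $\mathcal B+\epsilon^2\mathcal B_\deux$ is skew, so each cross term appears twice with opposite signs. The remaining commutators, produced when moving $\mathcal D_x$ past $\mathcal M^*\mathcal A_{\epsilon^2}\mathcal M$, etc., are controlled thanks to~\eqref{devDPsi}, \eqref{iBTTL}–\eqref{iBPPL} exactly as in Step~6 of Proposition~\ref{esthydroplat}; in particular the delicate loss of derivative is absorbed by the extra $\epsilon$ factor coming from $\nabla^\top_X S_{\top\top}$ vanishing on $\mathcal L$, combined with the control of $\epsilon^3\|\partial_x^2 n\|_s$ provided by $\mathcal E_{s,1}$.

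\medskip

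It remains to handle the lower‑order contribution
$$
\mathcal M^{-1}\Bigl(0\,,\,\tfrac{i}{2}\deux^\top(S_{\top\top}\nabla_x^\top\nabla^{\top m}\Phi^\epsilon,S_{\perp\perp}\nabla_x^\perp n)\Bigr)^{\!t}
$$
against $LU^m$. Of the three pieces of $LU^m$, only the one containing $-\mathcal D_x(\mathcal M^*\mathcal A_{\epsilon^2}\mathcal M\mathcal D_x U^m)$ has a chance of producing a non‑negligible cross term (the other two are $O_{L^2}(1)$). An integration by parts in $x$ together with the almost‑symmetry~\eqref{almostsym} of $\deux^\top(\cdot,N)$ exhibits a cancellation against the symmetric contribution coming from Step~6 of the flat argument, leaving only $O(\mathcal E_s^2)$; this is the analogue of the cancellation between the two factors $VI_2$ and a piece of $V$ in the flat proof. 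The main obstacle throughout is bookkeeping: each commutator must be shown to be $O_{L^2}(\mathcal E_s)$, so that after pairing with $LU^m=O_{L^2}(\mathcal E_s)$ it contributes only $O(\mathcal E_s^2)$; this relies crucially on the vanishings on $\mathcal L$ of the various first derivatives of $S_{\top\top}$, $(iB)_{\top\top}$, $(iB)_{\perp\perp}$, $S_{\perp\perp}$ encoded in~\eqref{devDPsi}, \eqref{iBTTL}, \eqref{iBPPL}, \eqref{avocette}, together with the control of $\epsilon^2\partial_t\phi$, $\epsilon\partial_x^3\phi$ and $\epsilon^3\partial_x^2 n$ ensured by $\mathcal E_{s,1}$.
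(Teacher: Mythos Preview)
Your plan matches the paper's proof: pair \eqref{hydrosimple} with $LU^m$, split into left-hand contributions $I,II,II_B,II_{\deux}$ producing the four time derivatives, and right-hand contributions $III,IV$ that together are $O(\mathcal E_s^2)$ thanks to skew-symmetry of $J$ and a final cancellation between the commutator remainder of $III$ and the lower-order piece $IV$.

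Two small points where your wording is loose. First, the off-diagonal pairings in your nine-term expansion cancel purely because $J$ is skew and because $L$ was chosen so that (modulo the $[\mathcal D_x,\mathcal M^*]$ commutator) $\mathcal M^{-*}LU^m$ equals $-2$ times the bracketed expression in the right-hand side of \eqref{hydrosimple}; the symmetry of $\mathcal V,\mathcal A_{\epsilon^2}$ and the skew-symmetry of $\mathcal B,\mathcal B_\deux$ play no role at this step (they are used earlier, in $II$, $II_B$, $II_\deux$). The paper exploits this directly via $(JX,X)=0$ rather than expanding nine terms. Second, your third paragraph overclaims: not every commutator produced when moving $\mathcal D_x$ past $\mathcal M^*$ is $O(\mathcal E_s^2)$. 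After using $[\nabla_x^\top,S_{\top\top}^*]=O(\epsilon^2)$ and $[\nabla_x^\perp,S_{\perp\perp}^*]=O(\epsilon^3)$, one term survives in $III$, namely $-\tfrac12\bigl(iS_{\perp\perp}(\nabla_x^\perp)^2U_2^m,\deux^\top(S_{\top\top}\nabla_x^\top U_1^m,S_{\perp\perp}\nabla_x^\perp n)\bigr)$, which is \emph{not} controlled by the $\epsilon^3\|\partial_{xx}n\|_s$ bound alone; it is precisely cancelled by $IV$ via the skew-symmetry of $i$. You do identify this cancellation in your fourth paragraph, so the overall argument is right, but the claim at the end of paragraph~3 should be weakened accordingly.
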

   
\begin{proof}
Taking the scalar product of~(\ref{hydrosimple}) with $ LU^m$, we get
\begin{equation}
   \label{dauphin20}
  \big( LHS\eqref{hydrosimple}, LU^m \big)= \big( RHS\eqref{hydrosimple}, LU^m \big).  
   \end{equation}
Split the left hand side into 
\begin{align*}
 &I =  \left(  \left(\mathcal{D}_{t} - {c \over \epsilon^2} \mathcal{D}_{x}\right) U^m, - \mathcal{D}_{x}(\mathcal{M}^* \mathcal{A}_{\epsilon^2} \mathcal{M}  \mathcal{D}_{x} U^m) \right), \\
 &II=  2   \left(  \left(\mathcal{D}_{t} - {c \over \epsilon^2} \mathcal{D}_{x}\right) U^m, \mathcal{M}^* \mathcal{V} \mathcal{M} U^m \right), \\
  &II_{B}=  - 2  \left(  \left(\mathcal{D}_{t} - {c \over \epsilon^2} \mathcal{D}_{x}\right) U^m, \mathcal{M}^* \mathcal{B} \mathcal{M} \mathcal{D}_{x} U^m \right), \\
  & II_{\deux}=   - 2  \epsilon^2 \left(  \left(\mathcal{D}_{t} - {c \over \epsilon^2} \mathcal{D}_{x}\right) U^m, \mathcal{M}^* \mathcal{B}_{\deux} \mathcal{M} \mathcal{D}_{x} U^m \right),
   \end{align*}
and the right hand side into
  \begin{align*}
   & III =  { 1 \over 2 \epsilon ^2} \left(  \mathcal{M}^{-1} J \left( \mathcal{D}_{x}(\mathcal{A}_{\epsilon^2} \mathcal{M} \mathcal{D}_{x} U^m) +  2( \mathcal{B}  + \epsilon^2 \mathcal{B}_{\deux}) \mathcal{M}\mathcal{D}_{x} U^m -  2 \mathcal{V}  \mathcal{M}U^m \right), LU^m \right), \\
   & IV =  \left( \mathcal{M}^{-1} \left( \begin{array}{ll} 0  \\  {1 \over 2 } i  \deux^\top \big( S_{\top\top} \nabla_{x}^\top \nabla^{\top m} \Phi^\epsilon, S_{\perp \perp} \nabla_{x}^\perp n \big)  \end{array} \right) ,  LU^m \right),
\end{align*}
so that \eqref{dauphin20} becomes
\begin{equation}
\label{dauphin2}
I+II+II_{B}+II_{\deux}= III + IV.
\end{equation}
 Note that the above terms have very similar properties to the ones that were defined in the proof of Proposition \ref{esthydroplat}.
  To handle $I$, we can rely on  \eqref{riemtopkahl}, \eqref{riemperpkahl} and \eqref{devDPsi} which yield $\mathcal{D}_{x} \mathcal{M} = O(\epsilon^2)$, $\mathcal{D}_{t} \mathcal{M} = O(1)$ to obtain
    that 
    \begin{equation}
    \label{unK} 2 I=  {d \over dt} \big(\mathcal{M} \mathcal{D}_{x} U^m, \mathcal{A}_{\epsilon^2} \mathcal{M} \mathcal{D}_{x} U^m \big)+ O(\mathcal{E}_s^2).
    \end{equation}
     To handle $II$, we use that $\mathcal{D}_{t}( \mathcal{M}^* \mathcal{V} \mathcal{M}) = O(1)$, $\mathcal{D}_{x}(\mathcal{M}^* \mathcal{V} \mathcal{M})= O(\epsilon^2)$
      and that $ \mathcal{M}^* \mathcal{V} \mathcal{M}$ is symmetric. This yields
     \begin{equation}
     \label{deuxK} II = {d \over dt} \big( \mathcal{V}  \mathcal{M} U^m , \mathcal{M} U^m \big).
     \end{equation}
     The term $II_{B}$ requires some more care. We first note that since $\mathcal{M}^* \mathcal{B} \mathcal{M}$ is skew-symmetric, we have that 
     $$-  II_{B} =  2   \left(\mathcal{D}_{t} U^m, \mathcal{M}^* \mathcal{B} \mathcal{M} \mathcal{D}_{x} U^m \right).$$
       Next, by using the definition of $\mathcal{M}$ and $\mathcal{B}$, we get that
     $$ -II_{B}= 2 \left(  (\nabla_{t}^\top U_{1}^m, S_{\top\top}^* P^\top B P^\perp S_{\perp \perp} \nabla_{x}^\perp U_{2}^m )+  (\nabla_{t}^\perp U_{2}^m,  S_{\perp \perp}^*
      P^\perp B P^\top  S_{\top\top}\nabla_{x}^\top U_{1}^m) \right)+ O(\mathcal{E}_s^2)$$
      and we shall manipulate the second term. We write
     \begin{multline*}  \left(\nabla_{t}^\perp U_{2}^m,  S_{\perp \perp}^*
      P^\perp B P^\top S_{\top\top} \nabla_{x}^\top U_{1}^m \right) = {d \over dt} \left(  U_{2}^m, S_{\perp\perp}^* P^\perp B P^\top S_{\top\top} \nabla_{x}^\top U_{1}^m \right)
      \\ - \left( U_{2}^m, \nabla_{t}^\perp \left(  S_{\perp\perp}^* P^\perp B P^\top S_{\top\top} \nabla_{x}^\top U_{1}^m \right)\right)
       \end{multline*}
       and observing that $ \nabla_X^\perp( S_{\perp\perp}^* P^\perp B P^\top S_{\top\top}) = O(\epsilon^2)X$ (using again \eqref{devDPsi} and the fact that $\nabla B= 0$ on $\mathcal{L}$), we obtain
       $$  \left( U_{2}^m, \nabla_{t}^\perp \left(  S_{\perp\perp}^* P^\perp B P^\top S_{\top\top} \nabla_{x}^\top U_{1}^m \right)\right) 
       =   \left( U_{2}^m,\ S_{\perp\perp}^* P^\perp B P^\top S_{\top\top} \nabla_{t}^\top  \nabla_{x}^\top U_{1}^m \right) + O(\mathcal{E}_s^2).$$
       Note that to get this estimate, it is crucial that the $x$ derivative is applied to $U_{1}^m$  and not $U_{2}^m$.
        Next, using \eqref{riemtopkahl},  the skew symmetry of $B$ and similar arguments as above, we obtain
     \begin{multline*}  \left(\nabla_{t}^\perp U_{2}^m,  S_{\perp \perp}^*
      P^\perp B P^\top S_{\top\top} \nabla_{x}^\top U_{1}^m \right) = {d \over dt} \left(  U_{2}^m, S_{\perp\perp}^* P^\perp B P^\top S_{\top\top} \nabla_{x}^\top U_{1}^m \right) \\
       -   (\nabla_{t}^\top U_{1}^m, S_{\top\top}^* P^\top B P^\perp S_{\perp \perp} \nabla_{x}^\perp U_{2}^m ) + O(\mathcal{E}_s^2).
       \end{multline*}
       This finally yields
    \begin{equation}
    \label{deuxBK}- II_{B}=   2  {d \over dt} \left(  U_{2}^m, S_{\perp\perp}^* P^\perp B P^\top S_{\top\top} \nabla_{x}^\top U_{1}^m \right) + O(\mathcal{E}_s^2)
      + O(\mathcal{E}_s^2).
      \end{equation}
      The term $II_{\deux}$ can be handled by similar arguments (note in particular that $\mathcal{M}^* \mathcal{B}_{\deux}\mathcal{M}$ is also skew symmetric)
       and is actually slightly easier due to the factor $\epsilon^2$  in front. We obtain
     \begin{equation}
     \label{deuxdeuxK} -II_{\deux}= 2 \epsilon^2  {d \over dt}    \left(  U_{2}^m, S_{\perp\perp}^* \deux^\perp\left( S_{\top\top}D\Phi \partial_{x} \phi, S_{\top\top} \nabla_{x}^\top U_{1}^m\right) \right).
     \end{equation}
    Let us now turn to the terms from the right hand-side. By the skew symmetry of $J$ and the choice of $L$, 
$$
   III=  { 1 \over 2 \epsilon ^2} \left(  \mathcal{M}^{-1}J \left( \mathcal{D}_{x}(\mathcal{A}_{\epsilon^2} \mathcal{M} \mathcal{D}_{x} U^m) +  2( \mathcal{B}  + \epsilon^2 \mathcal{B}_{\deux^\top}) \mathcal{M}\mathcal{D}_{x} U^m -  2 \mathcal{V} \mathcal{M}U^m \right),  - [ \mathcal{D}_{x}, \mathcal{M}^*]  \mathcal{M} \mathcal{A}_{\epsilon^2} \mathcal{D}_{x} U^m\right).
$$
For the commutator in the right hand side, by using the rough estimate $[\nabla_{x}^\top, S_{\top\top}^*]= O(\epsilon^2)$, $ [\nabla_{x}^\perp, S_{\perp \perp}^*]= O(\epsilon^3)$,  that comes from \eqref{devDPsi}, we obtain that
  $$ III= -  {1 \over 2 \epsilon^2}\left(S_{\top \top}^{-1} i S_{\perp \perp} (\nabla_{x}^\perp)^2 U_{2}^m, [\nabla_{x}^\top, S_{\top\top}^*] S_{\top\top}\nabla_{x}^\top U_{1}^m\right) +  O(\mathcal{E}_s^2).$$
  Since $[\nabla_{x}^{\top}, S_{\top\top}^*] = [ \nabla_{x}^\top, S_{\top\top}]^*$, we can use \eqref{macareuxK3},  \eqref{macareuxK4} and \eqref{almostsym} to get that
 \begin{equation}
 \label{troisK}
 III=   -  {1 \over 2}\left( i S_{\perp \perp} (\nabla_{x}^\perp)^2 U_{2}^m, \deux^\top_{u}\left( S_{\top\top} \nabla_{x}^\top U_{1}^m, S_{\perp \perp}\nabla_{x}^\perp n \right)
 \right)+O(\mathcal{E}_s^2).
 \end{equation}
  Finally it remains to handle $IV$. By counting powers of $\epsilon$, we can easily simplify it into
  \begin{multline}
  \label{quatreK} IV= {1 \over 2} \left(  S_{\perp\perp}^{-1} i \deux^\top\left( S_{\top\top}\nabla_{x}^\top U_{1}^m,  -S_{\perp \perp} \nabla_{x}^\perp n,\right), S_{\perp\perp}^{*} S_{\perp\perp} (\nabla_{x}^\perp)
  ^2 U_{2}^m \right) + O(\mathcal{E}_s^2) \\=  -  {1 \over 2} \left(  i \deux^\top_{u}\left( S_{\top\top}\nabla_{x}^\top U_{1}^m, S_{\perp \perp} \nabla_{x}^\perp n,\right), S_{\perp\perp} (\nabla_{x}^\perp)
  ^2 U_{2}^m \right) + O(\mathcal{E}_s^2).
  \end{multline}
  Note that again there is a crucial cancellation when we sum up $III$ and $IV$ thanks to the skew symmetry of $i$.
  
To conclude the proof of Proposition \ref{estimationhydroK}, it suffices to collect \eqref{unK}, \eqref{deuxK}, \eqref{deuxBK}, \eqref{deuxdeuxK}, \eqref{troisK}, \eqref{quatreK}. 

\bigskip
\noindent
\textit{Proof of Proposition \ref{esthydroK}:}
  It suffices to integrate in time the estimate of Proposition \ref{estimationhydroK}  and to prove that the left hand side gives a control of $\mathcal{E}_{s}^2$.
Using  rough expansions, we can first write that
   \begin{multline*}
   \left({ 1 \over 2 }\left(\mathcal{M} \mathcal{D}_{x} U^m, \mathcal{A}_{\epsilon^2} \mathcal{M} \mathcal{D}_{x} U^m \right)
 +   \big( \mathcal{V}  \mathcal{M} U^m , \mathcal{M} U^m \big) \right.  \\
  \left. -  2\left(  U_{2}^m, S_{\perp\perp}^* P^\perp B P^\top S_{\top\top} \nabla_{x}^\top U_{1}^m \right)
  -2 \epsilon^2\left(  U_{2}^m, S_{\perp\perp}^* \deux^\perp\left( S_{\top\top}D\Phi \partial_{x} \phi, S_{\top\top} \nabla_{x}^\top U_{1}^m\right) \right) \right)  \\
 =   \int_{\mathbb{R}}\Big( {1 \over 2} |\nabla_{x}^\top \nabla^{\top m} \Phi^\epsilon |^2  + {1 \over 2} \epsilon^2 | \nabla_{x}^\perp \nabla^{\perp m} n |^2 + 2\lambda| \nabla^{\perp m}n|^2 - 2  \nabla^{\perp m}n \cdot B_{\Phi}  \nabla^{\top m} \Phi^\epsilon \Big) \, dx + \epsilon O(\mathcal{E}_{s}(u,t)).
\end{multline*}
Moreover, we can also easily find a slightly modified energy for the case $m=0$.
We can thus conclude as in the end of the proof of Proposition \ref{esthydroplat}.
\end{proof}

\subsection{Reduction of \eqref{eqhydroK}}
The aim is to prove that  the control of $\mathcal{E}_{s}$, $s \geq 2$ provided on $[0, T^\epsilon]$  by  Proposition
\ref{propenergieK1} and Proposition \ref{esthydroK} allows to reduce \eqref{eqhydroK} to a hydrodynamical system set on $T_{\Phi}\mathcal{L} \times N_{\Phi}\mathcal{L}$
which is very similar at leading order to \eqref{eqhydro}. As a preliminary, we shall establish that
\begin{lem}
\label{lemreduction}
The following expansions hold on $[0, T^\epsilon]$ for the following tensors acting from  $T_{\Phi} \mathcal{L}$ or $N_{\Phi} \mathcal{L}$ to $T_{\Phi}\mathcal{L}$ or  $N_{\Phi}\mathcal{L}$, 
\begin{align*}
&  \left(S_{\top\top}^{-1} (iB)_{\top\top}\right)_{(\Phi, \epsilon^2 n)}=  (iB)_{\Phi} + O(\epsilon^4), \\
&   \left(S_{\perp \perp}^{-1}  (iB)_{\perp\perp }\right)_{(\Phi, \epsilon^2 n)}=  (iB)_{\Phi} + O(\epsilon^4), \\
&  \left( S_{\top\top}^{-1} i S_{\perp\perp}\right)_{(\Phi, \epsilon^2 n)}=  (S_{0}^{-1} i)_{\Phi} +  O(\epsilon^4), \\
 & \left( S_{\perp\perp}^{-1} i S_{\top\top}\right)_{(\Phi, \epsilon^2 n)} =  (i S_{0})_{\Phi} + O(\epsilon^4), \\
 &     \left( S_{\perp\perp}^{-1} i ( \nabla^\top_{(Y,N)} S_{\top\top}) \right)_{(\Phi, \epsilon^2 n)}= i  \deux_{\Phi}^\top(\cdot, N) + O(\epsilon^2), \quad \forall \;\; Y\in T_{\Phi}\mathcal{L}, \, N \in N_{\Phi}\mathcal{L}, \\
  &  \left( S_{\top\top} i (S_{\perp\perp}^*)^{-1}\right)_{(\Phi, \epsilon^2 n)} =  (S_{0}^{-1} i)_{\Phi} + O(\epsilon^4)
   \end{align*}
(recall that $S_{0}$ is defined by $S_{0}= Id + \epsilon^2 \deux^\top(\cdot, n)$ on $T\mathcal{L}$). 
     \end{lem}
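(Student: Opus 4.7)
The plan is to establish, for each of the six identities, a first-order Taylor expansion of the relevant tensor along the geodesic $s \mapsto (\Phi, s n)$ in the normal bundle $N\mathcal{L}$, evaluated at $s = \epsilon^2$. For any smooth tensor field $T$ on $N \mathcal{L}$,
\[
T(\Phi, \epsilon^2 n) = T(\Phi, 0) + \epsilon^2 \,(\nabla_n T)(\Phi, 0) + O(\epsilon^4),
\]
so for the identities with an $O(\epsilon^4)$ remainder one needs to match both the value at $s=0$ and the $s$-derivative at $s=0$ on each side, while for the fifth identity (only $O(\epsilon^2)$) matching the value at $s=0$ suffices.

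First I would read off the values on $\mathcal{L}$ from \eqref{devDPsi}, the definition \eqref{Baction}, and the fact that $iB$ preserves $T_p \mathcal{L}$ and $N_p \mathcal{L}$ for $p \in \mathcal{L}$ (a consequence of \eqref{H2}): this gives $S_{\top\top|\mathcal{L}} = S_{\perp\perp|\mathcal{L}} = \mathrm{Id}$, $(iB)_{\top\top|\mathcal{L}} = iB_{|T\mathcal{L}}$, $(iB)_{\perp\perp|\mathcal{L}} = iB_{|N\mathcal{L}}$, together with $S_{0|\mathcal{L}} = \mathrm{Id}$. This already produces the claimed leading terms in all six identities and completely settles the fifth one (where the leading term is $i\,\deux^\top(\cdot,N)$, obtained by adding $(\nabla_X^\top S_{\top\top})_p = 0$ and $(\nabla_N^\top S_{\top\top})_p = \deux^\top(\cdot,N)$ from \eqref{devDPsi}).

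Next, to compute the $s$-derivative at $s=0$ of each left-hand side I would combine the Leibniz rule (together with the standard formula $\frac{d}{ds} T^{-1} = -T^{-1} \dot T\, T^{-1}$ for inverses) with the following differential identities, all established earlier in the paper: $(\nabla_n^\top S_{\top\top})_p(\cdot) = \deux^\top(\cdot, n)$ and $(\nabla^\perp S_{\perp\perp})_p = 0$ from \eqref{devDPsi}; $(\nabla_n^\top (iB)_{\top\top})_p(\cdot) = \deux^\top(iB\cdot, n)$ and $(\nabla^\perp (iB)_{\perp\perp})_p = 0$ from \eqref{iBTTL}--\eqref{iBPPL}; and $\nabla i = 0$ together with $\nabla B = 0$ on $\mathcal{L}$. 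In the first two identities the two Leibniz terms cancel exactly (the derivatives of $S_{\top\top}$ and $(iB)_{\top\top}$, respectively of $S_{\perp\perp}$ and $(iB)_{\perp\perp}$, produce matching contributions), yielding a vanishing first $s$-derivative and hence the claimed $(iB)_\Phi + O(\epsilon^4)$. In the third, fourth and sixth identities, the surviving first derivative reproduces precisely the linearization in $\epsilon^2$ of $S_0^{\pm 1} = \mathrm{Id} \pm \epsilon^2 \deux^\top(\cdot, n) + O(\epsilon^4)$, so the claim follows.

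The most delicate point to verify is the vanishing $(\nabla^\perp S_{\perp\perp})_p = 0$ used above; starting from $S_{\perp\perp}(p, sn) Z = P^\perp(\Psi(p, sn)) D\Psi_{(p, sn)}(0, Z)$, differentiating in $s$ at $s=0$, using $\nabla_n P^\perp_p = 0$ from \eqref{macareux} to kill one term, and invoking \eqref{psi''} to compute
\[
P^\perp \nabla D\Psi_{(p,0)}\bigl((0,n),(0,Z)\bigr) = P^\perp\bigl[\deux^\top(0,Z) + \deux^\top(0,n) + \deux^\perp(0,0)\bigr] = 0.
\]
Beyond this, the only bookkeeping challenge will be keeping track of which tensor acts on which subbundle (in particular for the adjoint $S_{\perp\perp}^*$ appearing in the sixth identity); since $S_{\perp\perp}$ coincides with the identity up to order $\epsilon^4$, $(S_{\perp\perp}^*)^{-1}$ contributes trivially at the remainder order, and the nontrivial correction in identity six comes entirely from the $\deux^\top(\cdot,n)$-derivative of $S_{\top\top}$.
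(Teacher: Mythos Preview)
Your proposal is correct and follows essentially the same approach as the paper: Taylor expand each tensor along the normal geodesic $s\mapsto(\Phi,sn)$, evaluate the value and first $s$-derivative at $s=0$ using \eqref{devDPsi}, \eqref{iBTTL}--\eqref{iBPPL}, \eqref{macareux}, \eqref{psi''} and $\nabla i=\nabla B=0$ on $\mathcal{L}$, and conclude by $f(\epsilon^2)=f(0)+O(\epsilon^4)$ (or $O(\epsilon^2)$ for the fifth identity). The only cosmetic difference is that the paper, for the first expansion, unpacks $(iB)_{\top\top}$ as $S_{\top\top}^{-1}P^\top iB\,S_{\top\top}+O(s^2)$ and obtains the cancellation $-\deux^\top_\Phi(iBX,n)+iB\,\deux^\top_\Phi(X,n)=0$ directly from Corollary~\ref{BdeuxT}, whereas you cite the already-packaged formula \eqref{iBTTL} (which itself was proved via Corollary~\ref{BdeuxT}); also, for the second identity the two Leibniz terms each vanish individually rather than cancel against one another, a harmless imprecision in your phrasing.
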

   \begin{proof}
   Let us prove the first expansion. Fix $X \in T_{\Phi}\mathcal{L}$, $n  \in N_{\Phi }\mathcal{L}$  and define 
    $$ f(s) =\left(S_{\top\top}^{-1}(iB)_{\top\top}\right)_{(\Phi, sn)} X =  \left(S_{\top\top}^{-1} P^\top iB S_{\top\top}\right)_{(\Phi, sn)} (X,0)  + O(s^2) X$$ 
    by definition of $(iB)_{\top\top}$.
    Then, we have $f(0)= (iB)_{\Phi} X$. Moreover,  by using that $ \nabla i= 0$,  $\nabla B= 0$ on $\mathcal{L}$, and that
      $ (\nabla_n S_{\top \top })_{\Phi}= \deux^\top(\cdot, n)$ thanks to \eqref{devDPsi} and \eqref{macareux}, we obtain that
     $$ f'(0)=  -  \deux^\top_\Phi(iB X, n) + iB \deux_\Phi^\top(X,n) = 0$$
     where the final cancellation comes from Corollary \ref{BdeuxT}. Thus by Taylor expansion $f(s)= f(0)+ O(s^2)$.
     
     The other expansions follow from the same arguments and \eqref{devDPsi}. 
          
   \end{proof}
   
    We shall now simplify the expression of the system \eqref{eqhydroK}.
Start with the first line: multiplying it by $S_{\top\top}^{-1}$ and dropping higher order terms in $\epsilon$ gives
\begin{multline*}
     {  \partial_{t}\Phi \over \epsilon} - { 1 \over \epsilon^2}\left(c+ S_{\top\top}^{-1}(iB)_{\top\top} \right) {\partial_{x} \Phi \over \epsilon}
      \\ = S_{\top\top}^{-1} i  \left[  
      {1 \over 2  } \nabla_{x}^\perp \big( S_{\perp \perp}
     \nabla_{x}^\perp n \big) - {2\lambda  \over \epsilon^2} (S_{\perp \perp}^*)^{-1} n  
      + {1 \over 2} \deux^\perp\Big(  \Sigma \left(\begin{array}{c} {\partial_{x} \Phi \over \epsilon} \\ 0
      \end{array} \right),   \Sigma \left(\begin{array}{c} {\partial_{x} \Phi \over \epsilon} \\ 0 
      \end{array} \right) \Big)  \right] +  \epsilon O_{H^2}(\mathcal{E}_{s}).
        \end{multline*}  
    Next, note that
    $$  S_{\top\top}^{-1} i \nabla_{x}^\perp( S_{\perp \perp}  \nabla_{x}^\perp n )= 
     S_{\top \top}^{-1} i S_{\perp \perp}  (\nabla_{x}^\perp)^2 n +  (\nabla^\perp_{ (\epsilon D\Phi \partial_{x} \phi, \epsilon^2 \nabla_{x}^\perp n)} S_{\perp\perp}) \nabla_{x}^\perp n
      =   S_{\top \top}^{-1} i S_{\perp \perp}  (\nabla_{x}^\perp)^2 n  + \epsilon  O_{H^2}(\mathcal{E}_{s})$$
        and hence by using Lemma \ref{lemreduction} we get
    $$ S_{\top\top}^{-1} i \nabla_{x}^\perp( S_{\perp \perp}  \nabla_{x}^\perp n )= i_\Phi ( \nabla_{x}^\perp)^2 n  +   \epsilon  O_{H^2}(\mathcal{E}_{s}).$$
      We can also  use  Lemma  \ref{lemreduction}, to expand
     $ {1 \over \epsilon^2} S_{\top\top}^{-1}(iB)_{\top\top} $ and $  {2\lambda  \over \epsilon^2} S_{\top\top}^{-1} i(S_{\perp \perp}^*)^{-1} n $. Moreover, an immediate expansion gives
           $$  S_{\top\top}^{-1} i  \deux^\perp\Big(  \Sigma \left(\begin{array}{c} {\partial_{x} \Phi \over \epsilon} \\ 0
      \end{array} \right),   \Sigma \left(\begin{array}{c} {\partial_{x} \Phi \over \epsilon} \\ 0 
      \end{array} \right)\Big) = i_\Phi  \deux^\perp_{\Phi} \left( D\Phi \partial_{x} \phi,  D\Phi \partial_{x} \phi \right) + \epsilon O_{H^2}(\mathcal{E}_{s}).$$
       This yields  the following equation  on $T_{\Phi} \mathcal{L}$ where all the involved tensors are evaluated on $\mathcal{L}$ at $\Phi$: 
     \begin{multline*}
      D\Phi \partial_{t} \phi - {1 \over \epsilon^2} (c+iB ) D\Phi \partial_{x} \phi= 
       {1 \over 2 }  i (\nabla_{x}^\perp)^2 n  - {2\over \epsilon^2} \lambda  S_{0}^{-1} i n + {1 \over 2 } i \deux^\perp \left( D\Phi \partial_{x} \phi, D\Phi \partial_{x} \phi\right) +
         \epsilon O_{H^2}(\mathcal{E}_{s}). 
     \end{multline*} 
 By Corollary \ref{BdeuxT}, we can  also write it as
   \begin{multline}
   \label{eqphireduite}
    S_{1}  \partial_{t} \phi - {1 \over \epsilon^2} (c+iB) S_1 \partial_{x} \phi = 
        i  {1 \over 2 } \left( (\nabla_{x}^\perp)^2 n  - {2\over \epsilon^2} \lambda   n + {1 \over 2 }  \deux^\perp \left( D\Phi \partial_{x} \phi, D\Phi \partial_{x} \phi\right)  \right)+
         \epsilon O_{H^2}(\mathcal{E}_{s}). 
     \end{multline} 
Note that up to $O(\epsilon)$ term, we get back the same expression as in the first line of \eqref{eqhydro} (here we have simplified a little bit by assuming that
 $F_{1}= R^V= 0$).
  
   We can proceed in the same way  for the second line of \eqref{eqhydroK}, we multiply it by $S_{\perp \perp}^{-1}$  and we use lemma \ref{lemreduction} again.
    Let us give some details for the most involved  term in the right hand side: 
    $$ {1 \over \epsilon^2} S_{\perp \perp}^{-1} i \nabla_{x}^\top \left( S_{\top \top} {\partial_{x} \Phi \over \epsilon}\right)
     =  {1 \over \epsilon^2} S_{\perp \perp}^{-1} i S_{\top\top} \nabla_{x}^\top  {\partial_{x} \Phi \over \epsilon} + 
      {1 \over \epsilon^2} S_{\perp \perp}^{-1} i( \nabla^\top_{( \epsilon D\Phi \partial_{x} \phi, \epsilon^2 \nabla_{x}^\perp n) } S_{\top\top}){\partial_{x} \Phi \over \epsilon},$$
       and hence by using  Lemma \ref{lemreduction} again, we obtain
       $$ {1 \over \epsilon^2} S_{\perp \perp}^{-1} i \nabla_{x}^\top \left( S_{\top \top} {\partial_{x} \Phi \over \epsilon}\right) =  {1 \over \epsilon^2}(iS_{0})_{\Phi} 
        \nabla_{x}^\top {\partial_{x} \Phi \over \epsilon}+  i \deux^\top\left(\nabla_{x}^\perp n, {\partial_{x} \Phi \over \epsilon} \right)
         + \epsilon O_{H^2}(\mathcal{E}_{s}),$$
which can also be written, using the definition of $S_{0}$ and Proposition \ref{deuxsym}, under the form 
  $${1 \over \epsilon^2} S_{\perp \perp}^{-1} i   \nabla_{x}^\top \left( S_{\top \top} {\partial_{x} \Phi \over \epsilon}\right) =  {1 \over \epsilon^2}  i \left(
        \nabla_{x}^\top \left( S_{0} {\partial_{x} \Phi \over \epsilon} \right) \right)
         +  i \deux^\top\left(\nabla_{x}^\perp n, {\partial_{x} \Phi \over \epsilon} \right) + \epsilon O_{H^2}(\mathcal{E}_{s}).$$
We thus obtain for the second line of \eqref{eqhydroK}
\begin{multline}
\label{eqnreduite}
\nabla_{t}^\perp n  - {1 \over \epsilon^2} (c+iB) \nabla_{x}^\perp n = i \left(  {1 \over 2  \epsilon^2}  \nabla_{x}^\top \left( S_{0} {\partial_{x} \Phi \over \epsilon} \right)
 + {1 \over 2}  \deux^\top( D\Phi \partial_{x} \phi, \nabla_{x}^\perp n) \right) + \epsilon \, O_{H^2}(\mathcal{E}_{s})
\end{multline}
where again all the tensors are evaluated at $\Phi (\epsilon \phi) \in \mathcal{L}$.

Looking at \eqref{eqphireduite}, \eqref{eqnreduite}, we can use again~\eqref{H2} and Corollary  \ref{BdeuxT} to obtain that
\begin{prop}
 On  $ [0, T^\epsilon]$, we have that the solution $u= \Psi (\Phi (\epsilon \phi), \epsilon^2 n)$ of \eqref{SMK}  verifies the system 
 \begin{equation}
\label{eqhydrohamiltonK}
\left\{ 
\begin{array}{l} 
\displaystyle
S_1 \partial_t \phi= \frac{1}{  \epsilon^2}   i \Bigl[- 2 \lambda n  - i (c +  i B) S_{1} \partial_{x}   \phi   \\
 \displaystyle
   \mbox{\hspace{5cm}}  + \epsilon^2  \frac{1}{2} \deux^{\perp}(D\Phi \partial_x \phi, D\Phi \partial_{x} \phi) + \frac{1}{2} \epsilon^2 
(\nabla^{\perp}_x)^2 n ) \Bigr] + \epsilon  O_{H^2}(\mathcal{E}_{s}) \\
\displaystyle
\nabla_t^\perp n    = {1 \over \epsilon^2} i \left[ \frac{1}{2} \nabla_x^\top (S_1 \partial_x \phi) 
 - i  (c+ i B) \nabla_{x}^\perp n 
 + \epsilon^2 \frac{1}{2} \deux^{\top} \left(
 D\Phi \partial_x \phi \, , \,\nabla_x^\perp n  \right) \right] + \epsilon  O_{H^2}(\mathcal{E}^s)
\end{array}
\right.
\end{equation}
where all the tensors are evaluated at $\Phi \in \mathcal{L}$.
\end{prop}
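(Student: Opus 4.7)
The plan is to reduce the full hydrodynamical system \eqref{eqhydroK} to \eqref{eqhydrohamiltonK} by two steps: first rewrite each of the two lines of \eqref{eqhydroK} as an equation on $T_{\Phi}\mathcal{L}$, respectively $N_{\Phi}\mathcal{L}$, by left-multiplying the first line by $S_{\top\top}^{-1}$ and the second by $S_{\perp\perp}^{-1}$; then replace the resulting compositions of tensors evaluated at the point $u=\Psi(\Phi(\epsilon\phi),\epsilon^2 n)$ by their values on $\mathcal{L}$ at $\Phi(\epsilon\phi)$, the discrepancy being $O(\epsilon^2)$ as a tensor, hence $\epsilon\, O_{H^2}(\mathcal{E}_s)$ once applied to the relevant fields, since all quantities of order one in $\mathcal{E}_s$ appear with at most one derivative above the leading scale.

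First I would left-multiply the first line of \eqref{eqhydroK} by $S_{\top\top}^{-1}$. The terms $\epsilon^5 S_{\top\perp}$ and $\epsilon^3(iB)_{\top\perp}$ are negligible by a mere count of $\epsilon$ powers together with the bootstrap bound~\eqref{hypapriori}. The operator $S_{\top\top}^{-1}(iB)_{\top\top}$ equals $(iB)_\Phi+O(\epsilon^4)$ and $S_{\top\top}^{-1}i(S_{\perp\perp}^*)^{-1}$ equals $(S_0^{-1}i)_\Phi+O(\epsilon^4)$, both by Lemma~\ref{lemreduction}. For the term $\tfrac12 S_{\top\top}^{-1}i\,\nabla_x^\perp(S_{\perp\perp}\nabla_x^\perp n)$ I use that $\nabla^\perp S_{\perp\perp} = O(\epsilon^2)$ (from~\eqref{devDPsi}) to commute $S_{\perp\perp}$ through one derivative at the cost of $\epsilon\,O_{H^2}(\mathcal{E}_s)$, then invoke $S_{\top\top}^{-1}iS_{\perp\perp}=(S_0^{-1}i)_\Phi+O(\epsilon^4)$. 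The quadratic $\deux^\perp$ term simply evaluates to $i_\Phi\deux^\perp_\Phi(D\Phi\partial_x\phi,D\Phi\partial_x\phi)+\epsilon\,O_{H^2}(\mathcal{E}_s)$ by the parallel transport definition of the extended projectors. Applying Corollary~\ref{BdeuxT} to push $(c+iB)$ past $S_1$, this yields~\eqref{eqphireduite}.

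The second line is treated analogously with $S_{\perp\perp}^{-1}$. The only delicate piece is $\tfrac{1}{2\epsilon^2}S_{\perp\perp}^{-1}i\,\nabla_x^\top(S_{\top\top}\,D\Phi\partial_x\phi/\epsilon)$: writing $\nabla_x^\top$ acting on the product produces one term where $\nabla_x^\top$ is absorbed into $\nabla_x^\top(\partial_x\Phi/\epsilon)$ and a second term involving $(\nabla^\top_{(\epsilon D\Phi\partial_x\phi,\epsilon^2\nabla_x^\perp n)} S_{\top\top})\,\partial_x\Phi/\epsilon$. The fifth identity in Lemma~\ref{lemreduction} turns this second contribution into $i\,\deux^\top_\Phi(\nabla_x^\perp n,D\Phi\partial_x\phi)+\epsilon\,O_{H^2}(\mathcal{E}_s)$, while the fourth identity converts the principal term into $\tfrac{1}{2\epsilon^2}\, i\,\nabla_x^\top(S_0\,\partial_x\Phi/\epsilon)$. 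Combining via the symmetry property (3) of Proposition~\ref{deuxsym} and merging $i(c+iB)=(c-iB)i$ by~\eqref{H2} produces~\eqref{eqnreduite}. The proposition then follows by matching \eqref{eqphireduite}--\eqref{eqnreduite} against~\eqref{eqhydrohamiltonK}.

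The main difficulty is bookkeeping: one must check at each step that every commutator or tensorial correction carries a factor of at least $\epsilon$ above what is already explicit, so that the remainder fits in $\epsilon\, O_{H^2}(\mathcal{E}_s)$; this uses in an essential way the slightly stronger assumption (H2) (which forces $\nabla B=0$ and the identity $B^2=-\mu I$ on $\mathcal{L}$, hence the vanishing in Lemma~\ref{lemreduction}), the normal coordinate choice on $\mathcal{L}$ through~\eqref{nablatopDPhi}, and the estimates \eqref{dtphikahl}--\eqref{dxxxphikahl} which are the substitutes for the fact that $H^2$ control of $\partial_t\phi$ and $\partial_x^3\phi$ is not directly in $\mathcal{E}_{s,2}$ and must be recovered from $\mathcal{E}_{s,1}$.
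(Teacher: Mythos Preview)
Your proof sketch is correct and follows essentially the same route as the paper: both multiply the two lines of \eqref{eqhydroK} by $S_{\top\top}^{-1}$ and $S_{\perp\perp}^{-1}$ respectively, then invoke Lemma~\ref{lemreduction} to replace the compound tensors $S_{\top\top}^{-1}(iB)_{\top\top}$, $S_{\top\top}^{-1}iS_{\perp\perp}$, $S_{\perp\perp}^{-1}iS_{\top\top}$, etc.\ by their values on $\mathcal{L}$ up to $O(\epsilon^4)$ or $O(\epsilon^2)$, and treat the singular term $\tfrac{1}{2\epsilon^2}S_{\perp\perp}^{-1}i\,\nabla_x^\top(S_{\top\top}\,\partial_x\Phi/\epsilon)$ via the fifth identity of Lemma~\ref{lemreduction}; the final algebraic repackaging into \eqref{eqhydrohamiltonK} uses~\eqref{H2} and Corollary~\ref{BdeuxT} in both cases.
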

     Note that up to the $O(\epsilon)$ remainders, this is a system on $T_{\Phi}\mathcal{L} \times N_{\Phi}\mathcal{L}$!

\subsection{Proof of Theorem \ref{theounif} in the case of a general K\"ahler manifold $\mathcal M$}
    The local well-posedness of smooth solutions for   \eqref{SMK} is classical
    (see for example \cite{Kenigandco}, \cite{Ding}) 
      and  we can proceed as in section \ref{prooftheounif1}. We define $T^*$ in the same way
      and use Propositions \ref{propenergieK1}, \ref{esthydroK} for a bootstrap argument.
       The only difficulty as before is to estimate $\|\epsilon \phi\|_{L^\infty}$, but this can be done
        as previously by using the system \eqref{eqhydrohamiltonK}. Note that here $i$ and $B$ are not necessarily constant tensors
         but since $\nabla i= \nabla B = 0$ on $\mathcal{L}$ , we can indeed proceed in the same way when we apply $\nabla^\top$ to 
         the first equation of    \eqref{eqhydrohamiltonK}.

\subsection{Proof of Theorem \ref{theoKdV} in the case of a general K\"ahler manifold $\mathcal M$}
     Again, by using  the system  \eqref{eqhydrohamiltonK}, we can proceed exactly as in section  
      \ref{sectionKdVlimit}.

\section{Remarks on the limit KdV system}
\label{ROTLKS}

The Cauchy problem for our KdV system on $T_{0}\mathcal{L}$ reads
\beq
\label{KdVfin}
\left\{ \begin{array}{ll}
2  c\partial_{t} A  = \frac{1}{4} \partial_{xxx}  A  +  \big({3 \over 2}  -\frac{2\mu}{\lambda} - {2c \over \lambda} (iB)_{0} \big) i_{0} \deux^\perp_{0} \left(  \partial_{x} A, A\right)  - {i_0 \over 2} F_{1,0}(i_0 \partial_{x} A, i_0 A) \\
A(t=0)=A_0.
\end{array} \right.
\eeq
Rescaling coordinates appropriately, and taking into account of the symmetry properties of $ i_{0}\deux_{0}^\perp$ and $F_{1}(0)$ (see \eqref{F1def}),  this can be written under the general form
\begin{equation}
\label{vectorKdV}
\left\{ \begin{array}{l}
\partial_t u - \partial_{xxx} u + \partial_x \mathcal{Q}(u,u) = 0 \\
u(t=0) = u_0,
\end{array} \right.
\end{equation}
where the unknown $u$ is valued in $\mathbb{R}^d$, and $\mathcal{Q}$ is a bilinear tensor $\mathbb{R}^d \times \mathbb{R}^d \rightarrow \mathbb{R}^d$ such that
\begin{equation}
\label{alouette}
(u,v,w) \mapsto \mathcal{Q}(u,v) \cdot w \;\; \mbox{is symmetric in $(u,v,w)$}.
\end{equation}
Indeed, $ (u,v,w) \mapsto i_{0} B_{0} i_{0} \deux_{0}^\perp(u,v) \cdot w=  B_{0} \deux^\perp_{0} (u,v) \cdot w = - i_{0} \deux^\perp_{0}(u,v) \cdot i_{0} B_{0} w $ is symmetric in all its arguments
  thanks to Proposition \ref{deuxsym} and Corollary \ref{BdeuxT}. 
\subsection{Hyperbolic structure}
Since  the matrix $\mathcal Q(u,\cdot)$ is symmetric for any $u$,
\begin{equation}
\label{hyper}\partial_t u + \partial_x\mathcal Q (u, u)=0\end{equation}
is a symmetric hyperbolic system. From the classical theory, we therefore get
\begin{thm}
\label{KdVtheo}
For $ s> 3/2$,  $ A_{0} \in H^s(\mathbb{R})$,   there exists  $T>0$ such that for every $\delta \in \mathbb{R}$ there is a unique
solution of
$$
\partial_t u - \delta \partial_{xxx} u + \partial_x \mathcal Q(u,u) = 0 ,
$$
 in $\mathcal{C}([0, T],  H^s) \cap \mathcal{C}^1([0, T], H^{s-1} \big).$ 
\end{thm}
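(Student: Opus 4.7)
The plan is to treat \eqref{hyper} as a symmetric hyperbolic system to which a harmless skew-adjoint dispersive perturbation $\delta\partial_{xxx}$ has been added, and obtain an $H^s$ existence time that is uniform in $\delta \in \mathbb{R}$. First, I would derive the $H^s$ a priori estimate. For $k \leq s$, apply $\partial_x^k$ to the equation and take the $L^2$ inner product with $\partial_x^k u$. The dispersive contribution is
$$\delta \int \partial_x^{k+3} u \cdot \partial_x^k u\,dx = -\tfrac{\delta}{2} \int \partial_x|\partial_x^{k+1}u|^2\,dx = 0,$$
so it drops out identically for every $\delta$. The nonlinear contribution, after an integration by parts, is controlled by
$$\int \partial_x^k \mathcal{Q}(u,u) \cdot \partial_x^{k+1}u\,dx;$$
the top-derivative piece $2\int \mathcal{Q}(\partial_x^k u, u)\cdot\partial_x^{k+1}u\,dx$ is, thanks to the full symmetry \eqref{alouette}, equal to $\int u\cdot \partial_x\mathcal{Q}(\partial_x^k u,\partial_x^k u)\,dx$, which integrates by parts to $-\int \partial_x u\cdot \mathcal{Q}(\partial_x^k u,\partial_x^k u)\,dx = O(\|\partial_x u\|_{L^\infty}\|u\|_{H^s}^2)$. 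The remaining commutator terms are handled by Moser/Kato--Ponce-type estimates, giving $\tfrac{d}{dt}\|u\|_{H^s}^2 \lesssim \|u\|_{W^{1,\infty}}\|u\|_{H^s}^2 \lesssim \|u\|_{H^s}^3$ by Sobolev embedding since $s > 3/2$. This yields an existence time depending only on $\|A_0\|_{H^s}$, independent of $\delta$.

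Second, I would construct solutions by a Friedrichs-type regularization: pick a smooth mollifier $J_\eta$ (truncation in Fourier, say) and solve
$$\partial_t u_\eta - \delta \partial_{xxx} u_\eta + J_\eta \partial_x \mathcal{Q}(J_\eta u_\eta, J_\eta u_\eta) = 0, \qquad u_\eta|_{t=0} = J_\eta A_0.$$
For fixed $\eta>0$, this is an ODE in $H^s$ with locally Lipschitz right-hand side (the dispersive term is linear and generates a unitary group on every $H^s$, while the regularized nonlinearity is bounded and locally Lipschitz), so it produces a local smooth solution. The a priori argument above applies to $u_\eta$ uniformly in $\eta$ and in $\delta$, since both $\delta\partial_{xxx}$ and the additional factors of $J_\eta$ (which is self-adjoint and commutes with $\partial_x$) contribute zero to the $H^s$ energy identity. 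One thus obtains solutions on a common interval $[0,T]$ with $T = T(\|A_0\|_{H^s})$ and uniform $H^s$ bounds.

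Third, I would pass to the limit $\eta \to 0$. Uniqueness and strong convergence in $\mathcal{C}([0,T], L^2)$ follow from an energy estimate on the difference $u_\eta - u_{\eta'}$: the dispersive term again cancels, and the nonlinear difference is estimated by $\|u_\eta - u_{\eta'}\|_{L^2}$ times some $H^s$ norms of $u_\eta,u_{\eta'}$, plus an $o(1)$ term from the mollifiers. Interpolation with the uniform $H^s$ bound then upgrades convergence to $\mathcal{C}([0,T], H^{s'})$ for any $s' < s$, and a Bona--Smith argument (regularizing the initial data and comparing) gives that the limit actually lies in $\mathcal{C}([0,T],H^s)$ with continuous dependence on the data; the equation then forces $\partial_t u \in \mathcal{C}([0,T],H^{s-3})$, and since $\partial_x\mathcal{Q}(u,u) \in \mathcal{C}([0,T],H^{s-1})$ for $s > 3/2$, in fact $\partial_t u - \delta\partial_{xxx}u \in \mathcal{C}([0,T],H^{s-1})$, yielding $\mathcal{C}^1([0,T],H^{s-1})$ regularity.

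The main obstacle is ensuring that every step is genuinely \emph{uniform in} $\delta$, including $\delta = 0$ where no dispersive smoothing is available; the whole argument hinges on the skew-adjointness of $\partial_{xxx}$ (making $\delta$ invisible to every $H^k$ energy) and on the full three-fold symmetry \eqref{alouette}, which is exactly what symmetrizes the quasilinear top-order term after one integration by parts. Everything else is standard quasilinear machinery.
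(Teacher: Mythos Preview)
Your proposal is correct and is exactly the argument the paper has in mind: the paper gives no detailed proof but simply observes that, since $\mathcal{Q}(u,\cdot)$ is symmetric by \eqref{alouette}, the system \eqref{hyper} is symmetric hyperbolic, and then appeals to ``the classical theory,'' noting explicitly that no dispersive properties are used---your a priori estimate, Friedrichs regularization, and Bona--Smith limit are precisely that classical machinery, with the skew-adjointness of $\delta\partial_{xxx}$ making $\delta$ invisible in every energy identity.

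One minor caveat on your last step: from $\partial_t u - \delta\partial_{xxx}u \in \mathcal{C}([0,T],H^{s-1})$ you cannot conclude $\partial_t u \in \mathcal{C}([0,T],H^{s-1})$ when $\delta\neq 0$, since $\partial_{xxx}u$ is only in $H^{s-3}$; the $\mathcal{C}^1([0,T],H^{s-1})$ clause in the statement holds as written only for $\delta=0$ and should presumably read $H^{s-3}$ (or $H^{s-\max(1,3\operatorname{sgn}|\delta|)}$) in general---this is an imprecision in the theorem statement rather than a flaw in your approach.
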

Notice that this result does not use any dispersive properties. 

\medskip

The system \eqref{hyper} is a system of conservation laws with flux $f$ given by
$$ f(u)= \mathcal Q (u,u).$$
Let us restrict to the region where it is strictly hyperbolic and consider $\lambda (u)$ an eigenvalue of $Df(u)$, and $r(u)$ an associated unitary eigenvector.
 We have 
$$  Df(u) \cdot r(u)= \lambda(u) r(u), \quad \forall u \in \mathcal{U}.$$
Let us differentiate this identity in the direction $r(u)$.  We find
$$ D^2f(u) (r, r) + Df(u) \cdot\big(Dr(u) \cdot r(u)\big)= D\lambda(u) \cdot r(u) r(u)+ \lambda (u) Dr(u) \cdot r(u).$$
Taking the scalar product of this identity with $r(u)$,  and using that  $\big(Dr(u)\cdot r(u)\big) \cdot r(u) = 0$ (since $r(u)$ is unitary)
and that $D^2 f= 2 \mathcal Q$, we obtain
$$   2\mathcal Q (r, r) \cdot r +\big( Df \cdot ( Dr\cdot r) \big) \cdot r= D\lambda\cdot  r.$$
By symmetry of $Df$,
$$\big( Df \cdot ( Dr\cdot r) \big) \cdot r =\big( Dr \cdot r \big) \cdot  \big( Df\cdot r \big) = \lambda (Dr \cdot r) \cdot r = 0, $$
therefore, we finally obtain that 
$$ D\lambda \cdot r =  2 \mathcal Q (r,r) \cdot r.$$
Consequently the characteristic fied $\lambda$ is  genuinely nonlinear in  an open set $\mathcal{U}$  if  and only if $\mathcal Q (r(u), r(u)) \cdot r(u) \neq 0, \quad \forall u \in \mathcal{U}$.

In particular, assuming that in an open set $\mathcal{U}$, the system is strictly hyperbolic and such that for any eigenvector $r$, $\mathcal Q (r,r) \cdot r  \neq 0$, the result of John \cite{John}, for example, gives that singularities occur in finite time.

\subsection{Hamiltonian structure}
The equation~\eqref{vectorKdV} derives formally from the Hamiltonian
\begin{equation}
\label{grebe}
H(u) =\int_{\mathbb{R}} \left[\frac{1}{2} \left|\partial_x u\right|^2 + \frac{1}{3} \mathcal Q (u,u) \cdot u \right]\,dx
\end{equation}
given the symplectic form
$$
\omega(f,g) = \int_{\mathbb{R}} \partial_x^{-1} f g \,dx.
$$
Other conserved quantities are the mass
(which is related via the Noether theorem to the invariance by translation of~(\ref{vectorKdV}))
$$
M(u) = \int_{\mathbb{R}} |u|^2\,dx.
$$
and the ``momentum''
$$
P(u) = \int_{\mathbb{R}} u\,dx.
$$
The class of Hamiltonians given by~\eqref{grebe} is equal to the 
class of Hamiltonians of the type 
$$\int \left[ \frac{1}{2} \left|\partial_x u\right|^2 + P(u) \right]\,dx,$$ where $P$ is a 
trilinear  form.

A more general class consists of Hamiltonians of the type $\int \left[ Q(\partial_x u) + P(u) \right]\,dx$, with $Q$ a
bilinear form, and $P$ a trilinear form. In the case $d=2$, this class of Hamiltonians gives the Gear-Grimshaw equations, which were derived in 
the context of water waves~\cite{Gear-Grimshaw}. A mathematical investigation of their properties was conducted in~\cite{BPST}.

\subsection{Local and global well-posedness} Let us first mention the central result in~\cite{BPST}. As already mentioned, this paper focuses on
Hamiltonians of the type $\int \left[ Q(\partial_x u) + P(u) \right]\,dx$, with $d=2$, $P$ a trilinear and $Q$ a bilinear form. 
Relying on methods introduced in~\cite{KPV0}, global well-posedness is established for data in $H^1$ when the bilinear form $Q$ is coercive 
(which is automatically the case for $H$ as in~(\ref{grebe})).

Focusing now on the equation~\eqref{vectorKdV}, we observe that the linearized problem is simply $\partial_t u - \partial_{xxx} u=0$, therefore
only one dispersion relation is present in the problem, namely $\xi^3$. It is then clear that the argument in~\cite{KPV} applies, giving the following
theorem (we refer to~\cite{KPV} for a definition of the $X^{s,b}$ space).

\begin{thm}
Let $s \in (-\frac{3}{4},0]$ and $b \in (\frac{1}{2},1)$. If $u_0 \in H^s$, there exists $T>0$ and a unique solution of~\eqref{vectorKdV} belonging 
to  $X^{s,b}(0,T)$ and 
$\mathcal{C}([0, T],  H^s)$.
\end{thm}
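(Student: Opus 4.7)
The plan is to carry out a Picard iteration on the Duhamel formulation of~\eqref{vectorKdV} in the Bourgain space $X^{s,b}(0,T)$ adapted to the Airy symbol $\tau-\xi^3$, following verbatim the scheme introduced by Kenig, Ponce and Vega for scalar KdV. The linearized equation is $\partial_t u - \partial_{xxx} u = 0$ applied componentwise, so the dispersion relation is scalar; the standard linear and inhomogeneous estimates
\[
\| \eta(t) e^{t\partial_x^3} u_0 \|_{X^{s,b}} \lesssim \|u_0\|_{H^s}, \qquad \Bigl\| \eta(t/T) \int_0^t e^{(t-t')\partial_x^3} F(t')\, dt' \Bigr\|_{X^{s,b}} \lesssim T^{\delta} \|F\|_{X^{s,b-1}},
\]
valid for some $\delta=\delta(b)>0$ when $b\in(1/2,1)$ and with $\eta$ a fixed smooth cutoff, hold for each component and aggregate into vector-valued estimates with the same constants. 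The embedding $X^{s,b} \hookrightarrow \mathcal{C}_t H^s_x$ for $b > 1/2$ then delivers the promised continuity in time.

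The analytic core, which I expect to be the main technical obstacle, is the bilinear estimate
\[
\| \partial_x \mathcal{Q}(u,v) \|_{X^{s,b-1}} \lesssim \|u\|_{X^{s,b}} \|v\|_{X^{s,b}}, \qquad s \in (-\tfrac{3}{4},0], \; b \in (\tfrac{1}{2},1) \text{ chosen close to } \tfrac{1}{2},
\]
which is precisely where the hypothesis $s > -3/4$ is consumed. Writing $\mathcal{Q}(u,v)_k = \sum_{i,j} c^{k}_{ij}\, u_i v_j$ in coordinates on $\mathbb{R}^d$, this vector inequality reduces, term by term, to the scalar Kenig--Ponce--Vega bilinear estimate
\[
\| \partial_x (fg) \|_{X^{s,b-1}} \lesssim \|f\|_{X^{s,b}} \|g\|_{X^{s,b}}, \qquad s > -\tfrac{3}{4},
\]
proved in \cite{KPV} via a Fourier-restriction/transversality analysis of the resonance set $\{\tau_1 + \tau_2 = \xi_1^3 + \xi_2^3 - (\xi_1+\xi_2)^3\}$. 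Crucially, since all components share the same dispersion relation $\xi^3$, no new resonance phenomena appear: the transversality computation of KPV applies verbatim to every pair of components, and the symmetry~\eqref{alouette} plays no role here (its only use is to give the Hamiltonian structure).

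Equipped with these estimates, I set
\[
\Lambda u(t) = \eta(t) e^{t\partial_x^3} u_0 - \eta(t/T) \int_0^t e^{(t-t')\partial_x^3} \partial_x \mathcal{Q}(u,u)(t')\, dt'
\]
and combine the linear and bilinear estimates to obtain
\[
\| \Lambda u \|_{X^{s,b}} \le C \|u_0\|_{H^s} + C T^\delta \|u\|_{X^{s,b}}^2, \qquad \| \Lambda u - \Lambda v \|_{X^{s,b}} \le C T^\delta \bigl(\|u\|_{X^{s,b}} + \|v\|_{X^{s,b}}\bigr) \|u-v\|_{X^{s,b}}.
\]
Choosing $T>0$ sufficiently small in terms of $\|u_0\|_{H^s}$ turns $\Lambda$ into a contraction on a ball of $X^{s,b}(0,T)$, producing the desired fixed point. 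Uniqueness in the full space and continuous dependence on $u_0$ follow from the same Lipschitz bound, and the regularity $\mathcal{C}([0,T], H^s)$ is read off from the embedding $X^{s,b}(0,T) \hookrightarrow \mathcal{C}([0,T], H^s)$ noted above.
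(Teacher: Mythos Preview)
Your proposal is correct and matches the paper's approach exactly: the paper simply observes that the linearized problem is componentwise Airy with the single dispersion relation $\xi^3$, so that the argument of \cite{KPV} applies verbatim, and leaves the details implicit. You have faithfully spelled out those details, including the key point that the vector bilinear estimate reduces term by term to the scalar KPV bilinear estimate because all components share the same phase.
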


Combining this theorem with the conservation of the $L^2$ norm, one obtains global well-posedness for $L^2$ data.

\subsection{Solitary waves}

In the case of scalar KdV ($d=1$, $\mathcal Q (u,u)=u^2$), solitary waves are of the form $cq(\sqrt{c}(x+ct))$, where $c>0$, and $q$ is real-valued and solves the ODE
\begin{equation}
\label{mesangecharbonniere}
q' - q''' + (q^2)' = 0.
\end{equation}
It is known since the original paper of Korteweg and De Vries~\cite{KdV} that periodic wave solutions are given by Jacobi elliptic functions, while
a finite energy solitary wave on $\mathbb{R}$ is given by
$$
q = - 3 \operatorname{sech}^2.
$$
For the equation~\eqref{vectorKdV} in the general case, solitary waves are also of the type $cQ(\sqrt{c}(x+ct))$, where $c>0$, and $Q$ is valued in $\mathbb{R}^d$ and solves the ODE
\begin{equation}
\label{mesangebleue}
Q' - Q''' + \mathcal Q (Q,Q)' = 0.
\end{equation}
The following lemma allows to reduce partially~\eqref{mesangebleue} to~\eqref{mesangecharbonniere} 

\begin{lem}
For $\mathcal Q$ non zero satisfying~(\ref{alouette}), there exists $z \in \mathbb{R}^d$, $z \neq 0$, such that $\mathcal Q (z,z) = z$.
\end{lem}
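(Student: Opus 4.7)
The plan is to find $z$ via a variational argument on the sphere. Define the cubic form
$$
C(u) \overset{def}{=} \frac{1}{3} \,\mathcal{Q}(u,u)\cdot u, \qquad u \in \mathbb{R}^d.
$$
Thanks to the full symmetry~\eqref{alouette}, a direct differentiation gives $\nabla C(u) = \mathcal{Q}(u,u)$. I would look for a critical point of $C$ restricted to the unit sphere $S^{d-1}$: at such a critical point $z_0$, Lagrange multipliers yield $\mathcal{Q}(z_0,z_0)=\lambda z_0$ for some $\lambda \in \mathbb{R}$, and taking the scalar product with $z_0$ gives the key relation $\lambda = 3 C(z_0)$.

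First I would show that $C$ is not identically zero. The assumption $\mathcal{Q}\neq 0$ means there exist $u,v$ with $\mathcal{Q}(u,v) \neq 0$; choosing $w=\mathcal{Q}(u,v)$ the fully symmetric trilinear form $T(u,v,w)=\mathcal{Q}(u,v)\cdot w$ is nonzero, and by polarization the cubic polynomial $u\mapsto T(u,u,u)=3C(u)$ is nonzero as well, hence not identically zero on $S^{d-1}$.

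Next I would maximize $C$ on the compact set $S^{d-1}$. Since $C$ is odd ($C(-u)=-C(u)$), the maximum and minimum are opposite, and since $C \not\equiv 0$ on $S^{d-1}$, the maximum value $C(z_0)$ is strictly positive. Consequently the Lagrange multiplier $\lambda = 3C(z_0)$ is strictly positive, in particular nonzero, and
$$
\mathcal{Q}(z_0,z_0) = \lambda z_0 \quad \text{with} \quad \lambda > 0.
$$
It remains to rescale: setting $z = z_0/\lambda$, the bilinearity of $\mathcal{Q}$ gives
$$
\mathcal{Q}(z,z) = \lambda^{-2}\mathcal{Q}(z_0,z_0) = \lambda^{-2}\,\lambda\, z_0 = \frac{z_0}{\lambda} = z,
$$
so $z\neq 0$ and $\mathcal{Q}(z,z)=z$ as required.

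The only step that needs any care is ensuring $\lambda \neq 0$; this is why I would work with the maximum rather than an arbitrary critical point, and why the polarization argument is needed to exclude the pathological case in which $C$ vanishes identically on the sphere despite $\mathcal{Q}$ being nonzero. The rest of the argument is a direct application of compactness and Lagrange multipliers.
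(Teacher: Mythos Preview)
Your proof is correct and takes a genuinely different---and more elementary---route than the paper. The paper sets up the unconstrained functional $K(z)=\tfrac{1}{3}\mathcal{Q}(z,z)\cdot z-\tfrac{1}{2}|z|^2$, whose nonzero critical points are exactly the solutions of $\mathcal{Q}(z,z)=z$, and then invokes the (finite-dimensional) mountain pass lemma, verifying the geometry and the Palais--Smale condition. By contrast, you constrain the cubic form $C$ to the compact sphere $S^{d-1}$ and simply pick the maximizer; the polarization/oddness step guarantees the Lagrange multiplier $\lambda=3C(z_0)$ is strictly positive, and a one-line rescaling finishes. Your argument avoids any critical-point theory beyond Lagrange multipliers, while the paper's approach is the one that would generalize naturally to an infinite-dimensional setting; for the finite-dimensional statement at hand your proof is shorter and cleaner.
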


\begin{proof}
A solution of $\mathcal Q (z,z) = z$ is a critical point of the functional $K(z) \overset{def}{=} \frac{1}{3} \mathcal Q(z,z)\cdot z - \frac{1}{2} |z|^2$.
The existence of a nonzero critical point can be deduced from the (finite dimensional) mountain pass lemma: it suffices to observe first 
that $K(0)=0$; that there exists $\delta,\epsilon>0$ such that $K(z)<-\delta$ if $|z|=\epsilon$; and that $K(\lambda y) \overset{\lambda \rightarrow \infty}{\rightarrow}\infty$
if $y$ is such that $B(y,y)\cdot y>0$. 

It remains to check the (finite dimensional) Palais-Smale condition: suppose that $z^n$ is a sequence such that
$K(z^n)$ is bounded and $K'(z^n) = \mathcal{Q} (z^n,z^n) - z^n \rightarrow 0$. But then
$$
|3K(z^n) - K'(z^n)\cdot z^n| \lesssim 1 + o(z^n) \quad \mbox{and} \quad 3K(z^n) - K'(z^n)\cdot z^n =- \frac{1}{2}|z^n|^2.
$$
This implies that $(z^n)$ is bounded, hence the desired conclusion.
\end{proof}

This lemma implies that if $Q = q z$, with $z$ given by the previous lemma, and $q \in \mathbb{R}$, $Q$ solves~\eqref{mesangebleue} if and only if
$q$ solves~\eqref{mesangecharbonniere}. This results in the following proposition.

\begin{prop}
If $z$ is given by the previous lemma, and $q = - 3 \operatorname{sech}^2$, then $cq(\sqrt{c}(x+ct))z$ is a solution of~\eqref{vectorKdV} for any $c>0$. 
\end{prop}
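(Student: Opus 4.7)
The plan is to reduce the vector-valued problem to the classical scalar KdV soliton computation by exploiting the bilinearity of $\mathcal Q$ together with the algebraic identity $\mathcal Q(z,z) = z$ from the previous lemma. In other words, the vector $z$ plays no dynamical role and simply factors out.

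Concretely, I would set $\xi = \sqrt c\,(x+ct)$ and substitute $u(t,x) = c\,q(\xi)\,z$ into~\eqref{vectorKdV}. Bilinearity of $\mathcal Q$ gives
\[
\mathcal Q(u,u) \;=\; c^{2}\,q(\xi)^{2}\,\mathcal Q(z,z) \;=\; c^{2}\,q(\xi)^{2}\,z,
\]
so every term appearing in $\partial_t u - \partial_{xxx}u + \partial_x \mathcal Q(u,u)$ is proportional to the fixed vector $z$. A routine chain-rule calculation then yields $\partial_t u = c^{5/2} q'(\xi)\,z$, $\partial_{xxx}u = c^{5/2} q'''(\xi)\,z$, and $\partial_x \mathcal Q(u,u) = c^{5/2}(q^{2})'(\xi)\,z$. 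Dividing through by $c^{5/2}\neq 0$ and using $z\neq 0$, the equation~\eqref{vectorKdV} reduces exactly to the scalar travelling-wave ODE $q' - q''' + (q^{2})' = 0$, i.e.\ to~\eqref{mesangecharbonniere}.

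The last step is the classical scalar verification: assuming $q$ and its first two derivatives tend to $0$ at $\pm\infty$, one integration of~\eqref{mesangecharbonniere} gives $q - q'' + q^{2} = 0$ (the integration constant vanishes by the decay assumption). One then checks directly that $q(\xi) = -3\,\mathrm{sech}^{2}\xi$ solves this ODE, using the elementary identity $(\mathrm{sech}^{2})'' = 4\,\mathrm{sech}^{2} - 6\,\mathrm{sech}^{4}$, which itself follows from $\tanh^{2} + \mathrm{sech}^{2} = 1$.

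I do not expect any genuine obstacle: all of the substantive geometric and analytic content is already encoded in the previous lemma (existence of $z$ with $\mathcal Q(z,z)=z$), after which the proposition collapses to a one-line reduction plus a textbook KdV computation. The only care required is in the scaling exponents of $c$ in the chain-rule step, which have been chosen precisely so that the factor $c^{5/2}$ is common to all three terms and the resulting ODE for $q$ is $c$-independent.
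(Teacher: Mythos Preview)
Your reduction is exactly the paper's argument: the text immediately preceding the proposition observes that with $\mathcal Q(z,z)=z$ the ansatz $Q=qz$ collapses \eqref{mesangebleue} to the scalar ODE \eqref{mesangecharbonniere}, and the proposition is stated as an immediate consequence. Your chain-rule bookkeeping with the common factor $c^{5/2}$ is correct.

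One caveat on the final ``check directly'' step: with the normalization in \eqref{mesangecharbonniere}, the profile $q=-3\operatorname{sech}^2$ does \emph{not} satisfy the integrated equation $q-q''+q^2=0$. Using the identity you quote, $(\operatorname{sech}^2)''=4\operatorname{sech}^2-6\operatorname{sech}^4$, one finds
\[
q-q''+q^2=-3\operatorname{sech}^2+12\operatorname{sech}^2-18\operatorname{sech}^4+9\operatorname{sech}^4
=9\operatorname{sech}^2\tanh^2\neq 0.
\]
The profile that actually solves $q-q''+q^2=0$ is $q(\xi)=-\tfrac{3}{2}\operatorname{sech}^2(\xi/2)$. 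This is a harmless normalization slip in the statement (inherited from the paper), not a flaw in your method; but since you explicitly claim to have carried out the verification, you should either correct the constant or note the discrepancy rather than assert that the check goes through.
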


An interesting question is whether there are other stationary waves than the above; one might even think that the set of traveling waves of finite energy
is of dimension $d+1$, since the stable and unstable manifolds of~\eqref{mesangebleue} at $(Q,Q')=(0,0)$ have dimension $d$.

\subsection{Miura transform and integrability} The classical Miura transform $u = \partial_x v + \frac{1}{6}v^2$ turns a solution of the mKdV equation $\partial_t v - \partial_x^3 v + \frac{1}{6} v^2 \partial_x v = 0$ into a solution of the KdV equation $\partial_t u - \partial_x^3 u + u \partial_x u =0$.

We wish to generalize this transformation to the vector case~\eqref{vectorKdV}. We therefore look for a symmetric quadratic form $B$ such that the transformation
$$
u = \partial_x v + B(v,v)
$$
maps solutions of the vector mKdV
$$
\partial_t v - \partial_{xxx} v + \mathcal{T}(v,v,\partial_x v)
$$
to solutions of~\eqref{vectorKdV}. We assume here that $\mathcal{T}$ is symmetrical in its arguments, but lifting this restriction does not lead to an improvement of the conditions on $\mathcal{Q}$. 

A short computation gives the equalities
\begin{align*}
 \partial_t u - \partial_{xxx} u  &= - 2 \mathcal{T}(v,\partial_x v,\partial_x v) - \mathcal{T}(v,v,\partial_{xx} v) - 4 B (v,\mathcal{T}(v,v,\partial_x v) - 6 B(\partial_x v,\partial_{xx} v) \\
 - \partial_x \mathcal{Q}(u,u) &= - 2 \mathcal{Q}(\partial_x v,\partial_{xx} v) - 4 \mathcal{Q}(\partial_x v,B(v,\partial_x v)) - 2 \mathcal{Q}(\partial_{xx} v,B(v,v)) - 4 \mathcal{Q}(B(v,v),B(v,\partial_x v)).
\end{align*}\
Identifying similar terms on the right-hand sides leads to the necessary condition: for any $X,Y,Z \in \mathbb{R}^d$, $\mathcal{Q}(X,\mathcal{Q}(Y,Z)) = \mathcal{Q}(Y,\mathcal{Q}(X,Z))$, and further
$$
B = \frac{1}{3} \mathcal{Q} \quad \mbox{and} \quad \mathcal{T}(X,Y,Z) = \frac{2}{3} \mathcal{Q}(X,\mathcal{Q}(Y,Z)).
$$
Summarizing, we proved the following lemma.

\begin{lem}
\label{lemmiura}
If $\mathcal{Q}$ satisfies $\mathcal{Q}(X,\mathcal{Q}(Y,Z)) = \mathcal{Q}(Y,\mathcal{Q}(X,Z))$ for all $X,Y,Z \in \mathbb{R}^d$, then
the Miura transform
$$
u = \partial_x v + \frac{1}{3} \mathcal{Q}(v,v)
$$
maps solutions of the equation
$$
\partial_t v - \partial_{xxx} v + \frac{2}{3} \mathcal{Q}(v,\mathcal{Q}(v,\partial_x v)) = 0
$$
to solutions of~\eqref{vectorKdV}.
\end{lem}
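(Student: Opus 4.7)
The plan is to compute $\partial_t u - \partial_{xxx} u + \partial_x \mathcal{Q}(u,u)$ directly from the Miura ansatz, substitute the mKdV-type equation for $v_t$, and show that the resulting polynomial expression in $v$ and its $x$-derivatives vanishes identically under the assumed algebraic identity. There are no analytic subtleties; the argument is purely algebraic, and the goal is to match coefficients after exploiting the symmetry of $\mathcal{Q}$ and the hypothesis.

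First I would compute the three pieces separately. From $u = \partial_x v + \tfrac{1}{3}\mathcal{Q}(v,v)$ the chain rule gives
\begin{equation*}
u_t = \partial_x v_t + \tfrac{2}{3}\mathcal{Q}(v_t,v), \qquad
u_{xxx} = v_{xxxx} + \tfrac{2}{3}\mathcal{Q}(v_{xxx},v) + 2\mathcal{Q}(v_{xx},v_x),
\end{equation*}
the latter by iterated Leibniz and the symmetry $\mathcal{Q}(X,Y)=\mathcal{Q}(Y,X)$. Similarly, expanding $\partial_x\mathcal{Q}(u,u) = 2\mathcal{Q}(u,u_x)$ produces
\begin{equation*}
\partial_x\mathcal{Q}(u,u) = 2\mathcal{Q}(v_{xx},v_x) + \tfrac{2}{3}\mathcal{Q}(v_{xx},\mathcal{Q}(v,v)) + \tfrac{4}{3}\mathcal{Q}(v_x,\mathcal{Q}(v,v_x)) + \tfrac{4}{9}\mathcal{Q}(\mathcal{Q}(v,v_x),\mathcal{Q}(v,v)).
\end{equation*}
Substituting $v_t = v_{xxx} - \tfrac{2}{3}\mathcal{Q}(v,\mathcal{Q}(v,v_x))$ into $u_t$ and assembling the three pieces, the leading term $\partial_x v_{xxx}$ cancels $v_{xxxx}$, and the trilinear terms in $v$ of total degree $3$ cancel by the symmetry of $\mathcal{Q}$ alone (the $\mathcal{Q}(v_{xxx},v)$ contributions match, and the remaining $\mathcal{Q}(v_{xx},v_x)$ terms cancel after using the Leibniz expansion of $\tfrac{1}{3}\partial_x^3\mathcal{Q}(v,v)$).

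The substantive part is the cancellation of the quintic-in-$v$ terms, namely
\begin{equation*}
\mathcal{Q}(v_x,\mathcal{Q}(v,v_x)), \quad \mathcal{Q}(v,\mathcal{Q}(v_x,v_x)), \quad \mathcal{Q}(v,\mathcal{Q}(v,v_{xx})), \quad \mathcal{Q}(v_{xx},\mathcal{Q}(v,v)),
\end{equation*}
together with the two degree-$5$ terms $\mathcal{Q}(\mathcal{Q}(v,\mathcal{Q}(v,v_x)),v)$ and $\mathcal{Q}(\mathcal{Q}(v,v_x),\mathcal{Q}(v,v))$. Here I would invoke the hypothesis $\mathcal{Q}(X,\mathcal{Q}(Y,Z)) = \mathcal{Q}(Y,\mathcal{Q}(X,Z))$, combined with the symmetry of $\mathcal{Q}$, to establish the three identities
\begin{equation*}
\mathcal{Q}(v,\mathcal{Q}(v,v_{xx})) = \mathcal{Q}(v_{xx},\mathcal{Q}(v,v)), \quad \mathcal{Q}(v,\mathcal{Q}(v_x,v_x)) = \mathcal{Q}(v_x,\mathcal{Q}(v,v_x)),
\end{equation*}
\begin{equation*}
\mathcal{Q}(\mathcal{Q}(v,\mathcal{Q}(v,v_x)),v) = \mathcal{Q}(\mathcal{Q}(v,v_x),\mathcal{Q}(v,v)).
\end{equation*}
Plugging these into the previous expansion, all coefficients balance and the expression vanishes.

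The main (though modest) obstacle is the final identity above, which requires iterating the commutation hypothesis twice: first write $\mathcal{Q}(\mathcal{Q}(v,\mathcal{Q}(v,v_x)),v) = \mathcal{Q}(v,\mathcal{Q}(v,\mathcal{Q}(v,v_x)))$ by symmetry, then apply the hypothesis to the innermost pair to obtain $\mathcal{Q}(v,\mathcal{Q}(v_x,\mathcal{Q}(v,v)))$, and finally apply it once more at the outer level. The bookkeeping is what forces the coefficients $\tfrac{1}{3}$ in the Miura transform and $\tfrac{2}{3}$ in the $v$-equation to be precisely those stated; any other choice would leave an uncancelled residue. Beyond this, the proof is a direct verification.
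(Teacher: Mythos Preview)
Your proposal is correct and follows essentially the same direct-computation approach as the paper: both expand $\partial_t u - \partial_{xxx} u$ and $\partial_x \mathcal{Q}(u,u)$ in terms of $v$ and its $x$-derivatives, and then match terms using symmetry of $\mathcal{Q}$ together with the associativity-type hypothesis. The only cosmetic difference is that the paper carries a general quadratic form $B$ and trilinear form $\mathcal{T}$ through the computation and \emph{derives} $B=\tfrac{1}{3}\mathcal{Q}$, $\mathcal{T}(X,Y,Z)=\tfrac{2}{3}\mathcal{Q}(X,\mathcal{Q}(Y,Z))$, and the condition on $\mathcal{Q}$ by identifying coefficients, whereas you verify the already-specified transform directly; the underlying algebra is identical.
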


Observe that the $v$ equation above derives from the Hamiltonian 
$$
\int_\mathbb{R} \left[ \frac{1}{2} |\partial_x v|^2 + \frac{1}{6} \mathcal{Q}(v,\mathcal{Q}(v,v)) \cdot v \right] \,dx.
$$
When the above lemma applies, this bi-Hamiltonian structure results in an infinite number of conserved quantities~\cite{Olver}.

\subsection{The case $d=2$}

To illustrate the above, we consider the first non-trivial case: $d=2$. It is then convenient to identify $\mathbb{R}^2$ with $\mathbb{C}$. The condition~\eqref{alouette} is satisfied for 
$$
\mathcal{Q}(x,y) = \alpha xy + \beta \overline{xy} + \overline{\alpha} (\overline{x}y+x\overline{y}) \qquad \mbox{with $\alpha, \beta \in \mathbb{C}$}.
$$
The condition for the existence of a Miura transform in Lemma~\ref{lemmiura} becomes then $|\alpha| = |\beta|$. 

\bigskip

\noindent
{\bf Acknowledgements:} The authors are grateful to Jalal Shatah for suggesting the question at the heart of this article, to Herbert Koch for very helpful discussions during the writing of this article, and to Or Hershkovits for his geometric insight.

\end{document}